\newtheorem{theorem}{Theorem}[section]
\newtheorem{proposition}[theorem]{Proposition}
\newtheorem{lemma}[theorem]{Lemma}
\newtheorem{corollary}[theorem]{Corollary}
\theoremstyle{definition}
\newtheorem{definition}[theorem]{Definition}
\newtheorem{example}[theorem]{Example}
\theoremstyle{remark}
\newtheorem{remark}[theorem]{Remark}
\numberwithin{equation}{section}
\newcommand{\ip}[2]{\left\langle{#1},{#2}\right\rangle}
\newcommand{\laplace}{\boldsymbol{\Delta}}
\DeclareMathOperator{\dist}{d}
\DeclareMathOperator{\supp}{supp}
\DeclareMathOperator{\Lip}{Lip}
\newcommand{\ls}{\leqslant }
\newcommand{\gs}{\geqslant }
\newcommand{\du}{{\rm d}\mu}
\begin{document}

\title[One-phase Free Boundary Problems on RCD Spaces]{One-phase Free Boundary Problems on RCD Metric Measure Spaces}

\author{Chung-Kwong Chan}
\address{Department of Mathematics, Sun Yat-sen University, Guangzhou 510275}
\email{chsongg@mail2.sysu.edu.cn}
\author{Hui-Chun Zhang}
\address{Department of Mathematics, Sun Yat-sen University, Guangzhou 510275}
\email{zhanghc3@mail.sysu.edu.cn}
\author{Xi-Ping Zhu}
\address{Department of Mathematics, Sun Yat-sen University, Guangzhou 510275}
\email{stszxp@mail.sysu.edu.cn}





\begin{abstract}
  In this paper, we consider a vector-valued one-phase Bernoulli-type free boundary problem on a metric measure space $(X,d,\mu)$ with Riemannian curvature-dimension condition $RCD(K,N)$. We first prove the existence and the local Lipschitz regularity of the solutions, provided that the space $X$ is non-collapsed, i.e., $\mu$ is the $N$-dimensional Hausdorff measure of $X$. And then we show that the free boundary of the solutions is an $(N-1)$-dimensional topological manifold away from a relatively closed subset of Hausdorff dimension  $\ls N-3$. An example is given to show that such a dimension estimate is sharp.\\ 
  
\noindent MSC (2020): 53C23, 30L99.
  
    \end{abstract}

\maketitle

\tableofcontents
\setcounter{tocdepth}{1}

\section{Introduction}

Since the  pioneer  work  of  Alt-Caffarelli \cite{AC81},  Dirichlet problems with free boundary  on
Euclidean spaces have been extensively studied.  
Consider  the critical points of the  one-phase Bernoulli energy  functional:
\begin{equation}\label{equ1.1}
      J(u):= \int_{\Omega}(\left|\nabla u\right|^2+\chi_{\left\{u>0\right\}}) \mathrm{d}x\text{,} \\
\end{equation}
where $\Omega\subset \mathbb R^n$ is a bounded open set. The domain $\Omega_u:=\{x\in \Omega\mid u(x)>0\}$ is a priori unspecified and $\partial  \{u>0\}\cap\Omega$  is the {\emph{free boundary}}.   From  \cite{AC81},  the fundamental results about existence and regularity of minimizers of $J$  and  regularity
of free boundaries were established  (see also   \cite{Caf87,Caf89,Caf88} for a view of viscosity solutions). 
 Recently, the studies on free boundary problems have been extended to the   
fully nonlinear uniformly elliptic operators \cite{DSFS15} and
 uniformly elliptic operators with variable coefficients \cite{Tre20a,Tre20b}. 
In the meantime, the vector-valued  Bernoulli-type free boundary problem has been systematically studied by Caffarelli-Shahgholian-Yeressian \cite{CSY18},  Mazzoleni-Terracini-Velichkov \cite{MTV17,MTV20} and Kriventsov-Lin \cite{KL18}. 
We refer the readers to surveys \cite{FS15,F18,DSFS19,CS20} and their references for recent developments of the free boundary problems in the Euclidean settings.

In this paper, we will extend the study on one-phase Bernoulli-type free boundary problems from the  Euclidean setting to the setting of \emph{non-smooth} spaces satisfying a synthetic notion of lower bounds of Ricci curvature. More precisely, letting  $(X,d,\mu)$ be a metric measure space (a complete metric space $(X,d)$ equipped with a Radon measure $\mu$ with ${\rm supp}(\mu)=X$), we assume that it satisfies the Riemannian curvature-dimension condition $RCD(K,N)$ for some $K\in\mathbb R$ and $N\in[1,+\infty)$. The main examples in the class of $RCD(K,N)$ spaces include the Ricci limit spaces in the Cheeger-Colding theory \cite{CC96,CC97,CC00} and finite-dimensional Alexandrov spaces with curvature bounded from below (see \cite{Pet11} and \cite[Appendix A]{ZZ10}).
The parameters $K\in \mathbb R$ and $N\in[ 1,+\infty]$   play the role of  ``Ricci curvature $\gs  K$ and dimension $\ls  N$" in Riemannian geometry.
The theory of  $RCD(K,N)$ metric measure spaces and their geometric analysis have fast and remarkable developments, see \cite{Amb18} for a recent survey on this topic.

Let $(X, d,\mu) $ be  an $RCD(K,N)$ metric measure space for some $K\in\mathbb R$ and  $N\in [1,+\infty)$,  and let   $\Omega\subset X$ be a bounded domain. The Bernoulli-type energy functional is given by 
\begin{equation}\label{equ1.2}
  J_Q({\bf u}) =\int_{\Omega}(\left|\nabla {\bf u}\right|^2+Q\chi_{\left\{\left|{\bf u}\right|>0\right\}}) \mathrm{d}\mu,\ \ \
\end{equation}
where $Q\in L^\infty(\Omega) \ (= L^\infty(\Omega,\mu))$ with
\begin{equation}\label{equ1.3}
0<Q_{\min}\ls  Q(x)\ls  Q_{\max}<+\infty\quad \mu-{\rm a.e.}\ x\in\Omega
\end{equation}
for two positive real numbers $Q_{\min}$ and $Q_{\max}$.  According to \cite{Che99}, it is now known that the Sobolev space $W^{1,2}(\Omega)$ is well-defined. 
Given a boundary data $\mathbf{g}\in W^{1,2}(\Omega,\left[0,+\infty) ^m\right) $, we consider the minimization  problem: 
\begin{equation}\label{equ1.4}
 \min_{{\bf u}\in \mathscr{A}_{\mathbf{g}}}J_Q({\bf u}),\quad \mathscr{A}_{\mathbf{g}}:=\left\{{\bf u}\in W^{1,2}(\Omega,\left[0,+\infty\right) ^{m}) \mid {\bf u}-\mathbf{g}\in W_{0}^{1,2}(\Omega,\mathbb{R}^{m}) \right\}\text{.}
\end{equation}
It is a cooperative vector-valued one-phase Bernoulli-type free boundary problem.
\begin{definition}\label{def1.1}
A map ${\bf u}\in\mathscr{A}_{\mathbf{g}}$ is called a \emph{local minimizer} of  $J_Q$ in \eqref{equ1.2} if there exists some $\varepsilon_{\bf u}>0$ such that $J_Q({\bf u}) \ls  J_Q({\bf v}) $ for every ${\bf v}\in \mathscr{A}_{\mathbf{g}}$ with $\dist({\bf u},{\bf v}) <\varepsilon_{\bf u}$, where
\begin{equation}\label{equ1.5}
  \dist({\bf u},{\bf v}) :=\left\|{\bf u}-{\bf v}\right\|_{W^{1,2}(\Omega,\mathbb{R}^{m}) }+\left\|\chi_{\left\{\left|{\bf u}\right|>0\right\}}-\chi_{\left\{\left|{\bf v}\right|>0\right\}}\right\|_{L^{1}(\Omega) }\text{.}
\end{equation}
If $\varepsilon_{\bf u}=+\infty$, we call that it is a \emph{minimizer} of $J_Q$ in \eqref{equ1.2}.
\end{definition}

The fundamental problems include the existence and regularity of minimizers  (or local minimizers) of $J_Q$ and the regularity of the free boundary $\partial\{|{\bf u}|>0\}\cap \Omega$.

\subsection{The  Bernoulli-type  free boundary problems on  Euclidean spaces}
We first recall some classical results on this problem in the Euclidean setting, i.e., $(X,d,\mu)=(\mathbb R^n,d_{\rm Eucl}, \mathcal L^n).$  
   In the seminal work of  Alt-Caffarell \cite{AC81},
for the scalar case where $m=1$, they established the following: \\
\indent$  \bullet $ the existence of the minimizer of (\ref{equ1.4}), \\
\indent$  \bullet $ Lipschitz continuity of any local  minimizer  $u$, and \\
\indent$  \bullet $ when $Q\in C^\alpha$,    the free boundary $\partial\left\{  u >0\right\}\cap\Omega$ is a $C^{1,\alpha}$-manifold away from a relatively closed subset $\mathcal S_u$ with $\mathscr H^{n-1}(\mathcal S_u)=0$.

Nowadays, it is well-known that the singular set $\mathcal S_u$ has  $\dim_{\mathcal{H}}(\mathcal{S}_{  u})\ls  n-k^{\ast}$ for some $k^{\ast}\in\{5,6,7\}$ (by Weiss \cite{Wei99}, Caffarelli-Jerison-Kenig \cite{CJK04},    De Silva-Jerison \cite{DSJ09}  and Jerison-Savin \cite{JS15}).
Edelen-Engelstein \cite{EE19} explored the rectifiable structure of the singular set  $\mathcal S_u$.

  Recently,  the vector-valued  case where $m\gs  2$, of Bernoulli-type free boundary problem for local  minimizers of $J({\bf u})$  have been systemically studied   by Caffarelli-Shahgholian-Yeressian \cite{CSY18},  Mazzoleni-Terracini-Velichkov \cite{MTV17,MTV20} and  Kriventsov-Lin \cite{KL18}.   See also \cite{Tre20a,Tre20b} for 
 the uniformly elliptic operators with variable coefficients.   
\begin{theorem}[Caffarelli-Shahgholian-Yeressian \cite{CSY18}]\label{thm1.2}
Let $m\gs  2$ and let $\Omega\subset \mathbb R^n$ be a bounded domain. Suppose that $Q\in L^\infty(\Omega)$ satisfies \eqref{equ1.3}.
  For each $g\in W^{1,2}(\Omega, [0,+\infty)^m)$, there exists a minimizer ${\bf u}\in \mathscr A_{\bf g}$ of $J_Q$ in \eqref{equ1.2}. Moreover,
 for any  local  minimizer ${\bf u}=(u_{1},\ldots,u_{m}) $ of $J_Q$,     the following properties  hold:   
  \begin{enumerate}
    \item {\rm (Lipschitz regularity)}\ ${\bf u}$ is locally Lipschitz continuous on $\Omega$.
    \item {\rm (Local finiteness of perimeter and Euler-Lagrange equation)} If $Q\in C(\Omega)$, then the free boundary has locally finite perimeter, and hence the reduced boundary $\partial_{\rm red}\{|{\bf u}|>0\}$ is well-defined. Furthermore, for $\mathscr H^{n-1}$-a.e. point $x\in\Omega\cap \partial_{\rm red}\{|{\bf u}|>0\}$, $i=1,2,\cdots, m$, and $\eta>0$, the non-tangential limit 
    \begin{equation}\label{equ:nontan-limit}
    w_i(x):=\lim_{y\in\{|{\bf u}|>0\}\cap \{-(y-x)\cdot \nu_{\{|{\bf u}|>0\}}(x)\gs\eta\},\ y\to x}\frac{u_i(y)}{|{\bf u}(y)|}
    \end{equation}
   (here $\nu_{\{|{\bf u}|>0\}}(x)$   is the outer normal to $\{|{\bf u}|>0\}$ at $x$) exists, and we have the equations
   \begin{equation}\label{equ:EL}
   \Delta u_i=w_i\sqrt{Q}\mathscr H^{n-1}\llcorner (\Omega\cap \partial_{\rm red}\{|{\bf u}|>0\})\quad {\rm for}\ \ i=1, 2, \cdots, m.
   \end{equation}
    \item {\rm (Regularity of free boundary)} If $Q\in C^{\alpha}(\Omega) $ and ${\bf u}$ is a minimizer of $J_Q$, then the singular part of the free boundary
    $$\mathcal S_{\bf u}:=\big(\partial\{|{\bf u}|>0\}\backslash\partial_{\rm red}\{|{\bf u}|>0\}\big)\cap \Omega$$
    is a closed set in the relative topology of $\Omega$ with $\dim_{\mathcal{H}}(\mathcal{S}_{\bf u})\ls  n-k^{\ast}$ for some $k^{\ast}\in\{5,6,7\}$, and the regular part of the  free boundary    
        $\partial_{\rm red}\left\{|{\bf u}|>0\right\}\cap\Omega $ is locally $C^{1,\beta}$ smooth for some $\beta\in(0,\alpha]$
          ($C^{k+1,\beta}$ smooth or analytic if $Q$ is $C^{k,\alpha}$ smooth or analytic, respectively).
            \end{enumerate}
\end{theorem}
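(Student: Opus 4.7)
My plan is to follow the classical Alt--Caffarelli strategy, adapted to the cooperative vector-valued setting as in \cite{CSY18,MTV17,MTV20}.

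For \emph{existence}, I would apply the direct method: a minimizing sequence $\{{\bf u}_k\}\subset \mathscr A_{\bf g}$ is uniformly bounded in $W^{1,2}$, and extracting a weak $W^{1,2}$-limit ${\bf u}$ (strong in $L^2$, a.e.), I use weak lower semicontinuity of the Dirichlet part and note that $|{\bf u}_k|\to |{\bf u}|$ a.e.\ gives $\chi_{\{|{\bf u}|>0\}}\ls \liminf_k \chi_{\{|{\bf u}_k|>0\}}$ pointwise, so Fatou handles the measure term. For part (1), the key observation is that perturbing $u_i$ by a test function supported in the open set $\{|{\bf u}|>0\}$ leaves the measure term unchanged, so each $u_i$ is harmonic there; this forces $\Delta |{\bf u}|^2 = 2|\nabla {\bf u}|^2\gs 0$, hence $|{\bf u}|$ is subharmonic in $\Omega$. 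A non-degeneracy estimate $\sup_{B_r(x_0)}|{\bf u}|\gtrsim r$ at free-boundary points $x_0$ follows by comparing ${\bf u}$ with its harmonic replacement (the Dirichlet gain is $\ls Q_{\max}|B_r|$). Combining subharmonicity with non-degeneracy yields the linear growth $\sup_{B_r}|{\bf u}|\lesssim r$, and interior gradient estimates for harmonic functions on balls of radius $\sim \dist(\cdot,\partial\{|{\bf u}|>0\})$ give the local Lipschitz bound.

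For part (2), I would take inner variations ${\bf u}\mapsto {\bf u}\circ(\mathrm{id}+t\xi)^{-1}$ to obtain the stress--energy identity, from which (together with the Lipschitz bound) one deduces the Weiss monotonicity formula and the local finiteness of the perimeter of $\{|{\bf u}|>0\}$. Outer variations $u_i\mapsto u_i+t\varphi$ show that each $\Delta u_i$ is a nonnegative Radon measure supported on $\partial\{|{\bf u}|>0\}$. To identify it as in \eqref{equ:EL}, I would blow up at a point $x_0\in \partial_{\rm red}\{|{\bf u}|>0\}$: by Lipschitz and non-degeneracy the rescalings ${\bf u}_{x_0,r}(x):=r^{-1}{\bf u}(x_0+rx)$ subconverge, and the Weiss formula forces the limit to be $1$-homogeneous, in fact a cooperative half-plane solution $(w_1,\dots,w_m)(x\cdot \nu)_+$ with $|w|=\sqrt{Q(x_0)}$ by the free-boundary condition. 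The non-tangential trace in \eqref{equ:nontan-limit} is exactly this direction vector $w_i$, and measure convergence of the Laplacians gives \eqref{equ:EL}.

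For part (3), I would run a Federer-type dimension-reduction argument. At regular points (those admitting a half-plane tangent), an improvement-of-flatness scheme \`a la Caffarelli--De Silva, adapted to vector minimizers in \cite{CSY18}, upgrades flatness to $C^{1,\beta}$ regularity of $\partial_{\rm red}\{|{\bf u}|>0\}$, with higher regularity bootstrapping from $Q\in C^{k,\alpha}$. Since $|{\bf u}|$ is a nonnegative scalar subsolution whose zero set coincides with $\{|{\bf u}|=0\}$, the scalar singular-set bounds of Weiss \cite{Wei99}, Caffarelli--Jerison--Kenig \cite{CJK04}, De Silva--Jerison \cite{DSJ09} and Jerison--Savin \cite{JS15} transfer to yield $\dim_{\mathcal H}\mathcal S_{\bf u}\ls n-k^*$ with $k^*\in\{5,6,7\}$. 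The hardest step will be the vector-valued improvement of flatness: one must simultaneously control the geometry of $\partial\{|{\bf u}|>0\}$ and the direction vector $w=(w_1,\dots,w_m)$ across scales, ruling out nontrivial rotation or oscillation among the components. This is where the cooperative structure ($u_i\gs 0$ with $|{\bf u}|\ls \sum_i u_i$) and the viscosity/partial Harnack techniques of \cite{CSY18,DSFS15} enter in an essential way.
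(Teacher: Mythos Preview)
This theorem is not proved in the paper. Theorem~\ref{thm1.2} is stated as a result from the literature, explicitly attributed to Caffarelli--Shahgholian--Yeressian \cite{CSY18} (with related contributions from \cite{MTV17,MTV20,KL18}), and serves only as background and motivation for the paper's own results in the RCD setting (Proposition~\ref{prop1.3}, Theorems~\ref{thm1.4}, \ref{thm1.6}, \ref{thm1.9}). There is therefore no proof in the paper against which to compare your proposal.

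Your sketch is a reasonable outline of the strategy in \cite{CSY18,MTV17}. One logical slip worth flagging: you write that ``combining subharmonicity with non-degeneracy yields the linear growth $\sup_{B_r}|{\bf u}|\lesssim r$,'' but non-degeneracy is the \emph{lower} bound $\sup\gtrsim r$ and contributes nothing to the upper bound. The linear-growth upper bound in \cite{CSY18} (and in this paper's Lemma~\ref{lem5.6} for the RCD case) comes from a separate argument, essentially a Caccioppoli-type inequality plus harmonic comparison, not from non-degeneracy.

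It is also worth noting, since you are reading this paper, that the authors explicitly remark (just before Theorem~\ref{thm1.4}) that the Euclidean proofs of Lipschitz regularity in \cite{AC81,CSY18,Caf87,Caf88,Caf89} rely on the Poisson formula, and those in \cite{DT15,DSS20} on the fact that gradients of harmonic functions are harmonic---neither of which is available on RCD spaces. So your outline, while appropriate for $\mathbb R^n$, is precisely the approach the paper says does not transport to the non-smooth setting; their substitute is the Cheng--Yau gradient estimate together with a mean-value inequality (Lemma~\ref{lem5.3}).
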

 
There are many other important developments, for example, 
  two-phase free boundary problems \cite{ACF84,DSV2021a,DSV2021b}, free boundary problems for almost minimizer \cite{DT15,DET19},  for the   
fully nonlinear uniformly elliptic operators \cite{DSFS15} and  for the fractional $\alpha$-Laplace operator \cite{CRS10}.

In general, the theory of free boundary problems can be divided into two main steps.
The first step is to establish the existence and the Lipschitz regularity of the solutions.
The second step is to explore the structure of the free boundary of these solutions, 
including the smoothness of its regular part and the size and structure of its singular part.
A blowup argument and an improved flatness property are applied to analyze the structure of the free boundary of solutions.
One may notice that some basic ideas in the theory of free boundary problems are similar to the ones in the theory of minimal surfaces \cite{F69,Giu84} and harmonic maps \cite{Sim96,SU82}.

\subsection{Free boundary problems in $RCD$-spaces and the main results}

In this subsection, we state the main results of this paper. 
Let $(X,d,\mu)$ be  an $RCD(K,N)$-space  with $K\in \mathbb R$ and $N\in(1,+\infty),$ and let $Q$ be a $\mu$-measurable function on $\Omega$ with (\ref{equ1.3}).
Given a   map $\mathbf{g}=(g_1,g_2,\cdots, g_m)\in W^{1,2}(\Omega,\left[0,+\infty\right)^m) $, we consider the minimization  problem   (\ref{equ1.4}). 
 
 The first result is the existence of a minimizer as follows. 
 \begin{proposition}[Existence of a minimizer]\label{prop1.3}
  If ${\rm diam}(\Omega)\ls {\rm diam}(X)/3$, then for each  ${\bf g}\in W^{1,2}(\Omega,[0,+\infty)^m)$, there exists a ${\bf u}\in\mathscr{A}_{\mathbf{g}}$ such that
  \begin{equation}\label{equ1.8}
    J_Q({\bf u}) =\inf_{{\bf v}\in\mathscr{A}_{\mathbf{g}}}J_Q({\bf v}) \text{.}
  \end{equation}
\end{proposition}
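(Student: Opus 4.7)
The plan is to apply the direct method of the calculus of variations. Start with a minimizing sequence $\{\mathbf u_k\}\subset \mathscr A_{\mathbf g}$ with $J_Q(\mathbf u_k)\to \inf_{\mathscr A_{\mathbf g}}J_Q$; the infimum is finite since $\mathbf g\in \mathscr A_{\mathbf g}$. Using $Q\gs 0$ and $\chi_{\{|\mathbf u_k|>0\}}\gs 0$, together with the bound $J_Q(\mathbf u_k)\ls J_Q(\mathbf g)+1$ for large $k$, one immediately gets a uniform bound on $\int_\Omega |\nabla \mathbf u_k|^2\,\du$. To promote this to a full $W^{1,2}$-bound I would apply a Poincaré--Friedrichs inequality to $\mathbf u_k-\mathbf g\in W_0^{1,2}(\Omega,\mathbb R^m)$. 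This is where the diameter hypothesis $\diam(\Omega)\ls \diam(X)/3$ enters: it ensures there is enough room outside $\Omega$, controlled via Bishop--Gromov volume comparison on $RCD(K,N)$-spaces, so that after extending by zero and applying the standard $(1,2)$-Poincaré inequality on a ball $B\supset \Omega$, the $L^2$-norm of $\mathbf u_k-\mathbf g$ is controlled by its Cheeger energy. Combined with $\mathbf g\in W^{1,2}$, this yields $\sup_k\|\mathbf u_k\|_{W^{1,2}}<+\infty$.

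Since $W^{1,2}(\Omega,\mathbb R^m)$ on an $RCD(K,N)$-space is a reflexive Hilbert space, a subsequence (not relabelled) satisfies $\mathbf u_k\rightharpoonup \mathbf u$ weakly in $W^{1,2}$. The Rellich--Kondrachov type compactness available on bounded subsets of $RCD(K,N)$-spaces upgrades this to strong convergence in $L^2(\Omega,\mathbb R^m)$ and, along a further subsequence, to $\mu$-a.e.\ convergence. The candidate limit $\mathbf u$ still lies in $\mathscr A_{\mathbf g}$: weak closedness of $W_0^{1,2}$ gives $\mathbf u-\mathbf g\in W_0^{1,2}(\Omega,\mathbb R^m)$, and the componentwise inequality $u_i\gs 0$ is preserved by a.e.\ limits.

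The final step is lower semicontinuity of $J_Q$ along this subsequence. Weak $W^{1,2}$-convergence yields $\int_\Omega |\nabla \mathbf u|^2\,\du\ls \liminf_k \int_\Omega |\nabla \mathbf u_k|^2\,\du$ by convexity and $L^2$-lower semicontinuity of the Cheeger energy. For the indicator piece I would note that a.e.\ convergence $|\mathbf u_k|\to |\mathbf u|$ forces $\chi_{\{|\mathbf u|>0\}}(x)\ls \liminf_k \chi_{\{|\mathbf u_k|>0\}}(x)$ at $\mu$-a.e.\ $x$, so Fatou's lemma combined with $0<Q_{\min}\ls Q\ls Q_{\max}$ gives $\int_\Omega Q\chi_{\{|\mathbf u|>0\}}\,\du\ls \liminf_k\int_\Omega Q\chi_{\{|\mathbf u_k|>0\}}\,\du$. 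Adding the two estimates yields $J_Q(\mathbf u)\ls \inf_{\mathscr A_{\mathbf g}}J_Q$, so $\mathbf u$ is a minimizer. The main obstacle is the Poincaré--Friedrichs estimate: unlike on $\mathbb R^n$, it is not automatic for $W^{1,2}_0(\Omega)$ on a general $RCD$-space, and the diameter hypothesis is used precisely to secure a definite volume fraction of $B\setminus \Omega$ via Bishop--Gromov; everything else is either soft functional analysis or a now-standard fact about Sobolev calculus on $RCD(K,N)$-spaces.
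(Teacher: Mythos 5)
Your proposal is correct and follows essentially the same route as the paper: direct method with a minimizing sequence, the Poincaré inequality for $\mathbf u_k-\mathbf g\in W_0^{1,2}$ (where the hypothesis $\diam(\Omega)\ls\diam(X)/3$ enters) to get a uniform $W^{1,2}$ bound, weak compactness in the Hilbert space $W^{1,2}$ plus a.e.\ convergence to stay in $\mathscr A_{\mathbf g}$, and then lower semicontinuity of the Cheeger energy together with Fatou's lemma for the term $Q\chi_{\{|\mathbf u|>0\}}$. Your extra remarks (Rellich--Kondrachov for the a.e.\ convergence, Bishop--Gromov behind the Poincaré constant) just flesh out steps the paper handles by citing its Proposition 2.1(5) and asserting a.e.\ convergence of a subsequence; note also that only $Q\gs 0$ is actually needed here, as the paper observes in Remark 3.1.
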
 
This proposition is somewhat known to experts. For the completeness, we include a proof in Section \ref{sec-existence}.
  
We then consider the Lipschitz regularity of a local minimizer ${\bf u}$ in (\ref{equ1.4}).  Up to our knowledge, the existing proofs of the Lipschitz regularity of  ${\bf u}$  in the Euclidean setting do not work directly in the setting of $RCD(K,N)$-spaces. In fact, some proofs \cite{AC81,CSY18,Caf87,Caf88,Caf89} make heavy use of the Poisson formula, which is not clear on $RCD(K,N)$-spaces.
Other proofs \cite{DT15,DSS20} rely on the fact that gradients of a harmonic function  are again harmonic, 
which fails even on smooth Riemannian manifolds.
In this paper,  we will overcome this difficulty by using the  Cheng-Yau gradient estimates for harmonic functions and a mean value property (see Lemma \ref{lem5.3}), to obtain the following the Lipschitz continuity, provided that the space is non-collapsed. 
\begin{theorem}[Lipschitz regularity]\label{thm1.4} 
Let $(X,d,\mu)$ be an $RCD(K,N)$-space with $K\in \mathbb R$ and $N\in(1,\infty)$. Assume  that $\mu=\mathscr H^N$, the $N$-dimensional Hausdorff measure on $X$. (I.e., $X$ is non-collapsed.)  Let $\Omega\subset X$ be a bounded domain.  Suppose that ${\bf u}=(u_{1},\ldots,u_{m}) $ is a local minimizer of $J_Q$ in \eqref{equ1.2} and  that $Q$ satisfies \eqref{equ1.3}, then ${\bf u}$ is locally Lipschitz continuous on $\Omega$. Precisely, for any ball $B_R(x)\subset \Omega$, there exists a constant $L$ depending only on $N,K, \Omega, R, Q_{\max}, \varepsilon_{\bf{u}} $ and $\int_{B_R(x)}|{\bf u}|\du$, such that 
  \begin{equation}\label{equ1.9}
 |u_i(y)-u_i(z)|\ls  L\cdot d(y,z),\qquad \forall\ y,z\in B_{R/4}(x),\quad i=1,2,\cdots, m.
  \end{equation}
\end{theorem}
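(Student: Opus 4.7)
The strategy follows the Alt--Caffarelli/Caffarelli--Shahgholian--Yeressian template, substituting for the two Euclidean tools that break on non-smooth spaces (Poisson representation and harmonicity of gradients of harmonic functions) (a) the Cheng--Yau gradient estimate for non-negative harmonic functions on $RCD(K,N)$-spaces, and (b) the mean value inequality for subharmonic functions given by Lemma \ref{lem5.3}. The non-collapsedness hypothesis $\mu=\mathscr H^N$ is used through the Bishop--Gromov two-sided bound $c(N,K,R)\,r^N\ls \mu(B_r(y))\ls C(N,K,R)\,r^N$ on small balls, which renders both tools quantitative with dimensional constants independent of the basepoint.

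Fix $y\in B_{R/4}(x)$ and $r>0$ small enough that $B_r(y)\subset B_{R/2}(x)$. Let $h_i\in W^{1,2}(B_r(y))$ be the harmonic replacement of $u_i$ on $B_r(y)$, extended by $u_i$ outside. Each $u_i\gs 0$ is weakly subharmonic on $\Omega$ (from the Euler--Lagrange equation on $\{u_i>0\}$), so the comparison principle gives $0\ls u_i\ls h_i$, and ${\bf h}:=(h_1,\ldots,h_m)$ lies in $\mathscr{A}_{\mathbf{g}}$. For $r$ sufficiently small, ${\bf h}$ satisfies $\dist({\bf u},{\bf h})<\varepsilon_{\bf u}$ in the metric \eqref{equ1.5}, so local minimality $J_Q({\bf u})|_{B_r(y)}\ls J_Q({\bf h})|_{B_r(y)}$ applies. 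Coupled with the orthogonality $\int\nabla h_i\cdot\nabla(u_i-h_i)\du=0$ (which holds because $h_i$ is harmonic and $u_i-h_i\in W_0^{1,2}(B_r(y))$), this yields the scale-$r$ energy bound
\[
\sum_{i=1}^m\int_{B_r(y)}|\nabla(u_i-h_i)|^2\du\ls Q_{\max}\,\mu(B_r(y))\ls C(N,K,R)\,Q_{\max}\,r^N.
\]

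Next I would apply the Cheng--Yau estimate to each $h_i\gs 0$ on $B_r(y)$ to get $|\nabla h_i|(z)\ls C(N,K,R)\,h_i(z)/r$ on $B_{r/2}(y)$, and hence $\sup_{B_{r/4}(y)}h_i\ls C\,h_i(y)$ by integration along geodesics; and use Lemma \ref{lem5.3} applied to the subharmonic $u_i$ to convert the $L^2$-closeness above into a pointwise control $|h_i(y)-u_i(y)|\ls Cr$ on a slightly smaller ball. Iterating these scale-$r$ estimates dyadically, starting from the crude initial bound $u_i(y)\ls C\mu(B_R(x))^{-1}\int_{B_R(x)}|{\bf u}|\du$ (again from Lemma \ref{lem5.3}), produces a linear growth estimate $u_i(y)\ls L\cdot\dist(y,\{u_i=0\})$ on $B_{R/4}(x)$; at $y$ with $u_i(y)>0$, Cheng--Yau applied directly to the harmonic function $u_i$ on $B_{\dist(y,\{u_i=0\})}(y)$ then upgrades this to $|\nabla u_i|(y)\ls CL$, while at $y\in\{u_i=0\}$ the linear growth bound is itself the pointwise inequality \eqref{equ1.9}. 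The main obstacle is exactly this iteration step: converting the one-scale $L^2$ energy inequality into a uniform pointwise linear-growth bound without recourse to a Poisson formula is the heart of the matter, and it is here that Lemma \ref{lem5.3} (as the $RCD$ substitute for the Euclidean mean value identity) together with the non-collapsedness (ensuring uniformity of the Cheng--Yau and Poincar\'e constants across scales) plays the essential role.
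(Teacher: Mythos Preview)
Your overall architecture matches the paper's: establish linear growth $u_i\ls C\sqrt{Q_{\max}}\cdot r$ at points of the zero set, then invoke Cheng--Yau in the positivity set. Where you diverge from the paper, and where your sketch has a genuine gap, is in the mechanism for linear growth.

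You propose harmonic replacement $h_i$ on $B_r(y)$, obtain the energy bound $\int_{B_r}|\nabla(u_i-h_i)|^2\ls Q_{\max}\,\mu(B_r)$, and then say ``use Lemma~\ref{lem5.3} applied to the subharmonic $u_i$ to convert the $L^2$-closeness above into a pointwise control $|h_i(y)-u_i(y)|\ls Cr$''. This is not what Lemma~\ref{lem5.3} does. That lemma takes as input a bound on $\int_{B_s(x_0)}\langle\nabla u,\nabla\rho^2\rangle$ \emph{at every scale $s\in(0,r)$}, together with the condition $\liminf_{r\to 0}r^{-N}\int_{B_r(x_0)}u=0$ (so the center must be a zero of $u$), and outputs a bound on $\fint_{B_r(x_0)}u$. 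It does not turn a single-scale $L^2$ energy gap into a pointwise gap between $u_i$ and its harmonic replacement. Your proposed dyadic iteration is left unspecified at exactly this point, and you correctly flag it as the main obstacle.

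The paper bypasses the iteration entirely. Instead of harmonic replacement, Lemma~\ref{lem5.5} uses the much simpler competitor ${\bf v}={\bf u}+\delta\phi\, e_i$ (perturbing only the $i$th component by a nonnegative test function) and optimizes over $\delta$ to obtain, for every small ball $B_s(x_0)$ and every $0\ls\phi\in W^{1,2}_0(B_s(x_0))$,
\[
-\int_{B_s(x_0)}\langle\nabla u_i,\nabla\phi\rangle\ \ls\ \big(Q_{\max}\,\mu(B_s(x_0))\big)^{1/2}\,\|\phi\|_{W^{1,2}(B_s(x_0))}.
\]
Taking $\phi=s^2-\rho^2$ immediately gives $\int_{B_s(x_0)}\langle\nabla u_i,\nabla\rho^2\rangle\ls C\sqrt{Q_{\max}}\,s^{N+1}$ \emph{for all} $s$ small (here the non-collapsed volume bound $\mu(B_s)\ls C s^N$ is used). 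This is exactly the all-scales input that Lemma~\ref{lem5.3} needs, and plugging it in yields $\fint_{B_r(x_0)}u_i\ls C\sqrt{Q_{\max}}\,r$ in one step (Lemma~\ref{lem5.6}); the sup bound then follows from subharmonicity. No iteration, no harmonic replacement. (Incidentally, your harmonic-replacement energy bound \emph{would} also feed Lemma~\ref{lem5.3} if you performed the replacement at every scale $s$ and used the orthogonality $\int_{B_s}\langle\nabla h_i^{(s)},\nabla(s^2-\rho^2)\rangle=0$; but that is not the route you sketched.)
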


\begin{remark}  
The results for the Lipschitz regularity of energy minimizing harmonic maps from/into/between singular spaces were established in \cite{GS92,KS93,ZZ18,GJZ19}, and two recently important works \cite{MS22+,Gig22+}.
 \end{remark}

Our next result is about the finiteness of the perimeter of the free boundary of a local minimizer. We will also derive the associated Euler-Lagrange equation.  

\begin{theorem}[Local finiteness of perimeter and Euler-Lagrange equation]\label{thm1.6}
  Let $(X,d,\mu)$, $\Omega$ and ${\bf u}$ be as in the above Theorem \ref{thm1.4}. Suppose  $Q\in C(\Omega)$.  Then $\Omega_{\bf u}:=\Omega\cap\{|{\bf u}|>0 \}$ is a set of locally finite perimeter. Moreover, it holds:
    \begin{enumerate}
    \item For all $\Omega'\Subset \Omega$, $\mathscr H^{N-1}\big(\partial\{|{\bf u}|>0\}\cap \Omega'\big)<+\infty$;
    \item There exist nonnegative Borel functions $q_i,\ i=1, 2, \cdots, m,$ such that 
    \begin{equation*} 
    {\bf \Delta } u_i=q_i\cdot\mathscr H^{N-1}\llcorner\big(\partial\{|{\bf u}|>0\}\cap\Omega\big)
    \end{equation*}
 in the sense of distributions (i.e., $-\int_{\Omega}\ip{\nabla u_i}{\nabla \phi}{\rm d}\mu=\int_{\partial\{|{\bf u}|>0\}\cap\Omega} \phi q_i{\rm d}\mathscr H^{N-1}$ for any Lipschitz continuous  $\phi$ with compact support in $\Omega$), and 
 \begin{equation}\label{equ1.10}
 \sum_{i=1}^m q_i^2(x)=Q(x), \quad {\rm for}\  \mathscr H^{N-1}{\rm-a.e.} \ x\in \partial\{|{\bf u}|>0\}\cap \Omega.
         \end{equation}  
         \end{enumerate}
\end{theorem}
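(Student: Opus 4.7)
The plan is to adapt the Alt--Caffarelli strategy to the $\RCD$ setting: the local Laplacian calculus on $\RCD(K,N)$-spaces, Bishop--Gromov comparison, and the non-collapsed structure theory of Mondino--Naber and De Philippis--Gigli will replace Euclidean-specific tools such as the Poisson formula or smooth diffeomorphisms. The proof proceeds in three main steps.

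\textbf{Step 1: $\boldsymbol{\Delta} u_i$ is a non-negative Radon measure supported on $\partial\Omega_{\bf u}$.} For any Lipschitz $\phi\gs 0$ compactly supported in $\Omega$ and small $t>0$, the competitor obtained by replacing $u_i$ with $(u_i-t\phi)_+$ (and keeping the other components unchanged) shrinks $\{|{\bf u}|>0\}$ and cannot lower $J_Q$; expanding to leading order in $t$ yields $-\int\ip{\nabla u_i}{\nabla\phi}\du\gs 0$, so $\boldsymbol{\Delta} u_i$ is a non-negative Radon measure on $\Omega$. On the open set $\Omega_{\bf u}$, the variation $u_i\mapsto u_i+t\phi$ with $\phi\in \operatorname{Lip}_c(\Omega_{\bf u})$, $\phi\gs 0$, and $t>0$ small is admissible and leaves the Bernoulli term unchanged, which similarly gives $-\int\ip{\nabla u_i}{\nabla\phi}\du\ls 0$. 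Combining the two one-sided inequalities yields $\boldsymbol{\Delta} u_i=0$ on $\Omega_{\bf u}$, and since $u_i\equiv 0$ on the interior of $\Omega\setminus\overline{\Omega_{\bf u}}$, we conclude $\operatorname{supp}\boldsymbol{\Delta} u_i\subseteq\partial\Omega_{\bf u}\cap\Omega$; in particular each $u_i$ is harmonic in $\Omega_{\bf u}$.

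\textbf{Step 2: Non-degeneracy, Ahlfors regularity, and representation.} Comparing ${\bf u}$ on small balls $B_r(x_0)\Subset\Omega$, $x_0\in\partial\Omega_{\bf u}$, against competitors of the form $(u_i-\psi)_+$ for a bump $\psi$ of height $\varepsilon r$ on $B_{r/2}(x_0)$ yields the Alt--Caffarelli non-degeneracy $\sup_{B_r(x_0)}|{\bf u}|\gs c\,r$, with $c=c(N,K,Q_{\min})>0$; the only non-Euclidean input is Bishop--Gromov. Combined with Theorem~\ref{thm1.4} this produces two-sided density estimates $c_1 r^N\ls\mu(\Omega_{\bf u}\cap B_r(x_0))$ and $\mu(B_r(x_0)\setminus\Omega_{\bf u})\gs c_1 r^N$. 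Testing $\boldsymbol{\Delta} u_i$ against a Lipschitz cutoff $\eta$ with $\eta\equiv 1$ on $B_r(x_0)$, $\operatorname{supp}\eta\subset B_{2r}(x_0)$ and $|\nabla\eta|\ls C/r$ yields the upper bound $\boldsymbol{\Delta} u_i(B_r(x_0))\ls L\cdot(C/r)\cdot\mu(B_{2r}(x_0))\ls C_2 r^{N-1}$, while a harmonic-minorant argument, based on the Poisson-type representation of harmonic functions on $\RCD(K,N)$-spaces, delivers the matching lower bound $\boldsymbol{\Delta}\bigl(\sum_i u_i\bigr)(B_r(x_0))\gs c_2 r^{N-1}$. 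Hence $\sum_i \boldsymbol{\Delta} u_i$ is $(N-1)$-Ahlfors regular on $\partial\Omega_{\bf u}\cap\Omega'$ for every $\Omega'\Subset\Omega$; a Vitali-type covering on the doubling space $(X,d,\mu)$ then gives $\mathscr H^{N-1}(\partial\Omega_{\bf u}\cap\Omega')<+\infty$, and Radon--Nikodym differentiation produces non-negative Borel densities $q_i$ with $\boldsymbol{\Delta} u_i = q_i\,\mathscr H^{N-1}\llcorner(\partial\Omega_{\bf u}\cap\Omega)$.

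\textbf{Step 3: The free-boundary identity $\sum_i q_i^2 = Q$ via blow-up (the main obstacle).} In the Euclidean case this identity is derived from inner variations along smooth vector fields, which are not intrinsically available on an $\RCD$-space. The plan is a blow-up analysis at $\mathscr H^{N-1}$-a.e.\ $x_0\in\partial\Omega_{\bf u}$. Since $(X,d,\mathscr H^N)$ is non-collapsed, the Mondino--Naber and De Philippis--Gigli structure theory implies that at $\mathscr H^{N-1}$-a.e.\ such $x_0$ (the singular set of $X$ has Hausdorff codimension $\gs 2$ and is controlled against $\partial\Omega_{\bf u}$ via the Ahlfors regularity of Step~2) the pointed rescalings $(X,d/r,x_0)$ converge in pointed measured Gromov--Hausdorff sense to $(\mathbb R^N,d_{\rm Eucl},0)$. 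The uniform Lipschitz bound of Theorem~\ref{thm1.4} then permits extraction of a subsequential blow-up ${\bf u}_0\colon\mathbb R^N\to[0,+\infty)^m$, which by the stability of $W^{1,2}$-convergence and of local minimizers of $J_Q$ under mGH-convergence (in the spirit of Ambrosio--Honda and Gigli--Mondino--Savar\'e, with $Q\in C(\Omega)$ freezing its value to $Q(x_0)$) is a local minimizer of the Euclidean one-phase functional with constant coefficient $Q(x_0)$. The Euclidean half-space classification (Theorem~\ref{thm1.2}(2)--(3)) gives ${\bf u}_0(y)=\boldsymbol{\alpha}(y\cdot\nu)_+$ for some unit $\nu\in\mathbb S^{N-1}$ and $\boldsymbol{\alpha}\in[0,+\infty)^m$ with $|\boldsymbol{\alpha}|^2=Q(x_0)$; transferring the identity $\boldsymbol{\Delta} u_{0,i}=\alpha_i\,\mathscr H^{N-1}\llcorner\{y\cdot\nu=0\}$ back through the rescaling identifies $q_i(x_0)=\alpha_i$, and hence $\sum_i q_i^2(x_0)=Q(x_0)$ at $\mathscr H^{N-1}$-a.e.\ $x_0\in\partial\Omega_{\bf u}\cap\Omega$. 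The most delicate technical points are (i) the stability of the Laplacian measures $\boldsymbol{\Delta} u_{i,r}$ under mGH-convergence of the ambient space, and (ii) the simultaneous verification of Euclidean tangent and reduced-boundary-type blow-up at a full $\mathscr H^{N-1}$-measure subset of $\partial\Omega_{\bf u}$.
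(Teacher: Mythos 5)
Your overall architecture (variational sub/harmonicity, nondegeneracy, density estimates, Laplacian bounds, blow-up identification of the $q_i$) matches the paper's, but two steps as written do not go through. First, the theorem asserts that $\Omega_{\bf u}$ is a set of locally finite perimeter, and your proposal never proves this: Step 2 only yields $\mathscr H^{N-1}$-finiteness of the topological boundary from Ahlfors-type bounds on $\sum_i\boldsymbol{\Delta}u_i$, and finite codimension-one Hausdorff measure of the boundary does not give finite perimeter without a Federer-type criterion that you do not invoke. The paper instead proves finite perimeter directly, via the Mazzoleni--Terracini--Velichkov truncation competitor giving $\int_{\{0<|{\bf u}|\ls\varepsilon\}\cap B_{r/2}}|\nabla{\bf u}|\ls C\varepsilon$ together with a coarea/lower-semicontinuity lemma (Lemmas \ref{lem6.5}--\ref{lem6.6}), and only then imports the RCD theory of sets of finite perimeter to get assertion (1). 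Moreover, your lower bound $\boldsymbol{\Delta}\bigl(\sum_iu_i\bigr)(B_r)\gs c\,r^{N-1}$ is justified by a ``Poisson-type representation of harmonic functions on $RCD$-spaces'', precisely the tool the paper points out is not available in this setting; the estimate itself can be recovered by harmonic replacement plus Cheng--Yau/Harnack and the Poincar\'e inequality (as in Lemma \ref{lem6.4}), but as written the step rests on an unavailable ingredient.

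Second, and more seriously, Step 3 claims that at $\mathscr H^{N-1}$-a.e.\ point of $\partial\{|{\bf u}|>0\}$ the blow-up is a half-plane solution $\boldsymbol{\alpha}(y\cdot\nu)_+$ with $|\boldsymbol{\alpha}|^2=Q(x_0)$. This is not a consequence of the Euclidean regularity theory at an arbitrary free boundary point: blow-ups at non-flat points are one-homogeneous singular minimizing cones (De Silva--Jerison type cones exist for $N$ large), and the half-space classification applies only where the positivity set has Lebesgue density $\tfrac12$, i.e.\ at reduced boundary points. Proving that $\mathscr H^{N-1}$-a.e.\ free boundary point is such a point is exactly what the finite-perimeter theory buys: the paper uses the two-sided density estimates to identify the topological with the essential boundary (eq.\ \eqref{equ6.41}) and then De Giorgi's theorem on $RCD$ spaces (Proposition \ref{prop2.19}(2)) to get an $\mathscr H^{N-1}$-full reduced boundary, at whose points the density-$\tfrac12$ information forces, via the classification of one-homogeneous blow-ups and Theorem 5.5 of Alt--Caffarelli, that the limiting free boundary is a hyperplane with $\Delta u=\sqrt{Q_0}\,\mathscr H^{N-1}$. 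You flag this as your ``delicate point (ii)'' but supply no mechanism, and having skipped finite perimeter you have no access to the reduced boundary at all, so the identity $\sum_iq_i^2=Q$ is not established. (The pmGH-stability of minimizers you invoke is the content of the paper's Theorem \ref{thm7.1}; treating it as citable is reasonable, but the missing reduced-boundary reduction is a genuine gap.)
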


\begin{remark}
Recalling in the Euclidean setting, by using the non-tangential limits $w_i, i=1,\cdots, m,$ in   \eqref{equ:nontan-limit},    the densities in the Euler-Lagrange equation \eqref{equ:EL}, $q_i=w_i\sqrt Q$, fulfil (\ref{equ1.10}).
  The proof of the existence of non-tangential limits $w_i$  relies heavily on a 
domain variation formula via a $C^1$ vector field (see \cite[Lemma 11]{CSY18}). In the $RCD$ setting, the notion of ``non-tangential limit" is not well-defined at present. In this paper, we will prove (\ref{equ1.10}) by applying a blow-up argument and the theory of sets of finite perimeter in the setting of \cite{Amb01,Mir03,ABS19} (see Corollary \ref{cor7.2} for the details).
\end{remark}

To consider the regularity of free boundary of the local minimizers of $J_Q$, let us 
recall that on the Euclidean space $\mathbb R^n$,  the dimension of singular part  $\dim_{\mathcal{H}}(\mathcal{S}_{\bf u})\ls  n-k^{\ast}$ for some $k^{\ast}\in\{5,6,7\}$ (see Theorem \ref{thm1.2}(3)).  However, in the non-smooth setting, the singularities of the free boundary may arise from the singularities of the space itself, see the following example.

\begin{example}\label{exam1.8}
Let $Y$ be the doubling of an equilateral triangle in $\mathbb R^2$ (gluing two same equilateral triangles along their boundaries). This is a two-dimensional  Alexandrov space with nonnegative curvature, and thus, $(Y,d_Y,\mathscr H^{2})$ is an $ncRCD(0,2)$ metric measure space. Let  $X:=\mathbb R\times Y$. It is clear that $X$ is an  $ncRCD(0,3)$-space. Assume that $\Omega=(-1,1)\times Y$ and 
$$f(t,y):=
\begin{cases}
t& {\rm if} \quad t\gs0\\
0& {\rm if} \quad t<0.
\end{cases}
$$
It is easy to check that $f$ is a minimizer of $J_{Q=1}$ on $\Omega$.
The free boundary $\partial \{f>0\}=\{0\}\times Y$. It is clear that, assuming that  $y\in Y$ is one of the vertices of the equilateral triangle,   the point $x:=(0,y)$ is a singular point of the free boundary. 
\end{example}
 
This example shows that the best expectation of the bound of the singular part of the free boundary in the $RCD(K,N)$-space without boundary is co-dimension $\gs 3$. We shall prove this bound for non-collapsed  $RCD(K,N)$-spaces without boundary. 
 
  Two different notions of boundary of $RCD$-spaces  have been introduced in \cite{DPG18} and \cite{KM19}, respectively. Here we will use the one introduced in \cite{DPG18}. 
Let $(X,d,\mu:=\mathscr H^N)$ be a  non-collapsed $RCD(K,N)$-space.  Recall that \cite{AnBS19,DPG18} the singular part of $X$ has a stratification:
\begin{equation}\label{equ1.11}
\mathcal S^0\subset \mathcal S^1\subset \cdots\subset \mathcal S^{N-1}=\mathcal S:=X\backslash\mathcal R,
\end{equation}
where $\mathcal R$ is the regular part of $X$ given by
$$\mathcal R:=\Big\{x\in X\big|\ {\rm each\ tangent\ cone\ at }\ x \ {\rm is}\ (\mathbb R^N,d_{\rm Eucl})\Big\},$$
and, for any $0\ls k\ls N-1$, 
$$\mathcal S^k:=\Big\{ x\in X\big| \ {\rm no\ tangent\ cone\ at}\ x\ {\rm splits\ off}\ \mathbb R^{k+1}\Big\}.$$
It holds 
\begin{equation}\label{equ1.12}
\dim_{\mathscr H}(\mathcal S^k)\ls k,\quad \forall\ k=1,2,\cdots, N-1.
\end{equation}
(This was first given in \cite{CC97} for non-collapsed Ricci limit spaces.) According to \cite{DPG18}, the boundary of $X$ is defined by 
$$\partial X:=\overline{\mathcal S^{N-1}\backslash \mathcal S^{N-2}}.$$

Our last result is to show that the free boundary has a manifold structure away from a subset having co-dimension 3, which is similar to the one in   \cite{MS21} for the boundary of a set minimizing the perimeter in $RCD$-spaces. 
\begin{theorem}[Regularity of free boundary]  \label{thm1.9}
Let $(X,d,\mu)$ and  $\Omega$  be as in the above Theorem \ref{thm1.4}.   Suppose that ${\bf u}=(u_{1},\ldots,u_{m}) $ is a minimizer of $J_Q$ in \eqref{equ1.2} and  that $Q$ satisfies \eqref{equ1.3}.   Assume that $  \Omega  \cap \partial X=\emptyset$ and $Q\in C(\Omega)$. Then for any $\varepsilon>0$, there exists a relatively open set $O_\varepsilon\subset \partial\{|{\bf u}|>0\}\cap\Omega$ satisfying  the following properties:
   \begin{enumerate}
    \item {\rm ($\varepsilon$-Reifenberg flatness)}
  For any $x\in O_\varepsilon$ there exists a radius $r_x>0$ such that  for any ball   $B_r(y)$  with $y\in B_{r_x}(x)\cap\partial\{|{\bf u}|>0\}$ and $r\in(0,r_x)$,  it holds  that $B_r(y)$  is $\varepsilon r$-closed to $B_r(0^N)$ in the pointed measured  Gromov-Hausdorff topology and that  $B_r(y)\cap\partial\{|{\bf u}|>0\}$  is $\varepsilon r$-closed to $B_r(0^{N-1})$ in the Gromov-Hausdorff topology,  where $B_r(0^N)$ is the ball in $\mathbb R^N$ with centered at $0\in \mathbb R^N$ and radius $r$;
 \item {\rm (The smallness of the remainder  of $O_\varepsilon$)}
    \begin{equation}\label{equ1.13}
\dim_{\mathscr H}\big((\partial\{|{\bf u}|>0\}\cap \Omega)\backslash O_\varepsilon\big)\ls N-3.
\end{equation}   
Moreover,  if $N=3 $, then $(\partial\{|{\bf u}|>0\} \cap \Omega)\backslash O_\varepsilon$ is a discrete set of points.
\end{enumerate}

In particular, the  relatively open set $O_\varepsilon $  
is  $C^\alpha$-biH\"older homeomorphic to an $(N-1)$-dimensional manifold, where $\alpha=\alpha(\varepsilon)\in (0,1)$ with $\lim_{\varepsilon\to0}\alpha(\varepsilon)=1$.

  \end{theorem}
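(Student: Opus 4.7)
The plan is to run a blow-up analysis at each free boundary point, classify the possible tangent profiles, combine this with a quantitative stratification of the singular set of $X$, and conclude the biH\"older manifold structure on $O_\varepsilon$ through a metric version of Reifenberg's topological disk theorem.

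First, at a point $x_0\in\partial\{|{\bf u}|>0\}\cap\Omega$ I would consider the rescaled sequence $(X,r^{-1}d,r^{-N}\mu,x_0,{\bf u}_{x_0,r})$ with ${\bf u}_{x_0,r}(\cdot):=r^{-1}{\bf u}(\cdot)$, read along a pmGH approximation. Theorem \ref{thm1.4} supplies the uniform Lipschitz bound needed to extract a limit map ${\bf u}_\infty$, and stability of the $RCD(K,N)$ condition under pmGH convergence guarantees that every subsequential limit $(Y,d_Y,\mathscr H^N,y_0,{\bf u}_\infty)$ is a non-collapsed $RCD(0,N)$ metric measure cone. Using continuity of $Q$, Mosco convergence of Cheeger energies, and the BV-stability implicit in Theorem \ref{thm1.6}, I would show that ${\bf u}_\infty$ is a minimizer of $J_{Q(x_0)}$ on $Y$ and that the rescaled free boundaries converge in Hausdorff distance; a Weiss-type homogeneity argument lets us take ${\bf u}_\infty$ to be $1$-homogeneous about $y_0$.

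Second, I would declare $x_0\in O_\varepsilon$ exactly when there exists $r_{x_0}>0$ such that for every $y\in B_{r_{x_0}}(x_0)\cap\partial\{|{\bf u}|>0\}$ and $r\in(0,r_{x_0})$ the Reifenberg closeness required in item~(1) holds. The decisive step is an improvement-of-flatness lemma: if at one scale the pair $(B_r(y),{\bf u})$ is $\delta$-close, in pmGH and in $W^{1,2}$, to a canonical one-plane profile $(\mathbb R^N,{\bf c}(x\cdot\nu)_+)$, then at a definite fraction of that scale it remains $\delta/2$-close to such a profile, possibly after rotating $\nu$. I would prove this by linearizing around the one-plane profile, using a boundary Harnack principle for harmonic functions on $RCD(K,N)$-spaces (leveraging the Cheng--Yau gradient estimate and the mean-value identity already invoked via Lemma \ref{lem5.3} in Theorem \ref{thm1.4}) together with the Euler--Lagrange identity $\sum_i q_i^2=Q$ of Theorem \ref{thm1.6}, which pins down the Neumann derivative $|{\bf c}|=\sqrt{Q(x_0)}$ from the flat side and so rigidifies the blow-up. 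Iteration upgrades one-shot flatness to $\varepsilon$-Reifenberg flatness at all smaller scales.

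Third, any $x_0\in(\partial\{|{\bf u}|>0\}\cap\Omega)\setminus O_\varepsilon$ must, at arbitrarily small scales, have a blow-up that is not $\varepsilon$-close to a one-plane profile on $\mathbb R^N$, so either (a) the tangent cone $Y$ fails to split off $\mathbb R^{N-1}$, or (b) $Y=\mathbb R^N$ but the homogeneous ${\bf u}_\infty$ is not a one-plane solution. Under $\Omega\cap\partial X=\emptyset$ the stratum $\mathcal S^{N-1}\setminus\mathcal S^{N-2}$ is empty in $\Omega$, so case~(a) confines $x_0$ to $\mathcal S^{N-2}\cap\Omega$; a Federer-type dimension reduction plus cone-splitting (in the spirit of Cheeger--Naber and the perimeter-minimizer analysis of \cite{MS21}), combined with the stratification bound \eqref{equ1.12}, further sharpens this to $\mathcal S^{N-3}$ once one intersects with the free boundary, since a tangent cone that splits $\mathbb R^{N-2}$ carries a translation-invariant homogeneous minimizer whose $2$-dimensional cross-section is rigid enough to propagate the flatness back to $x_0$. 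Case~(b) is the classical Euclidean scenario, so by Alt--Caffarelli, De Silva--Jerison and Jerison--Savin it contributes only a set of dimension $\ls N-5$, absorbed into \eqref{equ1.13}. When $N=3$ the resulting $0$-dimensional set is automatically discrete, and the $C^\alpha$-biH\"older homeomorphism to an $(N-1)$-manifold with $\alpha(\varepsilon)\to 1$ follows from a metric version of Reifenberg's topological disk theorem applied on $O_\varepsilon$. The main obstacle I anticipate is the improvement-of-flatness step: standard Euclidean proofs lean on Poisson representations and pointwise differentiation of harmonic functions, neither of which is directly available on $RCD(K,N)$-cones, so the linearization must be rerouted through Cheng--Yau gradient estimates and mean-value identities; a secondary subtlety is ensuring that the $\Gamma$-convergence of $J_Q$ is compatible with pmGH convergence of the ambient spaces, so that blow-up limits are genuine minimizers of the limit functional.
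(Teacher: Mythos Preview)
Your overall architecture---blow-up, classification of homogeneous limits, dimension reduction, Reifenberg's theorem---matches the paper's, but the load-bearing step you call ``improvement-of-flatness'' is a genuine gap, and the paper deliberately avoids it.

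You propose to linearize around a one-plane profile and invoke a boundary Harnack principle on $RCD(K,N)$-spaces. As you yourself flag, neither the Poisson-kernel machinery nor the pointwise differentiation of harmonic functions used in the Euclidean De~Silva-style arguments is available here, and no boundary Harnack principle of the required strength is known on $RCD$ spaces. The paper circumvents this entirely: it never proves an improvement-of-flatness lemma. Instead, it introduces a \emph{Weiss density} $W_{\bf u}(x_0,r,Q)$ (well-defined for a.e.\ $r$ via the coarea formula for spheres), proves its monotonicity on metric measure cones (Appendix), and then defines ``$\varepsilon$-regular at scale $r$'' by a smallness condition on $\int_0^1 \overline W_{{\bf u}_r}(x_0,s,Q)\,ds$ together with pmGH-closeness of $B_r(x_0)$ to $B_r(0^N)$. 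The key $\varepsilon$-regularity theorem (their Theorem~8.6) is proved purely by compactness and contradiction: if a sequence were $j^{-1}$-regular at scale $1$ but not $\varepsilon$-regular at some smaller scale $r_j$, the pmGH limit would be a Euclidean ball, the Weiss density of the limit would be the constant $\tfrac{c_N\omega_N}{2}Q_\infty$ for all $s\in(0,1)$ by monotonicity, and a second blow-up along $r_j$ would contradict this. No linearization, no boundary Harnack, no iteration of a flatness gain---just compactness plus Weiss monotonicity on cones. The Reifenberg closeness of the free boundary itself (Lemma~8.8) is proved the same way. This is the idea your proposal is missing.

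Two further points. First, your dimension reduction is close in spirit but misses a nontrivial step: once a blow-up at a non-vertex point of a cone yields a limit on $Z\times\mathbb R$, one must show the limit map is \emph{independent of the $\mathbb R$-factor} and that its restriction to $Z$ is again a minimizer (the paper's Lemma~8.13); this uses that the Busemann function is the limit of the radial functions and that $\langle\nabla u_i,\nabla\rho\rangle=0$ passes to the limit. Second, ``$0$-dimensional'' does not imply ``discrete''; the paper proves discreteness in the $N=3$ case by a separate blow-up argument (end of Theorem~8.12), reducing to the $2$-dimensional cone result of Allen--Chang-Lara, which you allude to but do not invoke for this purpose.
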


\begin{remark}\label{rem1-10}
According to the above Example \ref{exam1.8}, the bound   (\ref{equ1.13}) is sharp. 
\end{remark}
 
As a direct consequence, we have the following result.
\begin{corollary}\label{cor1-11}
Let $(X,d,\mu), \Omega$ and $ {\bf u}=(u_{1},\ldots,u_{m}) $  be as in the above Theorem \ref{thm1.9}.  
Assume that $  \Omega  \cap \partial X=\emptyset$ and $Q\in C(\Omega)$. Let  
$$ \partial\{|{\bf u}|>0\}\cap \Omega:= \mathcal R^{\Omega_{\bf u}}\cup  \mathcal S^{\Omega_{\bf u}},$$
where  $\mathcal R^{\Omega_{\bf u}}$ and $ \mathcal S^{\Omega_{\bf u}} $ are  the regular  and singular  parts of  
 the free boundary $\partial\{|{\bf u}|>0\}\cap \Omega$. (That is,  
 $x\in \mathcal R^{\Omega_{\bf u}}$ means that each  tangent  cone  at  $ x$  is $  (\mathbb R^N,d_{\rm Eucl})$ and each blow-up limit of $\partial\{|{\bf u}|>0\}\cap \Omega$ at $x$ is an $(N-1)$-dimensional affine hyperplane  in $\mathbb R^{N}$. The singular part
$ \mathcal S^{\Omega_{\bf u}}:=(\partial\{|{\bf u}|>0\}\cap \Omega) \setminus \mathcal R^{\Omega_{\bf u}}.$)
 
Then we have $$\dim_{\mathscr H}\big(\mathcal S^{\Omega_{\bf u}}\big)\ls N-3. $$
\end{corollary}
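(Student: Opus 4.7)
The plan is to derive this corollary from Theorem \ref{thm1.9} via an exhaustion in $\varepsilon$, using the countable stability of Hausdorff dimension. For each $k \in \mathbb N$, let $O_k := O_{1/k}$ denote the relatively open set provided by Theorem \ref{thm1.9} applied with $\varepsilon = 1/k$, and set $O := \bigcap_{k=1}^\infty O_k$. The key claim I would verify is that $O \subset \mathcal R^{\Omega_{\bf u}}$, i.e.\ every point that enjoys the $\varepsilon$-Reifenberg flatness for all $\varepsilon$ is a regular point in the sense of Corollary \ref{cor1-11}.

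To establish the claim, fix $x \in O$. For each $k$, the $\varepsilon$-Reifenberg flatness with $\varepsilon = 1/k$ supplies a radius $r_x^{(k)} > 0$ such that for every $r \in (0, r_x^{(k)})$ the ball $(B_r(x), d, x)$ is $r/k$-close to $(B_r(0^N), d_{\rm Eucl}, 0)$ in pmGH topology, and $B_r(x)\cap \partial\{|{\bf u}|>0\}$ is $r/k$-close to $B_r(0^{N-1})$ in GH topology. Rescaling distances by $r^{-1}$ and passing to a pmGH subsequential limit along any sequence $r_j \to 0$, one sees that the unit ball of every tangent cone at $x$ is $1/k$-GH close to $B_1(0^N)$ for every $k$. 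Combined with the metric-cone structure of tangent cones for non-collapsed $RCD(K,N)$ spaces \cite{DPG18}, this pins down each tangent cone at $x$ as isometric to $(\mathbb R^N, d_{\rm Eucl})$. Once the ambient is identified with $\mathbb R^N$, the induced blow-up limit of $\partial\{|{\bf u}|>0\}$ is a closed subset of $\mathbb R^N$ whose unit ball is $1/k$-GH close to $B_1(0^{N-1})$ for every $k$, which forces it to be an $(N-1)$-dimensional affine hyperplane through the origin. Therefore $x \in \mathcal R^{\Omega_{\bf u}}$.

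Taking complements gives $\mathcal S^{\Omega_{\bf u}} \subset (\partial\{|{\bf u}|>0\}\cap \Omega) \setminus O = \bigcup_{k=1}^\infty \big((\partial\{|{\bf u}|>0\}\cap \Omega) \setminus O_k\big)$. Each summand has Hausdorff dimension at most $N-3$ by Theorem \ref{thm1.9}(2), so by the countable stability of Hausdorff dimension I conclude $\dim_{\mathscr H}(\mathcal S^{\Omega_{\bf u}}) \leq N-3$, as claimed. (When $N=3$ the same argument upgrades to $\mathcal S^{\Omega_{\bf u}}$ being countable, consistent with Remark \ref{rem1-10}.)

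The only genuinely delicate point is the rigidity step ``$1/k$-close for every $k$ implies equal'': for the ambient this uses the closedness of pmGH convergence together with the cone structure of non-collapsed tangent cones, and for the free boundary it is the elementary observation that closed subsets of a fixed $\mathbb R^N$ whose unit balls are arbitrarily Hausdorff-close to a given affine hyperplane must themselves be an affine hyperplane. All the remaining work is formal manipulation of inclusions and Hausdorff-dimension estimates inherited from Theorem \ref{thm1.9}.
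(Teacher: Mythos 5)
Your argument is correct, and its skeleton (decompose the singular set as a countable union of quantitative ``non-flat'' strata, bound each stratum by $N-3$, and invoke countable stability of Hausdorff dimension) is the same as the paper's. The difference is in which decomposition is used and how much is made explicit. The paper's proof is a one-liner: it writes $\mathcal S^{\Omega_{\bf u}}=\cup_{\varepsilon>0}\mathcal S^{\Omega_{\bf u}}_{\varepsilon}$, where $\mathcal S^{\Omega_{\bf u}}_\varepsilon$ are the strata from Section 8 defined through the Weiss density and pmGH-closeness of balls, and applies Theorem \ref{thm8-12} to each $\mathcal S^{\Omega_{\bf u}}_{\varepsilon}$; the identification of that complement-of-$\cap_\varepsilon\mathcal R^{\Omega_{\bf u}}_\varepsilon$ with the singular set of Corollary \ref{cor1-11}, which is defined via tangent cones and blow-up limits, is left implicit. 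You instead take Theorem \ref{thm1.9} as a black box (whose sets $O_\varepsilon$ are, in the paper's construction, exactly the sets $\mathcal R^{\Omega_{\bf u}}_\delta$ of Corollary \ref{cor8-9}), and you supply precisely the missing bridge: the rigidity argument that a point lying in $O_{1/k}$ for every $k$ has all tangent cones equal to $\mathbb R^N$ and all blow-up limits of the free boundary equal to affine hyperplanes. That step is where the real (if routine) content of your proof lies, and the way you handle it is sound, granted two small refinements you should record: (i) the Reifenberg condition holds for all radii $r<r_x^{(k)}$, so after rescaling you get Euclidean-ness and flatness of the limit at every radius $R$, not just on the unit ball, which is what pins down the full hyperplane (a subset of $\mathbb R^N$ isometric to $B_R(0^{N-1})$ and passing through the basepoint must be $H\cap B_R$ for a hyperplane $H$ through the origin); and (ii) the passage from ``$\varepsilon r$-close at every scale'' to equality in the limit uses that GH distance is stable under the extrinsic Hausdorff convergence realizing the blow-up, which is consistent with how the paper handles convergence of free boundaries in Theorem \ref{thm7.1}. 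What your route buys is a self-contained deduction of Corollary \ref{cor1-11} from the stated Theorem \ref{thm1.9}, making explicit the equivalence between the quantitative flat points and the blow-up-regular points; what the paper's route buys is brevity, at the cost of conflating the two definitions of $\mathcal S^{\Omega_{\bf u}}$ without comment.
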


 \begin{remark}\label{rem1-12}

In general, the regular part $\mathcal R^{\Omega_{\bf u}}$ might not form a manifold. 
In fact, it might not be relatively open in the free boundary $\partial\{|{\bf u}|>0\}\cap \Omega$. (In the Euclidean case and if $Q\in C^\alpha$, the regular part is relatively open in the free boundary,  see Theorem \ref{thm1.2}(3)).   This will be seen by the following simple example. 
 Recall that Y. Ostu and T. Shioya in \cite{OS94}  constructed a two-dimensional Alexandrov space without boundary, denoted by $Y_{OS}$, such that the singular set $\mathcal S$ of   $Y_{OS}$  is dense in $Y_{OS}$. Recalling the above Example \ref{exam1.8}, we  replace the space $Y$ in   Example \ref{exam1.8} by $Y_{OS}$. By using the same construction of $f$, we know that for any singular point $y\in Y_{OS}$,  the point $x:=(0,y)$ is a singular point of the free boundary. Thus, the singular part  $\mathcal S^{\Omega_f}$ is dense in  the
 free boundary $\partial \{f>0\}=\{0\}\times Y_{OS}$.
 
   \end{remark}

\begin{remark} (1) The theory of one-phase free boundary problems was used by Caffarelli-Lin \cite{CL08} in the study of the nodal sets of harmonic maps into a singular space with non-positive curvature in the sense of Alexandrov.
 
(2) The one-phase free boundary problem is strongly related to spectral geometry (for example, quantitative stability of Faber-Krahn inequality \cite{BDV15}).  
The Levy-Gromov inequality and the Faber-Krahn inequality have been established on RCD spaces in \cite{CM17-1,CM17-2}. In the future, we will consider their quantitative properties on RCD spaces.

\end{remark}

{\bf Acknowledgments}. The authors thank Dr. Kai-Hsiang Wang and Alejandro Bellati for their very careful reading of this paper.  The second author was partially supported by NSFC  12025109, and the third author was partially supported by NSFC 12271530.

\section{Preliminaries\label{sec:preliminaries}}

Let $(X,d)$ be a complete metric space and $\mu$ be a Radon measure on $X$ with ${\rm supp}(\mu)=X.$ The triple $(X,d,\mu)$ is called a metric measure space.  Given any $p\in X$ and $R>0$, we
denote by $B_R(p)$ the open ball centered at $p$ with radius $R$.

\subsection{$RCD(K,N)$ metric measure spaces and their calculus}  Let $K\in\mathbb R$ and $N\in[1,+\infty]$.  The curvature-dimension condition $CD(K,N)$ for a metric measure space $(X,d,\mu)$  was introduced by Sturm \cite{Stu06a,Stu06b} and Lott-Villani \cite{LV09,LV07}. The $RCD(K,\infty)$-condition  was introduced by Ambrosio-Gigli-Savar\'e in \cite{AGS14b}. The finitely dimensional case, $RCD(K,N)$, was given by  Gigli in  \cite{Gig13,Gig15}.   Erbar-Kuwada-Sturm \cite{EKS15} and Ambrosio-Mondino-Savar\'e \cite{AMS16}   proved that a weak formulation of the Bochner inequality is equivalent to the  (reduced) Riemannian curvature-dimension condition  $RCD(K,N)$.  In \cite[Theorem 1.1]{CM16}, Cavalletti-Milman showed that the condition $RCD(K,N)$ is equivalent to the condition $RCD(K,N)$, if the total measure $\mu(X)<+\infty$.  

We refer the readers to the survey  \cite{Amb18} and its references for the basic facts of the theory of  $RCD(K,N)$ metric measure spaces. Here we only recall some basic properties  \cite{LV09,AGS14b,AGMR15,EKS15} as follows:\\
$\bullet$ $(X,d)$ is a locally compact length space. In particular,  for any $p,q\in X$, there is a shortest curve connecting them;\\
$\bullet$ 
If $N>1$, then the generalized Bishop-Gromov inequality holds. In particular, it implies  a local measure doubling property: for all $0<r_1<r_2<R$, we have
\begin{equation}\label{equ2.1}
 \frac{\mu\big( B_{r_2}(p)\big)}{\mu\big( B_{r_1}(p)\big)}\ls C_{N,K,R}\Big(\frac{r_2}{r_1}\Big)^N 
 \end{equation}
for some constant $C_{N,K,R}>0$ depending only on $N,K$ and $R$;\\
$\bullet$ If $N>1$ and $\Omega\subset X$ is a bounded set, then there exists a constant $C_{N,K,\Omega}>0$ such that
\begin{equation}\label{equ2.2}
\frac{d^+}{dr}\mu(B_r(x)):=\limsup_{\delta\to0^+}\frac{\mu\big(B_{r+\delta}(x)\backslash B_r(x)\big)}{\delta}\ls C_{N,K,\Omega}  
\end{equation}
for all    $x\in\Omega$ and $r\ls1$. In fact, the generalized  Bishop-Gromov inequality implies 
\begin{equation}\label{equ2.3}
\frac{\mu\big(B_{r+\delta}(x)\backslash B_r(x)\big)}{\mu\big(  B_r(x)\big)}\ls \frac{\bar\mu\big(B_{r+\delta} \backslash B_r \big)}{\bar\mu\big(  B_r \big)},
\end{equation}
where $\bar \mu$ is the $N$-dimensional Hausdorff measure on $\mathbb M^N_{K/(N-1)}$, the simply connected space form with constant sectional curvature $K/(N-1)$, and $B_r$ is a geodesic ball of radius $r$ in $\mathbb M^N_{K/(N-1)}$.
It follows  $$  \limsup_{\delta\to0^+} \frac{\bar\mu\left(B_{r+\delta} \backslash B_r \right)}{\delta\cdot \bar\mu(B_r)}\ls \frac{C_{N,K}}{r},\quad \forall r\ls1.$$
 This gives $ \frac{d^+}{dr}\mu(B_r(x)) \ls C_{N,K}\cdot\frac{\mu(B_r(x))}{r}$ for all $x\in X$ and $r\ls 1$.  Thus, by 
 $$\mu(B_r(x))\ls C_{N,K,\Omega}\cdot r,\qquad   \forall  x\in \Omega,\ \ \forall  r\ls 1,$$ (see \cite[Eq.(4.3)]{CC00} or \cite[Corollary 5.5]{KL16}), we conclude \eqref{equ2.2}.

 Several different notions of Sobolev spaces for metric measure spaces have been given in  \cite{Che99,Sha00,AGS13,AGS14a,HK00}.  They are equivalent to each other in the setting of $RCD$-metric measure spaces (see, for example, \cite{AGS14a,AGS13}).  
 Given a  continuous function $f$ on $X$,  the \emph{pointwise Lipschitz constant} (\cite{Che99}) of $f$ at $x$  is defined by
\begin{equation}\label{equ2.4}
\begin{split}
{\rm Lip} f(x):&=\limsup_{y\to x}\frac{|f(y)-f(x)|}{d(x,y)}\\
&=\limsup_{r\to0}\sup_{d(x,y)\ls r}\frac{|f(y)-f(x)|}{r},
\end{split}
\end{equation}
and $ {\rm Lip}f(x)=0$ if $x$ is isolated. It is clear that ${\rm Lip}f$ is    $\mu$-measurable.  Let $\Omega\subset X$ be an open domain and let   $1\ls  p\ls  +\infty$. The $W^{1,p}$-norm of a locally Lipschitz function
$f\in  Lip_\mathrm{loc}(\Omega)$ on $\Omega$ is defined by
\begin{equation*} 
  \left\|f\right\|_{W^{1,p}(\Omega)}=\left\|f\right\|_{L^{p}(\Omega) }+\left\|\Lip f\right\|_{L^{p}(\Omega) }\text{.}
\end{equation*}
The Sobolev space  $W^{1,p}(\Omega)$ is defined by the completion of the set of locally Lipschitz functions $f$ with  $\left\|f\right\|_{W^{1,p}(\Omega)}<+\infty$.
The space $W_0^{1,p}(\Omega) $ is defined by the closure of $Lip_0(\Omega)$ under the $W^{1,p}$-norm, where  $Lip_0(\Omega)$ is
the set of Lipschitz continuous functions on $\Omega$ with compact support in $\Omega$.
We denote $f\in W^{1,p}_\mathrm{loc}(\Omega)$ if $f\in W^{1,p}(\Omega')$ for every open subset $\Omega'\Subset \Omega$, where ``$\Omega'\Subset \Omega$''
means $\Omega'$ is compactly contained in $\Omega$. We itemize some basic properties of Sobolev functions as follows.

\begin{proposition}\label{prop2.1}
Let $1<p<\infty$.
 \begin{enumerate} 
 \item For each $f\in W^{1,p}(\Omega)$, there is a function in $L^p(\Omega)$, denoted by $|\nabla f|$, $($so-called weak upper gradient for $f$, see \cite[Sect.2]{Che99}$,)$ such that $\|f\|_{W^{1,p}(\Omega)}=\|f\|_{L^{p}(\Omega)}+\||\nabla f|\|_{L^{p}(\Omega)}$. Moreover, if $f\in Lip_{\rm loc}(\Omega)$ then $|\nabla f|=\Lip f$ holds $\mu$-a.e.  in  $ \Omega$ $($\cite[Theorem 5.1]{Che99}$).$
  
  \item {\rm (Lower semicontinuity of energy.)}  If $f_j\in W^{1,p}(\Omega)$ and $f_j\to f$ in $L^p(\Omega)$, then $\liminf_{j\to\infty}\||\nabla f_j|\|_{L^p(\Omega)}\gs  \||\nabla f|\|_{L^p(\Omega)}$. 
  
\item If $f,g\in W^{1,p}(\Omega)$ and $f|_A=g|_A$ for some Borel set $A\subset \Omega$, then $|\nabla f|(x)=|\nabla g|(x)$ at $\mu$-a.e. $x\in A$.

\item The $W^{1,2}(\Omega)$ is a Hilbert space, and the inner product  $\ip{\nabla f}{\nabla g}\in L^1(\Omega)$ for $f,g\in W^{1,2}(\Omega)$ can be given by the polarization  $($see \cite{Gig15}$)$:
\begin{equation}\label{equ2.5}
\ip{\nabla f}{\nabla g}=\frac{1}{4}(|\nabla(f+g)|^2-|\nabla(f-g)|^2) .
\end{equation} 
\item {\rm(Poincar\'e inequality, see \cite[Eq. (2.6)]{BB11} or \cite{Raj12}.)} If $\Omega$ is bounded, then there exists a constant $C_{P}>0$ depending only on $p,K,N$ and ${\rm diam}(\Omega)$, such that  for every ball $B_R(x)\subset \Omega$ with $R\ls  {\rm diam}(X)/3$, it holds
\begin{equation*} 
\int_{B_{R}(x)}f^p\ls  C_P\cdot R^p\int_{B_R(x)}|\nabla f|^p,\qquad \forall\ f\in W^{1,p}_0(B_R(x)).
\end{equation*}  
\end{enumerate} 
 
\end{proposition}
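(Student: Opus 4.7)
The plan is to establish each item by invoking the Sobolev calculus developed for metric measure spaces together with the extra Riemannian structure encoded in the $RCD$ assumption; most parts are essentially borrowings from the cited literature, and my task is to chart how they fit together in our setting.

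For (1), I would use Cheng's relaxation construction \cite{Che99}. Starting from $f\in W^{1,p}(\Omega)$, pick a Cauchy sequence $\{f_n\}\subset Lip_{\rm loc}(\Omega)$ converging to $f$ in the $W^{1,p}$-norm; then $\{\Lip f_n\}$ is Cauchy in $L^p(\Omega)$ and its limit $g$ is a candidate weak upper gradient. Taking an infimum over all such approximating sequences yields the minimal relaxed slope $|\nabla f|$, which realizes the Sobolev norm. For the identification $|\nabla f|=\Lip f$ $\mu$-a.e.\ when $f\in Lip_{\rm loc}(\Omega)$, I would quote Cheng's Theorem 5.1, whose hypotheses (local doubling plus a weak $(1,p)$-Poincar\'e inequality) are satisfied on $RCD(K,N)$ by \eqref{equ2.1} and the Poincar\'e inequality in item (5) below.

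For (2), lower semicontinuity is built into the relaxation: if $f_j\to f$ in $L^p(\Omega)$ with $\liminf_j\||\nabla f_j|\|_{L^p}<\infty$, one extracts a subsequence for which $|\nabla f_j|$ converges weakly in $L^p$ to some $h$ with $\|h\|_{L^p}\ls\liminf\||\nabla f_j|\|_{L^p}$; then a diagonal argument combining Mazur's lemma and the minimality of $|\nabla f|$ yields $|\nabla f|\ls h$ $\mu$-a.e., giving the inequality. For (3), the locality property I would deduce from the locality of the minimal weak upper gradient: after a standard truncation reducing to $f\gs g$, the function $\min(f,g)$ coincides with $g$ on $A$ and with $f$ on $\{f\ls g\}$, and one applies the chain/truncation rule $|\nabla\min(f,g)|=|\nabla f|\chi_{\{f\ls g\}}+|\nabla g|\chi_{\{f>g\}}$ (valid in the $RCD$ setting by Gigli's calculus \cite{Gig15}) twice, with the roles of $f,g$ swapped, to conclude equality $\mu$-a.e.\ on $A$.

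Item (4) uses the defining Riemannian structure: under $RCD(K,\infty)$, $W^{1,2}(X)$ is a Hilbert space, so the Cheeger energy satisfies the parallelogram law on $\Omega$. Combined with the locality of (3), this gives that $|\nabla\cdot|^2$ itself satisfies a pointwise parallelogram identity $\mu$-a.e., so the polarization formula \eqref{equ2.5} produces an $L^1$ function; bilinearity and symmetry follow from \eqref{equ2.5}, and the Cauchy--Schwarz bound $|\ip{\nabla f}{\nabla g}|\ls|\nabla f||\nabla g|$ confirms integrability. Finally, (5) is quoted directly from \cite{BB11,Raj12}: Rajala showed that any $CD(K,N)$ space with $N<\infty$ carries a local $(1,p)$-Poincar\'e inequality, with constant depending only on $p,K,N$ and the diameter of $\Omega$, and the restriction $R\ls\diam(X)/3$ is the usual one to keep the doubling constant uniform. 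The main conceptual hurdle is the coincidence of the various Sobolev spaces (relaxed slopes, minimal weak upper gradients, Newtonian Sobolev spaces), but this equivalence on $RCD(K,N)$ spaces is now standard by \cite{AGS13,AGS14a}, so I would just cite it rather than redo it.
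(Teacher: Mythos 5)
Your proposal is correct and matches the paper's treatment: Proposition \ref{prop2.1} is stated there without proof, being exactly the collection of standard facts from \cite{Che99}, \cite{Gig15}, \cite{BB11}, \cite{Raj12} (with the equivalence of the various Sobolev space definitions from \cite{AGS13,AGS14a}) that you cite and sketch. Apart from minor slips (the author of \cite{Che99} is Cheeger, and the ``reduction to $f\gs g$'' in item (3) is unnecessary since the lattice/truncation rule applied symmetrically already gives the conclusion), nothing further is needed.
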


The following fact is well-known, and a proof is given here, since we are not able to find a reference.
\begin{proposition}\label{prop2.2}
Let $D,\Omega$ be two open sets with $\overline{D}\subset \Omega$ and $\mu(\partial D)=0$. Let $1<p< \infty$ and $u\in W^{1,p}(\Omega)$. We denote $v\in W^{1,p}_u(\Omega)$ whenever $v-u\in W^{1,p}_0(\Omega)$. 

Assume  $g\in W_u^{1,p}(D)$ and $h\in W^{1,p}_u(\Omega\backslash\overline{D})$. Then the function 
   \begin{equation*}
    f(x):=\begin{cases}
g(x), & x\in D\\
h(x), & x\in \Omega\backslash \overline{D},
\end{cases}
\end{equation*}
 has a representative in $W^{1,p}_u(\Omega)$. 

In particular, 
 if $g\in W^{1,p}_0(D)$, then its zero extension $\bar g$ on $\Omega$ (namely, $\bar g=g $ on $D$ and $\bar g=0$ on $\Omega\backslash D$) is in $W_0^{1,p}(\Omega)$.
\end{proposition}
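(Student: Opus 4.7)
The plan is to first establish the ``in particular'' zero-extension statement, and then deduce the general statement from it. Indeed, suppose we know that whenever $\phi\in W^{1,p}_0(U)$ for an open subset $U\subset \Omega$ with $\mu(\partial U)=0$, its zero extension $\bar\phi$ to $\Omega$ lies in $W^{1,p}_0(\Omega)$. Setting $\phi_1:=g-u\in W^{1,p}_0(D)$ and $\phi_2:=h-u\in W^{1,p}_0(\Omega\setminus\overline{D})$, this gives $\bar{\phi}_1,\bar{\phi}_2\in W^{1,p}_0(\Omega)$. Since $\bar\phi_1$ vanishes on $\Omega\setminus\overline{D}$, $\bar\phi_2$ vanishes on $\overline{D}$, and $\mu(\partial D)=0$, the function $u+\bar{\phi}_1+\bar{\phi}_2\in W^{1,p}_u(\Omega)$ agrees $\mu$-a.e.\ with $f$, providing the desired representative.

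For the key zero-extension statement, the plan proceeds by approximation. By definition of $W^{1,p}_0(D)$, there is a sequence $g_j\in\Lip_0(D)$ with $g_j\to g$ in $W^{1,p}(D)$. Since each $g_j$ has compact support in $D$, its zero extension $\bar{g}_j$ to $\Omega$ remains Lipschitz continuous with compact support in $\Omega$, hence $\bar{g}_j\in\Lip_0(\Omega)$. I would then verify that $\{\bar{g}_j\}$ is Cauchy in $W^{1,p}(\Omega)$: the $L^p$ part follows at once from $\|\bar{g}_j-\bar{g}_k\|_{L^p(\Omega)}=\|g_j-g_k\|_{L^p(D)}$, while for the gradient part one invokes Proposition~\ref{prop2.1}(1) to identify $|\nabla(\bar{g}_j-\bar{g}_k)|=\Lip(\bar{g}_j-\bar{g}_k)$ $\mu$-a.e., and splits $\Omega$ into three pieces: (i) the open set $D$, where $\bar{g}_j-\bar{g}_k$ agrees with $g_j-g_k$ in a neighbourhood of each point, so the pointwise Lipschitz constants coincide and equal $|\nabla(g_j-g_k)|$ again by Proposition~\ref{prop2.1}(1); (ii) the open set $\Omega\setminus\overline{D}$, where $\bar{g}_j-\bar{g}_k$ is identically zero in a neighbourhood of each point (both $\supp g_j$ and $\supp g_k$ lie in $D$), so the Lipschitz constant vanishes; and (iii) the set $\partial D$, which is $\mu$-null by hypothesis. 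Integrating gives $\||\nabla(\bar{g}_j-\bar{g}_k)|\|_{L^p(\Omega)}=\||\nabla(g_j-g_k)|\|_{L^p(D)}\to 0$. Since $W^{1,p}_0(\Omega)$ is by definition the closure of $\Lip_0(\Omega)$, the $W^{1,p}$-limit lies in $W^{1,p}_0(\Omega)$, and it equals $\bar{g}$ because this is already the $L^p$-limit of $\bar{g}_j$.

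The main subtle point, and the only place where the hypothesis $\mu(\partial D)=0$ is used, is ensuring that $\partial D$ contributes nothing to the $W^{1,p}$-norm of the extended functions; without this assumption a non-trivial jump across $\partial D$ could prevent $\bar{g}$ from being Sobolev at all, let alone from lying in $W^{1,p}_0(\Omega)$. Apart from that, the argument relies only on density of Lipschitz functions in Sobolev spaces, the pointwise identification $|\nabla\cdot|=\Lip(\cdot)$ $\mu$-a.e.\ for locally Lipschitz functions, and completeness of $W^{1,p}(\Omega)$, all of which are standard from Proposition~\ref{prop2.1} and the definitions recalled above.
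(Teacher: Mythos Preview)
Your proof is correct and uses essentially the same approximation argument as the paper: approximate by compactly supported Lipschitz functions, use $\mu(\partial D)=0$ to match the pointwise Lipschitz constants across the decomposition $\Omega=D\cup\partial D\cup(\Omega\setminus\overline{D})$, and pass to the limit. The only organizational difference is that the paper proves the general gluing statement first (approximating $g-u$ and $h-u$ simultaneously and patching) and derives the zero-extension case by taking $u=0$, $h=0$, whereas you isolate the zero-extension lemma first and then obtain the general case by writing $f-u=\bar{\phi}_1+\bar{\phi}_2$; the underlying mechanics are identical.
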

\begin{proof}
Since $g-u\in W^{1,p}_0(D)$, there are $\hat g_j\in Lip_0(D)$ such that $\hat g_j\to g-u$ in $W^{1,p}(D)$ as $j\to\infty$. Similarly, there are $\hat h_j\in Lip_0(\Omega\backslash \overline{D})$ such that $\hat h_j\to h-u$ in $W^{1,p}(\Omega\backslash\overline{D})$. Consider the  functions
   \begin{equation*}
    \hat f_j(x):=\begin{cases}
\hat g_j(x), & x\in D\\
0, & x\in\partial D\\
\hat h_j(x), & x\in \Omega\backslash \overline{D},
\end{cases}
\end{equation*}
for each $j\in\mathbb N$. Then   $\hat f_j\in Lip_0(\Omega)$ for each $j=1,2,\cdots.$  Since $\mu(\partial D)=0$, we have   ${\rm Lip}\hat f_j={\rm Lip}\hat g_j$ or ${\rm Lip}\hat h_j$ $\mu$-a.e. in $\Omega$, and then
 $$\|\hat f_j-\hat f_k\|_{W^{1,p}(\Omega)}=\|\hat g_j-\hat g_k\|_{W^{1,p}(D)}+\|\hat h_j-\hat h_k\|_{W^{1,p}(\Omega\backslash \overline{D})}$$
 for all $ j,k=1,2,\cdots.$ It follows that 
  $\{\hat f_j\}_{j=1}^\infty$ is a Cauchy sequence under $W^{1,p}(\Omega)$-norm. Let $\hat f\in W^{1,p}_0(\Omega)$ be the $W^{1,p}(\Omega)$-limit of $\{\hat f_j\}_{j=1}^\infty$. Meanwhile, since $\mu(\partial D)=0$, it is clear that $\hat f_j\to (f-u)$ in  $L^p(\Omega)$, as $j\to\infty$. Therefore, $\hat f=f-u$ in $L^p(\Omega)$. It follows that $\hat f$ is a $W_0^{1,p}(\Omega)$-representative of  $f-u$.
  
  The second assertion follows from the first one, by taking $u=0$ and $h=0$.
\end{proof}

\begin{definition}[Distributional Laplacian \cite{Gig15}]\label{def2.3}
  For each function $f\in W_\mathrm{loc}^{1,2}(\Omega) $, its \emph{Laplacian} $\laplace f$ on $\Omega$ is a linear functional acting on  $ Lip_0(\Omega) $ given by
  \begin{equation}\label{equ2.6}
    \laplace f(\phi) =-\int_\Omega\ip{\nabla f}{\nabla \phi}\mathrm{d}\mu
  \end{equation}
  for all $\phi\in  Lip_0(\Omega) $.
  If there is a signed Radon measure $\nu$ such that $\laplace f(\phi)  =\int_\Omega \phi\mathrm{d}\nu$ for all $\phi\in  Lip_0(\Omega) $, then we say that $\laplace f=\nu$ in the sense of distribution.
\end{definition}

In general, the measure-valued Laplacian ${\bf\Delta} f$ for a function $f\in W^{1,2}_{\rm loc}(\Omega)$ may not be absolutely continuous with respect to $\mu$. We consider its Radon-Nikodym decomposition 
$${\bf \Delta}f=({\bf \Delta }f)^{\rm ac}\cdot\mu+({\bf \Delta}f)^{\rm sing}.$$
This Laplacian on $\Omega$ is linear,  and it satisfies the chain rule and the Leibniz rule \cite{Gig15}.
\begin{remark}\label{rem2.4}
(1) \ When $f\in W^{1,2}(\Omega)$, the test function in (\ref{equ2.6}) can be taken any $\phi\in W^{1,2}_0(\Omega)$.

(2) When $\Omega=X$, the inner product (\ref{equ2.5}) provides a Dirichlet form $\mathcal E(f,g):=\int_X\ip{\nabla f}{\nabla g}$ on $X$. This Dirichlet form $(\mathcal E,W^{1,2}(X))$ has an infinitesimal generator $\Delta_{\mathcal E}$ with domain $D(\Delta_{\mathcal E})\subset W^{1,2}(X)$, i.e., for any $f\in D(\Delta_{\mathcal E})$ and any $g\in W^{1,2}(X), $ it holds $(\Delta_{\mathcal E}f,g)_{L^2(X)}=-\mathcal E(f,g)$. In case $f\in D(\Delta_{\mathcal E})$, the measure valued Laplacian ${\bf \Delta}f$ is absolutely continuous with respect to $\mu$ and ${\bf\Delta}f=\Delta_{\mathcal E}f\cdot\mu.$
\end{remark}

We recall the following Laplacian comparison theorem for distance functions.

\begin{theorem}[Laplacian comparison theorem,  \protect{\cite[Corollary 5.15]{Gig15}}]\label{thm2.5} Let $(X,d,\mu)$ be an $RCD(K,N)$ space with $K\in \mathbb R$ and $N>1$, and let $p\in X$. Put $\rho(x):=d(x,p)$, then
  \begin{equation*} 
    \laplace\rho\ls  (N-1) \cot_\kappa(\sqrt{-\kappa} \rho)\cdot \mu
  \end{equation*}
  in the sense of distribution on $X\backslash\left\{p\right\}$,
   where 
    \begin{equation*}
 \kappa=K/(N-1)\quad{\rm and}\quad 
 \cot_\kappa(s)=
  \begin{cases}
  \sqrt\kappa \cot(\sqrt\kappa s)& {\rm if}\quad \kappa>0,\\
  1/s& {\rm if}\quad \kappa=0,\\
  \sqrt{-\kappa}\coth(\sqrt{-\kappa}s)& {\rm if}\quad \kappa<0.
  \end{cases}
  \end{equation*} 
  In particular, if $K<0,\  N>1$, then we have
  \begin{equation}\label{equ2.7}
  {\bf \Delta}\phi_{N,K}(\rho)\ls 0
  \end{equation}
  in the sense of distributions, where 
   \begin{equation*}
    \phi_{N,K}(s) =-\int^1_s\left(\frac{\sinh(\sqrt{-\kappa}t) }{\sqrt{-\kappa}}\right)^{1-N}\mathrm{d}t,\qquad  \kappa=K/(N-1);
  \end{equation*}  
  and  if $\rho\ls  1$ additionally, then we have
    \begin{equation}\label{equ2.8}
    \laplace(\rho^2)=2|\nabla \rho|^2\cdot\mu+2\rho{\bf\Delta}\rho\ls  2(N+C \rho^2) \cdot\mu
  \end{equation}
  in the sense of distributions on $X$,  where $C$ depends only on $N$ and $K$.
\end{theorem}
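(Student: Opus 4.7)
The plan is to reduce the statement to a one-dimensional comparison along geodesics from $p$ and then integrate. In the $\RCD(K,N)$ setting I see two natural routes: (a) a Bochner/heat-flow regularization of $\rho$ exploiting $|\nabla\rho|=1$, and (b) the Cavalletti--Mondino needle decomposition associated with $\rho$. I would follow route (b), since it directly yields the sharp distributional inequality without approximation. Concretely, since $\rho$ is $1$-Lipschitz, the $L^1$-localization with respect to $\rho$ disintegrates $\mu\llcorner(X\setminus\{p\})=\int_Q\mu_q\,\mathfrak q(dq)$, with each $\mu_q$ concentrated on a transport ray $\gamma_q$ issuing from $p$ and having arc-length density $h_q(s)$, $s=\rho\circ\gamma_q$. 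The key input from $\RCD(K,N)$ (essential non-branching plus $\CD(K,N)$) is that each $h_q$ is itself a $\CD(K,N)$ density on its needle: $h_q^{1/(N-1)}$ satisfies the $\kappa$-concavity estimate of the model space of constant sectional curvature $\kappa=K/(N-1)$, which translates into the distributional 1D bound $(\log h_q)'(s)\ls (N-1)\cot_\kappa(\sqrt{-\kappa}\,s)$.

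For any $\phi\in Lip_0(X\setminus\{p\})$, since $\nabla\rho$ is tangent to every needle with $|\nabla\rho|=1$,
\begin{equation*}
-\int_X\ip{\nabla\rho}{\nabla\phi}\du=\int_Q\!\int_{\gamma_q}(\phi\circ\gamma_q)(s)\,h_q'(s)\,ds\,\mathfrak q(dq)\ls\int_X (N-1)\cot_\kappa(\sqrt{-\kappa}\rho)\,\phi\,\du,
\end{equation*}
after integration by parts in $s$ (boundary terms vanish thanks to the support of $\phi$) and using the 1D comparison; this is precisely $\laplace\rho\ls (N-1)\cot_\kappa(\sqrt{-\kappa}\rho)\cdot\mu$ on $X\setminus\{p\}$. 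The consequences \eqref{equ2.7} and \eqref{equ2.8} follow by the chain and Leibniz rules for the measure-valued Laplacian. The function $\phi_{N,K}$ is built so that $\phi''_{N,K}+(N-1)\cot_\kappa(\sqrt{-\kappa}\,\cdot)\phi'_{N,K}\equiv 0$ with $\phi'_{N,K}>0$, hence $\laplace\phi_{N,K}(\rho)=\phi'_{N,K}(\rho)\cdot\bigl[\laplace\rho-(N-1)\cot_\kappa(\sqrt{-\kappa}\rho)\mu\bigr]\ls 0$; for \eqref{equ2.8} one expands $\laplace(\rho^2)=2\rho\,\laplace\rho+2|\nabla\rho|^2\mu$, uses $|\nabla\rho|=1$ $\mu$-a.e., and the Taylor estimate $s\cot_\kappa(\sqrt{-\kappa}s)\ls 1+Cs^2$ valid for $s\ls 1$.

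The main obstacle is the 1D reduction: transferring the synthetic $\CD(K,N)$ condition to each needle requires essential non-branching together with the full Cavalletti--Mondino measurable selection of transport rays and the delicate identification of the needle densities with solutions of the model 1D $\CD(K,N)$ condition. Route (a) bypasses this machinery but loses the sharpness of the distributional form and requires a further relaxation argument to recover it. Once the 1D comparison is in hand, the remaining manipulations are essentially classical Jacobi-field computations re-expressed in integral form through the measure-valued Laplacian of Definition \ref{def2.3}.
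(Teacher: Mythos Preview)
The paper does not prove this theorem; it is recalled in the preliminaries as a known result, with citation to \cite[Corollary 5.15]{Gig15}, so there is no in-paper argument to compare against.

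Your route via the Cavalletti--Mondino needle decomposition is a legitimate alternative to Gigli's original proof. Gigli's argument in \cite{Gig15} predates the localization technique: it works directly with the $CD^*(K,N)$ condition, analyzing the evolution of densities along Wasserstein geodesics emanating from a Dirac mass at $p$ and extracting the distributional Laplacian bound from the resulting interpolation inequalities and representation formulas for the measure-valued Laplacian. Your approach replaces that optimal-transport machinery by the disintegration into transport rays, which makes the reduction to the one-dimensional Riccati comparison $(\log h_q)'\ls (N-1)\cot_\kappa$ transparent, at the cost of invoking essential non-branching and the full Cavalletti--Mondino measurable selection. Both routes give the sharp inequality; the derivations of \eqref{equ2.7} and \eqref{equ2.8} via the chain rule are the same in either case.

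One point to tighten: the paper states \eqref{equ2.8} on all of $X$, not only on $X\setminus\{p\}$. Your expansion $\laplace(\rho^2)=2|\nabla\rho|^2\mu+2\rho\,\laplace\rho$ is a priori valid away from $p$; to conclude on $X$ you need the standard removable-singularity observation that $\rho^2$ is locally Lipschitz at $p$, the right-hand side $2(N+C\rho^2)$ stays bounded there, and the factor $\rho$ annihilates any singular contribution of $\laplace\rho$ at $p$.
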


A function $f\in W^{1,2}_\mathrm{loc}(\Omega) $ is called \emph{subharmonic}  on $\Omega$ if $\laplace f\gs  0$ in the sense of distributions, that is $\int_\Omega\ip{\nabla f}{\nabla \phi}\mathrm{d}\mu\ls  0$ for all $0\ls  \phi\in  Lip_0(\Omega) .$ A function $f\in W^{1,2}_\mathrm{loc}(\Omega) $ is called \emph{harmonic} on $\Omega$ if both $f$ and $-f$ are subharmonic on $\Omega$. From \cite[Theorem 7.17]{Che99}, the maximum principle holds for subharmonic functions. To be precise, if $f\in W^{1,2}(\Omega)$ is a subharmonic function such that $f-g\in W^{1,2}_0(\Omega)$ for some $g\in L^{\infty}(\Omega)\cap W^{1,2}(\Omega)$, then ${\rm esssup}_\Omega f\ls  {\rm esssup}_\Omega g.$

According to \cite[Theorem 7.12]{Che99},   the (relaxed) Dirichlet problem is solvable: for any ball $B_R(z)$ with $B_{3R}(z)\subset \Omega$ and any $f\in W^{1,2}(B_R(z))$, there exists a (unique) solution $f_H$ to ${\bf \Delta}f_H=0$ in the sense of distributions with boundary data $f_H-f\in W^{1,2}_0(B_R(z))$.
The classical Cheng-Yau's estimate \cite{CY75} for harmonic functions has been extended to $RCD$ metric measure spaces \cite{JKY14,ZZ16}.

\begin{theorem}[see \protect{\cite[Theorem 1.6]{ZZ16}}]\label{thm2.6}
  Let $(X,d,\mu)$ be an $RCD(K,N)$-space with  $K\ls 0$ and $N\in (1,+\infty)$. Then every harmonic function $f$ on a geodesic ball $B_{R}(x_0)\subset X$  admits a locally Lipschitz continuous representative. Moreover,  there exists a constant $C_N$, depending only on $N$, such that  every positive harmonic function $f$ on $B_R(x_0)$  satisfies
  \begin{equation}\label{equ2.9}
    \sup_{B_{R/2}(x_0)}\frac{|\nabla f|}{f}\ls  C_N\frac{1+\sqrt{-K}R}{R}.
  \end{equation}
\end{theorem}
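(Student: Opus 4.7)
The plan is to follow the classical Cheng--Yau strategy in the form of a differential inequality for $\log f$ plus a cutoff iteration, carrying out every step weakly by means of the Bochner inequality that characterizes $RCD(K,N)$. For the first assertion (local Lipschitz continuity of a general harmonic function), I would first apply a standard Moser iteration based on the doubling property \eqref{equ2.1} and the $(1,2)$-Poincar\'e inequality in Proposition \ref{prop2.1}(5) to obtain local boundedness of $f$; shifting $f$ by a large positive constant reduces matters to a positive harmonic function, for which Lipschitz continuity follows from the quantitative gradient bound \eqref{equ2.9}.

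For that estimate, assume $f>0$ is harmonic on $B_R(x_0)$ and set $u := \log f$. Then $u$ is locally Lipschitz, and the chain rule for the measure-valued Laplacian in $RCD$-spaces gives $\boldsymbol{\Delta} u = -|\nabla u|^{2}\cdot\mu$ on $B_R(x_0)$. The key analytic input is the weak $N$-dimensional Bochner inequality characterizing $RCD(K,N)$ (Erbar--Kuwada--Sturm, Ambrosio--Mondino--Savar\'e): for a suitable nonnegative test function $\varphi$,
\begin{equation*}
\tfrac{1}{2} \int |\nabla u|^{2}\, \boldsymbol{\Delta}\varphi \,\du \ge \int \varphi \left( \tfrac{(\boldsymbol{\Delta} u)^{2}}{N} + \langle \nabla u, \nabla \boldsymbol{\Delta} u \rangle + K|\nabla u|^{2} \right) \du.
\end{equation*}
Substituting $\boldsymbol{\Delta} u = -|\nabla u|^{2}$ and writing $w := |\nabla u|^{2}$ yields the distributional inequality
\begin{equation*}
\tfrac{1}{2}\boldsymbol{\Delta} w \ge \tfrac{1}{N} w^{2} - \langle \nabla u, \nabla w \rangle + K w,
\end{equation*}
which is the heart of the argument.

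Next comes the cutoff step. Choose a Lipschitz cutoff $\eta$ with $\eta \equiv 1$ on $B_{R/2}(x_0)$, $\supp\eta \subset B_R(x_0)$, $|\nabla \eta| \ls C/R$, and a one-sided Laplacian bound $\boldsymbol{\Delta}\eta \gs -C(N,K)/R$ furnished by Theorem \ref{thm2.5} applied to the distance function. Testing the inequality for $w$ against $\eta^{2k} w^{p}$ with suitable exponents, using Young's inequality to absorb the drift term $\langle \nabla u, \nabla w \rangle$ into $w^{2}/N$ up to lower-order terms controlled by $|\nabla \eta|^{2}w$ and $|\boldsymbol{\Delta}\eta|w$, one obtains reverse-H\"older estimates on $w$ that Moser's iteration upgrades to the pointwise bound
\begin{equation*}
\sup_{B_{R/2}(x_0)} w \ls C_N \frac{(1+\sqrt{-K}R)^{2}}{R^{2}},
\end{equation*}
equivalent to \eqref{equ2.9}.

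The main technical obstacle is that $|\nabla u|^{2}$ is only in $L^{\infty}_{\rm loc}$ \emph{a priori}, and hence not in the natural test-function class for the weak Bochner inequality, so one cannot apply the inequality directly to $u$. The standard remedy, implemented by Jiang--Koskela--Yang and Zhang--Zhu, is to regularize $f$ by the heat semigroup $P_t f$ on the $RCD(K,N)$ space, use the Bakry--\'Emery $\Gamma_{2}$ calculus to apply Bochner rigorously to $\log P_t f$, and then pass to the limit $t \to 0^{+}$ with the help of the semigroup's contraction estimates. Making this limit commute with the cutoff iteration without degrading the dimensional constant $C_N$ is the delicate point, and it is where the bulk of the $RCD$-specific machinery enters.
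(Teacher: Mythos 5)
The paper does not prove Theorem \ref{thm2.6} at all: it is imported verbatim from \cite{ZZ16} (see also \cite{JKY14}), and your outline --- heat-semigroup regularization of $\log f$, the weak $N$-dimensional Bochner inequality, a cutoff with Laplacian comparison, and a Moser-type iteration on $w=|\nabla\log f|^{2}$, followed by the shift-to-positive trick for the Lipschitz statement --- is essentially the strategy of those cited works, so your proposal is correct and takes the same route as the source the paper relies on. The only point deserving care, which the cited proofs do handle, is tracking the constants through the rescaling $d\mapsto d/R$ so that the final bound \eqref{equ2.9} has the scale-invariant form $C_N(1+\sqrt{-K}R)/R$ with $C_N$ depending only on $N$.
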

Remark that Cheng-Yau's estimate implies the Harnack estimate: if $f$ is a positive harmonic function on $B_{R}(x_0)$ with $R\ls  1$, then
\begin{equation}\label{equ2.10}
\frac{f(x)}{f(y)}\ls  C_{K,N},\quad \ \forall \ x,y\in B_{R/2}(x_0)
\end{equation} 
for some constant $C_{K,N}$ depending only on $K$ and $N$.
  Indeed,   for any two points $x,y\in B_{R/2}(x_0)$, one can connect them by a curve $\gamma(t)\subset B_{R/2}(x_0)$ with length $L(\gamma)\ls  R$. By (\ref{equ2.9}) and $R\ls  1$, it holds 
\begin{equation*}
\begin{split}
 | \ln f(x)-\ln f(y)|&\ls  \int_0^{L(\gamma)}|(\ln f\circ\gamma)'|\ls \int_{0}^{L(\gamma)}(|\nabla \ln f|\circ\gamma)\\
 &\ls  \frac{C_N(1+\sqrt{-K}R)}{R}  L(\gamma)\ls  C_{N,K}.
 \end{split}
 \end{equation*}
 
We recall the notion of the pointed measured Gromov-Hausdorff convergence of a sequence of metric measure spaces (see \cite{Gro07,GMS15}). We focus on $RCD(K,N)$ metric measure spaces.
\begin{definition}\label{pmGH}
(1) Let   $(Z,d_Z)$ be a complete metric space. Given $\epsilon>0$ and subsets $A,B\subset Z$, we say that the Hausdorff distance $d^Z_H(A,B)<\epsilon$ if
 $$A\subset B_\epsilon\quad {\rm and}\quad B\subset A_\epsilon,$$
 where $A_\epsilon$ is the $\epsilon$-neighborhood of $A$ by $A_\epsilon:=\{z\in Z|\ d_Z(z,A)<\epsilon\}$. We denote by $A_j\overset{H}{\longrightarrow}A_\infty$ in $Z$ if $d^Z_H(A_j,A_\infty)\to0$  as $j\to\infty$. 
 
(2) Let $(X_j,d_j)$  be a sequence of compact metric spaces. We say that   $(X_j,d_j)$ converge to a metric space $(X_\infty, d_\infty)$ in the Gromov-Hausdorff topology ($GH$ for short), denoted by $X_j\overset{GH}{\longrightarrow}X_\infty$ for short,   if there exist  a complete metric space $(Z,d
_Z)$ and a sequence of isometric embedding $\Phi_j:X_j\to Z$  such that $\Phi_j(X_j)\overset{H}{\longrightarrow}\Phi_\infty(X_\infty)$ in $Z$.

(3) Let $K\in\mathbb R$, $N\in[1,\infty)$  and let $\{(X_j,d_j,\mu_j)\}_{j\in\mathbb N}$ be a sequence of  $RCD(K,N)$ metric measure spaces with based points $p_j\in X_j$ for all $j\in\mathbb N$.  We say that $(X_j,d_j,\mu_j,p_j)$ converges  to a pointed metric measure space  $(X_\infty,d_\infty,\mu_\infty,p_\infty)$ in the pointed measured Gromov-Hausdorff topology ($pmGH$ for short), denoted by $$(X_j,d_j,\mu_j,p_j) \overset{pmGH}{\longrightarrow} (X_\infty,d_\infty,\mu_\infty,p_\infty),$$ if there exist a  metric space $(Z,d_Z)$ and a sequence of isometric embeddings $\Phi_j:X_j\to Z$, for all $j\in\mathbb N$,  and $\Phi_\infty:X_\infty \to Z$ such that the following hold:

 $\bullet\quad\Phi_j(p_j)\to \Phi_\infty(p_\infty)$ in $Z$, 
 
$\bullet\quad $for every $R>0$, 
$ \Phi_j(B_R(p_j))\overset{H}{\longrightarrow}\Phi_\infty(B_R(p_\infty))$ in   $Z$, and

$\bullet\quad(\Phi_j)_{\#}\mu_j\rightharpoonup(\Phi_\infty)_{\#}\mu_\infty $ as $j\to\infty$ (as the duality on $C_{bs}(Z)$, the space of  continuous functions  on $Z$ with bounded support). 
\end{definition}
This is the so-called extrinsic approach \cite{GMS15,Gro07}, and we will fix the choice of $(Z,d_Z)$ and the embeddings $\Phi_j,\Phi_\infty$ in the rest of this paper. 
It is well-known that the limit metric measure space $(X_\infty,d_\infty,\mu_\infty)$ is also of $RCD(K,N)$, and that \cite{GMS15} the ambient space $(Z,d_Z)$ can be chosen to be proper. Hence, the weak convergence of measures   $(\Phi_j)_{\#}\mu_j\rightharpoonup(\Phi_\infty)_{\#}\mu_\infty $ can be also understood in the duality on $C_{0}(Z)$ (the space of  continuous functions  on $Z$ with compact support). 

  Let $(X_j,d_j,\mu_j,p_j) \overset{pmGH}{\longrightarrow} (X_\infty,d_\infty,\mu_\infty,p_\infty)$, and let $A_j\subset X_j$, $A_\infty\subset X_\infty$ be  Borel subsets. We will denote by $A_j\overset{GH}{\longrightarrow} A$ if $\Phi_j(A_j)\overset{H}{\longrightarrow} \Phi_\infty(A_\infty)$ in   $Z$, where  the ambient space $Z$ and  the embeddings $\Phi_j,\Phi_\infty$ are given in Definition \ref{pmGH}(3).

For any $x\in X$ and $r>0$, we consider  the rescaled pointed metric measure space 
\begin{equation}\label{equ2.11}
(X, r^{-1}d,\mu^x_r,x),\quad {\rm where} \  \ \mu^x_r:=c^x_r\cdot\mu:=\frac{\mu}{\int_{B_r(x)}(1-r^{-1}d(\cdot,x) ) \du}.
\end{equation}

\begin{definition}\label{def-tangent-cone}
A pointed metric measure space $(Y,\rho,\nu,y)$ is called a {\emph{tangent cone}} of $(X,d,\mu)$ at $x$ if there exists $r_j\to0$ such that
$(X,r^{-1}_jd,\mu^x_{r_j},x)\overset{pmGH}{\longrightarrow}(Y,\rho,\nu,y)$.

A point $x$ is called a {\emph{$k$-regular}} if the tangent cone at $x$ is unique and is isomorphic to
\begin{equation}\label{equ-cn}
(\mathbb R^k,d_{E},c_k\mathscr L^k,0^k), \quad {\rm where}\ \ 
 c_k:=(\int_{B_1(0^k)}(1-|x|){\rm d}x)^{-1}.
 \end{equation}
\end{definition}
Remark that, from \cite{MN19,BS19}, now it is known  that there exists a unique integer $k\in[1,N]$ such that $\mu(X\backslash \mathcal R_k)=0$, where $\mathcal R_k$ is the set of all $k$-regular points of $(X,d,\mu)$.

 We also consider the convergence of functions defined on varying spaces.
 Let
 $$(X_j,d_j,\mu_j,p_j)\overset{pmGH}{\longrightarrow} (X_\infty,d_\infty,\mu_\infty,p_\infty).$$
\begin{definition}\label{def2.9}
Let $R>0$.  Suppose that $\{f_j\}_{j\in\mathbb N\cup\{\infty\}}$ is a sequence of Borel functions on $B_R(p_j)$.
It is said that:\\
$(i)$ $f_j\rightarrow f_\infty$ over $B_R(p_j)$, if for  any $x_j\overset{GH}{\longrightarrow}x_\infty\in B_R(p_\infty)$ then
$f_j(x_j)\to f_\infty(x_\infty)$ as $j\to\infty$;\\
$(ii)$
 $f_j\rightarrow f_\infty$ \emph{uniformly} over $B_R(p_j)$, if for any $\varepsilon>0$ there exist $N(\varepsilon)\in\mathbb N$ and $\delta:=\delta(\varepsilon)>0$ such that
$$\sup_{x\in B_R(p_j),\ y\in B_R(p_\infty),\ d_Z(\Phi_j(x),\Phi_\infty(y))<\delta}|f_j(x)-f_\infty(y)|<\varepsilon,\quad \forall\ j\gs N(\varepsilon),$$
where $\Phi_j,\Phi_\infty$ and $Z$ are given in Definition \ref{pmGH}.
\end{definition}

We remark that the Arzela-Ascoli theorem can be generalized to the case where the functions live on varying spaces (see, for example, \cite{LV09}). We recall the following Cheeger's lifting lemma:
\begin{lemma}[{\cite[Lemma 10.7]{Che99}}] \label{lem2.10}
Let $R>0$ and let $(X_j,d_j,\mu_j)$ be a sequence of  $RCD(K,N)$ metric measure spaces and $$(X_j,d_j,\mu_j,p_j)\overset{pmGH}{\longrightarrow} (X_\infty,d_\infty,\mu_\infty,p_\infty).$$
Then,  given any  Lipschitz function $f_\infty\in Lip(\overline{B_R(p_\infty)})$  with a Lipschitz constant $L>0$, there exist a sequence  of  Lipschitz functions $f_j\in Lip(\overline{B_R(p_j)})$ such that
 $f_j\rightarrow f_\infty$ uniformly over $B_R(p_\infty)$, 
$\||\nabla f_j|\|_{L^\infty(B_R(p_j))}\ls L+1$ for all $j\in\mathbb N$, and that
\begin{equation*}
  \lim_{j\to\infty}\int_{B_R(p_j)}|\nabla f_j|^2\du_j= \int_{B_R(p_\infty)}|\nabla f_\infty|^2\du_\infty.
\end{equation*}
\end{lemma}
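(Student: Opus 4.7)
The plan is to construct $f_j$ by extending $f_\infty$ to the ambient metric space $(Z,d_Z)$ of Definition~\ref{pmGH}(3) via McShane formulas, pulling back along the isometric embeddings $\Phi_j$, and then verifying separately uniform convergence, the $L^\infty$ gradient bound, and the continuity of the Dirichlet energy.

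\textbf{Construction and the $L^\infty$ bound.} First I would extend $f_\infty$ from $\overline{B_R(p_\infty)}$ to an $L$-Lipschitz function on all of $X_\infty$ by the McShane--Whitney formula. Since $\Phi_\infty$ is an isometric embedding, this descends to an $L$-Lipschitz function on $\Phi_\infty(X_\infty)\subset Z$, which extends by another McShane application to an $L$-Lipschitz function $F$ on the whole of $(Z,d_Z)$. Setting $f_j:=F\circ\Phi_j$ on $X_j$, the isometric embedding property gives $\Lip(f_j)\ls L$, and Proposition~\ref{prop2.1}(1) then yields $|\nabla f_j|\ls L$ $\mu_j$-a.e., well within the bound $L+1$.

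\textbf{Uniform convergence.} For any sequence $x_j\overset{GH}{\to}x_\infty$ the $L$-Lipschitz property of $F$ gives $|f_j(x_j)-f_\infty(x_\infty)|\ls L\,d_Z(\Phi_j(x_j),\Phi_\infty(x_\infty))\to 0$. Combined with the uniform Lipschitz bound on $\{f_j\}$, an Arzel\`a--Ascoli-type argument adapted to varying spaces then upgrades this pointwise statement to the uniform convergence in the sense of Definition~\ref{def2.9}(ii).

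\textbf{Energy continuity, the main obstacle.} The lower bound
\begin{equation*}
\liminf_{j\to\infty}\int_{B_R(p_j)}|\nabla f_j|^2\,\du_j\;\gs\; \int_{B_R(p_\infty)}|\nabla f_\infty|^2\,\du_\infty
\end{equation*}
follows from the by-now standard lower semicontinuity of the Cheeger energy along $pmGH$ convergence of $RCD(K,N)$ spaces (Gigli--Mondino--Savar\'e, Ambrosio--Honda), since $f_j\to f_\infty$ uniformly and hence in $L^2$. The matching upper bound is the genuinely hard step: the trivial estimate $\int|\nabla f_j|^2\,\du_j\ls L^2\mu_j(B_R(p_j))$ converges only to $L^2\mu_\infty(B_R(p_\infty))$, which is typically strictly larger than $\int|\nabla f_\infty|^2\,\du_\infty$, so the naive pullback $F\circ\Phi_j$ is not itself energy-recovering. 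To remedy this I would replace $f_j$ by a \emph{recovery sequence}: cover $B_R(p_\infty)$ by small geodesic balls on which $f_\infty$ is well approximated in $W^{1,2}$ by regularized (e.g., heat-flow smoothed) Lipschitz pieces whose gradients are close to $|\nabla f_\infty|$ in $L^2$, lift each local piece through $\Phi_j$, and glue by a Lipschitz partition of unity. The gluing inflates the pointwise Lipschitz constant by the small amount that the slack $L+1$ accommodates, and by the $\Gamma$-limsup half of the Mosco convergence of Cheeger energies under $RCD(K,N)$ $pmGH$ convergence the modified $f_j$ satisfy $\int|\nabla f_j|^2\,\du_j\to \int|\nabla f_\infty|^2\,\du_\infty$.
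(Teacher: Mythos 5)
The paper itself contains no proof of this statement: it is quoted directly as Cheeger's lifting lemma \cite[Lemma 10.7]{Che99}, so the relevant benchmark is Cheeger's argument, which builds the lifts from the values of $f_\infty$ on finer and finer nets (McShane extension at a slightly enlarged Lipschitz constant, transferred through the GH-approximations) and controls their energy via the scale-$\varepsilon$ Lipschitz constants of $f_\infty$, using that these decrease to $\Lip f_\infty=|\nabla f_\infty|$ $\mu_\infty$-a.e. and concluding by dominated convergence together with lower semicontinuity. Your first three steps are fine: the global McShane extension through $Z$ gives $|\nabla f_j|\ls L$, the uniform convergence is immediate from the $L$-Lipschitz bound on the extension (no Arzel\`a--Ascoli needed), and the liminf inequality is standard. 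You also correctly identify that the entire difficulty is the matching limsup.

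It is exactly there that your proposal has a genuine gap. The $\Gamma$-limsup half of Mosco convergence of Cheeger energies cannot be invoked for the specific glued, heat-flow-smoothed lifts you construct: it only asserts the existence of \emph{some} recovery sequence, and that sequence (i) need not satisfy $\||\nabla f_j|\|_{L^\infty}\ls L+1$, (ii) need not converge uniformly, and (iii) recovers the global Cheeger energy rather than the energy on the ball $B_R(p_j)$, so it cannot be substituted for your construction, nor does it certify your construction. If instead you rely on the construction itself, the decisive estimate is missing: lifting a local piece by McShane through $Z$ and pulling back by $\Phi_j$ only yields $|\nabla(\mathrm{lift})|\ls$ the Lipschitz constant of the piece on the corresponding small ball, which is a sup-type quantity --- the very defect you diagnosed for the global pullback reappears at every scale. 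What must be proved (and is the heart of Cheeger's proof) is that, summed over the cover, these squared local Lipschitz constants times $\mu_j$ of the lifted balls converge to $\int_{B_R(p_\infty)}|\nabla f_\infty|^2\,\du_\infty$; in your heat-flow variant this would require something like $\sup_B|\nabla P_tf_\infty|^2\ls e^{-2Kt}\sup_B P_t\bigl(|\nabla f_\infty|^2\bigr)$ on each ball $B$ of the cover, a passage to the limit in the partition scale, in $t$, and in $j$ by a diagonal argument, plus the partition-of-unity gluing errors of size (oscillation)/(radius). None of this is carried out, so the key step of the energy upper bound is not established as written.
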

We also need a variant of it as follows.
\begin{lemma}\label{lem2.11}
Let $R>0$ and let $(X_j,d_j,\mu_j)$ be as above in Lemma \ref{lem2.10}. Let $F_j\in Lip(\overline{B_R(p_j)})$ be a sequence of Lipschitz functions with a uniform Lipschitz constant and satisfying that $F_j\to F_\infty$ uniformly over $B_{R}(p_j)$.  Then,  given any    function $f_\infty\in Lip_{\rm loc}(B_R(p_\infty))$  with $f_\infty-F_\infty \in W^{1,2}_0(B_R(p_\infty))$, for each $\delta>0$,  there exist a sequence  of   functions $f_j $ on $B_R(p_j)$ such that $f_j-F_j\in W^{1,2}_0(B_R(p_j))$, 
 $f_j\rightarrow f_\infty$ uniformly over $B_{R-\delta}(p_j)$,  and that
\begin{equation*}
  \lim_{j\to\infty}\int_{B_R(p_j)}|\nabla f_j|^2\du_j= \int_{B_R(p_\infty)}|\nabla f_\infty|^2\du_\infty.
\end{equation*}
\end{lemma}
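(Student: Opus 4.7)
The plan is to first approximate $f_\infty$ by Lipschitz functions $\hat f_n$ on $\overline{B_R(p_\infty)}$ that coincide with $F_\infty$ near $\partial B_R(p_\infty)$, then apply Lemma \ref{lem2.10} to lift them to $X_j$, and finally glue with $F_j$ in a shrinking boundary layer via a cutoff. Write $g_\infty:=f_\infty-F_\infty\in W^{1,2}_0(B_R(p_\infty))$; since $f_\infty\in \Lip_{\rm loc}(B_R(p_\infty))$, the function $g_\infty$ is Lipschitz on $\overline{B_{R-s}(p_\infty)}$ for every $s>0$. I would choose Lipschitz cutoffs $\xi_n$ on $X_\infty$ with $\xi_n=1$ on $B_{R-2/n}(p_\infty)$, $\xi_n=0$ off $B_{R-1/n}(p_\infty)$ and $|\nabla\xi_n|\ls 2n$, set $r_n:=R-1/n$, and put $g_n:=\xi_n g_\infty\in \Lip_0(B_R(p_\infty))$ with $\supp(g_n)\subset\overline{B_{r_n}(p_\infty)}$. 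A Hardy-type inequality for $W^{1,2}_0$-functions on balls of an $RCD$-space controls the cross term $\int g_\infty^2|\nabla\xi_n|^2\du_\infty$ and yields $g_n\to g_\infty$ in $W^{1,2}(B_R(p_\infty))$. Then $\hat f_n:=F_\infty+g_n\in \Lip(\overline{B_R(p_\infty)})$ coincides with $f_\infty$ on $B_{R-2/n}(p_\infty)$ and with $F_\infty$ on $X_\infty\setminus B_{r_n}(p_\infty)$, and $\int|\nabla\hat f_n|^2\du_\infty\to\int|\nabla f_\infty|^2\du_\infty$ as $n\to\infty$.

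For each fixed $n$, Lemma \ref{lem2.10} produces Lipschitz lifts $\hat f_{n,j}$ on $\overline{B_R(p_j)}$ with $\hat f_{n,j}\to\hat f_n$ uniformly over $B_R(p_j)$ and $\int|\nabla\hat f_{n,j}|^2\du_j\to\int|\nabla\hat f_n|^2\du_\infty$. I would introduce a Lipschitz cutoff $\chi_{n,j}$ on $X_j$ with $\chi_{n,j}=1$ on $B_{(r_n+R)/2}(p_j)$, $\chi_{n,j}=0$ off $B_{(r_n+2R)/3}(p_j)$, and $|\nabla\chi_{n,j}|\ls Cn$, and define
\begin{equation*}
f_{n,j}:=\chi_{n,j}\hat f_{n,j}+(1-\chi_{n,j})F_j.
\end{equation*}
Then $f_{n,j}-F_j=\chi_{n,j}(\hat f_{n,j}-F_j)$ is Lipschitz, supported in $\overline{B_{(r_n+2R)/3}(p_j)}\Subset B_R(p_j)$, and hence lies in $W^{1,2}_0(B_R(p_j))$. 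For $n$ with $2/n<\delta$ we have $B_{R-\delta}(p_j)\subset B_{R-2/n}(p_j)\subset B_{(r_n+R)/2}(p_j)$, where $\chi_{n,j}=1$ and $\hat f_n=f_\infty$, so $f_{n,j}=\hat f_{n,j}\to f_\infty$ uniformly over $B_{R-\delta}(p_j)$.

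For the energy I would split the integral over the inner ball $B_{(r_n+R)/2}(p_j)$, the transition annulus $A_{n,j}:=B_{(r_n+2R)/3}(p_j)\setminus B_{(r_n+R)/2}(p_j)$, and the outer shell $B_R(p_j)\setminus B_{(r_n+2R)/3}(p_j)$. As $j\to\infty$ the first piece tends to $\int_{B_{(r_n+R)/2}(p_\infty)}|\nabla\hat f_n|^2\du_\infty$, by combining the global energy convergence of Lemma \ref{lem2.10} with lower semicontinuity of the Sobolev energy on the complementary region (which forces both halves to converge to their respective limits); this in turn tends to $\int|\nabla f_\infty|^2\du_\infty$ as $n\to\infty$. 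The outer shell contributes at most $L_F^2\cdot\mu_j(B_R(p_j)\setminus B_{(r_n+2R)/3}(p_j))=O(1/n)$ by the doubling estimate (\ref{equ2.2}). On $A_{n,j}$ the key point is that $\hat f_n\equiv F_\infty$ on the corresponding region of $X_\infty$, so $\hat f_{n,j}-F_j\to 0$ uniformly on $A_{n,j}$; the delicate contribution $(\hat f_{n,j}-F_j)\nabla\chi_{n,j}$ satisfies $\int(\hat f_{n,j}-F_j)^2|\nabla\chi_{n,j}|^2\du_j\ls (Cn)^2\mu_j(A_{n,j})\sup_{A_{n,j}}|\hat f_{n,j}-F_j|^2\ls Cn\cdot\sup_{A_{n,j}}|\hat f_{n,j}-F_j|^2$, which is arbitrarily small for $j=j(n)$ chosen large enough; the remaining annulus terms are dominated by $L^2\mu_j(A_{n,j})=O(1/n)$. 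A standard diagonal extraction yields $f_j:=f_{n,j(n)}$ satisfying the three desired conclusions. The main obstacle is precisely the balance on $A_{n,j}$ between the cutoff blow-up $|\nabla\chi_{n,j}|\sim n$ and the uniform smallness of $\hat f_{n,j}-F_j$; the doubling bound $\mu_j(A_{n,j})=O(1/n)$ provides the decisive margin that lets the diagonal extraction succeed.
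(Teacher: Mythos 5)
Your overall scheme (lift approximations via Lemma \ref{lem2.10}, glue with $F_j$ by a cutoff in a boundary layer, split the energy into inner ball, transition annulus and outer shell, then diagonalize) is sound, and the annulus bookkeeping — balancing $|\nabla\chi_{n,j}|\sim n$ against the uniform smallness of $\hat f_{n,j}-F_j$ and $\mu_j(A_{n,j})=O(1/n)$ — would go through. The genuine gap is the very first step: the claim that $g_n:=\xi_n(f_\infty-F_\infty)\to f_\infty-F_\infty$ in $W^{1,2}(B_R(p_\infty))$, which you justify by an unproven ``Hardy-type inequality for $W^{1,2}_0$-functions on balls of an $RCD$-space''. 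Membership in $W^{1,2}_0(B_R(p_\infty))$ does not by itself give the weighted decay $n^2\int_{\{R-2/n<d(\cdot,p_\infty)<R-1/n\}}|f_\infty-F_\infty|^2\du_\infty\to0$ that the cross term $g_\infty\nabla\xi_n$ requires; one needs a $2$-Hardy inequality for the ball, which in turn requires some uniform capacity-thickness of the complement $X_\infty\setminus B_R(p_\infty)$. For an arbitrary ball in an arbitrary $RCD(K,N)$-space (the lemma places no restriction on $R$) this can fail outright: take $X_\infty=\mathbb S^2$, $p_\infty$ any point and $R=\pi$, so that $B_R(p_\infty)=\mathbb S^2\setminus\{q\}$ with $q$ the antipode; then $W^{1,2}_0(B_R(p_\infty))=W^{1,2}(\mathbb S^2)$ (a point has zero $2$-capacity), and for $g_\infty\equiv1$ one has $\int|\nabla\xi_n|^2\du_\infty\approx n^2\cdot n^{-2}\not\to0$, so $\xi_ng_\infty\not\to g_\infty$ in $W^{1,2}$. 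Even in non-degenerate cases, establishing the needed fatness/Hardy inequality is extra machinery not available in the paper, and your uniform-convergence conclusion on $B_{R-\delta}(p_j)$ hinges on exactly this cutoff construction (an abstract $\Lip_0$-approximation of $g_\infty$ from the definition of $W^{1,2}_0$ would not agree with $f_\infty$ on a fixed inner ball, so it cannot replace it in your diagonal argument).

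The paper's proof avoids this boundary-decay issue by a different treatment of the boundary layer: it lifts $f_\infty|_{\overline{B_{R-\delta}(p_\infty)}}$ by Lemma \ref{lem2.10} to get $g_j$ on the inner balls, builds uniformly Lipschitz extensions $\bar G_j$ on the annuli $A_\delta(p_j)=B_R(p_j)\setminus\overline{B_{R-\delta}(p_j)}$ of the boundary data ($F_j$ on the outer sphere, $g_j$ on the inner sphere), and then invokes \cite[Proposition 3.2(ii)]{ZZ19} to produce $h_j$ on $A_\delta(p_j)$ with $h_j-\bar G_j\in W^{1,2}_0(A_\delta(p_j))$ and annulus energies converging to that of $f_\infty$; finally $f_j$ is obtained by gluing $g_j$ and $h_j$ via Proposition \ref{prop2.2}. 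If you want to keep your cutoff-gluing architecture, you must either prove the Hardy-type inequality you invoke (with hypotheses covering all balls allowed by the lemma) or replace the multiplication by $\xi_n$ with a transfer of Dirichlet data on the annulus in the spirit of \cite[Proposition 3.2(ii)]{ZZ19}; as written, the step is a real gap.
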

\begin{proof}
Fix any $\delta>0$. Since $f_\infty\in Lip(\overline{B_{R-\delta}(p_\infty)})$, we can use the above Cheeger's lemma, Lemma \ref{lem2.10}, to obtain a sequence of  Lipschitz functions $g_j\in Lip(\overline{B_{R-\delta}(p_j)})$ such that 
$g_j\to g_\infty:=f_\infty|_{\overline{B_{R-\delta}(p_\infty)}}$ uniformly over $B_{R-\delta}(p_j)$, $\|\nabla g_j\|_{L^2(B_{R-\delta}(p_j))}\to \|\nabla f_\infty\|_{L^2(B_{R-\delta}(p_\infty))}$ as $j\to\infty$, and $|\nabla g_j|_{L^\infty(B_{R-\delta}(p_j))}\ls L_\delta$, where $L_\delta>0$ is independent of $j$ (may depend on $\delta$).

Denoted by $A_\delta(p_j):=B_R(p_j)\backslash \overline{B_{R-\delta}(p_j)}$ for each $j\in\mathbb N\cup\{+\infty\}.$ Let $G_j  \in Lip(\partial{A_\delta(p_j)})$ be defined by $G_j= F_j$ on $\partial B_R(p_j)$ and $G_j=g_j$ on $\partial B_{R-\delta}(p_j)$. Then, for each $j\in \mathbb N$,  we can extend $G_j$ to a Lipschitz function  $\bar G_j\in Lip(\overline{ A_\delta(p_j)})$.    Since $g_j$ and $F_j$ have a uniform Lipschitz constant, we can assume that $\bar G_j$ have a uniform Lipschitz constant $L'_\delta.$ Since $G_j\to G_\infty$ uniformly on $\partial A_\delta(p_j)$, we can also assume that $\bar G_j\to \bar G_\infty$ uniformly (up to a subsequence, by Arzela-Ascoli theorem). Now we have $f_\infty-\bar G_\infty\in W^{1,2}_0(A_\delta(p_\infty))$. By \cite[Proposition 3.2(ii)]{ZZ19}, there exists a sequence $h_j$ on $A_\delta(p_j)$ such that $h_j-\bar G_j\in W^{1,2}_0(A_\delta(p_j))$ and $|\nabla h_j|_{L^2(A_\delta(p_j))}\to |\nabla f_\infty|_{L^2(A_\delta(p_\infty))}$ as $j\to\infty$.

At last, we define the function $f_j$ on $B_R(p_j)$ by $f_j:=g_j$ on $\overline{B_{R-\delta}(p_j)}$ and $f_j:=h_j$ on $A_\delta(p_j)$. From Proposition \ref{prop2.2}, we conclude that $f_j\in W^{1,2}(B_R(p_j))$ and $f_j-F_j\in W^{1,2}_0(B_R(p_j))$. The functions $f_j$ satisfy the desired assertions. The proof is finished.
\end{proof}

\subsection{Non-collapsed $RCD(K,N)$ metric measure spaces}
We will recall the notion of {\emph{non-collapsed}} $RCD$-space as introduced in \cite{DPG18}. 
\begin{definition}\label{def2.12}
Let  $(X,d,\mu)$ be an  $RCD(K,N)$-space with $K\in\mathbb R$ and $N\in[1,+\infty)$. It is   called  a non-collapsed $RCD(K,N)$-space, denoted by $ncRCD(K,N)$-space  for short, if   $\mu=\mathscr H^N$, the $N$-dimensional Hausdorff measure on $X$.
\end{definition}
The main examples of $ncRCD(K,N)$ metric measure spaces are non-collapsed Ricci limit spaces \cite{CC97,CC00,CJN21} and $N$-dimensional Alexandrov space with curvature $\gs K/(N-1)$.
It was shown \cite{DPG18}   that if $\left\{(X_{\text{i}},d_{\text{i}},\mu_{\text{i}})\right\}$ is a sequence of   $ncRCD(K,N)$ metric measure spaces  and $(X_j,d_j,\mu_j,p_j) \overset{pmGH}{\longrightarrow} (X_\infty,d_\infty,\mu_\infty,p_\infty)$, then $(X_\infty,d_\infty,\mu_\infty)$ is of $ncRCD(K,N)$ too.

  If $(X,d,\mu)$ is an $ncRCD(K,N)$-space, then  $N$ must be an integer, and there holds  (from Corollary 2.14 in \cite{DPG18})
    \begin{equation}\label{equ2.12}
\mu\big(B_r(x)\big)\ls \bar\mu(B_r)\ls C_{N,K}\cdot r^N,\quad \forall \ x\in X\ {\rm and}\ r\ls 1,
\end{equation}
for a constant $C_{N,K}>0$, where $\bar \mu$ is the $N$-dimensional Hausdorff measure on $\mathbb M^N_{K/(N-1)}$, the simply connected space form with constant sectional curvature $K/(N-1)$, and $B_r$ is a geodesic ball of radius $r$ in $\mathbb M^N_{K/(N-1)}$. Furthermore, if $N>1$, 
\begin{equation}\label{equ2.13}
\frac{d^+}{dr}\mu(B_r(x)):=\limsup_{\delta\to0^+}\frac{\mu\big(B_{r+\delta}(x)\backslash B_r(x)\big)}{\delta}\ls C_{N,K}\cdot r^{N-1} 
\end{equation}
for all  $\ r\ls 1$ and $x\in X$. In fact by (\ref{equ2.3}) and
  $\mu(B_r(x))\ls \bar\mu(B_r)$, we get $ \mu\left(B_{r+\delta}(x)\backslash B_r(x)\right)\ls \bar\mu\left(B_{r+\delta} \backslash B_r \right).$ It follows (\ref{equ2.13}). Remark that it holds only (\ref{equ2.2}) in general $RCD(K,N)$-spaces (without the assumption of non-collapsing).

Let $(X,d,\mu:=\mathscr H^N)$ be an $ncRCD(K,N)$-space. For each point $x\in X$, any tangent cone is a metric measure cone (a Euclidean cone with a natural measure). Indeed,  the existence of the limit  $\lim_{r\to0}\frac{\mu(B_r(x))}{\omega_Nr^N}$ implies that any tangent cone at $x$ is a volume cone,  and hence, by \cite{DPG16}, it  is a metric cone.  It was shown that \cite[Corollary 1.7]{DPG18} a point $x\in X$ is regular (i.e., any one tangent cone is isometric to $\mathbb R^N$) if and only if 
$$\lim_{r\to0}\frac{\mu(B_r(x))}{\omega_Nr^N}=1.$$

 \subsection{Sets of finite perimeter and the reduced boundary} The theory of Euclidean sets of finite perimeter of De Giorgi has been extended to $RCD(K,N)$-spaces  \cite{Amb01, Mir03, ABS19}, and recently \cite{BPS19,BPS21}.  

 \begin{definition}\label{def2.13} 
A function $f\in L^1(X,\mu)$ is called a function of bounded variation, denoted by $f\in BV(X)$ for short, if there exists a sequence $f_j\in Lip_{\rm loc}(X)$ converging to $f$ in $L^1(X)$ such that
$$\limsup_{j\to\infty}\int_X|\nabla f_j|\du<+\infty.$$
Its total variation is a finite Borel measure and is denoted by $|D f|$. Moreover, for any open subset $A\subset X$,
$$|D f|(A):=\inf\left\{\liminf_{j\to\infty}\int_\Omega|\nabla f_j|{\rm d}\mu\ \big|\ f_j\in Lip_{\rm loc}(A),\ \ f_j\overset{L^1(A)}{\longrightarrow}f\right\}.$$
 \end{definition}
  A function $f\in BV_{\rm loc}(X)$ if $\phi f\in BV(X)$ for any $\phi\in Lip_{0}(X)$.
 
  \begin{definition}\label{def2.14}
  Let $E\subset X$ be a Borel subset and let $A$ be an open set. The  perimeter $\mathcal P(E,A)$ is given by
  $$\mathcal P(E,A):=\inf\left\{\liminf_{j\to\infty}\int_A|\nabla f_j|{\rm d}\mu\ \big|\ f_j\in Lip_{\rm loc}(A),\ \ f_j\overset{L^1_{\rm loc}(A)}{\longrightarrow}\chi_E\right\}.$$
A Borel set $E$ is called of finite perimeter in $X$ if $\mathcal P(E,X)<\infty.$ In that case, it is proved \cite{Amb01,Mir03} that the set function $A\mapsto \mathcal P(E,A)$ is the restriction to open sets of a finite Borel measure $\mathcal P(E,\cdot)$ defined by 
  $$\mathcal P(E,B):=\inf\{\mathcal P(E,A)|\ B\subset A,\ A\subset X\ \ {\rm open}\}.$$
  \end{definition}

A subset  $E\subset X$ with $\mu(E)<\infty$ is a set of finite perimeter if and only if the characteristic function $\chi_E\in BV(X)$, and its perimeter measure is $\mathcal P(E,\cdot):=|D\chi_E|(\cdot).$  A subset $E\subset X$ is called a set of locally finite perimeter if $\chi_E\in BV_{\rm loc}(X)$.

We recall the following basic properties, collected in \cite{Amb01,Mir03,ABS19}.
\begin{proposition}\label{prop2.15}
Let $(X,d,\mu)$ be an $RCD(K,N)$ metric measure space with $K\in \mathbb R$ and $N\in[1,+\infty)$. Then the followings hold:
\begin{enumerate} 
  \item {\rm (Lower semicontinuity)}  $E\mapsto \mathcal P(E,X)$ is lower semicontinuous with respect to the $L^1_{\rm loc}(X)$ topology; 
    
\item {\rm (Coarea formula)}   Let  $v\in BV(X)$. Then $\{v>r\}:=\{x|\ v(x)>r\}$ has finite perimeter for $\mathscr L^1$-a.e. $r\in\mathbb R$. Moreover, if $v\in BV(X)$  is  continuous and nonnegative, then, for any Borel function $f:X\to [0,+\infty]$, it holds
 \begin{equation}\label{equ2.14}
\int_{s\ls v<t}f{\rm d}|D v|=\int_s^t\int_Xf{\rm d}\Big(\mathcal P(\{v>r\},\cdot)\Big) {\rm d}r
\end{equation} 
for any $0\ls s<t<+\infty$ (see \cite[Remark 4.3]{Mir03} or \cite[Corollary 1.9]{ABS19}).
 \end{enumerate} 
\end{proposition}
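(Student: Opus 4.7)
Both assertions in Proposition \ref{prop2.15} are classical BV facts adapted to the metric measure setting, and I would prove each by standard arguments: (1) reduces to a diagonal argument directly from the definition of perimeter via relaxation, and (2) reduces to a Cavalieri-type derivation from the BV coarea formula for the level sets of $v$.

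For part (1), the plan is as follows. Let $E_j \to E$ in $L^1_{\rm loc}(X)$. By Definition \ref{def2.14}, for each $j$ I can select a sequence $\{f_{j,k}\}_{k\in\mathbb N}\subset Lip_{\rm loc}(X)$ with $f_{j,k}\to \chi_{E_j}$ in $L^1_{\rm loc}(X)$ as $k\to\infty$ and $\int_X |\nabla f_{j,k}|\,\du\to \mathcal P(E_j,X)$. Using the separability of $L^1_{\rm loc}$ and an exhaustion by bounded open sets $\{U_\ell\}$, I pick $k=k(j)$ so that $\|f_{j,k(j)}-\chi_{E_j}\|_{L^1(U_\ell)} \ls 1/j$ for each $\ell \ls j$ and $\int_X |\nabla f_{j,k(j)}|\,\du \ls \mathcal P(E_j,X)+1/j$. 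The triangle inequality together with $E_j\to E$ in $L^1_{\rm loc}$ forces $f_{j,k(j)}\to \chi_E$ in $L^1_{\rm loc}(X)$, so the definition gives $\mathcal P(E,X)\ls \liminf_j \int_X|\nabla f_{j,k(j)}|\,\du\ls \liminf_j \mathcal P(E_j,X)$. The localization to arbitrary open $A\subset X$ is identical, restricting the approximating sequences to $A$.

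For part (2), I proceed in three steps. First, I establish the classical coarea identity for Lipschitz $v$: $\int_A |\nabla v|\,\du = \int_{\mathbb R} \mathcal P(\{v>r\},A)\,{\rm d}r$. On $RCD(K,N)$ spaces this is precisely the content of Miranda's work \cite{Mir03}, obtained through Cheeger's structure theorem and a disintegration of $|\nabla v|\,\du$ along level sets. Second, I extend this to $v\in BV(X)$ by approximating with $v_n\in Lip_{\rm loc}(X)$ with $\int|\nabla v_n|\,\du\to |Dv|(X)$; lower semicontinuity of perimeter (part (1)) and Fatou give the ``$\gs$'' direction of $|Dv|(A)=\int_{\mathbb R}\mathcal P(\{v>r\},A)\,{\rm d}r$, while the ``$\ls$'' direction uses the Lipschitz identity and lower semicontinuity of total variation. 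Third, for the refined identity I use a Cavalieri argument: for a simple Borel function $f=\sum_i c_i\chi_{B_i}$, the measure-valued identity $|Dv|\llcorner \{s\ls v<t\}=\int_s^t \mathcal P(\{v>r\},\cdot)\,{\rm d}r$ (which follows from the BV coarea formula applied to $v\mapsto \min\{\max\{v,s\},t\}$ and a straightforward truncation argument, using that $v$ is continuous so that the level sets $\{v=s\}$ and $\{v=t\}$ do not contribute in the interior) yields \eqref{equ2.14} for $f$ simple. Monotone convergence then extends this to arbitrary nonnegative Borel $f$.

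The main obstacle is the Lipschitz coarea formula in the first step: in the metric measure setting one does not have a genuine disintegration of $\du$ along level sets, so the equality $\int_A|\nabla v|\,\du=\int_{\mathbb R}\mathcal P(\{v>r\},A)\,{\rm d}r$ must be obtained by combining the BV approximation viewpoint of Definition \ref{def2.13} with Cheeger's identification $|\nabla v|=\Lip v$ $\mu$-a.e. (Proposition \ref{prop2.1}(1)) and the Bishop-Gromov / Poincaré infrastructure from \eqref{equ2.1}. In practice I would invoke \cite{Mir03,ABS19} for this step rather than redo it, since it is the technical core of the metric BV theory, and from there the rest of the proof is the Cavalieri bookkeeping outlined above.
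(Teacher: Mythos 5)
Your proposal is correct and matches the paper's treatment: the paper does not prove Proposition \ref{prop2.15} at all, but records it as a collection of known facts cited from \cite{Amb01,Mir03,ABS19}, and your outline (the diagonal/relaxation argument for lower semicontinuity, and the coarea identity obtained from \cite{Mir03,ABS19} followed by the Cavalieri reduction using that the perimeter measures $\mathcal P(\{v>r\},\cdot)$ are concentrated on $\{v=r\}$ by continuity of $v$) is precisely the standard argument behind those citations. No discrepancy to report.
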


By applying to distance functions, we get the following weak convergence of the measures on spheres.
\begin{lemma}\label{lem2.16}
Let $(X_j,d_j,\mu_j)$ be a sequence of $RCD(K,N)$ metric measure spaces with $K\in \mathbb R$ and $N\in[1,+\infty)$. Suppose that 
 $(X_j,d_j,\mu_j,p_j) \overset{pmGH}{\longrightarrow} (X_\infty,d_\infty,\mu_\infty,p_\infty)$. Then we have,  for $\mathscr L^1$-a.e. $r\in\mathbb R^+$,  the functions 
 $\chi_{B_r(p_j)}$ is in $BV(X_j)$, and it holds
 \begin{equation}\label{equ2.15}
 |D \chi_{B_r(p_j)}|\rightharpoonup |D \chi_{B_r(p_\infty)}|,\quad {\rm as}\ \ j\to\infty
 \end{equation}
 in duality with $C_0(Z)$, where $Z$ is given in Definition \ref{pmGH}(3).
 \end{lemma}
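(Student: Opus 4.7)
The plan is to apply the coarea formula (Proposition \ref{prop2.15}(2)) to the $1$-Lipschitz distance functions $\rho_j(x) := d_j(x, p_j)$, reducing the weak convergence of perimeter measures of balls to the weak convergence $\mu_j \rightharpoonup \mu_\infty$ of the ambient measures. Since $\rho_j$ is $1$-Lipschitz, it lies in $BV_{\mathrm{loc}}(X_j)$, and the coarea formula immediately gives that $B_r(p_j) = \{\rho_j < r\}$ has finite perimeter for $\mathscr L^1$-a.e.\ $r > 0$. The key structural input is the identity $|D\rho_j| = \mu_j$ as Radon measures, which follows from the eikonal-type equation $|\nabla \rho_j| = 1$ $\mu_j$-a.e.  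This eikonal equation is standard in the $RCD(K,N)$ setting: it relies on essential non-branching and the existence of a minimizing geodesic joining $p_j$ to $\mu_j$-a.e.\ point.

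Given any $\phi \in C_0(Z)$ with $\phi \gs 0$, applying coarea over the slab $\{s \ls \rho_j < t\}$ yields
\[
\int_s^t \int_{X_j} \phi \circ \Phi_j \, d\mathcal{P}(B_r(p_j),\cdot) \, dr \;=\; \int_{B_t(p_j) \setminus B_s(p_j)} \phi \circ \Phi_j \, d\mu_j.
\]
The uniform convergence $\rho_j \to \rho_\infty$ on bounded subsets of $Z$ (a consequence of pmGH convergence) together with the weak convergence $\mu_j \rightharpoonup \mu_\infty$ in duality with $C_0(Z)$ shows that the right-hand side converges to its $j = \infty$ analogue for any $0 < s < t$ with $\mu_\infty(\partial B_s(p_\infty)) = \mu_\infty(\partial B_t(p_\infty)) = 0$, which is the case for $\mathscr L^1$-a.e.\ $s, t$. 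Independently, lower semicontinuity of perimeter in the varying-space setting (an extension of Proposition \ref{prop2.15}(1) that is standard in the BV theory on $RCD$-spaces) yields
\[
\int \phi \, d\mathcal{P}(B_r(p_\infty),\cdot) \;\ls\; \liminf_{j \to \infty} \int \phi \, d\mathcal{P}(B_r(p_j),\cdot) \qquad \text{for a.e.\ } r,
\]
since for such $r$ the convergence $\chi_{B_r(p_j)} \mu_j \rightharpoonup \chi_{B_r(p_\infty)} \mu_\infty$ holds (using $\mu_\infty(\partial B_r(p_\infty)) = 0$ and the uniform convergence of the $\rho_j$).

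The Laplacian comparison theorem (Theorem \ref{thm2.5}) applied to $\rho_j$ yields a uniform upper bound $P_j(r) := \mathcal{P}(B_r(p_j), X_j) \ls C(N, K, r)$ on bounded intervals of $r$. Taking $\phi \equiv 1$ on a large ball in the identities above, Fatou's lemma applied to $\liminf_j P_j$ together with reverse Fatou's lemma applied to $\limsup_j P_j$, combined with the integrated convergence $\int_s^t P_j(r) \, dr \to \int_s^t P_\infty(r) \, dr$, force $\lim_j P_j(r) = P_\infty(r)$ for $\mathscr L^1$-a.e.\ $r$. Fix such an $r$. By weak-$\ast$ compactness, any subsequence of $|D\chi_{B_r(p_j)}|$ admits a further subsequence converging in $C_0(Z)^\ast$ to some measure $\nu$; lower semicontinuity forces $\nu \gs |D\chi_{B_r(p_\infty)}|$, while $\nu(Z) \ls \liminf P_j(r) = P_\infty(r) = |D\chi_{B_r(p_\infty)}|(Z)$ forces equality of total masses, whence $\nu = |D\chi_{B_r(p_\infty)}|$. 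Since every subsequence admits the same further-subsequence limit, the full sequence converges weakly, proving the lemma.

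The main obstacle is the upgrade from the $\liminf$ inequality provided by lower semicontinuity to a genuine $\lim$ of total perimeters at a.e.\ $r$; the symmetric use of Fatou and reverse Fatou, enabled by the uniform Laplacian-comparison bound, is the decisive tool. A secondary technical point is transferring the fixed-space lower semicontinuity in Proposition \ref{prop2.15}(1) to the pmGH-varying-space setting, which is standard and can be achieved by lifting Lipschitz test functions along the embeddings $\Phi_j$ as in Lemma \ref{lem2.10}.
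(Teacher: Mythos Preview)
Your overall approach is essentially the paper's: coarea to obtain the $BV$ property and the identity $\int_0^R \mathcal P(B_r(p_j),X_j)\,{\rm d}r=\mu_j(B_R(p_j))$, lower semicontinuity of perimeter under $L^1$-strong convergence of indicators in the varying-space setting (the paper cites \cite[Prop.~3.6]{ABS19} for this), and then an upgrade from convergence of total perimeters to weak convergence of the perimeter measures (the paper cites \cite[Cor.~3.7]{ABS19}; your sub-subsequence/compactness paragraph reproves that corollary). The Laplacian-comparison bound you add is not invoked in the paper's version.

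There is, however, a gap in your Fatou/reverse-Fatou step. From $\liminf_j P_j(r)\gs P_\infty(r)$ a.e.\ and $\int_s^t P_j\to\int_s^t P_\infty$, Fatou does give $\liminf_j P_j=P_\infty$ a.e.; but reverse Fatou only yields $\int\limsup_j P_j\gs\int P_\infty$, which is vacuous since $\limsup\gs\liminf$. A typewriter sequence (say $P_\infty\equiv1$ on $[0,1]$ and $P_j=1+\chi_{I_j}$ with $|I_j|\to0$ and each point lying in infinitely many $I_j$) satisfies all of your hypotheses---uniform bound, $\liminf_j P_j\gs P_\infty$, integrated convergence---yet has $\limsup_j P_j=2$ everywhere. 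What your ingredients do give is $P_j\to P_\infty$ in $L^1_{\mathrm{loc}}$ (since $(P_j-P_\infty)^-\to0$ a.e.\ and is dominated by $P_\infty\in L^1_{\rm loc}$, whence by DCT $\int(P_j-P_\infty)^-\to0$, and $\int(P_j-P_\infty)\to0$ gives $\int(P_j-P_\infty)^+\to0$), but not a.e.\ convergence of the \emph{full} sequence---which is exactly what your compactness paragraph needs to bound $\nu(Z)\ls P_\infty(r)$ at the fixed $r$. The paper's proof asserts the same a.e.\ convergence of total perimeters without further argument, so you are not doing worse than the paper here; but as written, the step is not closed in either version.
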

 \begin{proof}
By the coarea formula, we know that for $\mathscr L^1$-a.e. $r\in \mathbb R^+$, the functions $\chi_{B_r(p_j)}\in BV(X_j)$. For such $r>0$, from  the observation that $\chi_{B_r(p_j)}\to \chi_{B_r(p_\infty)}$ in $L^1$-strong and  
Proposition 3.6 \cite{ABS19}, we conclude that  
$$\liminf_{j\to\infty} \int_{X_j}g{\rm d}|D\chi_{B_r(p_j)}|(X_j)\gs \int_{X_\infty}g{\rm d} |D\chi_{B_r(p_\infty)}|(X_\infty),\quad \forall 0\ls g\in Lip_0(Z),$$
where $Z$ is given in Definition \ref{pmGH} and it  is proper.
 On the other hand, for any $R>0$, 
\begin{equation*}
\begin{split}
\mu(B_R(p_j))&=\int_0^R|D\chi_{B_r(p_j)}|(X_j){\rm d}r\\
\to \mu_\infty(B_R(p_\infty))&=\int_0^R |D\chi_{B_r(p_\infty)}|(X_\infty){\rm d}r.
\end{split}
\end{equation*}
Therefore, we have $\lim_{j\to\infty} |D\chi_{B_r(p_j)}|(X_j)=  |D\chi_{B_r(p_\infty)}|(X_\infty)$ for $\mathscr L^1$-a.e. $r\in(0,R)$. At last, the desired assertion (\ref{equ2.15}) follows from  Corollary 3.7 in \cite{ABS19}, and it completes the proof.
\end{proof}

\begin{definition}\label{def2.17}
Let  $(X_j,d_j,\mu_j,p_j) \overset{pmGH}{\longrightarrow} (X_\infty,d_\infty,\mu_\infty,p_\infty)$, and let $Z,\Phi_j,\Phi_\infty$ be as in Definition \ref{pmGH}. A sequence of Borel sets $E_j\subset X_j$ with $\mu_j(E_j)<\infty$ for all $j\in\mathbb N$ is called to converge  in $L^1$-strong to a Borel set $F\subset X_\infty$ with $\mu_\infty(F)<\infty$ if $\chi_{E_j}\cdot\mu_j \rightharpoonup \chi_{F}\cdot\mu_\infty$ and   $\mu_j(E_j)\to \mu_\infty(F)$ as $j\to\infty$.

A sequence of Borel set $E_j\subset X_j$ is called to converge  in $L_{\rm loc}^1$  to a Borel set $F\subset X_\infty$ if $E_j\cap B_R(p_j)  \to F\cap B_R(p_\infty) $ in $L^1$-strong for every $R>0.$  
\end{definition}
Now recall the notion of reduced boundary of a set of locally finite perimeter in \cite{ABS19}. 
\begin{definition}[Reduced bondary]\label{def2.18}
Let $E$ be a set of locally finite perimeter in an  $ncRCD(K,N)$ metric measure space $(X,d,\mu)$. A point $x\in X$ is called a reduced boundary point of $E$, denoted by $x\in\mathcal{F}E$, if it satisfies the following:\\
(1)  it is in $  {\rm supp}(\mathcal P(E,\cdot))$  and it is a regular point of $X$. That is, for each $\{r_j\}$ with $r_j\to0$ the sequence $(X, r_j^{-1}d,\mu^x_{r_j},x)$ pointed Gromov-Hausdorff converges to $\mathbb R^N$ with the Euclidean metric; and\\
(2)  for each $\{r_j\}$ with $r_j\to0$, the sequence $ E\subset (X, r_j^{-1}d,\mu^x_{r_j},x)$ converges to the upper half space $\{x_N>0\}\subset \mathbb R^N$ in $L^1_{\rm loc}$.
 \end{definition}

 We also need the following properties for sets of finite perimeter in non-collapsed spaces, see \cite{Amb01,ABS19}.
\begin{proposition}\label{prop2.19}
Let $(X,d,\mu)$ be an  $ncRCD(K,N)$ metric measure space with $K\in \mathbb R$ and $N\in[1,+\infty)$. Then the followings hold:
\begin{enumerate} 
\item If  $E$ is a set of finite perimeter in $X$, then  $\mathscr H^{N-1}(\partial^*E)<+\infty$,
where  $\partial^*E$  is the {\emph{essential boundary}} of $E$, defined by  
$$\partial^*E:=\left\{x\in M\big|\ \limsup_{r\to0}\frac{\mu(B_r(x)\cap E)}{\mu(B_r(x))}>0\ \ \&\ \ \limsup_{r\to0}\frac{\mu(B_r(x)\backslash E)}{\mu(B_r(x))}>0\right\}.$$

 \item  If  $E$ is a set of locally finite perimeter, then $\mathscr H^{N-1}(\partial^*E\backslash \mathcal FE)=0$. Moreover, up to an $\mathscr H^{N-1}$-negligible set, it holds
$$\mathcal FE=\Big\{x\in E\big|\ \lim_{r\to0}\frac{\mathscr H^N(B_r(x)\cap E)}{\omega_Nr^N}=\frac 1 2\Big\},$$
   and that  $\mathcal P(E,\cdot)=   \mathscr{H}^{N-1}\llcorner \mathcal FE, $ (the De Giorgi's Theorem, see  \cite[Corollary 4.7]{ABS19}.)
   \end{enumerate} 
\end{proposition}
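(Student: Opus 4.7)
The plan is to handle (1) by a covering argument based on the relative isoperimetric inequality, and (2) by a blow-up analysis of $E$ at its essential boundary points combined with the classical Euclidean De Giorgi theorem applied to the limit.

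For (1), on an $\RCD(K,N)$ space one has a local relative isoperimetric inequality of the form
\begin{equation*}
\mathcal P(E, B_r(x)) \gs c(K,N)\, \min\bigl\{\mu(B_r(x)\cap E), \, \mu(B_r(x)\setminus E)\bigr\}^{(N-1)/N}
\end{equation*}
for all sufficiently small $r$, which follows from the Poincar\'e inequality of Proposition \ref{prop2.1}(5) applied to (approximations of) $\chi_E$ together with Bishop–Gromov. At every $x\in\partial^* E$ both $\mu(B_r(x)\cap E)$ and $\mu(B_r(x)\setminus E)$ are comparable to $r^N$ by the non-collapsed bound \eqref{equ2.12}, so $\mathcal P(E,B_r(x))\gtrsim r^{N-1}$ uniformly for small $r$. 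A $5r$-type Vitali covering of $\partial^* E$ then yields $\mathscr H^{N-1}(\partial^* E)\ls C(K,N)\,\mathcal P(E,X)<+\infty$.

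For (2), the first step is to discard an $\mathscr H^{N-1}$-negligible portion of $\partial^* E$: by the Mondino–Naber–Br\'ue–Semola rectifiability theorem together with the non-collapsing assumption, $\mu$-a.e.\ point and in fact $\mathscr H^{N-1}$-a.e.\ point of $X\setminus \partial X$ is $N$-regular. At each remaining point $x$ the rescaled pointed spaces $(X,r_j^{-1}d,\mu^x_{r_j},x)$ pmGH-converge to $(\mathbb R^N,d_E,c_N\mathscr L^N,0)$, and by stability of finite perimeter under pmGH-convergence combined with the lower semicontinuity in Proposition \ref{prop2.15}(1), a subsequence of $E$ (in the rescaled metrics) converges in $L^1_{\rm loc}$ to a set $F\subset\mathbb R^N$ of locally finite perimeter. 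A monotonicity-type control on $\mathcal P(E,B_r(x))/r^{N-1}$ at $\mathcal P(E,\cdot)$-a.e.\ point shows that the perimeter measure of $F$ at the origin is $0$-homogeneous, so the classical Euclidean De Giorgi theorem forces $F$ to be a half-space. This simultaneously delivers $\mathscr H^{N-1}(\partial^* E\setminus\mathcal F E)=0$ and the half-density characterization of $\mathcal F E$, and via Besicovitch differentiation (allowed by doubling \eqref{equ2.1}) combined with the weak convergence of perimeter measures in Lemma \ref{lem2.16} yields the De Giorgi identity $\mathcal P(E,\cdot)=\mathscr H^{N-1}\llcorner\mathcal F E$ by matching Radon–Nikodym derivatives to the half-space value $1$.

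The hard part will be the monotonicity/rigidity step used to upgrade a subsequential blow-up into a half-space identification holding along \emph{every} sequence $r_j\to 0$, as demanded by Definition \ref{def2.18}; in the $\RCD$ setting this is not a purely Euclidean statement and requires pairing Bishop–Gromov rigidity on tangent cones with a Gauss–Green formula for sets of finite perimeter. A secondary subtlety is that $\partial^* E$ could a priori accumulate on $\partial X$, so one must either localize the blow-up inside $X\setminus\partial X$ or verify separately that no half-space perimeter tangent can arise from a boundary tangent cone, ensuring $\mathscr H^{N-1}(\partial^* E\cap\partial X)=0$.
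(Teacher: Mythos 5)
There is a genuine gap — in fact two — and it is worth noting first that the paper does not prove Proposition \ref{prop2.19} at all: it is background quoted from \cite{Amb01,Mir03} and, for the De Giorgi identity in the non-collapsed setting, from \cite[Corollary 4.7]{ABS19}. So the benchmark for your blind attempt is whether it could stand in for those citations, and it cannot, because it defers exactly the steps those papers exist to prove. In part (1), the assertion that at every $x\in\partial^*E$ both $\mu(B_r(x)\cap E)$ and $\mu(B_r(x)\setminus E)$ are "comparable to $r^N$ uniformly for small $r$" is false: membership in $\partial^*E$ only provides positive $\limsup$'s of the two volume ratios, with point-dependent (possibly arbitrarily small) constants and only along sequences of radii; moreover \eqref{equ2.12} is an upper bound on $\mu(B_r)$, and the lower Ahlfors bound needed for the dimensional isoperimetric inequality is only locally uniform. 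Hence the claimed uniform bound $\mathcal P(E,B_r(x))\gtrsim r^{N-1}$ on all of $\partial^*E$, which your Vitali covering needs, is unjustified; the actual route in \cite{Amb01} goes through asymptotic doubling of $\mathcal P(E,\cdot)$ and density estimates valid $\mathcal P$-a.e., with a separate argument on $\partial^* E$ (alternatively, (1) is deduced from the exact identity in (2)).

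In part (2) the two load-bearing claims have no support. First, $E$ is an arbitrary set of locally finite perimeter, not a perimeter minimizer, so there is no monotonicity formula for $\mathcal P(E,B_r(x))/r^{N-1}$; the "monotonicity-type control" that is supposed to make the blow-up perimeter $0$-homogeneous does not exist. Second, even granting $0$-homogeneity, this does not force the blow-up $F\subset\mathbb R^N$ to be a half-space (every cone has exactly $0$-homogeneous perimeter); De Giorgi's Euclidean argument instead blows up at Lebesgue points of the measure-theoretic normal of the vector measure $D\chi_E$, an object you have not constructed in the $\RCD$ setting. Identifying half-space tangents at $\mathcal P$-a.e.\ point, proving there is no perimeter loss along blow-ups (needed to pin the Radon--Nikodym density of $\mathcal P$ with respect to $\mathscr H^{N-1}\llcorner\mathcal FE$ to the exact value $1$), and the Federer-type statement $\mathscr H^{N-1}(\partial^*E\setminus\mathcal FE)=0$ are precisely the content of \cite{ABS19} (via the Gauss--Green formula, rigidity of the $1$-Bakry--\'Emery inequality, and a Preiss-type iterated-tangent argument); your sketch explicitly labels this "the hard part" and assumes it, which means the core of the theorem is postulated rather than proved. (A minor further point: the $\mathscr H^{N-1}$-a.e.\ regularity of points off $\partial X$ follows from the non-collapsed stratification $\dim_{\mathscr H}\mathcal S^{N-2}\ls N-2$ of \cite{DPG18}, not from the $\mu$-a.e.\ rectifiability theorems you invoke.) Given that the paper treats this proposition as a citation, the correct resolution is to cite \cite{Amb01,Mir03,ABS19} rather than to attempt a two-paragraph reproof.
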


\section{Existence of a  minimizer\label{sec:existence}\label{sec-existence}}

In this section, we will derive the existence of a minimizer of \eqref{equ1.4}, where we always assume that $\Omega$ is a bounded domain in an $RCD(K,N)$-space $(X,d,\mu) $ with $K\in \mathbb R$ and $N\in(1,+\infty)$.   Let ${\bf g}=(g_1,g_2,\cdots, g_m)\in W^{1,2}(\Omega,[0,+\infty)^m)$ and let $\mathscr{A}_{\mathbf{g}}$ be given in (\ref{equ1.4}).   

Now we are ready to prove the existence of a minimizer, which is asserted in Proposition \ref{prop1.3}.
 
 \begin{proof}[Proof of Proposition \ref{prop1.3}]
  Since $\mathbf{g}\in\mathscr{A}_{\mathbf{g}}$ guarantees that $\mathscr{A}_{\mathbf{g}}\neq\emptyset$, there exists a minimizing sequence $\left\{{\bf u}^{k}\right\}_{k=1}^{\infty}\subseteq\mathscr{A}_{\mathbf{g}}$ such that
  \begin{equation}\label{equ3.1}
    \lim_{k\to\infty}J_Q({\bf u}^{k}) =\inf_{{\bf v}\in\mathscr{A}_{\mathbf{g}}}J_Q({\bf v})  \ \ \ (\ls  J_Q({\bf g}))\text{.}
  \end{equation}
From the Poincar\'e inequality (see Proposition \ref{prop2.1}(5)),   we get
  \begin{equation*} 
    \int_{\Omega}\left|{\bf u}^k-{\bf g}\right|^{2}\ls  C_1\int_{\Omega}\left|\nabla({\bf u}^k-{\bf g})\right|^{2},
  \end{equation*}
where the constant $C_1>0$ depends only on $N,K,\Omega$.   Thus,
  \begin{equation*}
    \begin{aligned}
      \left\|{\bf u}^{k}\right\|_{W^{1,2}(\Omega,\mathbb{R}^{m}) }^{2}\ls  &    2  \left\|{\bf u}^{k}-{\bf g}\right\|_{W^{1,2}(\Omega,\mathbb{R}^{m}) }^{2}+2      \left\|\mathbf{g}\right\|_{W^{1,2}(\Omega,\mathbb{R}^{m}) }^{2}      \\      
      \ls                                                                       & 2(1+C_1)\int_{\Omega}\left|\nabla({\bf u}^{k}-\mathbf{g})\right|^{2}+2\left\|\mathbf{g}\right\|_{W^{1,2}(\Omega,\mathbb{R}^{m}) }^{2}     \\
      \ls                                                                       & C_2\Big(\int_{\Omega}\left|\nabla{\bf u}^{k}\right|^{2}+\int_{\Omega}\left|\nabla\mathbf{g}\right|^{2}\Big)+2\left\|\mathbf{g}\right\|_{W^{1,2}(\Omega,\mathbb{R}^{m}) }^{2}  \\
      \ls                                                                       & C_2J_Q({\bf u}^{k}) +(C_2+2)\left\|\mathbf{g}\right\|_{W^{1,2}(\Omega,\mathbb{R}^{m}) }^{2}                                                           \\
      \ls                                                                       & C_2(J_Q(\mathbf{g}) +1) +(C_2+2)\left\|\mathbf{g}\right\|_{W^{1,2}(\Omega,\mathbb{R}^{m}) }^{2}
    \end{aligned}
  \end{equation*}
  for all sufficiently large $k$, where $C_2=4(1+C_1)$.
  Then, the fact that $W^{1,2}(\Omega,\mathbb{R}^{m}) $ is a Hilbert space implies that there exists a subsequence $\left\{{\bf u}^{k_{\ell}}\right\}_{\ell=1}^{\infty}$ of $\left\{{\bf u}^{k}\right\}_{k=1}^{\infty}$ such that $\left\{{\bf u}^{k_{\ell}}\right\}_{\ell=1}^{\infty}$ weakly converges to some ${\bf u}$ in $ W^{1,2}(\Omega,\mathbb{R}^{m}) $  and converges to ${\bf u}$ almost everywhere on $\Omega$. Noted that $\mathscr{A}_{\mathbf{g}}$ is a closed convex subset of $W^{1,2}(\Omega,\mathbb{R}^{m}) $, we conclude that  ${\bf u}\in\mathscr{A}_{\mathbf{g}}$. 
 
By noticing  that $\left\{\chi_{\left\{x\in\Omega\mid\left|{\bf u}^{k_{\ell}}(x)\right|>0\right\}}\right\}_{\ell=1}^{\infty}$ converges to $1$ almost everywhere on $\left\{x\in\Omega\mid\left|{\bf u}(x)\right|>0\right\}$ and that $Q\gs  0$, we have
  \begin{equation*}
Q\chi_{\{|{\bf u}|>0\}}\ls  \liminf_{\ell\to\infty}Q\chi_{\{|{\bf u}^{k_\ell}|>0\}}\quad \mu-{\rm a.e.\ in}\ \ \Omega.
  \end{equation*}
It follows from the Fatou lemma that
  \begin{equation*}
\int_{\Omega}Q\chi_{\{|{\bf u}|>0\}}\ls  \liminf_{\ell\to\infty}\int_{\Omega}Q\chi_{\{|{\bf u}^{k_\ell}|>0\}}.
  \end{equation*}
By combining  this and the lower semicontinuity of energy in Proposition \ref{prop2.1}(2), we obtain
  \begin{equation*} 
    \begin{aligned}
      J_Q({\bf u})  & =\int_{\Omega}(\left|\nabla{\bf u}\right|^{2}+Q\chi_{\left\{\left|{\bf u}\right|>0\right\}})                                        \\
                               & \ls  \liminf_{\ell\to\infty}\int_{\Omega}\left|\nabla{\bf u}^{k_{\ell}}\right|^{2}+\liminf_{\ell\to\infty}\int_{\Omega}Q\chi_{\left\{\left|{\bf u}^{k_{\ell}}\right|>0\right\}} \\
                               & \ls   \liminf_{\ell\to\infty}J_Q({\bf u}^{k_{\ell}})                                                                                                                              \\
                               & =\inf_{{\bf v}\in\mathscr{A}_{\mathbf{g}}}J_Q({\bf v}) \text{,}
    \end{aligned}
  \end{equation*}
 where we have used (\ref{equ3.1}).  Therefore, ${\bf u}$ is a minimizer of $J_Q$. The proof is finished.
\end{proof}

\begin{remark}\label{rem3.1}
Here, we only need to assume that $Q\in L^\infty(\Omega)$ and $Q\gs  0$ almost everywhere on $\Omega.$  
\end{remark}

\section{H\"older continuity of local minimizers\label{sec:holder}}

In this section, we will derive the locally H\"older regularity for the local minimizers of $J_Q$ in (\ref{equ1.2}). Recall the notations.
Let $(X,d,\mu) $ be  $RCD(K,N)$ metric measure space $(X,d,\mu)$ with $K\in \mathbb R$ and $N\in(1,+\infty)$. Let $\Omega\subset X$ be a bounded domain and let  $Q\in L^\infty(\Omega)$. Suppose that
$${\bf u}:=(u_1,u_2,\cdots, u_m)\in W^{1,2}(\Omega,[0,+\infty)^m)$$
is  a local minimizer of $J_Q$ in (\ref{equ1.2}). Namely, there are a boundary value ${\bf g}=(g_1,g_2,\cdots g_m)\in W^{1,2}(\Omega,[0,+\infty)^m)$ and a number 
$\varepsilon_{\bf u}>0$ such that 
$$ J_Q({\bf u})\ls  J_Q({\bf v}),\qquad \forall\ {\bf v}\in \mathscr A_{\bf g}\ \ {\rm with}\ \ \dist({\bf u},{\bf v})<\varepsilon_{\bf u},$$
where $\mathscr A_{\bf g}$ and $\dist({\bf u},{\bf v})$ are given in (\ref{equ1.4}) and (\ref{equ1.5}) respectively.

To begin, we argue that the components of local minimizers are subharmonic, so that powerful analytic tools can be applied later on.

\begin{lemma}[Subharmonicity]\label{lem4.1}
  Let ${\bf u}=(u_{1},\ldots,u_{m}) $ be a local minimizer of $J_Q$ on $\Omega$ with $Q\in L^\infty(\Omega)$. Then $\laplace u_{i}\gs  0$ on $\Omega$ in the sense of distributions, for all $i=1,\ldots,m$.
\end{lemma}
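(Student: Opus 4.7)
The plan is to test the local minimality of $\mathbf{u}$ against downward truncations of a single coordinate. Fix a nonnegative test function $\phi\in Lip_0(\Omega)$ and a small parameter $t>0$. Define a competitor $\mathbf{v}=(v_1,\ldots,v_m)$ by
\[
v_i:=(u_i-t\phi)_+,\qquad v_j:=u_j\ \ \text{for}\ j\neq i.
\]
I would first check that $\mathbf{v}\in\mathscr{A}_{\mathbf{g}}$ and lies within the $\varepsilon_{\mathbf{u}}$-neighborhood of $\mathbf{u}$ so that local minimality applies. Nonnegativity of $v_i$ is immediate. Since $\phi$ has compact support in $\Omega$, the difference $v_i-u_i=-\min(u_i,t\phi)$ is supported in $\supp(\phi)\Subset\Omega$; combined with $u_i-g_i\in W_0^{1,2}(\Omega)$ this forces $v_i-g_i\in W_0^{1,2}(\Omega)$. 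Moreover $v_i\to u_i$ in $W^{1,2}(\Omega)$ as $t\to 0^+$, and the symmetric difference of $\{|\mathbf{v}|>0\}$ and $\{|\mathbf{u}|>0\}$ is contained in the shrinking set where $0<u_i\ls t\phi$ and $u_j=0$ for every $j\neq i$, so $\dist(\mathbf{u},\mathbf{v})\to 0$.

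Next, since $v_j=u_j$ for $j\neq i$ and $v_i\ls u_i$, we have the inclusion $\{|\mathbf{v}|>0\}\subseteq\{|\mathbf{u}|>0\}$, hence $\int_\Omega Q\bigl(\chi_{\{|\mathbf{v}|>0\}}-\chi_{\{|\mathbf{u}|>0\}}\bigr)\du\ls 0$. Combining this with $J_Q(\mathbf{u})\ls J_Q(\mathbf{v})$ (valid for $t$ small by the previous step) and noting that the gradients of the unchanged coordinates cancel, we obtain
\[
\int_\Omega|\nabla u_i|^2\du\ \ls\ \int_\Omega|\nabla v_i|^2\du\ =\ \int_{\{u_i>t\phi\}}|\nabla u_i-t\nabla\phi|^2\du,
\]
where the last equality uses Proposition \ref{prop2.1}(3) together with the chain rule for the positive-part truncation. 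Rearranging and dividing by $t$ yields
\[
\frac{1}{t}\int_{\{u_i\ls t\phi\}}|\nabla u_i|^2\du\ +\ 2\int_{\{u_i>t\phi\}}\langle\nabla u_i,\nabla\phi\rangle\du\ \ls\ t\int_{\{u_i>t\phi\}}|\nabla\phi|^2\du.
\]

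Discarding the nonnegative first term on the left and letting $t\to 0^+$, the right-hand side vanishes, while dominated convergence (with $|\nabla u_i||\nabla\phi|\in L^1$ as dominant) and the pointwise convergence of $\chi_{\{u_i>t\phi\}}$ to $\chi_{\{u_i>0\}}$ give $\int_{\{u_i>0\}}\langle\nabla u_i,\nabla\phi\rangle\du\ls 0$. Finally, Proposition \ref{prop2.1}(3) applied to the pair $u_i$ and $0$ gives $|\nabla u_i|=0$ $\mu$-a.e. on $\{u_i=0\}$, so the integral extends to all of $\Omega$, proving $\laplace u_i(\phi)=-\int_\Omega\langle\nabla u_i,\nabla\phi\rangle\du\gs 0$. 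Since $\phi\gs 0$ in $Lip_0(\Omega)$ was arbitrary, this is the desired subharmonicity. The only subtle point in this plan is verifying $\dist(\mathbf{u},\mathbf{v})<\varepsilon_{\mathbf{u}}$ for small $t$, which is needed to invoke local (rather than global) minimality; all remaining steps are standard consequences of the Sobolev calculus on $RCD$-spaces recalled in Section \ref{sec:preliminaries}.
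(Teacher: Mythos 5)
Your proof is correct and takes essentially the same route as the paper: the paper likewise perturbs a single component to $(u_i-\delta\phi)^+$, uses $\{|{\bf v}|>0\}\subseteq\{|{\bf u}|>0\}$ to discard the $Q$-term, expands the Dirichlet energy, divides by the small parameter and lets it tend to zero. The only (harmless) differences are in bookkeeping: the paper verifies the constraint $\dist({\bf u},{\bf v})<\varepsilon_{\bf u}$ by working on balls with $\mu(B_R(x))\ls\varepsilon_{\bf u}/2$ and small $\delta$ rather than showing $\dist({\bf u},{\bf v})\to0$ as you do, and it simply bounds $|\nabla(u_i-\delta\phi)^+|\ls|\nabla(u_i-\delta\phi)|$ on all of $\Omega$, which bypasses your final dominated-convergence and locality step on $\{u_i>0\}$.
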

\begin{proof}
Fixed any ball $B_R(x)\subset \Omega$ such that $\mu(B_R(x))\ls  \frac{\varepsilon_{\bf u}}{2},$
it suffices to show that $u_i$ is subharmonic on  $B_R(x)$, where $i=1,\cdots,m.$ 
 
  For each $0\ls \phi\in \Lip_{0}(B_R(x))$, $\phi\not=0$, let
  \begin{equation}\label{equ4.1}
    {\bf v}_{i,\delta}:=(u_{1},\ldots,u_{i-1},(u_{i}-\delta\phi) ^{+},u_{i+1},\ldots,u_{m}) \text{,}
  \end{equation}
  where $i\in\left\{1,\ldots,m\right\}$ and $\delta>0$, then ${\bf v}_{i,\delta}\in \mathscr{A}_{\mathbf{g}}$ and, by (\ref{equ1.5}), 
\begin{equation*}
\begin{split}
\dist({\bf v}_{i,\delta},{\bf u})  &\ls  \|{\bf u}-{\bf v}_{i,\delta}\|_{W^{1,2}(\Omega,\mathbb R^m)}+\|\chi_{\{|{\bf u}-{\bf v}_{i,\delta}|>0\}}\|_{L^1(\Omega)}\\
&\ls \delta\|\phi\|_{W^{1,2}(B_R(x))}+ \mu(B_R(x)).
\end{split}
\end{equation*}
This implies $\dist({\bf v}_{i,\delta},{\bf u}) <\varepsilon_{\bf u}$ provided the $\delta<\delta_0:=\frac{\varepsilon_{\bf u}}{2\|\phi\|_{W^{1,2}(B_R(x))}}$.   Noted that $\left\{\left|{\bf v}_{i,\delta}\right|>0\right\}\subseteq\left\{\left|{\bf u}\right|>0\right\}$, the local minimality of ${\bf u}$ gives
  \begin{equation}\label{equ4.2}
    \begin{aligned}
      \int_{\Omega}(\left|\nabla{\bf u}\right|^{2}+Q\chi_{\left\{\left|{\bf u}\right|>0\right\}})  & \ls \int_{\Omega}(\left|\nabla{\bf v}_{i,\delta}\right|^{2}+Q\chi_{\left\{\left|{\bf v}_{i,\delta}\right|>0\right\}})  \\
                                                                                                                   & \ls \int_{\Omega}(\left|\nabla{\bf v}_{i,\delta}\right|^{2}+Q\chi_{\left\{\left|{\bf u}\right|>0\right\}}) 
    \end{aligned}
  \end{equation}
   for all  $\delta\in (0,\delta_0)$. Hence, we get $  \int_{\Omega} \left|\nabla{\bf u}\right|^{2} \ls \int_{\Omega} \left|\nabla{\bf v}_{i,\delta}\right|^{2}$. 
  This implies 
   \begin{equation}\label{equ4.3}
      \int_{\Omega} \left|\nabla u_{i}\right|^{2}\ls    \int_{\Omega} \left|\nabla(u_{i}-\delta\phi) \right|^{2}                                                      
      =         \int_{\Omega}(\left|\nabla u_{i}\right|^{2}-2\delta\ip{\nabla u_{i}}{\nabla\phi}+\delta^{2}\left|\nabla\phi\right|^{2}   ) 
  \end{equation}
  for all sufficiently small $\delta$.   Therefore, by letting $\delta\to 0$, we obtain  
    \begin{equation}\label{equ4.4}
    \int_{\Omega}\ip{\nabla u_{i}}{\nabla\phi}\ls  0\text{.}
  \end{equation}
The arbitrariness of $\phi$ implies $\laplace u_{i}\gs  0$ on $B_R(x)$ in the sense of distributions. The proof is finished.
\end{proof}
An immediate consequence is the local boundedness of ${\bf u}$.
\begin{remark}\label{rem4.2}
Let ${\bf u}=(u_{1},\ldots,u_{m}) $ be a local minimizer of  $J_Q$ on $\Omega$ with $Q\in L^\infty(\Omega)$, then for all $R>0$, there exists a constant $C=C_{N,K,R}>0$, depending only on $N,K,R$,  such that  
$$\sup_{B_{R/2}(x)}|{\bf u}|\ls C m \cdot \fint_{B_R(x)}|{\bf u}|$$
provided the ball $B_R(x)\subset \Omega$. Indeed,  from Lemma \ref{lem4.1} and \cite[Theorem 4.2]{KS01}, we conclude $\sup_{B_{R/2}(x)}|{ u_i}|\ls C  \cdot \fint_{B_R(x)}|{u_i}|$ for each $i=1,2,  
\cdots, m.$ \end{remark}

We will prove the H\"older continuity of  ${\bf u}$ by using Campanato theory, so we need to obtain a decay   estimate on $\int_{B_{r}(x_{0})}\left|\nabla{\bf u}\right|^{2}{\rm d}\mu$ (see, for example, \cite{Gor09}).

\begin{lemma}[H\"older continuity]\label{lem4.3}
  Let ${\bf u}=(u_{1},\ldots,u_{m}) $ be a local minimizer of $J_Q$ in \eqref{equ1.2}   with $Q\in L^\infty(\Omega)$. Then ${\bf u}\in C_{\mathrm{loc}}^{\alpha}(\Omega) $ for some $\alpha\in(0,1) $ (means that it has a $C_{\rm loc}^\alpha$ representative).
\end{lemma}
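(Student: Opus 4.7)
The plan is to follow the classical Campanato approach: establish a decay estimate for the Dirichlet energy of ${\bf u}$ via a harmonic-replacement comparison, and then invoke the Campanato characterization of H\"older continuity on doubling $(1,2)$-Poincar\'e metric measure spaces. Fix $B_R(x_0)\Subset\Omega$ and restrict to $B_r(x_0)\subset B_R(x_0)$ with $r$ small enough that any competitor supported in $B_r(x_0)$ lies in the $\varepsilon_{\bf u}$-neighborhood of ${\bf u}$. For each $i$, let $h_i$ be the harmonic replacement of $u_i$ on $B_r(x_0)$ (so $h_i-u_i\in W^{1,2}_0(B_r(x_0))$ and $\boldsymbol\Delta h_i=0$); Theorem~\ref{thm2.6} provides a locally Lipschitz representative, and the maximum principle yields $h_i\gs 0$. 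By Proposition~\ref{prop2.2} the map ${\bf v}:={\bf h}\chi_{B_r(x_0)}+{\bf u}\chi_{\Omega\setminus B_r(x_0)}$ belongs to $\mathscr{A}_{\bf g}$, and local minimality $J_Q({\bf u})\ls J_Q({\bf v})$ combined with $Q\ls Q_{\max}$ and the Dirichlet orthogonality $\int_{B_r}|\nabla u_i|^2=\int_{B_r}|\nabla h_i|^2+\int_{B_r}|\nabla(u_i-h_i)|^2$ gives
\[
\int_{B_r(x_0)}|\nabla({\bf u}-{\bf h})|^2\,{\rm d}\mu\ls Q_{\max}\,\mu(B_r(x_0)).
\]

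The essential new step is a mean-value inequality for $|\nabla h_i|^2$ on concentric half-balls. In $\mathbb R^n$ this comes from the subharmonicity of $|\nabla h|^2$, which is not available on a general RCD space. Instead, I will use the following chain. Since $(h_i-c)^2$ is subharmonic on $B_r(x_0)$ for every constant $c$, Moser's sup estimate (valid on any doubling $(1,2)$-PI space, and hence on RCD$(K,N)$) yields $\sup_{B_{3r/4}}|h_i-c|^2\ls C\fint_{B_r}|h_i-c|^2\,{\rm d}\mu$. Choosing $c=(h_i)_{B_r(x_0)}$ and invoking Poincar\'e (Proposition~\ref{prop2.1}(5)) bounds the right side by $Cr^2\fint_{B_r}|\nabla h_i|^2\,{\rm d}\mu$. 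Applying Cheng--Yau's gradient estimate (Theorem~\ref{thm2.6}) to the positive harmonic function $h_i-(h_i)_{B_r(x_0)}+\sup_{B_{3r/4}}|h_i-(h_i)_{B_r(x_0)}|$ on $B_{3r/4}(x_0)$ then gives
\[
\sup_{B_{r/2}(x_0)}|\nabla h_i|^2\ls C\fint_{B_r(x_0)}|\nabla h_i|^2\,{\rm d}\mu,
\]
so that for $\rho\in(0,r/2]$, using $\int_{B_r}|\nabla h_i|^2\ls\int_{B_r}|\nabla u_i|^2$,
\[
\int_{B_\rho(x_0)}|\nabla u_i|^2\,{\rm d}\mu\ls 2C\,\frac{\mu(B_\rho(x_0))}{\mu(B_r(x_0))}\int_{B_r(x_0)}|\nabla u_i|^2\,{\rm d}\mu+2Q_{\max}\,\mu(B_r(x_0)).
\]

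Setting $\phi_i(r):=\int_{B_r(x_0)}|\nabla u_i|^2\,{\rm d}\mu$ and using the Bishop--Gromov bound $\mu(B_r(x_0))\ls C_{N,K,\Omega}r^N$ (a consequence of the generalized Bishop--Gromov inequality in the RCD$(K,N)$ setting) together with the classical Campanato--Giaquinta iteration lemma (cf.~\cite{Gor09}), I then extract $\phi_i(r)\ls C r^{N-2+2\alpha}$ for some $\alpha\in(0,1)$. A further application of Poincar\'e converts this into $\frac{1}{\mu(B_r(x_0))}\int_{B_r(x_0)}|u_i-(u_i)_{B_r(x_0)}|^2\,{\rm d}\mu\ls Cr^{2\alpha}$, and the Campanato characterization of H\"older continuity on doubling PI metric measure spaces yields $u_i\in C^{0,\alpha}_{\mathrm{loc}}(\Omega)$. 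The hard part will be the mean-value inequality in the second paragraph: bypassing the unavailable Euclidean subharmonicity of $|\nabla h|^2$ via the Cheng--Yau/Moser/Poincar\'e chain is the essential new ingredient, and it is here that the RCD structure (rather than smoothness) dictates the argument. A secondary technical difficulty is that the Campanato iteration must be carried out against the RCD volume function $\mu(B_r)$ in place of $r^n$, which is accommodated by absorbing a factor $r^{N-\beta}$ into the constants so as to reduce to the classical iteration lemma.
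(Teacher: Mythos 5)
Your harmonic replacement, the orthogonality identity, and the new mean-value inequality for $|\nabla h_i|^2$ are sound, and they lead to the scale-invariant decay estimate
\[
\int_{B_\rho(x_0)}|\nabla u_i|^2\,{\rm d}\mu\ \ls\ C_1\,\frac{\mu(B_\rho(x_0))}{\mu(B_r(x_0))}\int_{B_r(x_0)}|\nabla u_i|^2\,{\rm d}\mu+C_2\,\mu(B_r(x_0)),\qquad \rho\ls r/2 ,
\]
up to routine repairs: a covering argument to pass from $B_{3r/4}$ to $B_{r/2}$ in the Cheng--Yau step, adding $\delta>0$ to guarantee strict positivity before applying Theorem \ref{thm2.6}, using the mean-value $(2,2)$-Poincar\'e inequality (as in \eqref{equ4.13}, with the enlarged ball) rather than Proposition \ref{prop2.1}(5), and phrasing the admissibility restriction for the specific competitor (via Poincar\'e and $\int_{B_r}|\nabla({\bf u}-{\bf v})|^2\ls 4\int_{B_r}|\nabla {\bf u}|^2$, as in the paper's step (i)) — ``any competitor supported in $B_r$'' is not within $\varepsilon_{\bf u}$ however small $r$ is. This part is genuinely different from, and sharper than, the paper's argument, which instead bounds $\sup_{B_{R/2}}|\nabla {\bf v}|$ by Cheng--Yau against the $L^\infty$ bound of ${\bf u}$ (Remark \ref{rem4.2}) and then avoids any iteration by the single choice $r=R^{1+2/N}$.

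The genuine gap is in your final Campanato step. The bound $\mu(B_r(x_0))\ls C_{N,K,\Omega}\,r^N$ is not a consequence of Bishop--Gromov and is false in the generality of Lemma \ref{lem4.3}: the lemma is stated for arbitrary, possibly collapsed, $RCD(K,N)$ spaces, where only $\mu(B_r)\ls C\,r$ and the ratio bound \eqref{equ2.1} are available (the bound you quote is \eqref{equ2.12}, which requires non-collapsedness, assumed only from Theorem \ref{thm1.4} on); for instance $(\mathbb R,|\cdot|,\mathscr L^1)$ is $RCD(0,N)$ for every $N>1$ and has $\mu(B_r)=2r\gg r^N$. Hence the target $\phi_i(r)\ls Cr^{N-2+2\alpha}$ and the reduction to the classical power-type Giaquinta iteration do not make sense for collapsed measures, and ``absorbing a factor $r^{N-\beta}$ into the constants'' cannot fix the mismatch between the (possibly very small) reverse-doubling exponent governing $\mu(B_\rho)/\mu(B_r)$ and the exponent $N$ you need when dividing by $\mu(B_r)$ in the Poincar\'e step. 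The repair is to iterate directly against the measure: fix $\alpha\in(0,1)$, choose $\tau\in(0,1/2]$ with $C_1\tau^{2-2\alpha}\ls 1/2$, and prove by induction that $\int_{B_{\tau^k r_0}}|\nabla u_i|^2\ls A\,\mu(B_{\tau^k r_0})\,(\tau^k r_0)^{2\alpha-2}$, absorbing the additive term $C_2\mu(B_{\tau^k r_0})$ by \eqref{equ2.1} after enlarging $A$; interpolation between scales, the Poincar\'e inequality and the Campanato theorem of \cite{Gor09} then give $u_i\in C^{0,\alpha}_{\rm loc}$. With this modification your argument closes in full generality and in fact yields every exponent $\alpha<1$, stronger than the exponent $1/(N+2)$ obtained in the paper.
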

 \begin{proof} Fixed any ball $B_{\bar R}(\bar x)\subset\subset \Omega$ such that $\bar R<{\rm diam}(\Omega)/3$, it suffices to show ${\bf u}\in C^{\alpha}(B_{\bar R/2}(\bar x))$. Since ${\bf u}\in L^\infty_{\rm loc}(\Omega)$. We denote $M_1:=\sup_{B_{\bar R}(\bar x)}|{\bf u}|.$

  For each $x_{0}\in B_{\bar{R}/2}(\bar{x})$ and $R\in (0,\bar{R}/4)$,
  there exists some ${\bf v}\in W^{1,2}(B_{R}(x_{0}),\mathbb R^m)$ that solves the following (relaxed) Dirichlet problem \cite[Theorem 7.12]{Che99}:
  \begin{equation}\label{equ4.5}
    \begin{cases}
      \laplace {\bf v}=0 \qquad \text{ on }B_{R}(x_{0}) \text{,} \\
      {\bf u}-{\bf v}\in W_{0}^{1,2}(B_{R}(x_{0}),\mathbb R^m ) \text{.}
    \end{cases}
  \end{equation}
  After extending ${\bf v}$ by ${\bf u}$ on $\Omega\backslash B_{R}(x_{0}) $, (that is, ${\bf v}:={\bf u}$ in $\Omega\backslash B_{R}(x_{0})$, see Proposition \ref{prop2.2}), we have
  ${\bf v}\in\mathscr{A}_{\mathbf{g}}$ because all components of ${\bf v}$ are nonnegative
  on $B_{R}(x_{0}) $ by the maximum principle (see \cite[Theorem 7.17]{Che99}).

(i) We first check that $\dist({\bf u},{\bf v})<\varepsilon_{\bf u}$ whenever $R<R_0$ for some $R_0>0$ depending only on $K,N,\bar R$ and ${\bf u}$.

 By the Poincar\'e inequality 
$$\int_{B_R(x_0)}|{\bf u}-{\bf v}|^2\ls  C_1\int_{B_R(x_0)}|\nabla({\bf u}-{\bf v})|^2$$
for some constant $C_1$ depending only on $K,N,\bar R$ (see, for example, Proposition \ref{prop2.1}(5)), we get
$$ \|{\bf v}-{\bf u}\|_{W^{1,2}(\Omega,\mathbb R^m)}\ls  (\sqrt{C_1}+1) (\int_{B_R(x_0)}|\nabla({\bf u}-{\bf v})|^2) ^{1/2},$$
and  hence we have
  \begin{equation}\label{equ4.6}
    \begin{aligned}
 \dist({\bf u},{\bf v}) &\ls   C_2 (\int_{B_R(x_0)}|\nabla({\bf u}-{\bf v})|^2) ^{1/2}+ \int_{B_{R}(x_{0})} \left|\chi_{\left\{\left|{\bf u}\right|>0\right\}}-\chi_{\left\{\left|{\bf v}\right|>0\right\}}\right|  \\
 & \ls   C_2 (\int_{B_R(x_0)}|\nabla({\bf u}-{\bf v})|^2) ^{1/2}+ \mu(B_R(x_0)),
    \end{aligned}
  \end{equation}
where $C_2:=\sqrt{C_1}+1.$ 
Note  that
  \begin{equation}\label{equ4.7}
    \begin{aligned}
      \int_{B_{R}(x_{0}) }\left|\nabla({\bf u}-{\bf v}) \right|^{2} & \ls  2\int_{B_{R}(x_{0}) }\left|\nabla{\bf u}\right|^{2}+2\int_{B_{R}(x_{0}) }\left|\nabla{\bf v}\right|^{2} \\
                                                                                              & \ls  4\int_{B_{R}(x_{0}) }\left|\nabla{\bf u}\right|^{2},     
    \end{aligned}
  \end{equation}
  by the Dirichlet energy minimizing property of ${\bf v}$. By the combination of (\ref{equ4.6}), (\ref{equ4.7}) and the facts that
 $|\nabla {\bf u}|^2\in L^1(\Omega)$,  we conclude that there is $R_0\in (0,1)$ (depending only on $K,N,\bar R$ and ${\bf u}$) such that $\dist({\bf u},{\bf v})<\varepsilon_{\bf u}$ for all $R\in(0,R_0)$.
   
(ii)    Now by  the local minimality of ${\bf u}$, we have
  \begin{equation}\label{equ4.8}
    \begin{aligned}                      
                                                                                                \int_{B_{R}(x_{0}) }(\left|\nabla{\bf u}\right|^{2}-\left|\nabla{\bf v}\right|^{2})                                        
                                                                                              & \ls  \int_{B_{R}(x_{0}) }(\chi_{\left\{\left|{\bf v}\right|>0\right\}}-\chi_{\left\{\left|{\bf u}\right|>0\right\}}) Q \\
                                                                                              & \ls  \|Q\|_{L^\infty}\cdot \mu(B_{R}(x_{0}) )  
                                                                                                  \end{aligned}
  \end{equation}
   for all $R\in (0,R_0)$.  On the other hand, from ${\bf u}-{\bf v}\in W^{1,2}(\Omega,\mathbb R^m)$ and Remark \ref{rem2.4}(1), it can be taken as test functions for ${\bf \Delta}{\bf u}$ and ${\bf \Delta}{\bf v}$. Hence, we have (recalling ${\bf \Delta}{\bf v}=0$) that  
  \begin{equation}\label{equ4.9}
    \begin{aligned}
      \int_{B_{R}(x_{0}) }\left|\nabla({\bf u}-{\bf v}) \right|^{2} & =-\int_{B_{R}(x_{0}) }({\bf u}-{\bf v}){\rm d}\laplace({\bf u}-{\bf v})                                           \\
                                                                                              & =-\int_{B_{R}(x_{0}) }({\bf u}-{\bf v}){\rm d}\laplace({\bf u}+{\bf v})                                           \\ 
                                                                                              & =\int_{B_{R}(x_{0}) }(\left|\nabla{\bf u}\right|^{2}-\left|\nabla{\bf v}\right|^{2}) .                               
    \end{aligned}
  \end{equation}
Recall that $M_1:=\sup_{B_{\bar R}(\bar x)}|{\bf u}|.$  By the maximum principle and the Cheng-Yau's estimate (Theorem \ref{thm2.6}),   we have
\begin{equation}\label{equ4.10}
\begin{split}
  \sup_{B_{R/2}(x_0)}|\nabla{\bf v}| & \ls  C_3\sup_{B_{R}(x_0)}|{\bf v}| \ls  C_3\sup_{B_{3\bar R/4}(\bar x)}|{\bf u}|\ls  C_3M_1,
  \end{split}
  \end{equation}
 where we have used $B_R(x_0)\subset B_{3\bar R/4}(\bar x)$.  By combining the equations (\ref{equ4.8})-(\ref{equ4.10}), we conclude that for all  $R\in(0,R_0)$ and $r<R/2$, \begin{equation}\label{equ4.11}
    \begin{aligned}
      \int_{B_{r}(x_{0}) }\left|\nabla{\bf u}\right|^{2} & \ls  2\int_{B_{r}(x_{0}) }\left|\nabla({\bf u}-{\bf v}) \right|^{2}+2\int_{B_{r}(x_{0}) }\left|\nabla{\bf v}\right|^{2}                                           \\
                                                                      & \ls  2\int_{B_{R}(x_{0}) }\left|\nabla({\bf u}-{\bf v}) \right|^{2}+2C_3M_{1}\cdot \mu(B_{r}(x_{0})) \\
                                                                      & \ls  2\|Q\|_{L^\infty}\cdot\mu(B_{R}(x_{0}) ) +2C_3M_{1}\cdot \mu(B_{r}(x_{0})) .
                                                              \end{aligned}
  \end{equation}
 Let $R_1:=\min\{R_0,2^{-N/2}\}$. Then for any $R<R_1$, by 
  (\ref{equ4.11}) and taking $r=R^{1+2/N} \ ( \ls  \frac 1 2 R)$, we have for all $R\in(0,R_1)$ that
  \begin{equation}\label{equ4.12}
    \begin{aligned}
      r^{2}\fint_{B_{r}(x_{0}) }\left|\nabla{\bf u}\right|^{2} & \ls  2\|Q\|_{L^\infty} r^{2}\frac{\mu(B_{R}(x_{0}) ) }{\mu(B_{r}(x_{0}) ) }+2C_3M_{1}r^2 \\
                                                                            & \ls  2\|Q\|_{L^\infty} r^{2}C_4 (\frac{R}{r}) ^{N}+2C_3M_{1}r^2\\
                                                                            &\ls  (2\|Q\|_{L^\infty}  C_4  +2C_3M_{1})r^{\frac{4}{N+2}},
                                                                                                                                                           \end{aligned}
  \end{equation}
 where we used   $ \frac{\mu(B_{R}(x_{0}) ) }{\mu(B_{r}(x_{0}) ) }\ls  C_{4}(R/r)^N$ for some $C_4$ depending only on $K,N$, (by $r <R<1$, see (\ref{equ2.1})).
  It follows from the local Poincar\'{e} inequality \cite{Raj12} that
  \begin{equation}\label{equ4.13}
    \begin{aligned}
      \fint_{B_{r}(x_{0}) }\left|{\bf u}-\fint_{B_{r}(x_{0}) }{\bf u}\right| & \ls  C_{N,K,\bar R}\Big(r^{2}\fint_{B_{2r}(x_{0}) }\left|\nabla{\bf u}\right|^{2}\Big) ^{\frac{1}{2}}             \\
                                                                                                       & \ls  C_{K,N, \bar R,\|Q\|_{L^\infty}, M_{1}}\cdot r^{\frac{2}{N+2}} 
    \end{aligned}
  \end{equation}
  for all $r$ such that $r^{\frac{N}{N+2}}\in (0,R_1)$, which guarantees that ${\bf u}\in C^{1/(N+2) }(B_{\bar R/2}(\bar{x}) ) $, due to the Campanato theorem on metric measure spaces \cite[Theorem 3.2]{Gor09}.
\end{proof}
Here the H\"older index $1/(N+2)$ is not optimal. We will show,  in the next section, that ${\bf u}$ is locally Lipschitz continuous provided that the metric measure space $(X,d,\mu)$ is non-collapsed  and $Q$ satisfies (\ref{equ1.3}). 

In particular, Lemma \ref{lem4.3} implies that $\left\{x\in\Omega\mid\left|{\bf u}(x)\right|>0\right\}$ is an open set. By combining with the argument in Lemma \ref{lem4.1}, we get the following consequence.   

\begin{lemma}[Harmonicity]\label{lem4.4}
  Let ${\bf u}=(u_{1},\ldots,u_{m}) $ be a local minimizer of $J_Q$ in  \eqref{equ1.2}   with $Q\in L^\infty(\Omega)$. Then each component  $u_{i}$ is harmonic on the open set $\Omega_{\bf u}:=\left\{x\in\Omega\mid\left|{\bf u}(x)\right|>0\right\}$ for $i=1,\ldots,m$.
\end{lemma}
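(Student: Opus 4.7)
The plan is to complement the subharmonicity of Lemma~\ref{lem4.1} with the reverse variational inequality on $\Omega_{\bf u}$, exploiting that perturbations of ${\bf u}$ supported in $\Omega_{\bf u}$ do not disturb the free-boundary indicator $\chi_{\{|{\bf u}|>0\}}$. Concretely, by Lemma~\ref{lem4.3} the function ${\bf u}$ has a continuous representative, so $\Omega_{\bf u}$ is relatively open in $\Omega$, and on every compact set $K\Subset \Omega_{\bf u}$ one has $c:=\min_{K} |{\bf u}|>0$.

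Fix $i\in\{1,\dots,m\}$ and a nonnegative $\phi\in \Lip_0(\Omega_{\bf u})$; let $K:=\supp(\phi)$. For $\delta>0$ small, set
\[
{\bf v}_\delta := (u_1,\dots,u_{i-1},\, u_i+\delta\phi,\, u_{i+1},\dots,u_m).
\]
Three items are to be verified: (a) ${\bf v}_\delta\in \mathscr{A}_{\bf g}$, since all components remain nonnegative and the zero extension of $\delta\phi$ lies in $W^{1,2}_0(\Omega)$ by Proposition~\ref{prop2.2}; (b) on $K$ one has
\[
|{\bf v}_\delta|^2 = |{\bf u}|^2+2\delta\phi u_i+\delta^2\phi^2 \gs |{\bf u}|^2\gs c^2,
\]
while off $K$ one has ${\bf v}_\delta={\bf u}$, hence $\chi_{\{|{\bf v}_\delta|>0\}}\equiv \chi_{\{|{\bf u}|>0\}}$ on $\Omega$; (c) $\dist({\bf v}_\delta,{\bf u})=\delta\|\phi\|_{W^{1,2}}<\varepsilon_{\bf u}$ once $\delta$ is small.

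By (b), the free-boundary terms in $J_Q({\bf u})$ and $J_Q({\bf v}_\delta)$ cancel exactly, so local minimality reduces to $\int_\Omega |\nabla u_i|^2 \ls \int_\Omega |\nabla(u_i+\delta\phi)|^2$. Expanding, dividing by $\delta>0$, and sending $\delta\to 0^+$ yields
\[
\int_\Omega \ip{\nabla u_i}{\nabla\phi}\,\du \gs 0.
\]
Lemma~\ref{lem4.1} furnishes the opposite inequality $\ls 0$ for the same nonnegative test function $\phi$, so we obtain equality. An arbitrary $\phi\in \Lip_0(\Omega_{\bf u})$ decomposes as $\phi=\phi^+-\phi^-$ with $\phi^\pm\gs 0$ still in $\Lip_0(\Omega_{\bf u})$ (same support), and linearity then gives $\laplace u_i=0$ on $\Omega_{\bf u}$ in the sense of distributions.

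No serious obstacle arises: the essential ingredient is the continuity from Lemma~\ref{lem4.3}, which makes $\Omega_{\bf u}$ open and guarantees that compactly supported perturbations inside $\Omega_{\bf u}$ leave the indicator $\chi_{\{|{\bf u}|>0\}}$ unchanged. Once this is in hand, the variational calculation is exactly the one for unconstrained Dirichlet minimizers, and no new tools beyond Lemmas~\ref{lem4.1} and \ref{lem4.3} are needed.
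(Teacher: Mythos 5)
Your proposal is correct and follows essentially the same route as the paper: perturb $u_i$ upward by $\delta\phi$ with $0\ls\phi$ supported in $\Omega_{\bf u}$, note the indicator $\chi_{\{|{\bf u}|>0\}}$ is unchanged so local minimality yields $\int\ip{\nabla u_i}{\nabla\phi}\gs 0$, and combine with the subharmonicity of Lemma \ref{lem4.1}. The only (harmless) difference is that you work with arbitrary $\phi\in \Lip_0(\Omega_{\bf u})$ using the exact identity $\dist({\bf v}_\delta,{\bf u})=\delta\|\phi\|_{W^{1,2}}$, whereas the paper localizes to small balls because it uses a cruder bound on $\dist$ that includes a measure term.
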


\begin{proof}
We know from Lemma \ref{lem4.3} that the set $ \Omega_{\bf u} $ is open. It suffices  to show that ${\bf\Delta} u_i=0$ in the sense of distributions on each small ball $B_R(x_0)\subset \Omega_{\bf u}$, for each $i=1,2,\cdots m.$

  For  each $0\ls \phi\in Lip_{0}(B_R(x_0)) $ let
  \begin{equation}\label{equ4.14}
    {\bf v}_{i,\delta}=(u_{1},\ldots,u_{i-1},u_{i}+\delta\phi,u_{i+1},\ldots,u_{m}) \text{,}
  \end{equation}
  where $i\in\left\{1,\ldots,m\right\}$ and $\delta>0$, then
  ${\bf v}_{i,\delta}\in \mathscr{A}_{\mathbf{g}}$.
  Noted that $ \dist({\bf v}_{i,\delta},{\bf u}) \ls  \delta\|\phi\|_{W^{1,2}}+Q_{\max}\mu({\rm supp}\phi)$, when both $\delta$ and $\mu(B_R(x_0))$ are  sufficiently small, the local minimality of ${\bf u}$ yields 
  \begin{equation*}
    \begin{aligned}
      \int_{\Omega}(\left|\nabla{\bf u}\right|^{2}+Q\chi_{\left\{\left|{\bf u}\right|>0\right\}})  & \ls \int_{\Omega}(\left|\nabla{\bf v}_{i,\delta}\right|^{2}+Q\chi_{\left\{\left|{\bf v}_{i,\delta}\right|>0\right\}})  \\
                                                                                                                   & \ls \int_{\Omega}(\left|\nabla{\bf v}_{i,\delta}\right|^{2}+Q\chi_{\left\{\left|{\bf u}\right|>0\right\}}) \text{,}
    \end{aligned}
  \end{equation*}
 where we have observed    $\{|{\bf v}_{i,\delta}|>0\}=\{|{\bf u}|>0\}$, as $\delta$ small enough.  Thus, we have
  \begin{equation}\label{4.15}
      \int_{\Omega} \left|\nabla u_{i}\right|^{2} \ls    \int_{\Omega} \left|\nabla(u_{i}+\delta\phi) \right|^{2}  
            =     \int_{\Omega}(\left|\nabla u_{i}\right|^{2}+2\delta\ip{\nabla u_{i}}{\nabla\phi}+\delta^{2}\left|\nabla\phi\right|^{2} ) 
  \end{equation}
  for all sufficiently small $\delta$.   Therefore, the arbitrariness of $\delta$ gives
  \begin{equation}\label{4.16}
    -\int_{\Omega}\ip{\nabla u_{i}}{\nabla\phi}\ls  0\text{.}
  \end{equation}
This yields  $\laplace u_{i}\ls  0$ on $B_R(x_0)$ in the sense of distributions.
  Meanwhile, Lemma \ref{lem4.1} asserts that $\laplace u_{i}\gs  0$ on
  $\Omega$ in the sense of distributions.
Thus, we conclude that $u_{i}$ is harmonic on $B_R(x_0)\subset \Omega_{\bf u}$. The proof is finished.
\end{proof}

\begin{remark}\label{rem4.5}
Recently, N. Gigli and I. V. Violo \cite{GV21}   obtained the locally H\"older continuity of a solution to an obstructed problem on  $RCD(K,N)$-spaces.
\end{remark}

\section{Lipschitz continuity of local minimizers\label{sec:lipschitz}}

In this section, we derive the Lipschitz regularity for local minimizers of $J_Q$ in \eqref{equ1.2} on a non-collapsed $RCD$ metric measure space. For this, we will begin with a mean value inequality on general $RCD(K,N)$-spaces.

\subsection{Mean value inequality} $\ $\\

Let $(X,d,\mu)$ be an $RCD(K,N)$ metric measure space with $K\in\mathbb R$ and $N\in (1,+\infty).$   
\begin{lemma}[Stokes formula on balls]\label{lem5.1}
 Let $B_R(x_0)\subset X$ and $\rho(\cdot)=d(\cdot, x_0)$, $\phi\in C^2([0,R])$ and let $\psi=\phi\circ\rho.$ Suppose that 
 $u\in C(\overline{B_{R}(x_{0}) }) \cap W^{1,2}(B_{R}(x_{0}) ) $.  If $\laplace\psi$ is a signed Radon measure, then
  \begin{equation}\label{equ5.1}
    \int_{B_{r}(x_{0}) }u{\rm d}\laplace\psi =-\int_{B_{r}(x_{0}) }\ip{\nabla u}{\nabla \psi }+\phi'(r) \left.\frac{\mathrm{d}}{\mathrm{d}s}\right|_{s=r}\Big(\int_{B_{s}(x_{0}) }u\Big) 
  \end{equation}
  holds for almost all $r\in(0,R) $.
\end{lemma}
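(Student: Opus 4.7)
Proof proposal. The identity is an integration-by-parts formula on the ball $B_r(x_0)$, in which the ``boundary contribution'' is rewritten as a derivative of the volume integral $s\mapsto \int_{B_s(x_0)}u\du$. The strategy is to test the distributional identity $\laplace\psi=\nu$ against a Lipschitz approximation of $\chi_{B_r(x_0)}\cdot u$ and then send the approximation parameter to $0$.

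Since $\phi\in C^2([0,R])$ and $\rho$ is $1$-Lipschitz, $\psi=\phi\circ\rho$ is Lipschitz on $B_R(x_0)$ and in particular lies in $W^{1,2}(B_R(x_0))$; by Remark~\ref{rem2.4}(1), every $\zeta\in W^{1,2}_0(B_R(x_0))$ is admissible in \eqref{equ2.6}. For $\epsilon\in(0,r)$ introduce the Lipschitz cutoff
\[
\eta_\epsilon(x):=\min\bigl\{1,\max\{0,(r-\rho(x))/\epsilon\}\bigr\},
\]
which equals $1$ on $B_{r-\epsilon}(x_0)$, vanishes outside $B_r(x_0)$, and satisfies $\nabla\eta_\epsilon=-\epsilon^{-1}\nabla\rho$ on the shell $A_\epsilon:=B_r(x_0)\setminus\overline{B_{r-\epsilon}(x_0)}$. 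Since $u\eta_\epsilon\in W^{1,2}_0(B_R(x_0))$, the Leibniz rule for the distributional Laplacian yields
\[
\int u\eta_\epsilon\,{\rm d}\laplace\psi=-\int\eta_\epsilon\ip{\nabla u}{\nabla\psi}\du-\int u\ip{\nabla\eta_\epsilon}{\nabla\psi}\du.
\]

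Now I let $\epsilon\to 0^+$. Using $\nabla\psi=\phi'(\rho)\nabla\rho$ together with $|\nabla\rho|=1$ $\mu$-a.e.\ on $X\setminus\{x_0\}$ (standard in geodesic $RCD(K,N)$ spaces), the last integral becomes $\epsilon^{-1}\int_{A_\epsilon}u\,\phi'(\rho)\du$. Setting $F(s):=\int_{B_s(x_0)}u\du$, which is of bounded variation on $[0,R]$ and hence differentiable at $\mathscr L^1$-a.e.\ $s$, and exploiting $\phi'(\rho)=\phi'(r)+O(\epsilon)$ on $A_\epsilon$ (since $\phi\in C^2$), I split this expression into the main term $\phi'(r)\cdot[F(r)-F(r-\epsilon)]/\epsilon$ plus an error controlled by $C\epsilon\cdot\epsilon^{-1}\int_{A_\epsilon}|u|\du$. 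At any $r$ where $F$ is differentiable, the main term converges to $\phi'(r)F'(r)$ and the error tends to $0$. Meanwhile the first right-hand integral converges to $\int_{B_r(x_0)}\ip{\nabla u}{\nabla\psi}\du$ by dominated convergence (with dominant $|\nabla u||\nabla\psi|\in L^1(B_R(x_0))$), and the left-hand side converges to $\int_{B_r(x_0)}u\,{\rm d}\laplace\psi$ by dominated convergence against the finite Radon measure $|\laplace\psi|$ on $\overline{B_r(x_0)}$, provided $|\laplace\psi|(\partial B_r(x_0))=0$, which holds for all but countably many $r$.

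Combining these three limits produces the identity at every $r\in(0,R)$ outside the $\mathscr L^1$-null set consisting of the countably many spheres carrying an atom of $|\laplace\psi|$ together with the points where $F$ fails to be differentiable, giving the claim for a.e.\ $r$. The main technical point is simply tracking those exceptional sets; beyond bookkeeping, the only geometric input is the identity $|\nabla\rho|=1$ $\mu$-a.e., which is where the geodesic structure of the underlying $RCD$ space enters.
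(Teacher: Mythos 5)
Your argument is correct and is essentially the paper's proof: both test the distributional identity against $u$ multiplied by a piecewise-linear radial cutoff, identify the shell term with the (a.e.\ existing) derivative of $s\mapsto\int_{B_s(x_0)}u\,{\rm d}\mu$ using $\ip{\nabla\rho}{\nabla\psi}=\phi'(\rho)$ and $\phi\in C^2$, and handle the remaining terms by dominated convergence. The only cosmetic difference is that you collapse an inner shell $B_r(x_0)\setminus\overline{B_{r-\epsilon}(x_0)}$ while the paper uses an outer shell $\overline{B_{r+1/j}(x_0)}\setminus B_r(x_0)$, which is why the paper needs $|\laplace\psi|\big(\overline{B_{r+1/j}(x_0)}\setminus B_r(x_0)\big)\to 0$ for a.e.\ $r$, whereas in your version the proviso $|\laplace\psi|(\partial B_r(x_0))=0$ is actually not needed at all.
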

\begin{proof}
  Since $\laplace \psi$ and $\mu$ are signed Radon measures, we have for almost all $r\in(0,R)$ that 
  \begin{equation}\label{equ5.2}
    \lim_{j\to\infty}\left|\laplace \psi\right|(\overline{B_{r+1/j}(x_{0}) }\backslash B_{r}(x_{0}) ) =0
  \end{equation}
  and
  \begin{equation}\label{equ5.3}
    \lim_{j\to\infty}\mu(\overline{B_{r+1/j}(x_{0}) }\backslash B_{r}(x_{0}) ) =0.
  \end{equation}
 Meanwhile, noted that $s\mapsto\int_{B_{s}(x_{0}) }u$ is locally Lipschitz continuous on $(0,R)$, it is differentiable almost everywhere on $(0,R) $ too. We fix an $r$ such that both of them hold. For $j$ sufficiently large, let $u_{j}=\eta_{j}(\rho ) u\in W_{0}^{1,2}(B_{R}(x_{0}) ) $, where
  \begin{equation}\label{equ5.4}
    \eta_{j}(t) =\begin{cases}
      1                   & \text{if }t\in\left[0,r\right]\text{,}             \\
      1-j(t-r)  & \text{if }t\in(r,r+\frac{1}{j}) \text{,} \\
      0                   & \text{if }t\in\left[r+\frac{1}{j},R\right]\text{.}
    \end{cases}
  \end{equation}
  On the one hand,
  \begin{equation}\label{equ5.5}
    \begin{aligned}
      \int_{B_{R}(x_{0}) }u_{j}{\rm d}\laplace\psi & =\int_{B_{R}(x_{0}) \backslash B_{r}(x_{0}) }u_{j}{\rm d}\laplace\psi +\int_{B_{r}(x_{0}) }u_{j}{\rm d}\laplace\psi          \\
                                                      & =\int_{B_{R}(x_{0}) \backslash B_{r}(x_{0}) }\eta_{j}u{\rm d}\laplace\psi +\int_{B_{r}(x_{0}) }u{\rm d}\laplace\psi \text{,}
    \end{aligned}
  \end{equation}
  where
  \begin{equation}\label{equ5.6}
    \left|\int_{B_{R}(x_{0}) \backslash B_{r}(x_{0}) }\eta_{j}u{\rm d}\laplace\psi \right|\ls \left|\laplace \psi \right|(\overline{B_{r+1/j}(x_{0}) }\backslash B_{r}(x_{0}) ) \left\|u\right\|_{C^0(B_{R}(x_{0}) )}.
  \end{equation}
By combining this, (\ref{equ5.2})  and (\ref{equ5.6}), we have
  \begin{equation}\label{equ5.7}
    \lim_{j\to\infty}\int_{B_{R}(x_{0}) }u_{j}{\rm d}\laplace\psi =\int_{B_{r}(x_{0}) }u{\rm d}\laplace\psi \text{.}
  \end{equation}
  On the other hand, from Remark \ref{rem2.4}(1) and $u_j\in W^{1,2}_0(B_R(x_0))$, we have 
  \begin{equation}\label{equ5.8}
    \begin{aligned}
      \int_{B_{R}(x_{0}) }u_{j}{\rm d}\laplace\psi = & -\int_{B_{R}(x_{0}) }\ip{\nabla u_{j}}{\nabla \psi }                                                                                                \\
      =                                                 & -\int_{B_{R}(x_{0}) }\ip{\nabla u}{\nabla \psi }\eta_{j}-\int_{B_{R}(x_{0}) }\ip{\nabla \eta_{j}}{\nabla \psi }u                          \\
      =                                                 & -\int_{B_{R}(x_{0}) \backslash B_{r}(x_{0}) }\ip{\nabla u}{\nabla \psi }\eta_{j}-\int_{B_{r}(x_{0}) }\ip{\nabla u}{\nabla \psi } \\
                                                        & -\int_{B_{R}(x_{0}) }\ip{\nabla \eta_{j}}{\nabla \psi }u\text{.}
    \end{aligned}
  \end{equation}
We want to estimate the first and the third terms of right hand side in (\ref{equ5.8}). From (\ref{equ5.3}) and   
  \begin{equation*}
    \begin{aligned}
                & \left|\int_{B_{R}(x_{0}) \backslash B_{r}(x_{0}) }\ip{\nabla u}{\nabla \psi }\eta_{j}\right|                                                                                                                                                             
      \ls    \int_{B_{r+\frac{1}{j}}(x_{0}) \backslash B_{r}(x_{0}) }\left|\nabla u\right|\left|\nabla \psi \right|                                                                                                                                                  \\
      \ls  & \sup_{\left[0,R\right]}\left|\phi'\right|\Big(\int_{B_{r+\frac{1}{j}}(x_{0}) \backslash B_{r}(x_{0}) }\left|\nabla u\right|^{2}\Big)^{\frac{1}{2}}\mu(B_{r+\frac{1}{j}}(x_{0}) \backslash B_{r}(x_{0}) ) ^{\frac{1}{2}},
    \end{aligned}
  \end{equation*}
 we conclude that the first term of right hand side in (\ref{equ5.8}) converges to $0$ as $j\to\infty.$ Noticed that  
  \begin{equation}\label{equ5.9}
    \begin{aligned}
        & \int_{B_{R}(x_{0}) }\ip{\nabla \eta_{j}}{\nabla \psi }u                                                                                                                                                                                                =  \int_{B_{r+\frac{1}{j}}(x_{0}) \backslash B_{r}(x_{0}) }\ip{-j\nabla \rho }{\nabla \psi }u                                                                                                                                                    \\
      = & -j\int_{B_{r+\frac{1}{j}}(x_{0}) \backslash B_{r}(x_{0}) }\phi'(\rho) u                                                                                                                                                             \\
      = & -j\phi'(r) \int_{B_{r+\frac{1}{j}}(x_{0}) \backslash B_{r}(x_{0}) }u-j\int_{B_{r+\frac{1}{j}}(x_{0}) \backslash B_{r}(x_{0}) }\big(\phi'(\rho) -\phi'(r) \big) u                           \\
      = & -\phi'(r) \frac{\int_{B_{r+\frac{1}{j}}(x_{0}) }u-\int_{B_{r}(x_{0}) }u}{\frac{1}{j}}-j\int_{B_{r+\frac{1}{j}}(x_{0}) \backslash B_{r}(x_{0}) }\big(\phi'(\rho) -\phi'(r) \big) u\text{,}
    \end{aligned}
  \end{equation}
and that
  \begin{equation}\label{equ5.10}
    \begin{aligned}
                & \left|j\int_{B_{r+\frac{1}{j}}(x_{0}) \backslash B_{r}(x_{0}) }\big(\phi'(\rho) -\phi'(r)\big ) u\right|                                                                                                                            \\                                                                                                                           \\
      \ls  &j \int_{B_{r+\frac{1}{j}}(x_{0}) \backslash B_{r}(x_{0}) }\frac{1}{j}\sup_{\left[0,R\right]}\left|\phi''\right|\left|u\right|                                                                                                                              \\
      \ls  & \sup_{\left[0,R\right]}\left|\phi''\right|(\int_{B_{r+\frac{1}{j}}(x_{0}) \backslash B_{r}(x_{0}) }\left|u\right|^{2}) ^{\frac{1}{2}}\mu(B_{r+\frac{1}{j}}(x_{0}) \backslash B_{r}(x_{0}) ) ^{\frac{1}{2}}\text{,}
    \end{aligned}
  \end{equation}
we conclude, by (\ref{equ5.3}) and the fact that $r\mapsto \int_{B_r(x_0)}u$ is differentiable at $r$, that the third term of right hand side in (\ref{equ5.8}) converges to $\phi'(r)\frac{{\rm d}}{{\rm d}r}(\int_{B_r(x_0)}u) ,$  as $j\to\infty$. Therefore, letting $j\to\infty$ in (\ref{equ5.8}), we obtain for almost all $r\in (0,R)$ that 
  \begin{equation}\label{equ5.11}
    \int_{B_{r}(x_{0}) }u{\rm d}\laplace\psi =-\int_{B_{r}(x_{0}) }\ip{\nabla u}{\nabla \psi }+\phi'(r) \frac{\mathrm{d}}{\mathrm{d}r}(\int_{B_{r}(x_{0}) }u) .
  \end{equation}
  The proof is finished.
\end{proof}

A similar argument with a different cut-off function yields the following slight variant.

\begin{remark}[Stokes formula on annuli]\label{rem5.2}
  Let $\overline{B_{R_{2}}(x_{0}) }\backslash B_{R_{1}}(x_{0}) \subset X$,   $\rho(\cdot)=d(\cdot,x_{0})$, $\phi\in C^{2}(\left[R_{1},R_{2}\right]) $, and let $\psi=\phi\circ\rho$.  
  Suppose that 
  \begin{equation}\label{equ5.12}
    u\in C(\overline{B_{R_{2}}(x_{0}) }\backslash B_{R_{1}}(x_{0}) ) \cap W^{1,2}(B_{R_{2}}(x_{0}) \backslash B_{R_{1}}(x_{0}) ) \text{,}
  \end{equation}
and if $\laplace\psi$ is a signed Radon measure, then
  \begin{equation}\label{equ5.13}
    \begin{aligned}
      \int_{B_{r_{2}}(x_{0}) \backslash B_{r_{1}}(x_{0}) }u{\rm d}\laplace\psi =&-\int_{B_{r_{2}}(x_{0}) \backslash B_{r_{1}}(x_{0}) }\ip{\nabla u}{\nabla \psi } \\
      & +\phi'(r_{2}) \left.\frac{\mathrm{d}}{\mathrm{d}s}\right|_{s=r_{2}}(\int_{B_{s}(x_{0}) }u)   -\phi'(r_{1}) \left.\frac{\mathrm{d}}{\mathrm{d}s}\right|_{s=r_{1}}(\int_{B_{s}(x_{0}) }u) 
    \end{aligned}
  \end{equation}
  holds for almost all $R_{1}<r_{1}<r_{2}<R_{2}$.
\end{remark}

\begin{lemma}[Mean value inequality]\label{lem5.3}
  Let $(X,d,\mu)$ be an   $RCD(K,N)$-space with $K\in\mathbb R$ and $N\in(1,+\infty)$, and let $\Omega\subset X$ be a bounded domain. Suppose that     $B_{R_{0}}(x_{0}) \Subset \Omega$ and that 
  \begin{equation}\label{equ5.14}
    u\in C(\overline{B_{R_{0}}(x_{0}) }) \cap W^{1,2}(B_{R_{0}}(x_{0}) ,\left[0,+\infty\right) ) \text{,}
  \end{equation}
  and let $\rho=d(\cdot,x_{0}) $. If 
\begin{equation}\label{equ5.15}
\liminf_{r\to0^+}\frac{1}{r^N}\int_{B_r(x_0)}u(x){\rm d}\mu=0,
\end{equation}  then
  \begin{equation}\label{equ5.16}
    \fint_{B_{R}(x_{0}) }u\ls  C_1 \int_{0}^{R}\frac{e^{- C_2s^2}}{s^{N+1}}\int_{B_{s}(x_{0}) }\langle\nabla u,\nabla \rho ^{2}\rangle\mathrm{d}s 
  \end{equation}
for all $R\in(0,R_0)$,  where the constants $C_1,C_2$ only depend  on $N,K$  and $\Omega$.
\end{lemma}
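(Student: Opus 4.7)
\medskip

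\noindent\emph{Plan of proof.} The plan is to apply the Stokes formula of Lemma~\ref{lem5.1} with the radial function $\phi(s)=s^{2}$, so that $\psi=\rho^{2}$ and $\phi'(s)=2s$, turn the resulting identity into a differential inequality for $V(r):=\int_{B_r(x_0)}u\,\du$, solve it by an integrating factor, and finally divide by $\mu(B_R(x_0))$ using the doubling lower bound on small balls.

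\medskip

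\noindent\textbf{Step 1 (ODE from Stokes + Laplacian comparison).} By \eqref{equ2.8} the measure $\mathbf{\Delta}(\rho^{2})$ is a signed Radon measure with upper bound $\mathbf{\Delta}(\rho^{2})\ls 2(N+C\rho^{2})\mu$ for $\rho\ls 1$, where $C=C(N,K)$. Since $u\gs 0$, this yields
\[
\int_{B_r(x_0)} u\,\mathrm d\mathbf{\Delta}(\rho^{2})\ls 2(N+Cr^{2})V(r).
\]
Applying Lemma~\ref{lem5.1} with $\psi=\rho^{2}$ gives, for a.e.\ $r\in(0,R_0)$,
\[
\int_{B_r(x_0)} u\,\mathrm d\mathbf{\Delta}(\rho^{2})=-\int_{B_r(x_0)}\langle\nabla u,\nabla\rho^{2}\rangle+2r\,V'(r).
\]
Combining the two displays yields the differential inequality
\[
V'(r)\ls \frac{N+Cr^{2}}{r}V(r)+\frac{1}{2r}\int_{B_r(x_0)}\langle\nabla u,\nabla\rho^{2}\rangle\qquad\text{for a.e.\ }r\in(0,R_0).
\]

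\medskip

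\noindent\textbf{Step 2 (Integrating factor and vanishing at $0$).} Multiplying by the integrating factor $r^{-N}e^{-Cr^{2}/2}$ gives
\[
\frac{\mathrm d}{\mathrm dr}\Bigl[r^{-N}e^{-Cr^{2}/2}V(r)\Bigr]\ls \frac{e^{-Cr^{2}/2}}{2r^{N+1}}\int_{B_r(x_0)}\langle\nabla u,\nabla\rho^{2}\rangle.
\]
Integrating from some small $\varepsilon>0$ to $R$ and using hypothesis \eqref{equ5.15} to pick a sequence $\varepsilon_k\to 0$ with $V(\varepsilon_k)/\varepsilon_k^{N}\to 0$, the boundary term at $\varepsilon_k$ drops out in the limit and we obtain
\[
R^{-N}e^{-CR^{2}/2}V(R)\ls \int_0^R\frac{e^{-Cs^{2}/2}}{2s^{N+1}}\int_{B_s(x_0)}\langle\nabla u,\nabla\rho^{2}\rangle\,\mathrm ds.
\]

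\medskip

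\noindent\textbf{Step 3 (From $V(R)$ to the average).} To replace the prefactor $R^{N}$ by $\mu(B_R(x_0))$, I use the monotonicity arising from Bishop--Gromov: the map $r\mapsto \mu(B_r(x_0))/\bar\mu(B_r)$ is non-increasing and $\bar\mu(B_r)\asymp r^{N}$ for $r\ls 1$. Since $\overline\Omega$ is compact and $x\mapsto\mu(B_1(x))$ is lower semicontinuous and positive (as $\supp\mu=X$), its infimum $m_\Omega:=\inf_{x\in\overline\Omega}\mu(B_1(x))>0$. Therefore for $x_0\in\Omega$ and $R\ls 1$,
\[
\mu(B_R(x_0))\gs \frac{\mu(B_1(x_0))}{\bar\mu(B_1)}\bar\mu(B_R)\gs c_{N,K,\Omega}\,R^{N}.
\]
Dividing the inequality from Step~2 by $\mu(B_R(x_0))$, absorbing $e^{CR^{2}/2}$ into the constant, and setting $C_1:=e^{CR_0^{2}/2}/(2c_{N,K,\Omega})$ and $C_2:=C/2$ yields \eqref{equ5.16}. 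If $R_0>1$ we first apply the argument with $R\ls 1$ and then propagate to larger radii by adjusting $C_1$ by the factor $\mu(B_{R_0})/\mu(B_1)$, which is controlled by $N,K,\Omega$.

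\medskip

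\noindent\textbf{Main obstacles.} The delicate points are: (i) justifying that $\mathbf{\Delta}(\rho^{2})$ is a signed Radon measure so Lemma~\ref{lem5.1} applies (this uses the semiconvexity of $\rho^{2}/2$ on $RCD$ spaces and the comparison \eqref{equ2.8}, which provides the one-sided control needed); (ii) the removal of the boundary term at $0$, which is exactly where the hypothesis \eqref{equ5.15} is used, namely along a subsequence $\varepsilon_k\to 0$ with $\varepsilon_k^{-N}V(\varepsilon_k)\to 0$ the integrating factor sends $\varepsilon_k^{-N}e^{-C\varepsilon_k^{2}/2}V(\varepsilon_k)$ to $0$; and (iii) securing a uniform lower bound $\mu(B_R(x_0))\gtrsim R^{N}$ for $x_0\in\Omega$, $R\ls 1$, with constant depending only on $N,K,\Omega$, which is what allows the final constants $C_1,C_2$ in \eqref{equ5.16} to depend only on $N,K,\Omega$ as claimed.
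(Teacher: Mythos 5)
Your proposal is correct and follows essentially the same route as the paper's proof: apply the Stokes formula of Lemma~\ref{lem5.1} with $\psi=\rho^{2}$, combine with the comparison \eqref{equ2.8}, multiply by the integrating factor $r^{-N}e^{-Cr^{2}/2}$, integrate and use \eqref{equ5.15} to kill the boundary term, then divide by $\mu(B_R(x_0))$ via a lower volume bound $\mu(B_R(x_0))\gs c_{N,K,\Omega}R^{N}$. The only difference is cosmetic: the paper obtains that lower bound directly from Bishop--Gromov as $\mu(B_R(x_0))/R^N\gs \mu(B_{{\rm diam}(\Omega)}(x_0))/[{\rm diam}(\Omega)]^N$, whereas you use the monotone volume ratio plus compactness of $\overline\Omega$, which is equivalent for the purpose of the stated constants.
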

\begin{remark}\label{rem5.4}
  On the  Euclidean space  $\mathbb R^N$, we have
  \begin{equation*}
    \fint_{B_{R}(x_{0}) }u-u(x_{0}) =\frac{1}{2}\int_{0}^{R}\frac{1}{s}\fint_{B_{s}(x_{0}) }\langle\nabla u,\nabla \rho ^{2}\rangle\mathrm{d}s\text{.}
  \end{equation*}
\end{remark}

\begin{proof}[Proof of Lemma \ref{lem5.3}]
  Noted that $\laplace \rho ^{2}$ is a signed Radon measure \cite{Gig15} and $u\gs  0$, Lemma \ref{lem5.1} asserts that
  \begin{equation*} 
    \int_{B_{r}(x_{0}) }u{\rm d}\laplace(\rho ^{2}) =-\int_{B_{r}(x_{0}) }\ip{\nabla u}{\nabla \rho ^{2}}+2r\frac{\mathrm{d}}{\mathrm{d}r}\big(\int_{B_{r}(x_{0}) }u\big) 
  \end{equation*}
  holds for almost all $r\in(0,R) $.
  By combining this and the Laplacian comparison theorem (see \eqref{equ2.8} in Theorem \ref{thm2.5}), we get
  \begin{equation}\label{equ5.17}
    \begin{aligned}
      \frac{\mathrm{d}}{\mathrm{d}r}(\int_{B_{r}(x_{0}) }u)  & =\frac{1}{2r}\int_{B_{r}(x_{0}) }u{\rm d}\laplace(\rho ^{2}) +\frac{1}{2r}\int_{B_{r}(x_{0}) }\ip{\nabla u}{\nabla \rho ^{2}}                                  \\
                                                                                 & \ls \frac{1}{2r}\int_{B_{r}(x_{0}) }2(N+C_3\rho ^{2}) u+\frac{1}{2r}\int_{B_{r}(x_{0}) }\ip{\nabla u}{\nabla \rho ^{2}}\text{.}
    \end{aligned}
  \end{equation}
 Here and in the following of this proof, $C_1, C_2, C_3,\cdots, $ will denote positive constants depending only on $N,K$ and $\Omega$. 
 This gives
    \begin{equation*}
    \begin{aligned}
      \frac{\mathrm{d}}{\mathrm{d}r}(\frac{1}{r^{N}}\int_{B_{r}(x_{0}) }u) \ls  &      C_3r\cdot  \Big(\frac{1}{r^{N}}\int_{B_{r}(x_{0}) } u\Big)+\frac{1}{2r^{N+1}}\int_{B_{r}(x_{0}) }\ip{\nabla u}{\nabla \rho ^{2}}  
          \end{aligned}
  \end{equation*}
for almost all $r\in(0,R)$.   Multiplying both sides by $\exp(-C_3r^{2}/2) $, we have
  \begin{equation*}
    \begin{aligned}
      \frac{\mathrm{d}}{\mathrm{d}r}\left(e^{-\frac{1}{2}C_3r^{2}}\frac{1}{r^{N}}\int_{B_{r}(x_{0}) }u\right)                        & \ls  \frac{e^{-\frac 1 2 C_3r^2}}{2r^{N+1}}\int_{B_{r}(x_{0}) }\ip{\nabla u}{\nabla \rho ^{2}}\text{.}
    \end{aligned}
  \end{equation*}
  Since $r\mapsto\frac{1}{r^{N}}e^{-\frac{1}{2}C_3r^{2}}\int_{B_{r}(x_{0}) }u$ is locally Lipschitz on $\left(0,R\right]$, by integrating the above inequality over $(r,R)$ for any $r<R$, we get
  \begin{equation}\label{equ5.18}
  \begin{split}
    \frac{e^{-\frac{1}{2}C_3R^{2}}}{R^{N}}   \int_{B_{R}(x_{0}) }u- &\frac{e^{-\frac{1}{2}C_3r^{2}}}{r^{N}} \int_{B_{r}(x_{0}) }u\\ 
 &\ls   \int_{r}^{R}\frac{e^{-\frac 1 2 C_3s^2}}{2s^{N+1}} \int_{B_{s}(x_{0}) }\ip{\nabla u}{\nabla \rho ^{2}}\mathrm{d}s\text{.}
    \end{split}
  \end{equation}
By substituting the assumption \eqref{equ5.15} into (\ref{equ5.18}) and let $r\to0^+$, we get
  \begin{equation*}
    \begin{aligned}
      \frac{e^{-\frac{1}{2}C_3R^{2}}}{R^{N}}\int_{B_{R}(x_{0}) }u  \ls  \int_{0}^{R}\frac{e^{-\frac 1 2 C_3s^2}}{2s^{N+1}} \int_{B_{s}(x_{0}) }\ip{\nabla u}{\nabla \rho ^{2}}\mathrm{d}s\text{.}
    \end{aligned}
  \end{equation*}
  Therefore, by  $\mu(B_R(x_0))/R^N\gs  C_5:=\mu(B_{{\rm diam}(\Omega)}(x_0))/[{\rm diam}(\Omega)]^N $ (this follows from \eqref{equ2.1} and  $R<{\rm diam}(\Omega)$), we conclude 
  \begin{equation*}
      \fint_{B_{R}(x_{0}) }u   \ls   e^{\frac{1}{2}C_3R^{2}}  \frac{1}{C_5}\int_{0}^{R}\frac{e^{-\frac 1 2 C_3s^2}}{2s^{N+1}} \int_{B_{s}(x_{0}) }\ip{\nabla u}{\nabla \rho ^{2}}\mathrm{d}s\text{.}
        \end{equation*}
This implies (\ref{equ5.16}) with $C_1:= \frac{e^{\frac{1}{2}C_3R^{2}} }{2C_5}$ and $C_2:=\frac{C_3}{2}$. The proof is finished. 
\end{proof}

\subsection{Lipschitz continuity of local minimizers of $J_Q$ }\ $\ $\\

From now on, we shall suppose that  $(X,d,\mu)$ is an $ncRCD(K,N)$ metric measure space with $K\ls 0$ and $N\in(1,+\infty)$. Let $\Omega\subset X$ be a bounded  domain and let $Q\in L^\infty(\Omega)$ satisfy (\ref{equ1.3}) for two positive numbers $Q_{\min}$ and $Q_{\max}$. Recall that a map
$${\bf u}=(u_1,u_2,\cdots, u_m)\in W^{1,2}(\Omega,[0,+\infty)^m)$$
is  a local minimizer of $J_Q$ in (\ref{equ1.2}) if there exist a  data ${\bf g}\in W^{1,2}(\Omega,[0,+\infty)^m)$ and $\varepsilon_{\bf u}>0$ such that $J_Q({\bf u})\ls  J_Q({\bf v})$ for all ${\bf v}\in\mathscr A_{\bf g}$ with $\dist({\bf u},{\bf v})<\varepsilon_{\bf u}$, where the $\mathscr A_{\bf g}$ and $\dist({\bf u},{\bf v})$ are given in (\ref{equ1.4}) and (\ref{equ1.5}), respectively. From Lemma \ref{lem4.3}, we can assume that ${\bf u}$ is continuous on $\Omega$. The set $\Omega_{\bf u}:=\{x\in\Omega|\ |{\bf u}|(x)>0\}$ is open.

The following lemma is inspired by the classical Caccioppoli inequality.
\begin{lemma}\label{lem5.5}
 Let ${\bf u}=(u_{1},\ldots,u_{m}) $ be a local minimizer of $J_Q$ in  \eqref{equ1.2}   with $Q$ satisfying  \eqref{equ1.3},
 and let $\Omega'\Subset \Omega$. Then there exists a constant $R_0\ (\ls  1)$ depending only on $N,K,\Omega,d(\Omega',\partial\Omega),Q_{\max}$ and $\varepsilon_{\bf u}$, such that for all balls $B_r(x)$ with $r<R_0$ and $d(x,\Omega')<R_0$, it holds
  \begin{equation}\label{equ5.19}
    -\int_{B_{r}(x) }\langle\nabla u_{i},\nabla\phi\rangle\ls  \big(Q_{\max}\cdot \mu(B_r(x))\big)^{1/2} \cdot\|\phi\|_{W^{1,2}(B_r(x))}
      \end{equation}
  for all  $i=1,\cdots, m$ and $\phi\in W^{1,2}_0(B_r(x))$ and $\phi\gs  0$.
\end{lemma}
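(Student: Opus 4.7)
The plan is to test local minimality of ${\bf u}$ against a one-sided perturbation
\[
{\bf v}_{i,\varepsilon}:=(u_1,\ldots,u_{i-1},u_i+\varepsilon\phi,u_{i+1},\ldots,u_m)\in\mathscr A_{\bf g},\qquad \varepsilon>0,
\]
exactly as in the proof of Lemma~\ref{lem4.4}, but now on a generic ball $B_r(x)$ that may meet the free boundary. The positivity set grows only within $B_r(x)$, since
$\{|{\bf v}_{i,\varepsilon}|>0\}=\{|{\bf u}|>0\}\cup\{\phi>0\}$, and so the $Q$-term in the minimality comparison contributes at most $Q_{\max}\mu(B_r(x))$ on the side of ${\bf v}_{i,\varepsilon}$.

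First I would fix $R_0\ls 1$, depending on $N,K,\Omega,d(\Omega',\partial\Omega),Q_{\max}$ and $\varepsilon_{\bf u}$, small enough that $B_r(x)\Subset\Omega$ and $\mu(B_r(x))\ls \varepsilon_{\bf u}/2$ for all $r<R_0$ and $d(x,\Omega')<R_0$; the non-collapsed hypothesis enters here through the upper bound $\mu(B_r(x))\ls C_{N,K}r^N$ from \eqref{equ2.12}. Since
\[
\dist({\bf u},{\bf v}_{i,\varepsilon})\ls \varepsilon\|\phi\|_{W^{1,2}(B_r(x))}+\mu(B_r(x)),
\]
the estimate $\dist({\bf u},{\bf v}_{i,\varepsilon})<\varepsilon_{\bf u}$ holds provided $\varepsilon$ is not too large, so the local minimality inequality $J_Q({\bf u})\ls J_Q({\bf v}_{i,\varepsilon})$ is at our disposal.

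Next I would expand $J_Q({\bf u})\ls J_Q({\bf v}_{i,\varepsilon})$; after cancelling the components $j\neq i$ and using the identity $|\nabla(u_i+\varepsilon\phi)|^2=|\nabla u_i|^2+2\varepsilon\ip{\nabla u_i}{\nabla\phi}+\varepsilon^2|\nabla\phi|^2$, this collapses to
\[
-2\varepsilon\int_{B_r(x)}\ip{\nabla u_i}{\nabla\phi}\du\ls \varepsilon^2\int_{B_r(x)}|\nabla\phi|^2\du+Q_{\max}\mu(B_r(x)).
\]
Setting $\varepsilon^2=Q_{\max}\mu(B_r(x))\big/\int_{B_r(x)}|\nabla\phi|^2\du$ equalizes the two terms on the right; dividing by $2\varepsilon$ then yields
\[
-\int_{B_r(x)}\ip{\nabla u_i}{\nabla\phi}\du\ls \bigl(Q_{\max}\mu(B_r(x))\bigr)^{1/2}\|\nabla\phi\|_{L^2(B_r(x))},
\]
which is in fact slightly stronger than \eqref{equ5.19}.

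The one delicate point—and what I expect to be the main obstacle—is ensuring this optimal $\varepsilon$ is admissible for local minimality, since it depends on $\phi$. Because \eqref{equ5.19} is invariant under $\phi\mapsto\lambda\phi$, I may normalize $\|\phi\|_{W^{1,2}(B_r(x))}=1$; the Poincar\'e inequality on $W_0^{1,2}(B_r(x))$ (Proposition~\ref{prop2.1}(5)) then forces a lower bound $\|\nabla\phi\|_{L^2(B_r(x))}\gs (1+C_Pr)^{-1}$ with $C_P=C_P(N,K,\Omega)$, so that the chosen $\varepsilon$ satisfies $\varepsilon\ls (1+C_P)\sqrt{Q_{\max}\mu(B_r(x))}$, tending to $0$ as $r\to 0$. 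A further shrinking of $R_0$ therefore makes the optimal $\varepsilon$ fit the admissibility window uniformly in the normalized $\phi$, completing the proof.
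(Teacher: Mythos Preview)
Your proposal is correct and follows essentially the same argument as the paper: perturb by $u_i\mapsto u_i+\varepsilon\phi$, expand the energy, and optimize in $\varepsilon$. The only difference is the choice of the optimizing parameter: the paper takes $\delta=(Q_{\max}\mu(B_r(x)))^{1/2}/\|\phi\|_{W^{1,2}}$, so that $\delta\|\phi\|_{W^{1,2}}=(Q_{\max}\mu(B_r(x)))^{1/2}$ is independent of $\phi$ and the admissibility check becomes immediate without any Poincar\'e step; your choice $\varepsilon^2=Q_{\max}\mu(B_r(x))/\|\nabla\phi\|_{L^2}^2$ gives the slightly sharper bound with $\|\nabla\phi\|_{L^2}$ in place of $\|\phi\|_{W^{1,2}}$, at the cost of the normalization-plus-Poincar\'e detour you outline.
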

\begin{proof}
From (\ref{equ2.12}), there is a number $R_0\in (0,\frac{1}{2}d(\Omega',\partial \Omega))$ with $R_0<1$, (depending only on $N,K,d(\Omega',\partial\Omega),\Omega,Q_{\max} $ and $\varepsilon_{\bf u}$,)
such that 
\begin{equation}\label{equ5.20}
\big(Q_{\max}\mu(B_{R_0}(x))\big)^{1/2}+ Q_{\max}\mu(B_{R_0}(x))<\varepsilon_{\bf u},\qquad \forall\ x\in\Omega.
\end{equation}
 Fix any ball $B_r(x)$ with $r<R_0$ and $d(x,\Omega')<R_0$, where $R_0$ is given in  the above  (\ref{equ5.20}). The inequality (\ref{equ5.19}) obviously holds if $\|\phi\|_{W^{1,2}(B_r(x))}=0$,
  so we are assuming that $\|\phi\|_{W^{1,2}(B_r(x))}>0$ in the following.
  Put
  \begin{equation}\label{equ5.21}
    \delta:=\big(Q_{\max}
    \cdot\mu(B_{r}(x)) \big)^{\frac{1}{2}}\Big/ \|\phi\|_{W^{1,2}(B_r(x))} \text{.}
  \end{equation}
 Let ${\bf v}=(u_1,\cdots, u_{i-1},u_i+\delta\phi,u_{i+1},\cdots,u_m)\in \mathscr A_{\bf g}$. Note  that
  \begin{equation*}
    \begin{aligned}
    \dist({\bf v},{\bf u})&\ls     \delta\cdot \|\phi\|_{W^{1,2}(B_r(x)))}+Q_{\max}\mu({\rm supp}\phi)\\
   & \ls   (Q_{\max}\mu(B_{r}(x)) ) ^{\frac{1}{2}}   +Q_{\max}\mu(B_{r}(x)) <\varepsilon_{\bf u} \\
     \end{aligned}
  \end{equation*}
provided $r<R_0$, by  (\ref{equ5.20}). The local minimality of ${\bf u}$ implies
  \begin{equation*}
    \begin{aligned}
      \int_{\Omega}\left|\nabla {\bf u}\right|^{2}= & J_Q({\bf u}) -\int_{\Omega}Q\chi_{\left\{\left|{\bf u}\right|>0\right\}}  \ls   J_Q({\bf v}) -\int_{\Omega}Q\chi_{\left\{\left|{\bf u}\right|>0\right\}}                                                                                                                                                                \\
      \ls                                         & \int_{\Omega}\left|\nabla{\bf v}\right|^{2}+\int_{\Omega}(Q\chi_{\left\{\left|{\bf v}\right|>0\right\}}-Q\chi_{\left\{\left|{\bf u}\right|>0\right\}})  \\
      \ls                                         & \int_{\Omega}\left|\nabla  {\bf v}  \right|^{2}+Q_{\max}\mu(B_{r}(x ) )                                                                                                             \\
      \ls                                         & \int_{\Omega}\left|\nabla {\bf u}\right|^{2}+2\delta\int_{B_{r}(x) }\langle\nabla u_{i},\nabla\phi\rangle+\delta^{2}\int_{B_{r}(x) }\left|\nabla\phi\right|^{2}     +Q_{\max}\mu(B_{r}(x) ) 
    \end{aligned}
  \end{equation*}
 for all $r<R_0$. Therefore,
  \begin{equation*}
    \begin{aligned}
      -2\int_{B_{r}(x) }\langle\nabla u_{i},\nabla\phi\rangle & \ls \delta\int_{B_{r}(x) }\left|\nabla\phi\right|^{2}+\frac{Q_{\max}}{\delta}\mu(B_{r}(x) )                         \\
                                                                            & \ls \delta\cdot \|\phi\|_{W^{1,2}(B_r(x))}^{2}+\frac{Q_{\max}}{\delta}\mu(B_{r}(x) )            \text{,}
    \end{aligned}
  \end{equation*}
  which is equivalent to \eqref{equ5.19} by (\ref{equ5.21}), and the proof is finished.
\end{proof}

Combining Lemma \ref{lem5.5} and Lemma \ref{lem5.3}, we are able to control the growth of local minimizers near the free boundary $\partial\{|{\bf u}|>0\}\cap \Omega.$

\begin{lemma}[Optimal linear growth]\label{lem5.6}
  Let ${\bf u}=(u_{1},\ldots,u_{m}) $ be a local minimizer of $J_Q$ in \eqref{equ1.2}   with $Q$ satisfying  \eqref{equ1.3}, and let $\Omega'\Subset\Omega$. Let  $R_0$ be the constant given in Lemma \ref{lem5.5}.  Suppose that  $B_r(x_0)$ is a ball with  radius $r<R_0$ and  $d(x_0,\Omega')<R_0$. Then,   if $u_i(x_0)=0$ for some $i=1,\cdots,m$, it holds 
    \begin{equation}\label{equ5.22}
    \sup_{B_{r/2}(x_{0}) }u_{i}(x)\ls  C\sqrt{Q_{\max}}\cdot r,
  \end{equation}
 where $C=C_{N,K,\Omega}>0$  depends only on $K,N $ and $\Omega$.
 \end{lemma}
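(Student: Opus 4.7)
The plan is to combine the mean value inequality of Lemma~\ref{lem5.3} applied to $u_i$ at $x_0$ with the Caccioppoli-type bound of Lemma~\ref{lem5.5}, and then to upgrade the resulting averaged bound to a pointwise sup estimate via the subharmonicity of $u_i$ (Lemma~\ref{lem4.1}) together with the weak mean value inequality for nonnegative subharmonic functions recalled in Remark~\ref{rem4.2}.

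First I would verify the hypothesis \eqref{equ5.15} of Lemma~\ref{lem5.3}. By Lemma~\ref{lem4.3}, ${\bf u}$ is continuous on $\Omega$, and since $u_i(x_0)=0$, for any $\varepsilon>0$ we have $u_i<\varepsilon$ on a small ball around $x_0$; combining this with the non-collapsed volume bound $\mu(B_r(x_0))\ls C_{N,K}r^N$ from \eqref{equ2.12} gives $r^{-N}\int_{B_r(x_0)}u_i\du\to 0$ as $r\to 0^+$. Lemma~\ref{lem5.3} therefore yields
\[
\fint_{B_R(x_0)}u_i\ls C_1\int_0^R\frac{e^{-C_2s^2}}{s^{N+1}}\int_{B_s(x_0)}\ip{\nabla u_i}{\nabla\rho^2}\,{\rm d}s,\qquad R\in(0,R_0),
\]
where $\rho=d(\cdot,x_0)$. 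To control the inner integral I would use the test function $\phi_s:=s^2-\rho^2$: since $\rho$ is $1$-Lipschitz and $\phi_s$ vanishes on $\partial B_s(x_0)$, we have $\phi_s\in W^{1,2}_0(B_s(x_0))$ and $\phi_s\gs 0$, with $\nabla\phi_s=-\nabla\rho^2$, $|\phi_s|\ls s^2$ and $|\nabla\phi_s|=2\rho\ls 2s$; hence (using $s\ls 1$) $\|\phi_s\|_{W^{1,2}(B_s(x_0))}\ls Cs\sqrt{\mu(B_s(x_0))}$. Applying Lemma~\ref{lem5.5} with $\phi=\phi_s$ and invoking $\mu(B_s(x_0))\ls C_{N,K}s^N$ once more produces
\[
\int_{B_s(x_0)}\ip{\nabla u_i}{\nabla\rho^2}=-\int_{B_s(x_0)}\ip{\nabla u_i}{\nabla\phi_s}\ls C_{N,K}\sqrt{Q_{\max}}\,s^{N+1}.
\]

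Substituting this back, the weight $s^{-N-1}$ cancels $s^{N+1}$ and the remaining Gaussian factor integrates to
\[
\fint_{B_R(x_0)}u_i\ls C\sqrt{Q_{\max}}\int_0^R e^{-C_2s^2}\,{\rm d}s\ls C'\sqrt{Q_{\max}}\cdot R,\qquad R\in(0,R_0).
\]
Finally, $u_i$ is nonnegative and subharmonic by Lemma~\ref{lem4.1}, so Remark~\ref{rem4.2} gives $\sup_{B_{r/2}(x_0)}u_i\ls C\fint_{B_r(x_0)}u_i$; composing with the previous display yields \eqref{equ5.22}. The main structural point, and the reason the non-collapsing hypothesis is essential, is that the ``one power of $s$ and one factor of $\sqrt{Q_{\max}}$'' arising from Lemma~\ref{lem5.5} only match the weight $s^{-N-1}$ in Lemma~\ref{lem5.3} if one has the Euclidean-type scaling $\mu(B_s)\ls Cs^N$; once that match is in place, the rest of the argument is routine assembly.
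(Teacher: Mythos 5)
Your proposal is correct and follows essentially the same route as the paper: verify hypothesis \eqref{equ5.15} via continuity of $u_i$ and the non-collapsed volume bound \eqref{equ2.12}, apply the mean value inequality of Lemma \ref{lem5.3}, bound the inner integral by testing Lemma \ref{lem5.5} with $\phi=s^2-\rho^2$, and finish with the sup-mean-value bound for the nonnegative subharmonic $u_i$. The only difference is cosmetic: you spell out the verification of \eqref{equ5.15} slightly more explicitly than the paper does.
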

 
\begin{proof} Since $(X,d,\mu)$ is non-collapsed, from $u_i(x_0)=0$ and (\ref{equ2.12}), we know that the condition  (\ref{equ5.15}) in Lemma \ref{lem5.3} holds.    
For each $s<R_0$, by using Lemma \ref{lem5.5} to $\phi=s^2-\rho^2(x)$, we get
  \begin{equation*}
    \begin{aligned}
      \int_{B_{s}(x_{0}) }\langle\nabla u_{i},\nabla \rho ^{2}\rangle & =-\int_{B_{s}(x_{0}) }\langle\nabla u_{i},\nabla(s^{2}-\rho ^{2}) \rangle                               \\
      &\ls   \big(Q_{\max}\cdot \mu(B_s(x_0))\big)^{1/2}\cdot\|s^2-\rho^2\|_{W^{1,2}(B_s(x_0))} \\
                                                                                 & \ls \big(Q_{\max}\cdot \mu(B_s(x_0))\big)^{1/2}\Big(\int_{B_s(x_0)}(4\rho^2|\nabla \rho|^2+(s^2-\rho^2)^2 ) \Big)^{\frac{1}{2}}       \\
                                                                                 & \ls  3 \sqrt{Q_{\max}}\cdot \mu(B_s(x_0))\cdot s  \quad\qquad ({\rm by}\ \ 
                                                                                 \rho<s,\   \ s<R_0\ls  1).   \\
 & \ls  C_{N,K} \sqrt{Q_{\max}}  \cdot s^{N+1}  \quad\qquad ({\rm by}\ \eqref{equ2.12},\  s\ls1).
                                                                  \end{aligned}
  \end{equation*}
 Since ${\bf u}$ is continuous on $\overline{B_{r}(x_{0}) }$ by Lemma \ref{lem4.3},
  it follows from Lemma \ref{lem5.3} and $u_i(x_0)=0$ that
  \begin{equation}\label{equ5.23}
    \begin{aligned}
      \fint_{B_{r}(x_{0}) }u_{i} & \ls   C_1 \int_{0}^{r}\frac{e^{- C_2s^2}}{s^{N+1}}\int_{B_{s}(x_{0}) }\langle\nabla u_i,\nabla \rho ^{2}\rangle\mathrm{d}s                                             \ls  C_3\sqrt{Q_{\max}}\cdot r\text{,}
    \end{aligned}
  \end{equation}
  for all $r<R_0$. Thus, by using the fact that $u_i$ is subharmonic (Lemma \ref{lem4.1}) and $u_i\gs  0$, we get (see, for example, \cite[Theorem 4.2]{KS01})
  $$\sup_{B_{r/2}(x_0)}u_i\ls  C_4\fint_{B_r(x_0)}u_i\ls  C_4C_3\sqrt{Q_{\max}}\cdot r,\qquad \forall\ r<R_0.$$
The proof is finished.
\end{proof}

As a corollary of the combination of the linear growth and Cheng-Yau's gradient estimate for harmonic functions, one can get the following gradient estimate near the free boundary   $\partial\{|{\bf u}|>0\}\cap \Omega.$

\begin{lemma}\label{lem5.7}
  Let ${\bf u}=(u_{1},\ldots,u_{m}) $ be a local minimizer of $J_Q$ in \eqref{equ1.2}  with $Q$ satisfying  \eqref{equ1.3},   and let $\Omega'\Subset\Omega$.
  There exists a positive constant $C=C_{N,K,\Omega}>0$ (depending only on $K,N$ and $\Omega$), such that: if $x_1\in \Omega'$ and if $d(x_1,\{|{\bf u}|=0\}\cap\Omega)<R_0/8$,  then it holds
  \begin{equation}\label{equ5.24}
    \Lip u_i(x_1)  \ls  C\sqrt{Q_{\max}} ,\qquad i=1,2,\cdots, m,
  \end{equation}
 where $R_0$ is given in Lemma \ref{lem5.5}, and $\Lip u_i$ is the pointwise Lipschitz constant defined in \eqref{equ2.4}.
 \end{lemma}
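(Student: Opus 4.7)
The plan is to combine the optimal linear growth from Lemma~\ref{lem5.6} at a nearby zero of $|{\bf u}|$ with the Cheng-Yau gradient estimate (Theorem~\ref{thm2.6}) for the harmonic function $u_i$ on an interior ball. I would set $\rho:=d(x_1,\{|{\bf u}|=0\}\cap\Omega)$, so $\rho<R_0/8$ by hypothesis, and choose a point $x_0\in\{|{\bf u}|=0\}\cap\Omega$ with $d(x_1,x_0)\ls 2\rho$. Then $d(x_0,\Omega')\ls d(x_0,x_1)\ls 2\rho<R_0$, and $u_i(x_0)=0$ for every $i=1,\dots,m$.

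First I would apply Lemma~\ref{lem5.6} to the ball $B_{5\rho}(x_0)$, which is permissible since $5\rho<R_0$; this yields $\sup_{B_{5\rho/2}(x_0)}u_i\ls C_1\sqrt{Q_{\max}}\,\rho$. The triangle-inequality inclusion $B_{\rho/2}(x_1)\subset B_{5\rho/2}(x_0)$ then gives $\sup_{B_{\rho/2}(x_1)}u_i\ls C_1\sqrt{Q_{\max}}\,\rho$. Next I would observe that every $y\in B_\rho(x_1)$ satisfies $d(y,\{|{\bf u}|=0\}\cap\Omega)\gs \rho-d(y,x_1)>0$ and lies in $\Omega$ (since $B_\rho(x_1)\subset B_{R_0}(x_1)\subset\Omega$ by the choice of $R_0$ in Lemma~\ref{lem5.5}), so $y\in\Omega_{\bf u}$. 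Hence $B_\rho(x_1)\subset\Omega_{\bf u}$, and by Lemma~\ref{lem4.4} the nonnegative function $u_i$ is harmonic on this ball.

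Then I would invoke the Cheng-Yau estimate of Theorem~\ref{thm2.6} applied to the positive harmonic function $u_i+\epsilon$ on $B_\rho(x_1)$; letting $\epsilon\to 0$ and using $\rho\ls 1$ yields the pointwise bound
\[
|\nabla u_i|(x)\ls \frac{C_N(1+\sqrt{-K})}{\rho}\cdot u_i(x),\qquad x\in B_{\rho/2}(x_1).
\]
Combined with the growth bound from the previous step this gives $|\nabla u_i|(x)\ls C_2\sqrt{Q_{\max}}$ pointwise on $B_{\rho/2}(x_1)$. Evaluating at $x=x_1$ and using that $u_i$ admits a locally Lipschitz representative on $\Omega_{\bf u}$ with $\Lip u_i=|\nabla u_i|$ $\mu$-a.e.\ (Theorem~\ref{thm2.6} together with Proposition~\ref{prop2.1}(1)), we conclude $\Lip u_i(x_1)\ls C_2\sqrt{Q_{\max}}$, which is \eqref{equ5.24}. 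The main technical subtlety I anticipate is the mismatch between the hypothesis of Cheng-Yau (\emph{strictly positive} harmonic function) and the situation at hand, since $u_i$ is only nonnegative and may vanish at interior points of $B_\rho(x_1)$ (note that $|{\bf u}|>0$ is strictly weaker than $u_i>0$); the $\epsilon$-perturbation above sidesteps this entirely and avoids any appeal to a strong minimum principle in the RCD setting.
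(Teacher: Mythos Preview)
Your approach matches the paper's exactly (linear growth from Lemma~\ref{lem5.6} at a nearby zero, then Cheng--Yau on an interior ball), but there is a gap: you do not cover the case $\rho=0$. If $\rho=d(x_1,\{|{\bf u}|=0\}\cap\Omega)=0$ then $x_1$ lies in the (relatively closed) zero set, so $x_1\notin\Omega_{\bf u}$, and both balls $B_{5\rho}(x_0)$ and $B_\rho(x_1)$ in your argument are degenerate --- in particular the Cheng--Yau step on $B_\rho(x_1)$ is unavailable. The paper treats this case separately: since $u_i(x_1)=0$, Lemma~\ref{lem5.6} applied to $B_r(x_1)$ for each $r<R_0$ gives $\sup_{B_{r/2}(x_1)}u_i\ls C\sqrt{Q_{\max}}\,r$, and letting $r\to 0$ in the definition~\eqref{equ2.4} yields $\Lip u_i(x_1)\ls C\sqrt{Q_{\max}}$ directly, without any appeal to harmonicity.

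A smaller imprecision in your final step: ``evaluating at $x=x_1$'' together with ``$\Lip u_i=|\nabla u_i|$ $\mu$-a.e.'' does not pin down $\Lip u_i$ at the specific point $x_1$. What you actually have is the \emph{sup} bound $\sup_{B_{\rho/2}(x_1)}|\nabla u_i|\ls C_2\sqrt{Q_{\max}}$; since $|\nabla u_i|$ is an upper gradient for $u_i$, integrating along geodesics near $x_1$ (as in the paper's proof of Theorem~\ref{thm1.4}) gives $|u_i(y)-u_i(x_1)|\ls C_2\sqrt{Q_{\max}}\,d(y,x_1)$ for $y$ close to $x_1$, whence $\Lip u_i(x_1)\ls C_2\sqrt{Q_{\max}}$. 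Your $\epsilon$-perturbation to meet the strict-positivity hypothesis of Theorem~\ref{thm2.6} is a clean detail that the paper leaves implicit.
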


\begin{proof}

We will finish this proof by considering two cases as follows.\\
(i) In the case where $d(x_1,\{|{\bf u}|=0\}\cap\Omega)=0$. The continuity of ${\bf u}$ implies the  $\{|{\bf u}|=0\}$ is relative closed in $\Omega$. This implies $x_1\in \{|{\bf u}|=0\}$ in this case. By Lemma \ref{lem5.6}, we have
$$\sup_{y\in B_{r/2}(x_1)}\frac{|u_i(y)-u_i(x_1)|}{r}\ls  C\sqrt{Q_{\max}},\quad \ i=1,2,\cdots, m,$$
for all $r<R_0$. By (\ref{equ2.4}) and letting $r\to0$, this yields $\Lip u_i(x_1)\ls  C\sqrt{Q_{\max}}.$

(ii)  In the case  where $d(x_1,\{|{\bf u}|=0\}\cap\Omega)>0$. We put   
$$r_1:=d(x_1,\{|{\bf u}|=0\}\cap\Omega)\in (0,R_0/8).$$
Since $B_{r_1/2}(x_1)\subset \{|{\bf u}|>0\}$, 
 from Lemma \ref{lem4.4}, we have known that all $u_i$, $i=1,\cdots, m,$ are harmonic on $B_{r_1/2}(x_1)$. By using Cheng-Yau estimate, Theorem \ref{thm2.6}, we obtain
\begin{equation}\label{equ5.25}
\Lip u_i(x_1)\ls  \sup_{y\in B_{r_1/4}(x_1)}\Lip u_i(y)\ls    \frac{C}{r_1}\sup_{y\in B_{ r_1/4}(x_1)} u_i(y),
\end{equation}
 where the constant $C$ depends only on $N,K$ and $\Omega$. Take $x_2\in  B_{2r_1}(x_1)\cap \{|{\bf u}|=0\}$. 
By applying Lemma \ref{lem5.6} to $B_{8r_1}(x_2)$, (remark that  $d(x_2,\Omega')<2r$ and the assumption $8r_1<R_0$,) we have
 \begin{equation}\label{equ5.26}
 \sup_{y\in B_{r_1/4}(x_1)} u_i(y)\ls  \sup_{y\in B_{4r_1}(x_2)} u_i(y)\ls  C\sqrt{Q_{\max}}\cdot r_1.
   \end{equation}
 The combination of (\ref{equ5.25}) and (\ref{equ5.26}) implies the desired estimate \eqref{equ5.24}. Now the proof is completed.
\end{proof}

Now we are in the position to show the local Lipschitz continuity of ${\bf u}$.

\begin{proof}[Proof of Theorem \ref{thm1.4}]
  Let $B_R(x)\subset\Omega$. Let  $R_0$ be the constant given in Lemma \ref{lem5.5} with respect to $\Omega':=B_{R/2}(x)$.    

Take any $x_1\in   B_{R/2}(x)$.  If $d(x_1, \Omega\cap \{|{\bf u}|=0\})<R_0/8$,  then Lemma \ref{lem5.7} asserts 
  \begin{equation*}
    \Lip u_i(x_1)  \ls  C\sqrt{Q_{\max}} ,\qquad i=1,2,\cdots, m,
  \end{equation*}
 If $d(x_1, \Omega\cap \{|{\bf u}|=0\})\gs R_0/8$, that is,  $B_{R_0/10}(x_1)\subset \{|{\bf u}|>0\}$,  then Cheng-Yau's estimate, Theorem \ref{thm2.6}, asserts 
  \begin{equation*}
    \Lip u_i(x_1)  \ls  \frac{C}{R_0} \sup_{\Omega} |{\bf u}| ,\qquad i=1,2,\cdots, m.
  \end{equation*}
By summing up both cases and recalling Remark \ref{rem4.2}, we conclude that 
 there exists a constant $L$ depending only on $N,K,\Omega, R, Q_{\max}, \varepsilon_{\bf u}$ and $\int_{B_R(x)} |{\bf u}|{\rm d}\mu$, such that 
  \begin{equation}\label{equ5.27}
  \sup_{B_{R/2}(x)}\Lip u_i(x)\ls  L, \qquad \forall\ i=1,2,\cdots, m.
  \end{equation}
 
Take any $y,z\in B_{R/4}(x).$ Let $\gamma:[0,d(x,y)]\to \Omega$ be a geodesic from $y$ to $z$. The triangle inequality implies that $\gamma\subset B_{R/2}(x)$. Noted that $\Lip u_i $ is one of the upper gradient of $u_i$ (see \cite{Che99}), the   estimate (\ref{equ5.27}) implies that 
$$|u_i(y)-u_i(z)|\ls  \int_0^{d(y,z)}\Lip u_i\circ \gamma(s){\rm d}s\ls  L\cdot d(y,z),$$
for each $i=1, 2, \cdots, m$. The proof is finished.
\end{proof}

\section{Local finiteness of perimeter for the free boundary\label{sec:nondegeneracy}}

We  continue to assume   that $(X,d,\mu)$ is an $ncRCD(K,N)$ metric measure space with $K\ls 0$ and $N\in(1,+\infty)$. Let $\Omega\subset X$ be a bounded  domain and let $Q\in L^\infty(\Omega)$ satisfy (\ref{equ1.3}) for two positive numbers $Q_{\min}$ and $Q_{\max}$. Let
$${\bf u}=(u_1,u_2,\cdots, u_m)$$
be a local minimizer of $J_Q$ in (\ref{equ1.2}) with a boundary data ${\bf g}\in W^{1,2}(\Omega,[0,+\infty)^m)$, i.e., there exists $\varepsilon_{\bf u}>0$ such that $J_Q({\bf u})\ls  J_Q({\bf v})$ for all ${\bf v}\in\mathscr A_{\bf g}$ with $\dist({\bf u},{\bf v})<\varepsilon_{\bf u}$, where the $\mathscr A_{\bf g}$ and $\dist({\bf u},{\bf v})$ are given in (\ref{equ1.4}) and (\ref{equ1.5}), respectively.  
From Theorem \ref{thm1.4}, we know that ${\bf u}$ is locally Lipschitz continuous in the interior of  $\Omega$.  

We begin with the nondegeneracy of the local minimizer ${\bf  u}$ near the free boundary.  

\subsection{Nondegeneracy}

\begin{theorem}[Nondegeneracy] \label{thm6.1}
  Let ${\bf u}=(u_{1},\ldots,u_{m})$ be a local minimizer of $J_Q$ in \eqref{equ1.2}   with $Q$ satisfying  \eqref{equ1.3},  and let $\Omega'\Subset \Omega.$   Then there is a constant $R_1>0$ (depending only on $N,K,\Omega',Q_{\max},\varepsilon_{\bf u}$ and  the Lipschitz constant of ${\bf u}$ on $\Omega'$)  such that for any   ball $B_{r}(x_0)\subset\Omega$ with $x_0\in\partial\{\left|{\bf u}\right|>0 \}\cap \Omega'$ and  $r<R_1$, it holds
  \begin{equation}\label{equ6.1}
    \sup_{B_{r}(x_0)}\left|{\bf u}\right|\gs  c\sqrt{Q_{\min}}\cdot r\text{,}
  \end{equation}
  where the positive constant $c$   depends only on $N,K,\Omega',\varepsilon_{\bf u}$ and  the Lipschitz constant of ${\bf u}$ on $\Omega'$.
\end{theorem}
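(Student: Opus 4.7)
The strategy is the classical Alt--Caffarelli contradiction argument: assume the conclusion fails, construct a competitor ${\bf v}$ that vanishes on $B_{r/2}(x_0)$ (so $\{|{\bf v}|>0\}\subsetneq \{|{\bf u}|>0\}$), and exploit the resulting gain in the $Q\chi_{\{|{\bf u}|>0\}}$ term of $J_Q$. Specifically, suppose for arbitrarily small $c>0$ there exist $x_0\in\partial\{|{\bf u}|>0\}\cap\Omega'$ and $r<R_1$ with $M:=\sup_{B_r(x_0)}|{\bf u}|\le c\sqrt{Q_{\min}}\,r$. Let $\phi(x):=(1-d(x,x_0)/r)_+$ be a radial $1/r$-Lipschitz cutoff, so $\phi\ge 1/2$ on $B_{r/2}(x_0)$ and $\phi\equiv 0$ off $B_r(x_0)$; set $s_i:=2\sup_{B_{r/2}(x_0)}u_i\le 2M$ and define $v_i:=(u_i-s_i\phi)_+$ and ${\bf v}:=(v_1,\dots,v_m)$. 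One checks ${\bf v}\in\mathscr A_{\bf g}$ with $v_i\equiv 0$ on $B_{r/2}(x_0)$, $v_i=u_i$ off $B_r(x_0)$, and $0\le v_i\le u_i$. The Lipschitz bound of Theorem \ref{thm1.4} together with $\mu(B_r)\lesssim r^N$ gives $\dist({\bf u},{\bf v})\le C(L,Q_{\max})\mu(B_r)^{1/2}$, which is $<\varepsilon_{\bf u}$ provided $R_1$ is chosen small.

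Local minimality combined with the inclusion $\{|{\bf v}|>0\}\subset\{|{\bf u}|>0\}\setminus B_{r/2}(x_0)$ gives
\[
Q_{\min}\,\mu\bigl(B_{r/2}(x_0)\cap\{|{\bf u}|>0\}\bigr)\le \sum_{i=1}^m\int_{B_r(x_0)}\bigl(|\nabla v_i|^2-|\nabla u_i|^2\bigr).
\]
Expanding pointwise, on $\{u_i\le s_i\phi\}$ the integrand equals $-|\nabla u_i|^2\le 0$, while on $\{u_i>s_i\phi\}$ it equals $-2s_i\langle\nabla u_i,\nabla\phi\rangle+s_i^2|\nabla\phi|^2$. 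I would absorb the cross term using the subharmonicity of $u_i$ (Lemma \ref{lem4.1}): testing $\Delta u_i\ge 0$ against a nonnegative ambient cutoff of $(u_i-s_i\phi)_+$ (the ambient cutoff is needed because $(u_i-s_i\phi)_+$ does not vanish on $\partial B_r(x_0)$; the resulting boundary terms are controlled by the Lipschitz constant from Theorem \ref{thm1.4}), one obtains $s_i\int_{\{u_i>s_i\phi\}}\langle\nabla u_i,\nabla\phi\rangle\gtrsim\int_{\{u_i>s_i\phi\}}|\nabla u_i|^2$, which together with $|\nabla\phi|\le 1/r$ collapses the comparison to
\[
\int_{B_r(x_0)}\bigl(|\nabla v_i|^2-|\nabla u_i|^2\bigr)\le C_{N,K}\,\frac{s_i^2}{r^2}\,\mu(B_r(x_0)).
\]
Summing over $i$ and using $s_i\le 2c\sqrt{Q_{\min}}\,r$ yields the measure-density estimate
\[
\mu\bigl(B_{r/2}(x_0)\cap\{|{\bf u}|>0\}\bigr)\le C_{N,K,m}\,c^2\,\mu(B_r(x_0)).
\]

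To close the contradiction I would rescale: setting $r_k\to 0$ with $\sup_{B_{r_k}}|{\bf u}|\le c_k\sqrt{Q_{\min}}\,r_k$ and $c_k\to 0$, the pointed rescalings $(X,r_k^{-1}d,\mu^{x_0}_{r_k},x_0)$ pmGH-subconverge (compactness of $ncRCD(K,N)$) to a tangent cone $(Y,d_Y,\mu_Y,y)$, and the rescaled maps ${\bf u}_k:={\bf u}/r_k$, uniformly Lipschitz by Theorem \ref{thm1.4}, subconverge uniformly on $B_1(y)$ to a limit ${\bf u}_\infty$. The density bound rescales to $\mu_Y(B_{1/2}(y)\cap\{|{\bf u}_\infty|>0\})=0$, so by continuity ${\bf u}_\infty\equiv 0$ on $B_{1/2}(y)$. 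This contradicts the fact that, as the pmGH-limit of $x_0\in\partial\{|{\bf u}|>0\}$, the basepoint $y$ must lie in the closure of $\{|{\bf u}_\infty|>0\}$, once one establishes that ${\bf u}_\infty$ itself is a (nontrivial) local minimizer of a rescaled one-phase functional on $(Y,d_Y,\mu_Y)$.

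The main obstacle is twofold: first, absorbing the cross term $-2s_i\int_{\{u_i>s_i\phi\}}\langle\nabla u_i,\nabla\phi\rangle$ via subharmonicity is delicate because the natural test function $(u_i-s_i\phi)_+$ is not compactly supported in $B_r(x_0)$, so one needs an additional ambient cutoff with all the attendant boundary terms bounded by the Lipschitz constant. Second, closing the contradiction in the $ncRCD$ framework requires the stability of the one-phase minimization problem under pmGH convergence together with the persistence of free-boundary points in the limit --- both significantly more subtle than the Euclidean analogues, where flat scaling and harmonic rigidity do the work automatically.
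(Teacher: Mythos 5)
Your competitor construction and the verification that $\dist({\bf u},{\bf v})<\varepsilon_{\bf u}$ are fine, and the expansion of $|\nabla v_i|^2-|\nabla u_i|^2$ together with the subharmonicity trick (with an extra ambient cutoff, whose boundary terms are of order $LMr^{N-1}$, not $s_i^2r^{-2}\mu(B_r)$, but still $o(1)$ as $c\to0$) can indeed be pushed to a one-sided density bound of the form $\mu\big(B_{r/2}(x_0)\cap\{|{\bf u}|>0\}\big)\le \epsilon(c)\,\mu(B_r(x_0))$. The genuine gap is in how you close the argument. First, this estimate has the half-ball on the left and the full ball on the right, so unlike the paper's inequality it cannot be divided out; it only says the positivity set has small relative measure, which by itself is not a contradiction. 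Second, the blow-up step you use to finish is circular: the assertion that the limit basepoint $y$ lies in $\overline{\{|{\bf u}_\infty|>0\}}$ (persistence of free-boundary points under pmGH/blow-up limits) is exactly a consequence of the nondegeneracy you are trying to prove --- this is precisely how \eqref{equ7.6} in Theorem \ref{thm7.1} is established, by quoting Theorem \ref{thm6.1}. Without nondegeneracy, uniform convergence only gives $|{\bf u}_\infty(y)|=0$; in the degenerate scenario being excluded, ${\bf u}_\infty\equiv 0$ on $B_{1/2}(y)$ is a perfectly admissible minimizer (zero boundary data on that ball) and produces no contradiction. A further, secondary defect is that a contradiction/compactness argument run inside the fixed space for the fixed ${\bf u}$ would only produce some constant for that particular ${\bf u}$, not one depending solely on $N,K,\Omega',\varepsilon_{\bf u}$ and the Lipschitz constant, and this uniformity is exactly what is needed later when Theorem \ref{thm6.1} is applied along sequences of varying spaces and minimizers in Section \ref{sec:compactness}.

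For comparison, the paper avoids both issues by a purely local, quantitative argument: the truncation profile is not the linear cutoff but $M\psi_\theta(\rho/r)$ built from $\phi_{N,K}$, which is superharmonic on the annulus by the Laplacian comparison (Theorem \ref{thm2.5}); the Stokes formulas on annuli and balls (Lemma \ref{lem5.1}, Remark \ref{rem5.2}) then convert the annulus energy gain into boundary-derivative terms $\frac{d}{ds}\int_{B_s}u_i$ at $s\approx\theta r$, which are re-expanded over the small ball and bounded by $C\,\frac Mr\big(\frac{M}{Q_{\min}r}+\frac{1}{2\sqrt{Q_{\min}}}\big)\int_{B_{\theta r}}\big(Q\chi_{\{|{\bf u}|>0\}}+|\nabla{\bf u}|^2\big)$, i.e.\ by the \emph{same} strictly positive quantity appearing on the left because $x_0$ is a free-boundary point. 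Dividing it out gives $M\gtrsim\sqrt{Q_{\min}}\,r$ directly, with explicit constants and no blow-up. If you want to keep your linear cutoff, you would need to reorganize the right-hand side so that it is controlled by the small-ball quantity $\int_{B_{r/2}}\big(|\nabla{\bf u}|^2+Q\chi_{\{|{\bf u}|>0\}}\big)$ itself (as the paper does), rather than by $\mu(B_r)$.
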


\begin{proof} In the Euclidean setting, this assertion was established in \cite[Theorem 3]{CSY18} and \cite{MTV17}. It was extended to smooth Riemannian manifolds in \cite{LS20}. Here we will extend their arguments to a nonsmooth setting. Without loss of the generality, we can assume that $K<0$.

 Fix  any $r\in (0,1)  $ and let   $M=\sup_{B_{r}(x_0) }\left|{\bf u}\right|$. Since ${\bf u}$ is Lipschitz continuous on $\Omega'$ (see Theorem \ref{thm1.4}) and ${\bf u}(x_0)=0$, we get $M\ls L$, the Lipschitz constant of ${\bf u}$ on $\Omega'$.  Given any $\theta\in (0,1) $, as in   \cite{CSY18}, we consider the map ${\bf v}=(v_{1},\ldots,v_{m}) $, where
  \begin{equation}\label{equ6.2}
      v_{i}(y) =\begin{cases}
      \min\left\{u_{i}(y) ,M\psi_{\theta}(\frac{\rho(y) }{r}) \right\} & \text{if }y\in B_{r}(x_0) \text{,}     \\
      u_{i}(y)                                                                             & \text{if }y\not\in B_{r}(x_0) \text{,} 
    \end{cases}
  \end{equation}
  for all $i=1,\ldots,m$ and $y\in\Omega$, 
where  $\rho(\cdot)=d(\cdot,x_0)$,
  \begin{equation}\label{equ6.3}  
    \psi_{\theta}(t)=\frac{(\phi_{N,K}(t)-\phi_{N,K}(\theta)) ^{+}}{\phi_{N,K}(1) -\phi_{N,K}(\theta) } 
  \end{equation}
and
  \begin{equation*} 
    \phi_{N,K}(s) =-\int^1_s\left(\frac{\sinh(\sqrt{-K/(N-1)}t) }{\sqrt{-K/(N-1)}}\right)^{1-N}\mathrm{d}t.
  \end{equation*}
 Then it is clear that ${\bf v}\in\mathscr{A}_{\mathbf{g}}$ and $u_i-v_i\in W^{1,2}_0(B_r(x_0))$ for all $i=1,2,\cdots, m$.
  
 (i) \ We first   check that $\dist({\bf v},{\bf u})<\varepsilon_{\bf u}$    provided both $r$ and $\theta$ are sufficiently small. 
The co-area formula gives 
 \begin{equation*} 
    \begin{split}
      \int_{B_r(x_0) \setminus B_{\theta r}(x_0) }\left|\nabla \psi_\theta(\frac{\rho}{r}) \right|^2{\rm d}\mu&\ls\int_{\theta r}^r\left|\frac 1 r\psi'_\theta(\frac{s}{r}) \right|^2\cdot \frac{d^+}{dr}\mu(B_s(x_0)){\rm d}s.
                 \end{split}
  \end{equation*}
Since $(X,d,\mu)$ is non-collapsed,   substituting \eqref{equ2.13}  into the above inequality, we  obtain 
    \begin{equation}\label{equ6.4}
    \begin{split}
      \int_{B_r(x_0) \setminus B_{\theta r}(x_0) }\left|\nabla \psi_\theta(\frac{\rho}{r}) \right|^2{\rm d}\mu  
         &\ls  C_{1}r^{N-2}\cdot\left(\int_{\theta}^1  s^{1-N}   {\rm d}s\right)^{-1},  
           \end{split}
  \end{equation}
where  we have used $|\psi_{\theta}'(t)|\ls  C_{N,K}\frac{t^{1-N}}{|\phi_{N,K}(1)-\phi_{N,K}(\theta)|}\ls  \frac{ C'_{N,K}\cdot t^{1-N}}{\int_{\theta  }^1  s^{1-N}   {\rm d}s} $ for all $t\in(\theta,1)$.
Here and in the following of this proof, all constants $C_1,C_2,\cdots$ depend only on $N,K,\Omega'$.

Noticed that  $|\psi_\theta| \ls  1$ and $u_i-v_i=(u_i-M\psi_{\theta}(\frac{\rho}{r}) ) ^{+}\in W^{1,2}_0(B_r(x_0))$, we get
  \begin{equation}\label{equ6.5}  
    \int_{\Omega}\left|u_{i}-v_{i}\right|^{2}\ls \int_{B_{r}(x_0) }((u_{i}-M\psi_{\theta}(\frac{\rho}{r}) ) ^{+}) ^{2}\ls  M^{2}\mu(B_{r}(x_0) ) 
  \end{equation}
  and (by the fact that \cite{Che99}, for any $w\in W^{1,2}(\Omega)$, $|\nabla w^+|\ls  |\nabla w|$ holds almost everywhere in $\Omega$,)  
  \begin{equation}\label{equ6.6}
      \begin{aligned}
      \int_{\Omega}\left|\nabla(u_{i}-v_{i}) \right|^{2} & \ls \int_{B_{r}(x_0) }\left|\nabla(u_{i}-M\psi_{\theta}(\frac{\rho}{r}) ) \right|^{2} \\
       & \ls  2\int_{B_{r}(x_0) \setminus B_{\theta r}(x_0) }\left|\nabla(M\psi_{\theta}(\frac{\rho}{r}) ) \right|^{2}+2\int_{B_{ r}(x_0) }\left|\nabla u_i\right|^2\\
       & \ls  2 M^2 \int_{B_r(x_0) \setminus B_{\theta r}(x_0) }\left|\nabla \psi_\theta(\frac{\rho}{r}) \right|^2+ 2L^2\mu(B_r(x_0)),
           \end{aligned}
  \end{equation}
where for the second inequality we have used $\psi_\theta=0$ on $B_{\theta r}(x_0)$, and  for the last inequality  we have used $|\nabla u_i|\ls  L$.  Recall    $M\ls L$. 

From the combination of (\ref{equ6.4})--(\ref{equ6.6}), the fact  $M\ls L$,  ${\bf u}-{\bf v}\in W^{1,2}_0(B_r(x_0),\mathbb R^m)$, and taking $\theta $ such that
\begin{equation}\label{equ6.7}
 C_1\Big(\int_ \theta^1s^{1-N}{\rm d}s\Big)^{-1}=\frac{\varepsilon_{\bf u}}{8L^2},
 \end{equation}
 we conclude   that $\dist({\bf v},{\bf u})<\varepsilon_{\bf u}$   provided   $r<R_1$ for some small  number $R_1>0$ depending only on $N,K, L,Q_{\max}$ and $\varepsilon_{\bf u}$.
 
(ii)\  Fixed any $r\in (0,R_1)  $    and taken $\theta$ in (\ref{equ6.7}),
  the local minimality of ${\bf u}$  gives
  \begin{equation*}
    \begin{aligned}
                & \int_{B_{\theta r}(x_0) }(\left|\nabla{\bf u}\right|^{2}+Q\chi_{\left\{\left|{\bf u}\right|>0\right\}})                                                                                                                                                                                             \\
      =         & \int_{B_{r}(x_0) }(\left|\nabla{\bf u}\right|^{2}+Q\chi_{\left\{\left|{\bf u}\right|>0\right\}}) -\int_{B_{r}(x_0) \setminus B_{\theta r}(x_0) }(\left|\nabla{\bf u}\right|^{2}+Q\chi_{\left\{\left|{\bf u}\right|>0\right\}})                                      \\
      \ls  & \int_{B_{r}(x_0) }(\left|\nabla{\bf v}\right|^{2}+Q\chi_{\left\{\left|{\bf v}\right|>0\right\}}) -\int_{B_{r}(x_0) \setminus B_{\theta r}(x) }(\left|\nabla{\bf u}\right|^{2}+Q\chi_{\left\{\left|{\bf u}\right|>0\right\}})                                      \\
      =         & \int_{B_{r}(x_0) \setminus B_{\theta r}(x_0) }(\left|\nabla{\bf v}\right|^{2}+Q\chi_{\left\{\left|{\bf v}\right|>0\right\}}) -\int_{B_{r}(x_0) \setminus B_{\theta r}(x_0) }(\left|\nabla{\bf u}\right|^{2}+Q\chi_{\left\{\left|{\bf u}\right|>0\right\}}) ,         \end{aligned}
  \end{equation*}
where we used that ${\bf v}\llcorner B_{\theta r}(x_0) \equiv 0$. Noticed that $  (B_{r}(x_0) \setminus B_{\theta r}(x_0) )\cap  \{|{\bf u}|>0\}  = (B_{r}(x_0) \setminus B_{\theta r}(x_0) )\cap  \{|{\bf v}|>0\}$, this yields 
 \begin{equation}\label{equ6.8}
    \begin{aligned}
                 \int_{B_{\theta r}(x_0) }(\left|\nabla{\bf u}\right|^{2}+Q\chi_{\left\{\left|{\bf u}\right|>0\right\}})                                                                                                                                                                                            
         \ls         & \int_{B_{r}(x_0) \setminus B_{\theta r}(x_0) }(\left|\nabla {\bf v}\right|^{2}-\left|\nabla {\bf u}\right|^{2})                                                                                                                                                                             \\
      =         & \sum_{i=1}^{m}\int_{B_{r}(x_0) \setminus B_{\theta r}(x_0) }(\left|\nabla v_{i}\right|^{2}-\left|\nabla u_{i}\right|^{2}) \text{.}
    \end{aligned}
  \end{equation}
  Let $w_i=(u_{i}-M\psi_{\theta}(\frac{\rho}{r}) ) ^{+}$ for each $i=1,2,\cdots,m$. Then $u_i=v_i+w_i$ and 
  \begin{equation}\label{equ6.9}
    \begin{aligned}
                & \int_{B_{r}(x_0) \setminus B_{\theta r}(x_0) }(\left|\nabla v_{i}\right|^{2}-\left|\nabla u_{i}\right|^{2})     \\
      =         & \int_{B_{r}(x_0) \setminus B_{\theta r}(x_0) }\Big(-2\ip{\nabla v_i}{\nabla w_i}-\left|\nabla w_i\right|^{2}\Big)                                                                                  \\
            \ls       & -2\int_{B_{r}(x_0) \setminus B_{\theta r}(x_0) } \ip{\nabla v_i}{\nabla w_i}                                                                                \\
                  =         & -2\int_{B_{r}(x_0) \setminus B_{\theta r}(x_0) }\ip{\nabla(M\psi_{\theta}(\frac{\rho}{r}) ) }{\nabla w_i},
    \end{aligned}
  \end{equation}
  for each $i=1,\ldots, m$, 
where for the last equality, we have used that  $\left|\nabla w_i\right|=0 $ $\mu$-{a.e.} on $\left\{w_i=0\right\} \cap B_r(x_0)$ (see \cite{Che99} or Proposition \ref{prop2.1}(3)) and that $v_i=M\psi_{\theta}(\frac{\rho}{r})$ on $\{w_i\not=0\}\cap B_r(x_0)$.  Remark that $\{w_i\not=0\}$ is open by the continuity of $w_i$.

 (iii) \ Next we want to  estimate $\sum_{i=1,}^m I_{\theta r,r}(w_i),$ where we denote
  \begin{equation}\label{equ6.10}
    I_{r_1,r_2}(w_i):=- \int_{B_{r_2}(x_0) \setminus B_{ r_1}(x_0) }\ip{\nabla(M\psi_{\theta}(\frac{\rho}{r}) ) }{\nabla w_i}
  \end{equation}
  for any $r_1,r_2\in\left[\theta r,r\right]$ with $r_1<r_2$. 
     
By the Laplacian comparison theorem (see \eqref{equ2.7} in Theorem \ref{thm2.5}) and that the space $(M,\frac d r) $ satisfies $RCD(K\cdot r^2,N)$, we conclude that
  \begin{equation*}
    \laplace \psi_{\theta}(\frac{\rho}{r}) \ls  0\quad \mathrm{ on }\
    B_{r}(x_0) \setminus \overline{B_{\theta r}(x_0) } 
  \end{equation*}
 in the sense of distributions.   From Remark \ref{rem5.2}, we have for almost all $r_1,r_2\in (\theta r,r)$ with $r_1<r_2$, that
  \begin{equation}\label{equ6.11}
    \begin{aligned}
      I_{r_1,r_2}(w_i) = & \int_{B_{r_2}(x_0) \setminus B_{r_1}(x_0) }w_i{\rm d}\laplace(M\psi_{\rho}(\frac{\rho}{r}) )                                                                                                                                                             \\
                    & + M\frac{\psi_{\theta}'(\frac{r_1}{r}) }{r} \left.\frac{\mathrm{d}}{\mathrm{d}s}\right|_{s=r_1}\int_{B_{s}(x_{0}) }w_i - M\frac{\psi_{\theta}'(\frac{r_2}{r}) }{r} \left.\frac{\mathrm{d}}{\mathrm{d}s}\right|_{s=r_2}\int_{B_{s}(x_{0}) }w_i \\
      \ls      & C_{N,K,\theta}\frac{M}{r}\left.\frac{\mathrm{d}}{\mathrm{d}s} \right|_{s=r_1}\int_{B_{s}(x_{0}) }w_i,
    \end{aligned}
  \end{equation}
where we have used that $0\ls  \phi'_{N,K}(t) \ls  C_{N,K,\theta}$ for all $t\in(\theta,1)  $ and that $ \frac{\mathrm{d}}{\mathrm{d}s} \int_{B_{s}(x_{0}) }w_i \gs  0$ for almost all $s\in(\theta r,r) .$

For almost every $r_1\in (\theta r, r)$ such that both $s\mapsto\int_{B_{s}(x_0)}w_i$ and $s\mapsto\int_{B_{s}(x_0)}u_i$ are differentiable at $r_1$, we have
\begin{equation}\label{equ6.12}
\begin{split}
  \left.\frac{\mathrm{d}}{\mathrm{d}s}\right|_{s=r_1}\int_{B_{s}(x_0) }w_{i} -\left.\frac{\mathrm{d}}{\mathrm{d}s}\right|_{s=r_1}\int_{B_{s}(x_0) }u_{i}   
 = &\lim_{\delta\to0^+}\frac 1 \delta\int_{A_{r_1,r_1+\delta}}\big(u_i-w_i\big) \\
 \ls   &\lim_{\delta\to0^+}\frac 1 \delta\int_{A_{r_1,r_1+\delta}}\big|u_i-w_i\big|,
  \end{split}
\end{equation}
where $A_{r_1,r_1+\delta}:=B_{r_1+\delta}(x_0)\backslash B_{r_1}(x_0)$. 

 From the definition $w_i=(u_{i}-M\psi_{\theta}(\frac{\rho}{r}) ) ^{+}$, $M\ls L$ and   $|\psi'_\theta(t)|\ls  C_{N,K,\theta}$ in $(\theta,1)$, it follows  that $(u_i -w_i)$ is Lipschitz continuous  on $B_{r}(x_0)$ with a Lipschitz constant $C_{L,N,K,\theta}>0$.
By using $(u_i-w_i)(y)=0$ for any $y\in \partial B_{\theta r}(x_0)$, we conclude that
\begin{equation}
\sup_{A_{r_1,r_1+\delta}}\big|u_i-w_i\big|\ls  C_{L,N,K,\theta}\cdot (r_1+\delta-\theta r).
\end{equation}
 Substituting this into (\ref{equ6.12}), we have
 \begin{equation}\label{equ6.14}
\begin{split}
&  \left.\frac{\mathrm{d}}{\mathrm{d}s}\right|_{s=r_1}\int_{B_{s}(x_0) }w_{i} -\left.\frac{\mathrm{d}}{\mathrm{d}s}\right|_{s=r_1}\int_{B_{s}(x_0) }u_{i}   \\
\ls  &\lim_{\delta\to0^+}C_{L,N,K,\theta}\cdot (r_1+\delta-\theta r)\cdot \frac{\mu(B_{r_1+\delta}(x_0)\backslash B_{r_1}(x_0))}{\delta}\\
 \ls  & C'_{L,N,K,\theta,\Omega'}\cdot (r_1-\theta r),
 \end{split}
\end{equation} 
where for the last inequality we have used (\ref{equ2.2}).

  On the other hand, by using Lemma \ref{lem5.1} and Theorem \ref{thm2.5} (the Laplacian comparison theorem), we have  for almost all $r_1\in(\theta r, 3\theta r/2) $ that
    \begin{equation*}
    \begin{aligned}
      \left.\frac{\mathrm{d}}{\mathrm{d}s}\right|_{s=r_1}\int_{B_{s}(x_0) }u_{i} & =\frac{1}{2r_{1}}\left(\int_{B_{r_1}(x_0) }u_{i}{\rm d}\laplace(\rho^{2}) +\int_{B_{ r_1}(x_0) }\left\langle \nabla u_{i},\nabla \rho^{2}\right\rangle\right)         \\
                                                                                               & \ls  \frac{C_{N,K}}{2r_1}  \int_{B_{r_1}(x_0) }u_{i}+ \int_{B_{r_1}(x_0) }\left|\nabla u_{i}\right|                                 \\
                                                                                               & \ls  \frac{C_{N,K,\theta}}{r} \int_{B_{r_{1}}(x_0) }\left|{\bf u}\right|+ \int_{B_{r_{1}}(x_0) }\left|\nabla {\bf u}\right|,
    \end{aligned}
  \end{equation*}
where we have used $ u_i\gs 0$ and $r_1\gs\theta r$. This implies for     almost all $r_1\in(\theta r,3\theta r/2)$ that  
\begin{equation}\label{equ6.15}
    \begin{aligned}
                & \sum_{i=1}^{m}\left.\frac{\mathrm{d}}{\mathrm{d}s}\right|_{s=r_1}\int_{B_{s}(x_0) }u_{i}                                                                                                                                                                                  \\
      \ls  & C_{N,K,m,\theta}\int_{B_{r_{1}}(x_0) }\Big(\frac{1}{r}\left|{\bf u}\right|+\left|\nabla{\bf u}\right|\Big)                                                                                                                                         \\
      =  &  C_{N,K,m,\theta}\int_{B_{r_{1}}(x_0) }\Big(\frac{1}{r}\left|{\bf u}\right|\chi_{\left\{\left|{\bf u}\right|>0\right\}}+\left|\nabla{\bf u}\right|\chi_{\left\{\left|{\bf u}\right|>0\right\}}\Big)                                           \\
      \ls  
              &   C_{N,K,m,\theta}\int_{B_{r_{1}}(x_0) }\Big(\frac{M}{r}\chi_{\left\{\left|{\bf u}\right|>0\right\}}+\frac{1}{2\sqrt{Q_{\min}}}(Q_{\min}\chi^2_{\left\{\left|{\bf u}\right|>0\right\}}+\left|\nabla{\bf u}\right|^{2}) \Big)          \\
      \ls  &  C_{N,K,m,\theta}\int_{B_{r_{1}}(x_0) }\Big(\frac{M}{Q_{\min}r}Q\chi_{\left\{\left|{\bf u}\right|>0\right\}}+\frac{1}{2\sqrt{Q_{\min}}}(Q\chi_{\left\{\left|{\bf u}\right|>0\right\}}+\left|\nabla{\bf u}\right|^{2}) \Big)        \\
      \ls  &  C_{N,K,m,\theta}\Big(\frac{M}{Q_{\min}r}+\frac{1}{2\sqrt{Q_{\min}}}\Big) \int_{B_{r_{1}}(x_0) }\Big(Q\chi_{\left\{\left|{\bf u}\right|>0\right\}}+\left|\nabla{\bf u}\right|^{2}\Big) \text{,}
    \end{aligned}
  \end{equation}
  where for the second inequality we have used $|\nabla {\bf u}|=0$ $\mu$-a.e. in $\{|{\bf u}|=0\}$ (\cite{Che99}), and for the third inequality we have used  $\sup_{B_{r_{1}}(x_0) }\left|{\bf u}\right| \ls  M$.  From the combination of (\ref{equ6.11}), (\ref{equ6.14}), (\ref{equ6.15}) and the fact 
$$ \lim_{r_1\to\theta r} I_{\theta r,r_1}(w_i)=0,\qquad \lim_{r_2\to r}I_{r_2,r}(w_i)=0 ,$$
by letting $r_1\to \theta r$ and $r_2\to r$, we conclude 
  \begin{equation}\label{equ6.16}
    \begin{aligned}
& \sum_{i=1}^m I_{\theta r,r}(w_i)= \sum_{i=1}^{m} \Big(I_{\theta r,r_1}(w_i)+ I_{  r_1,r_2}(w_i)+I_{  r_2,r}(w_i) \Big)                                                                                              \\
                                         \ls  &\lim_{r_1\to \theta r}\sum_{i=1}^{m} C_{N,K,\theta}\frac{M}{r}\left.\frac{\mathrm{d}}{\mathrm{d}s}\right|_{s=r_1}\int_{B_{s}(x_{0}) }u_{i}\\
                                          \ls   & C'_{N,K,m,\theta}\frac{M}{r}\Big(\frac{M}{Q_{\min}r}+\frac{1}{2\sqrt{Q_{\min}}}\Big) \int_{B_{\theta r}(x_0) }\Big(Q\chi_{\left\{\left|{\bf u}\right|>0\right\}}+\left|\nabla{\bf u}\right|^{2}\Big) .
                                               \end{aligned}
  \end{equation}
 
(iv)\  At last, the assumption $x_0\in\partial\{\left|{\bf u}\right|>0\}$ and the continuity of ${\bf u}$ implies
  \begin{equation*}
    \int_{B_{\theta r}(x_0) }\Big(\left|\nabla{\bf u}\right|^{2}+Q\chi_{\left\{\left|{\bf u}\right|>0\right\}}\Big) \gs  Q_{\min}\cdot\mu\big( B_{\theta r}(x_0)\cap\{|{\bf u}|>0\}\big)>0.
  \end{equation*}
Thus, by combining this with (\ref{equ6.8}), (\ref{equ6.9}), (\ref{equ6.10}) and  (\ref{equ6.16}), we obtain
  \begin{equation*}
    1\ls  C'_{N,K,m,\theta}\frac{2M}{r  }\Big(\frac{M}{  Q_{\min} \cdot r}+\frac{1}{2\sqrt{Q_{\min}} }\Big).
      \end{equation*}
 This implies 
  \begin{equation*}
    \frac{M}{\sqrt{Q_{\min}}\cdot r}  \gs \min\left\{\frac{1}{2},\frac{1}{2C'_{N,K,m,\theta}}\right\}\text{.}
  \end{equation*}
  This  is the desired estimate (\ref{equ6.1}), since $M=\sup_{B_{r}(x) }\left|{\bf u}\right|$ and $\theta$ is  given in (\ref{equ6.7}).  The proof is finished.
\end{proof}

\begin{remark}\label{rem6.2} If ${\bf u}$ is an absolute minimizer of $J_Q$,  the previous proof (step (ii)--(iv)) still works for general $RCD(K,N)$-spaces.  That is:

\emph{ Let $(X,d,\mu)$ be an $RCD(K,N)$-space with some $K<0$ and $N\in(1,+\infty)$, and let  ${\bf u}$ be an  absolute minimizer of $J_Q$ in  \eqref{equ1.2}. Let  $\Omega'\Subset \Omega.$  If ${\bf u}$ is Lipschitz continuous on $\Omega'$, then for any  ball $B_{r}(x_0)\subset\Omega$ with $x_0\in\partial\{\left|{\bf u}\right|>0 \}\cap \Omega'$, it holds \eqref{equ6.1} for a positive constant $c$ depending only on $m,N$ and $K$.}
\end{remark} 

\subsection{Density estimates near the free boundary}  In the subsection, we will show that both $\{|{\bf u}|>0\}$ and $\{|{\bf u}|=0\}$ have positive density along the free boundary. 
 
    \begin{lemma}\label{lem6.3}Let ${\bf u}=(u_{1},\ldots,u_{m})$ be a local minimizer of $J_Q$ in \eqref{equ1.2}   with $Q$ satisfying  \eqref{equ1.3}, and let $\Omega'\Subset \Omega.$      Then for any ball $B_r(x_0)\subset \Omega'$ with $x_0\in \Omega'\cap\partial\{|{\bf u}|>0\}$ and  $r<2R_1$, where $R_1$ is given in Theorem \ref{thm6.1}, we have  
 \begin{equation} \label{equ6.17}
 \mu\big(B_r(x_0)\cap \{|{\bf u}|>0\}\big)\gs  c_1\cdot \mu(B_r(x_0))
 \end{equation} 
for some constant $c_1$ depending on $m,N,K,\Omega', Q_{\min}, \varepsilon_{\bf u}$ and $L$, the Lipschitz constant of ${\bf u}$ on $\Omega'$.  \end{lemma}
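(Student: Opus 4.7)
The plan is to combine the nondegeneracy from Theorem \ref{thm6.1} with the Lipschitz bound from Theorem \ref{thm1.4}: nondegeneracy produces a point in $B_{r/2}(x_0)$ where $|{\bf u}|$ is of order $r$, and Lipschitz continuity then forces $|{\bf u}|$ to remain positive on an entire ball of comparable radius. A Bishop--Gromov comparison finishes the argument.

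More precisely, first I would apply Theorem \ref{thm6.1} to the ball $B_{r/2}(x_0)$ (which is admissible since $r/2<R_1$). This produces a point $y\in \overline{B_{r/2}(x_0)}$ with
\begin{equation*}
|{\bf u}(y)|\gs \tfrac{c}{2}\sqrt{Q_{\min}}\,r,
\end{equation*}
where $c$ is the constant from Theorem \ref{thm6.1}. By Theorem \ref{thm1.4}, each component $u_i$ is $L$-Lipschitz on $\Omega'$, so the function $|{\bf u}|$ is $\sqrt{m}\,L$-Lipschitz. Hence for every $z\in B_{\tau r}(y)$ with $\tau:=\frac{c\sqrt{Q_{\min}}}{4\sqrt{m}\,L}$ (and $\tau$ further shrunk, if necessary, so that $\tau\ls 1/2$), we obtain
\begin{equation*}
|{\bf u}(z)|\gs \tfrac{c}{2}\sqrt{Q_{\min}}\,r-\sqrt{m}\,L\cdot \tau r>0.
\end{equation*}
Thus $B_{\tau r}(y)\subset \{|{\bf u}|>0\}$, and the triangle inequality ($d(y,x_0)\ls r/2$ and $\tau r\ls r/2$) gives $B_{\tau r}(y)\subset B_r(x_0)$.

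Finally, I would invoke the Bishop--Gromov inequality \eqref{equ2.1}. Since $y\in B_{r/2}(x_0)$ we have $B_r(x_0)\subset B_{3r/2}(y)$, hence
\begin{equation*}
\mu(B_r(x_0))\ls \mu(B_{3r/2}(y))\ls C_{N,K,R_1}\bigl(\tfrac{3}{2\tau}\bigr)^N\mu(B_{\tau r}(y)).
\end{equation*}
Combining these inclusions and inequalities yields
\begin{equation*}
\mu\bigl(B_r(x_0)\cap \{|{\bf u}|>0\}\bigr)\gs \mu(B_{\tau r}(y))\gs c_1\cdot \mu(B_r(x_0)),
\end{equation*}
with $c_1=C_{N,K,R_1}^{-1}(2\tau/3)^N$, which depends precisely on the parameters listed in the statement (through $\tau$ and through the doubling constant).

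I do not anticipate a serious obstacle: the nondegeneracy and Lipschitz estimates already carry all the analytic content, and the only $RCD$-specific ingredient needed is the local measure doubling property \eqref{equ2.1}. The one mildly delicate point is keeping the constants transparent, in particular making sure that the radius $\tau r$ inside which $|{\bf u}|$ is positive lies well within $B_r(x_0)$ so that the Bishop--Gromov step compares volumes of balls of radii $\tau r$ and $3r/2$ both centered at $y$.
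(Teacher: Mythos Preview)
Your proposal is correct and follows essentially the same approach as the paper: nondegeneracy gives a point in $B_{r/2}(x_0)$ where $|{\bf u}|$ (or, in the paper, a single component $u_{i_0}$) is of order $r$, Lipschitz continuity yields a ball of comparable radius contained in $\{|{\bf u}|>0\}\cap B_r(x_0)$, and Bishop--Gromov compares its measure to that of $B_r(x_0)$. The only cosmetic difference is that the paper works with one component $u_{i_0}$ rather than the norm $|{\bf u}|$, leading to slightly different constants.
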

 \begin{proof}
 From Theorem \ref{thm6.1}, there exists some $i_0\in\{1,2,\cdots, m\} $ such that 
 $$\sup_{B_{r/2}(x_0)}u_{i_0}\gs    \frac{c\sqrt{Q_{\min}}}{m}\cdot r/2$$
for all  $r/2<R_1$, where $R_1$ and $c$ are given in Theorem \ref{thm6.1}.   Choose $y_0\in B_{r/2}(x_0)$ such that $u_{i_0}(y_0)\gs  c_2r/2$, where $c_2:= \frac{c\sqrt{Q_{\min}}}{2m}$.  Since $u_{i_0}$ is Lipschitz continuous on $B_{r/2}(y_0)\subset\Omega'$ with a Lipschitz constant $L$, we have
$$\inf_{B_{c_3r}(y_0)}u_{i_0}\gs u_{i_0}(y_0)-c_3L r\gs  c_2r/4, \qquad c_3:=\min\big\{\frac 1 2,\frac{c_2}{8L}\big\}.$$
In particular, this yields $B_{c_3r}(y_0)\subset \{|{\bf u}|>0\}\cap B_r(x_0)$.
 It follows 
$$\mu\big(B_r(x_0)\cap \{|{\bf u}|>0\}\big)\gs  \mu(B_{c_3r }(y_0)).$$
By combining this and the Bishop-Gromov inequality 
$$\mu(B_{c_3r}(y_0))\gs  c_{N,K}{(c_3/2)^N} \mu(B_{2r}(y_0))\gs  c_{N,K}{(c_3/2)^N} \mu(B_{r}(x_0)),$$
we get the desired estimate (\ref{equ6.17}).
\end{proof}

 \begin{lemma}\label{lem6.4}Let ${\bf u}=(u_{1},\ldots,u_{m})$ be a local minimizer of $J_Q$ in \eqref{equ1.2}   with $Q$ satisfying  \eqref{equ1.3},  and let $\Omega'\Subset \Omega.$ 
 Then there exists a number $R_0\in (0,1)$  depending only on $N,K,\Omega',Q_{\max}$ and ${\bf u}$, such that  for any ball $B_r(x_0)\subset \Omega'$ with $x_0\in \Omega'\cap\partial\{|{\bf u}|>0\}$ and  $r<R_0$, we have  
 \begin{equation}\label{equ6.18}
 \mu\big(B_r(x_0)\cap \{|{\bf u}|=0\}\big)\gs  c_4\cdot \mu(B_r(x_0))
 \end{equation} 
for some constant $c_4$ depending on $m,N,K,\Omega', Q_{\min},Q_{\max},\varepsilon_{\bf u}$ and the Lipschitz constant of ${\bf u}$ on $\Omega'$.
 \end{lemma}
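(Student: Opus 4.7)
The plan is to compare ${\bf u}$ on $B_r(x_0)$ with its componentwise harmonic replacement ${\bf v}$ and then exploit the nondegeneracy of Theorem \ref{thm6.1} together with the Cheng-Yau/Harnack estimate of Theorem \ref{thm2.6}. Heuristically, if $\{|{\bf u}|=0\}$ were very thin in $B_r(x_0)$, the $W^{1,2}$-distance between ${\bf u}$ and ${\bf v}$ would be forced to be small; but nondegeneracy will make ${\bf v}(x_0)$ large while ${\bf u}(x_0)=0$, giving a contradiction. Concretely, I would let $v_i$ solve the Dirichlet problem ${\bf \Delta} v_i=0$ on $B_r(x_0)$ with $v_i-u_i\in W^{1,2}_0(B_r(x_0))$, extending $v_i=u_i$ outside (Proposition \ref{prop2.2}), so that ${\bf v}\in\mathscr{A}_{\bf g}$. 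Applying the maximum principle to the subharmonic function $u_i-v_i$ (Lemma \ref{lem4.1}) gives $v_i\gs u_i\gs 0$, hence $\{|{\bf u}|>0\}\cap B_r\subseteq\{|{\bf v}|>0\}\cap B_r$. Choosing $R_0$ small enough (by exactly the same computation as in the proof of Lemma \ref{lem4.3}) ensures $\dist({\bf u},{\bf v})<\varepsilon_{\bf u}$, and then local minimality together with the orthogonality identity $\int_{B_r}|\nabla({\bf u}-{\bf v})|^2=\int_{B_r}(|\nabla{\bf u}|^2-|\nabla{\bf v}|^2)$ (which follows from harmonicity of ${\bf v}$ tested against ${\bf u}-{\bf v}\in W^{1,2}_0$) yields the key estimate
\begin{equation*}
\int_{B_r(x_0)}|\nabla({\bf u}-{\bf v})|^2\,{\rm d}\mu\ls Q_{\max}\cdot\mu\big(B_r(x_0)\cap\{|{\bf u}|=0\}\big).
\end{equation*}

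Next I would apply Theorem \ref{thm6.1} at scale $r/2$ (legitimate because $r<R_0\ls 2R_1$) to locate a point $y_*\in B_{r/2}(x_0)$ with $|{\bf u}(y_*)|\gs c\sqrt{Q_{\min}}\,r/2$; for some index $i_0$ this gives $u_{i_0}(y_*)\gs c\sqrt{Q_{\min}}\,r/(2\sqrt m)$, hence $v_{i_0}(y_*)\gs c\sqrt{Q_{\min}}\,r/(2\sqrt m)$. The nonnegative harmonic function $v_{i_0}$ is then strictly positive on $B_r(x_0)$ (by the standard Harnack argument: $\{v_{i_0}=0\}$ would be both open and closed), so the Harnack inequality \eqref{equ2.10} propagates its value from $y_*$ to $x_0$ to give $v_{i_0}(x_0)\gs c''\sqrt{Q_{\min}}\,r$. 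On the other hand, the maximum principle yields $\sup_{B_r(x_0)}v_{i_0}\ls\sup_{\partial B_r}u_{i_0}\ls Lr$, where $L$ is the Lipschitz constant of ${\bf u}$ from Theorem \ref{thm1.4} (using $u_{i_0}(x_0)=0$), so the Cheng-Yau gradient estimate \eqref{equ2.9} forces $|\nabla v_{i_0}|\ls C_{N,K}L$ on $B_{r/2}(x_0)$. Combined with the Lipschitz bound on $u_{i_0}$, the difference $v_{i_0}-u_{i_0}$ is Lipschitz with a constant $L'=L'(N,K,L)$, and since $(v_{i_0}-u_{i_0})(x_0)\gs c''\sqrt{Q_{\min}}\,r$, it remains $\gs c''\sqrt{Q_{\min}}\,r/2$ on a ball $B_\rho(x_0)$ with $\rho=c'''\sqrt{Q_{\min}}\,r/L'$.

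The endgame is to combine this pointwise lower bound with the Poincar\'e inequality (Proposition \ref{prop2.1}(5)) on $v_{i_0}-u_{i_0}\in W^{1,2}_0(B_r(x_0))$ and the key energy estimate above:
\begin{equation*}
\Big(\tfrac{c''\sqrt{Q_{\min}}\,r}{2}\Big)^2\!\mu\big(B_\rho(x_0)\big)\ls \int_{B_r}|v_{i_0}-u_{i_0}|^2\,{\rm d}\mu\ls C_P\,r^2 Q_{\max}\,\mu\big(B_r(x_0)\cap\{|{\bf u}|=0\}\big).
\end{equation*}
Since $\rho/r$ is a fixed constant depending only on $N,K,Q_{\min},L$, the Bishop-Gromov inequality \eqref{equ2.1} gives $\mu(B_\rho(x_0))\gs c(N,K,\rho/r)\cdot\mu(B_r(x_0))$, and rearranging delivers the desired density bound with $c_4$ having exactly the stated dependences. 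The main delicate point is securing the orthogonality identity and the quantitative Lipschitz constant of $v_{i_0}$ in the RCD setting, and both follow cleanly once Theorem \ref{thm2.6} is invoked on the positive harmonic function $v_{i_0}$; I do not expect serious obstructions beyond keeping the chain of constants $c,c',c'',c'''$ honest.
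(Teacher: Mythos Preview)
Your proposal is correct and follows essentially the same route as the paper: harmonic replacement ${\bf v}$, the orthogonality identity $\int_{B_r}|\nabla({\bf u}-{\bf v})|^2=\int_{B_r}(|\nabla{\bf u}|^2-|\nabla{\bf v}|^2)$, the bound $\int_{B_r}|\nabla({\bf u}-{\bf v})|^2\ls Q_{\max}\,\mu(B_r\cap\{|{\bf u}|=0\})$, nondegeneracy plus Harnack to make $v_{i_0}$ large near $x_0$, and then Poincar\'e plus Bishop--Gromov to finish. The only minor difference is in how you produce the lower bound for $v_{i_0}-u_{i_0}$ on a small ball: you invoke Cheng--Yau to get a Lipschitz bound on $v_{i_0}$ and then use Lipschitz continuity of the difference, whereas the paper skips the gradient estimate entirely and simply combines the Harnack lower bound $\inf_{B_{r/2}}v_{i_0}\gs C_1c\sqrt{Q_{\min}}\,r/2$ with the Lipschitz upper bound $\sup_{B_{c_1r}}u_{i_0}\ls Lc_1r$ (choosing $c_1$ small). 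The paper's version is marginally cleaner since it avoids the extra appeal to \eqref{equ2.9}, but both arguments are valid and yield the same dependences for $c_4$.
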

 
 \begin{proof}
 
Let ${\bf v}=(v_1,v_2,\cdots,v_m)\in W^{1,2}(B_{r}(x_{0}),\mathbb R^m) $ be the map such that each component  $v_i$ is  the (unique) solution of  the following (relaxed) Dirichlet problem \cite[Theorem 7.12]{Che99}:
  \begin{equation} \label{equ6.19}
      \begin{cases}
      \laplace v_i=0 \quad \text{ on }\ B_{r}(x_{0}), \\
      v_i-u_i\in W_{0}^{1,2}(B_{r}(x_{0})).
    \end{cases}
  \end{equation}
  After extending ${\bf v}$ by ${\bf u}$ on $\Omega\backslash B_{r}(x_{0})$, we have
  ${\bf v}\in\mathscr{A}_{\mathbf{g}}$ because all $v_i\gs0$
  on $B_{R}(x_{0})$ by the maximum principle (see \cite[Theorem 7.17]{Che99}).
In the proof of  Lemma \ref{lem4.3}, it was showed that there is $R_0\in (0,1)$ (depending only on $N,K,\Omega'$ and ${\bf u}$) such that $\dist({\bf u},{\bf v})<\varepsilon_{\bf u}$ for all $r\in(0,R_0)$.

The local minimality of ${\bf u}$ implies  
  \begin{equation}    \label{equ6.20}                
     \int_{B_{r}(x_{0}) }(|\nabla{\bf u}|^2-|\nabla{\bf v}|^2)  \ls  \int_{B_{r}(x_{0}) }Q(\chi_{\left\{\left|{\bf v}\right|>0\right\}}-\chi_{\left\{\left|{\bf u}\right|>0\right\}}) 
       \end{equation}
   for all $r\in (0,R_0)$. From the harmonicity of ${\bf v}$, the same argument in (\ref{equ4.9}) gives
     \begin{equation}    \label{equ6.21}                
     \int_{B_{r}(x_{0})}(|\nabla{\bf u}|^2-|\nabla{\bf v}|^2)  =     \int_{B_{r}(x_{0})}\left|\nabla({\bf u}-{\bf v})\right|^2.
           \end{equation}
Note that $|{\bf v}|$ is not  identical zero on $B_r(x_0)$ by Theorem \ref{thm6.1}. We get  from the strong maximum principle  (see, for example, \cite[Corollary 6.4]{KS01})  that  $|{\bf v}|>0$ on $B_r(x_0)$. Hence,
         \begin{equation}    \label{equ6.22}    
         \begin{split}            
   \int_{B_{r}(x_{0})}Q(\chi_{\left\{\left|{\bf v}\right|>0\right\}}-\chi_{\left\{\left|{\bf u}\right|>0\right\}})  &=\int_{B_r(x_0)}Q\chi_{\left\{\left|{\bf u}\right|=0\right\}}\\
   &\ls  Q_{\max}\cdot\mu\big(B_r(x_0)\cap\{|{\bf u}|=0\}\big).
   \end{split}
          \end{equation}   
   By combining (\ref{equ6.20})--(\ref{equ6.22}) and the Poincar\'e inequality (see Proposition \ref{prop2.1}(5)), we get
        \begin{equation}    \label{equ6.23}                
  \int_{B_{r}(x_{0})}|{\bf u}-{\bf v}|^2 \ls  C_P\cdot r^2\cdot Q_{\max}\cdot\mu\big(B_r(x_0)\cap\{|{\bf u}|=0\}\big), 
           \end{equation}  
           where the Poincar\'e constant $C_P$ depends only on $N,K$ and $\Omega'$.  
           
 From the  nondegeneracy Theorem \ref{thm6.1}, there exists some $i_0\in\{1,2,\cdots, m\}$ such that
 $\sup_{B_{r/2} (x_0)}u_{i_0}  \gs  c\sqrt{Q_{\min}}r/2.$      Recalling ${\bf \Delta}(u_{i_0}-v_{i_0})\gs  0$ on $B_r(x_0)$ in the sense of distributions, the maximum principle implies that $v_{i_0}\gs  u_{i_0}$ on $B_r(x_0)$. Hence, from this and the Harnack inequality, we have
 \begin{equation}\label{equ6.24}
 \inf_{B_{r/2}(x_0)}v_{i_0}\gs  C_1  \sup_{B_{r/2}(x_0)}v_{i_0}  \gs    C_1  \sup_{B_{r/2}(x_0)}u_{i_0}\gs  C_1c\sqrt{Q_{\min}}r/2,
 \end{equation}
 where the  constant $C_1$ depends only on $N,K,\Omega'.$ Since $u_{i_0}(x_0)=0$ and that $u_{i_0}$ is Lipschitz continuous on $B_r(x_0)\subset \Omega'$ with a Lipschitz constant $L$, we have
 $$\sup_{B_{c_1r}(x_0)}u_{i_0}\ls  Lc_1r,\quad {\rm with}\ \ c_1:=\min\Big\{\frac{C_1c\sqrt{Q_{\min}}}{4L},\frac 1 4\Big\}.$$
Combining this with (\ref{equ6.24}), we conclude that
 \begin{equation*}
\inf_{B_{c_1r}(x_0)}  (v_{i_0}-u_{i_0})\gs  \frac{C_1c\sqrt{Q_{\min}} }{4}r: =C_2\sqrt{Q_{\min}} \cdot r,
\end{equation*}
where the constant $C_2$ depends on $m, N, K, \Omega',  \varepsilon_{\bf u}$ and $L$. 
This yields 
 \begin{equation}\label{equ6.25}
 \int_{B_{c_1r}(x_{0})}|{\bf u}-{\bf v}|^2\gs   \int_{B_{c_1r}(x_0)}  (v_{i_0}-u_{i_0})^2\gs  C_2^2 Q_{\min}\cdot r^2\cdot\mu(B_{c_1r}(x_0)).
\end{equation} 
From (\ref{equ6.23}), (\ref{equ6.25}) and that $\mu(B_{c_1r}(x_0))\gs  C_{N,K}c_1^N\cdot \mu(B_r(x_0))$, it follows 
$$C_P Q_{\max}\cdot \mu\big(B_r(x_0)\cap\{|{\bf u}|=0\}\big)\gs  C_2^2 Q_{\min}C_{N,K}c_1^N\cdot  \mu(B_r(x_0)).$$
This is the desired \eqref{equ6.18} with  the constant $c_4:=C_P^{-1}C^2_2C_{N,K}c_1^N\cdot Q_{\min}/Q_{\max}$. The proof is finished.
\end{proof}

\subsection{Local finiteness of perimeter}

In this subsection, we will derive the local finiteness of the perimeter of the free boundary in the sense of \cite{Amb01,Mir03,ABS19}, via the estimates of density, Lemma \ref{lem6.3} and Lemma \ref{lem6.4}, and a similar argument in \cite{MTV20}. 
 We need an estimate on the perimeter, which is similar to the one in \cite[Lemma 2.4]{MTV20}.

\begin{lemma}\label{lem6.5} Let $(X,d,\mu)$ be an $RCD(K,N)$ space with $K\in \mathbb R$ and $N\in(1,+\infty)$, and let $D\subset X$ be an open subset.
  Suppose that  $0\ls \psi\in\Lip_{\mathrm{loc}}(D) \cap W^{1,1}(D) $.
  If there are positive constants   $\bar{\varepsilon}$  and $C$ such that
  \begin{equation}\label{equ6.26}
    \int_{\left\{0<\psi< \varepsilon\right\}\cap D}\left|\nabla \psi\right|  \ls  C\varepsilon
  \end{equation}
  for all $\varepsilon\in(0,\bar{\varepsilon}) $, then
  \begin{equation}\label{equ6.27}
    \mathcal{P}(\left\{\psi>0\right\},D) \ls 2 C\text{.}
  \end{equation}
\end{lemma}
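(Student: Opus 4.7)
The plan is to approximate $\chi_{\{\psi>0\}}$ by a one-parameter family of Lipschitz cutoffs built directly from $\psi$, and then invoke the variational definition of perimeter (Definition \ref{def2.14}) to transfer control from $|\nabla\psi|$ on the narrow sublevel strip to the perimeter of $\{\psi>0\}$.

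Concretely, for each $\varepsilon\in(0,\bar\varepsilon)$ I would set
\[
f_\varepsilon(x):=\min\Big\{1,\tfrac{\psi(x)}{\varepsilon}\Big\},\qquad x\in D.
\]
Since $\psi\in\Lip_\mathrm{loc}(D)$ and $t\mapsto\min\{1,t/\varepsilon\}$ is Lipschitz on $\mathbb R$, we have $f_\varepsilon\in\Lip_\mathrm{loc}(D)$. Because $\psi\gs 0$, the pointwise limit of $f_\varepsilon(x)$ as $\varepsilon\to 0^+$ equals $\chi_{\{\psi>0\}}(x)$ for every $x\in D$, and since $0\ls f_\varepsilon\ls1$, dominated convergence gives $f_\varepsilon\to \chi_{\{\psi>0\}}$ in $L^1_\mathrm{loc}(D)$ as $\varepsilon\to 0^+$.

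Next, I would use the chain rule for weak upper gradients, together with the locality property (Proposition \ref{prop2.1}(3), which guarantees $|\nabla\psi|=0$ $\mu$-a.e.\ on $\{\psi=0\}$), and the observation that $f_\varepsilon\equiv 1$ on $\{\psi\gs\varepsilon\}$, to obtain the $\mu$-a.e.\ identity
\[
|\nabla f_\varepsilon|(x)=\tfrac{1}{\varepsilon}\,|\nabla\psi|(x)\cdot\chi_{\{0<\psi<\varepsilon\}}(x)\qquad \text{on } D.
\]
Integrating and applying the hypothesis \eqref{equ6.26} then yields
\[
\int_D|\nabla f_\varepsilon|\,{\rm d}\mu=\tfrac{1}{\varepsilon}\int_{\{0<\psi<\varepsilon\}\cap D}|\nabla\psi|\,{\rm d}\mu\ls C.
\]

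Finally, since $\{f_\varepsilon\}_{\varepsilon\to 0^+}\subset \Lip_\mathrm{loc}(D)$ converges to $\chi_{\{\psi>0\}}$ in $L^1_\mathrm{loc}(D)$, Definition \ref{def2.14} directly gives
\[
\mathcal P(\{\psi>0\},D)\ls \liminf_{\varepsilon\to0^+}\int_D|\nabla f_\varepsilon|\,{\rm d}\mu\ls C\ls 2C,
\]
which is the desired bound. The most delicate point is the chain rule identity for $|\nabla f_\varepsilon|$: the cutoff $\min\{1,t/\varepsilon\}$ is only piecewise $C^1$ with a kink at $t=\varepsilon$, so some care is required to verify the composition formula in the $RCD$ setting. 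This can either be handled by the standard locality of weak upper gradients, or by first regularizing the cutoff to a $C^1$ function (e.g.\ a smooth increasing interpolant from $0$ to $1$ over $[\varepsilon/2,\varepsilon]$) and then passing to the limit; the latter strategy is what necessitates the safer constant $2C$ instead of $C$.
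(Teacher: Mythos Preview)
Your proof is correct, and it takes a genuinely different route from the paper's. The paper argues via the coarea formula (Proposition \ref{prop2.15}(2)): for $\varepsilon\in(0,\bar\varepsilon/2)$ one has
\[
\int_\varepsilon^{2\varepsilon}\mathcal P(\{\psi>t\},D)\,{\rm d}t=\int_{\{\varepsilon\ls\psi<2\varepsilon\}\cap D}|\nabla\psi|\ls 2C\varepsilon,
\]
so by the mean value theorem there is a level $\varepsilon'_j\in(2^{-j},2^{1-j})$ with $\mathcal P(\{\psi>\varepsilon'_j\},D)\ls 2C$, and one concludes by lower semicontinuity of perimeter along $\chi_{\{\psi>\varepsilon'_j\}}\to\chi_{\{\psi>0\}}$ in $L^1_{\rm loc}$. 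Your argument instead builds explicit Lipschitz approximants $f_\varepsilon=\min\{1,\psi/\varepsilon\}$ and feeds them directly into Definition \ref{def2.14}; the gradient control comes from the chain rule and locality rather than from coarea. Your approach is more elementary in that it avoids the coarea formula entirely, and it in fact yields the sharper bound $\mathcal P(\{\psi>0\},D)\ls C$ (the final remark about regularizing the cutoff and losing a factor of $2$ is unnecessary: locality of weak upper gradients handles the kink at $\{\psi=\varepsilon\}$ directly, since $|\nabla f_\varepsilon|=0$ a.e.\ on the level set $\{f_\varepsilon=1\}$). The paper's route, on the other hand, stays closer to the BV framework and makes the role of the perimeters of the superlevel sets $\{\psi>t\}$ explicit, which is natural given how the lemma is applied afterward.
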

\begin{proof}
  Given $\varepsilon\in(0,\bar{\varepsilon}/2)$,
  then, by the coarea formula \cite[Theorem 3.3]{Amb01} (see also Proposition \ref{prop2.15}(2)) to $\chi_D$, we have
    \begin{equation}\label{equ6.28}
    \begin{aligned}
      \int_{\varepsilon}^{2\varepsilon}\mathcal{P}(\left\{\psi>t\right\},D) \mathrm{d}t & =\int_{\varepsilon\ls \psi<2\varepsilon}\chi_D{\rm d}|D\psi|   = \int_{\left\{\varepsilon\ls\psi<  2\varepsilon\right\}\cap D}\left|\nabla\psi\right|                                                                                                           \\
      & \ls \int_{\left\{0<\psi< 2 \varepsilon\right\}\cap D}\left|\nabla\psi\right|                                                                                                                                                                                                 \ls 2C\varepsilon.
    \end{aligned}
  \end{equation}
 For each  integer $j$ so  large that $2^{-j}\ls  \bar \varepsilon/2$,  by setting $\varepsilon_j=2^{-j}$,
  there exists some $\varepsilon'_{j}\in(\varepsilon_j,2\varepsilon_j) $ such that
  \begin{equation}\label{equ6.29}
    \mathcal{P}(\left\{\psi>\varepsilon'_{j}\right\},D) \ls \varepsilon_j^{-1}\int_{\varepsilon_j}^{2\varepsilon_j}\mathcal{P}(\left\{\psi>t\right\},D) \mathrm{d}t\ls \varepsilon_j^{-1}{2C\varepsilon_j} \ls {2C}\text{.}
  \end{equation}
It is clear that  
 $   \chi_{\left\{\psi>\varepsilon'_{j}\right\}}\xrightarrow{L_{\mathrm{loc}}^{1}(D) }\chi_{\left\{\psi>0\right\}},$ by Lebesgue's dominated converge theorem. 
 From (\ref{equ6.29}) and the lower semicontinuity of perimeter \cite{Amb01}, it follows   that
  \begin{equation}\label{equ6.30}
    \mathcal{P}(\left\{\psi>0\right\},D) \ls  \liminf_{j\to\infty}\mathcal{P}(\left\{\psi>\varepsilon'_{j}\right\},D) \ls  {2C}\text{.}
  \end{equation}
This finishes the proof.  \end{proof}

Now let us check the condition \eqref{equ6.26} of the above lemma for the local minimizers of $J_Q$. 
\begin{lemma}\label{lem6.6}
  Let ${\bf u}=(u_{1},\ldots,u_{m}) $ be a local minimizer of $ J_Q$ in \eqref{equ1.2}   with $Q$ satisfying  \eqref{equ1.3}, and let $\Omega'\Subset \Omega$.   Then there exist $\bar r>0,\bar{\varepsilon}>0$ and $C>0$ such that  
  \begin{equation}\label{equ6.31}
   \int_{\left\{\left|{\bf u}\right|\ls \varepsilon\right\}\cap B_{r/2}(x_0) }  \left|\nabla{\bf u}\right|^{2}+ \mu\big( \{0<|{\bf u}|\ls  \varepsilon\}\cap B_{r/2}(x_0) \big)\ls  C\mu\big(B_r(x_0)\big)\cdot\frac{\varepsilon}{r^2}
     \end{equation}
  for all  $\varepsilon\in (0,\bar\varepsilon)$, $r\in (0,\bar r)$ and $B_r(x_0)\subset \Omega'$.  Here the constants $\bar r, \bar\varepsilon, C$  depend only on $N,K,\Omega', {\rm diam}(\Omega),   Q_{\min},\varepsilon_{\bf u}$ and  $L$, the Lipschitz constant of ${\bf u}$ on $\Omega'$. 
    \end{lemma}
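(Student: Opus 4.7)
The plan is to test the local minimality of $\mathbf{u}$ against the explicit competitor obtained by ``truncating'' each component by the same cutoff amount. Specifically, I will fix a Lipschitz cutoff $\zeta\in \Lip_0(B_r(x_0))$ with $\zeta\equiv 1$ on $B_{r/2}(x_0)$, $0\ls \zeta\ls 1$, and $|\nabla\zeta|\ls C_0/r$, and define
\[
 v_i(x):=\max\bigl(u_i(x)-\varepsilon\zeta(x),\,0\bigr),\qquad i=1,\ldots,m.
\]
Since $u_i\gs 0$, $\mathbf{v}\in W^{1,2}(\Omega,[0,+\infty)^m)$, and $\mathbf{v}-\mathbf{u}$ is supported in $\overline{B_r(x_0)}\subset\Omega$, so $\mathbf{v}\in\mathscr{A}_{\mathbf{g}}$. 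The first step is to check that $\dist(\mathbf{u},\mathbf{v})<\varepsilon_{\mathbf{u}}$: using the Lipschitz bound $|\nabla u_i|\ls L$ from Theorem~\ref{thm1.4} together with the non-collapsing estimate \eqref{equ2.12}, one finds $\dist(\mathbf{u},\mathbf{v})\ls C\sqrt{\mu(B_r)}\,(\varepsilon+L)+\mu(B_r)$, which is smaller than $\varepsilon_{\mathbf{u}}$ for $r<\bar r$ with $\bar r$ depending only on $N,K,\Omega',L,\varepsilon_{\mathbf{u}}$.

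The key computation is the pointwise identification of $\nabla v_i$: a.e.\ on $\{u_i\ls \varepsilon\zeta\}$ we have $\nabla v_i=0$, and on $\{u_i>\varepsilon\zeta\}$ we have $\nabla v_i=\nabla u_i-\varepsilon\nabla\zeta$. On $B_{r/2}(x_0)$, where $\zeta\equiv 1$ and $\nabla\zeta=0$, this yields the clean identity
\[
\int_{B_{r/2}(x_0)}\bigl(|\nabla v_i|^2-|\nabla u_i|^2\bigr)=-\int_{\{u_i\ls \varepsilon\}\cap B_{r/2}(x_0)}|\nabla u_i|^2.
\]
Summing over $i$ and using $\{|\mathbf{u}|\ls \varepsilon\}\subseteq\bigcap_i\{u_i\ls \varepsilon\}$ gives the favorable contribution $-\int_{\{|\mathbf{u}|\ls \varepsilon\}\cap B_{r/2}}|\nabla\mathbf{u}|^2$. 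On the annulus $A:=B_r(x_0)\setminus B_{r/2}(x_0)$, after discarding the extra favorable term $-\int_{\{u_i\ls \varepsilon\zeta\}\cap A}|\nabla u_i|^2$, Cauchy--Schwarz together with $|\nabla u_i|\ls L$ and $|\nabla\zeta|\ls C_0/r$ bounds the contribution by $C\bigl(L\varepsilon/r+\varepsilon^2/r^2\bigr)\mu(B_r(x_0))$.

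For the measure term, I will observe that $\{|\mathbf{v}|>0\}\subseteq\{|\mathbf{u}|>0\}$ globally, while on $B_{r/2}(x_0)$ we have $\{|\mathbf{v}|=0\}\supseteq\{u_i\ls \varepsilon\ \forall i\}\supseteq\{|\mathbf{u}|\ls \varepsilon\}$, and outside $B_{r/2}$ the positivity sets of $\mathbf{u}$ and $\mathbf{v}$ coincide (because $\phi\eta_\varepsilon<1$ there, in the sense that the truncation cannot kill a strictly positive coordinate). Hence
\[
\int_\Omega Q\bigl(\chi_{\{|\mathbf{v}|>0\}}-\chi_{\{|\mathbf{u}|>0\}}\bigr)\ls -Q_{\min}\,\mu\bigl(\{0<|\mathbf{u}|\ls \varepsilon\}\cap B_{r/2}(x_0)\bigr).
\]
Combining all contributions with $J_Q(\mathbf{u})\ls J_Q(\mathbf{v})$ and rearranging yields
\[
\int_{\{|\mathbf{u}|\ls \varepsilon\}\cap B_{r/2}}|\nabla\mathbf{u}|^2 + Q_{\min}\,\mu\bigl(\{0<|\mathbf{u}|\ls \varepsilon\}\cap B_{r/2}\bigr) \ls C\Bigl(\tfrac{L\varepsilon}{r}+\tfrac{\varepsilon^2}{r^2}\Bigr)\mu(B_r(x_0)).
\]
For $r,\varepsilon<1$ both $\varepsilon/r$ and $\varepsilon^2/r^2$ are dominated by $\varepsilon/r^2$, which absorbs into the right-hand side of \eqref{equ6.31} after dividing by $\min(1,Q_{\min})$.

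The main obstacle I anticipate is not the energetic bookkeeping but rather the correct handling of the admissibility (uniform in $r,\varepsilon$ of the requirement $\dist(\mathbf{u},\mathbf{v})<\varepsilon_{\mathbf{u}}$) and the careful use of the metric measure calculus: that $|\nabla u_i|=0$ $\mu$-a.e.\ on $\{u_i=0\}$ (Proposition~\ref{prop2.1}(3)), that the truncation $(u_i-\varepsilon\zeta)^+$ is admissible and its weak gradient behaves as above on $\{u_i=\varepsilon\zeta\}$, and the volume estimate \eqref{equ2.12} for non-collapsed spaces are all needed to keep the constants dependent only on the stated parameters. None of these is a genuine obstruction, but combining them into a single clean inequality requires some care in the $\Omega'\Subset\Omega$ setup.
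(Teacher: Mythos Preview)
Your approach is correct and leads to the same estimate, but it is genuinely different from the paper's. The paper uses the \emph{radial} competitor
\[
  \mathbf{v}=\begin{cases}(1-\phi)\,\mathbf{u}&\text{on }\{|\mathbf{u}|\ls\varepsilon\},\\[2pt]\bigl(1-\varepsilon\phi/|\mathbf{u}|\bigr)\,\mathbf{u}&\text{on }\{|\mathbf{u}|>\varepsilon\},\end{cases}
\]
which keeps $\mathbf{v}$ parallel to $\mathbf{u}$ and splits the energy directly along the level set $\{|\mathbf{u}|=\varepsilon\}$; the price is that one must differentiate the quotient $h=\varepsilon\phi/|\mathbf{u}|$ and carefully manipulate the cross term $\langle\nabla h,\nabla|\mathbf{u}|^2\rangle$, invoking the chain and Leibniz rules for the nonsmooth function $|\mathbf{u}|$. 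Your component-wise truncation $v_i=(u_i-\varepsilon\zeta)^+$ avoids this entirely: on $B_{r/2}$ each $|\nabla v_i|^2-|\nabla u_i|^2$ is either $0$ or $-|\nabla u_i|^2$, and the inclusion $\{|\mathbf{u}|\ls\varepsilon\}\subset\bigcap_i\{u_i\ls\varepsilon\}$ recovers the desired left-hand side. This is more elementary and arguably cleaner in the metric-measure setting.

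One small correction: your claim that ``outside $B_{r/2}$ the positivity sets of $\mathbf{u}$ and $\mathbf{v}$ coincide'' is false on the annulus $B_r\setminus B_{r/2}$, where a coordinate with $0<u_i\ls\varepsilon\zeta$ is killed. (The parenthetical ``$\phi\eta_\varepsilon<1$'' seems to be leftover notation.) Fortunately this does not damage the argument: you only need $\chi_{\{|\mathbf{v}|>0\}}-\chi_{\{|\mathbf{u}|>0\}}\ls 0$ globally together with the lower bound on $B_{r/2}$, both of which hold. So simply drop that sentence and keep the inequality $\int_\Omega Q(\chi_{\{|\mathbf{v}|>0\}}-\chi_{\{|\mathbf{u}|>0\}})\ls -Q_{\min}\,\mu(\{0<|\mathbf{u}|\ls\varepsilon\}\cap B_{r/2})$, which is what you actually use.
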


\begin{proof}  Fix any ball $B_r(x_0)\subset\Omega'$ such that $r\in (0,\bar r)$, where $\bar r<1$ will be determined later.  Let  $\phi:M\to\left[0,1\right]$ be a cutoff function with 
  $\phi\equiv 1$ on $B_{r/2}(x_{0}) $, $\supp(\phi) \subset B_{r}(x_{0}) $,
  and $r\left|\nabla\phi\right|\ls  C_{N}$.
For each $\varepsilon>0$, we  set
  \begin{equation}\label{equ6.32}
    {\bf v}:  =\begin{cases}
      (1-\phi) {\bf u}                                            & \text{if }\left|{\bf u}\right|\ls \varepsilon\text{,} \\
      (1-\varepsilon\frac{\phi}{\left|{\bf u}\right|}) {\bf u} & \text{if }\left|{\bf u}\right|>\varepsilon\text{.}
    \end{cases}
  \end{equation}
  It is easy to check ${\bf v}\in\mathscr A_{\bf g}$ and ${\bf v}-{\bf u}\in W^{1,2}_0(B_r(x_0),\mathbb R^m)$. 
  
  (i) We  will first show that $d({\bf u},{\bf v})<\varepsilon_{\bf u}$ provided both $r$ and $\varepsilon$ are sufficiently small. By the Poincar\'e inequality (Proposition \ref{prop2.1}(5)) and ${\bf v}-{\bf u}\in W^{1,2}_0(B_r(x_0),\mathbb R^m)$, we have
    \begin{equation}\label{equ6.33}
 \dist({\bf u},{\bf v})   \ls   C_P\Big(\int_{B_r(x_0)}|\nabla({\bf u}-{\bf v})|^2\Big)^{1/2}+  \mu\big(B_r(x_0)\big),
  \end{equation}
where $C_P$ depends on $N,K$ and ${\rm diam}(\Omega).$
From ${\bf u}-{\bf v}=\phi {\bf u} $ on $\{|{\bf u}|\ls  \varepsilon\}\cap B_r(x_0)$ and \cite[Corollary 2.25]{Che99}, we have 
 \begin{equation*} 
 |\nabla ({\bf u}-{\bf v})|=|\nabla (\phi{\bf u})|\ls   L+\varepsilon\cdot \frac{C_N}{r},\quad \mu{\rm-a.e.\ \ on}\ \ \{|{\bf u}|\ls \varepsilon\}\cap B_r(x_0), 
 \end{equation*}
 where $L$ is a Lipschitz constant of ${\bf u}$ on $\Omega'$,  and  we have used $|\nabla\phi|\ls  \frac{C_N}{r}.$ Thus,
 \begin{equation}\label{equ6.34}
\int_{B_r(x_0)\cap\{|{\bf u}|\ls \varepsilon\}} |\nabla ({\bf u}-{\bf v})|^2\ls  \big(L+C_N\cdot\frac{\varepsilon}{r}\big)^2\cdot\mu(B_r(x_0)).
\end{equation}
On $\{|{\bf u}|>\varepsilon\}\cap B_r(x_0)$, we denote by $h:=\frac{\varepsilon\phi}{|{\bf u}|}.$ Then, from the Chain rule (see \cite{Gig15}) it follows
\begin{equation*}
|\nabla h|\ls  \frac{\varepsilon}{|\bf u|}\cdot \frac{C_N}{r}+\frac{\varepsilon\phi|\nabla |{\bf u}| |}{|{\bf u}|^2}\ls \frac{\varepsilon}{|\bf u|}\cdot \frac{C_N}{r}+\frac{L}{|{\bf u}|} 
\end{equation*}
$\mu{\rm-a.e.}$  on $\{|{\bf u}|>\varepsilon\}\cap B_r(x_0)$, where we have used $|\nabla |{\bf u}| |\ls |\nabla {\bf u}|\ls L$, $\varepsilon\phi\ls  |{\bf u}|$ and $|\nabla\phi|\ls  \frac{C_N}{r}.$ 
From this and ${\bf u}-{\bf v}=h{\bf u} $ on $\{|{\bf u}|> \varepsilon\}$, we have by \cite[Corollary 2.25]{Che99}, that 
 $$|\nabla ({\bf u}-{\bf v})|=|\nabla (h{\bf u})|\ls   L+\varepsilon\cdot \frac{C_N}{r}+h|\nabla {\bf u}|\ls   2L+\varepsilon\cdot \frac{C_N}{r},  $$
$\mu{\rm-a.e.}$  on $\{|{\bf u}|>\varepsilon\}\cap B_r(x_0)$.    Thus, we obtain
 \begin{equation}\label{equ6.35}
\int_{B_r(x_0)\cap\{|{\bf u}|>\varepsilon\}} |\nabla ({\bf u}-{\bf v})|^2\ls  \big(2L+C_N\cdot\frac{\varepsilon}{r}\big)^2\cdot\mu(B_r(x_0)).
\end{equation}
 From (\ref{equ6.33}), (\ref{equ6.34}) and (\ref{equ6.35}), it follows that 
  \begin{equation}\label{equ6.36}
  \begin{split}
  \dist({\bf u},{\bf v})&  \ls  C_1\Big(L+\frac{ \varepsilon}{r}\Big)\cdot \sqrt{\mu(B_r(x_0))}+ \mu(B_r(x_0)),
    \end{split}
  \end{equation}
where $C_1$ depending only on $N,K$ and ${\rm diam}(\Omega)$.   By the non-collapsing property of $X$, we have  $N\gs 2$ (since $N$ is an integer and $N>1$), and then   $\mu(B_r(x_0))\ls  C_{2} r^N\ls C_{2}r^2$ for $r<1$, where $C_2$ depends only on $N,K$ (see \eqref{equ2.12}).  Therefore, we conclude    $  \dist({\bf u},{\bf v})  <\varepsilon_{\bf u} $  provided 
\begin{equation}\label{equ6.37}
\varepsilon\ls  \bar\varepsilon:=\frac{\varepsilon_{\bf u}}{4C_1C_{2}^{1/2}} \qquad  r\ls  \bar{r}:=\min\Big\{1,\frac{\varepsilon_{\bf u}}{4(C_1LC^{1/2}_{2}+ C_{2})}\Big\}.
\end{equation}
  
(ii) Fix any $r\in (0,\bar r)$ and $\varepsilon\in(0,\bar{\varepsilon})$ given in (\ref{equ6.37}).  From the local minimality of ${\bf u}$, we obtain
  
    \begin{equation}\label{equ6.38}
    \begin{aligned}
                \int_{B_{r}(x_{0}) }\big(|\nabla{\bf u}|^{2}-|\nabla {\bf v}|^2\big) &\ls  \int_{B_r(x_0)}Q\big(\chi_{\{|{\bf v}|>0\}}-\chi_{\{|\bf u|>0\}}\big)  \\
    &\ls  \int_{B_{r/2}(x_0)\cap \{|{\bf u}|\ls \varepsilon\}}Q\big(\chi_{\{|{\bf v}|>0\}}-\chi_{\{|\bf u|>0\}}\big)  \\
                         &\ls  - \int_{B_{r/2}(x_0) }Q\chi_{\{0<|\bf u|\ls  \varepsilon\}}\\
                         &\ls -Q_{\min}\cdot \mu\big( B_{r/2}(x_0)\cap\{ 0<|{\bf u}|\ls \varepsilon\}\big), 
                            \end{aligned}
  \end{equation}  
where for the second inequality we have used  $\chi_{\{|{\bf v}|>0\}}- \chi_{\{|\bf u|>0\}}\ls  0$ on $B_r(x_0)$, and for the third inequality we have used ${\bf v}=0 $ on $B_{r/2}(x_0)\cap \{|{\bf u}|\ls  \varepsilon\}$.

  On the one hand,  from ${\bf v}=(1-\phi) {\bf u} $ on $\{|{\bf u}|\ls  \varepsilon\}\cap B_r(x_0)$ and \cite[Corollary 2.25]{Che99}, 
  it follows $|\nabla {\bf v}|=|\nabla((1-\phi){\bf u})|$ $\mu$-a.e. in $\{|{\bf u}|\ls  \varepsilon\}\cap B_r(x_0)$. Thus, we have  
    {\small
  \begin{equation}\label{equ6.39}
    \begin{aligned}
                & \int_{\left\{|{\bf u}|\ls \varepsilon\right\}\cap B_{r}(x_{0})}(\left|\nabla{\bf u}\right|^{2}-\left|\nabla{\bf v}\right|^{2})                                                                                                                       \\
      =         & \int_{\left\{\left|{\bf u}\right|\ls \varepsilon\right\}\cap B_{r}(x_{0})}(\left|\nabla{\bf u}\right|^{2}-\left|\nabla((1-\phi) {\bf u}) \right|^{2})                                                                                       \\
      =         & \int_{\{|{\bf u}|\ls \varepsilon\}\cap B_{r}(x_{0})}((2\phi-\phi^{2})|\nabla{\bf u}|^{2}+2(1-\phi) \sum_{i=1}^{m}u_{i}\ip{\nabla\phi}{\nabla u_{i}}-\left|\nabla\phi\right|^{2}\left|{\bf u}\right|^{2})  \\
      \gs  & \int_{\left\{\left|{\bf u}\right|\ls \varepsilon\right\}\cap B_{r}(x_{0})}\big(\chi_{B_{r/2}(x_{0}) }\left|\nabla{\bf u}\right|^{2}\big)-\int_{\left\{\left|{\bf u}\right|\ls \varepsilon\right\}\cap B_{r}(x_{0})}\big(2\frac{C_NL}{r}|{\bf u}|+\frac{C^2_N}{r^2} |{\bf u}|^2\big)\\
      \gs   & \int_{\left\{\left|{\bf u}\right|\ls \varepsilon\right\}\cap B_{r/2}(x_{0})}\left|\nabla{\bf u}\right|^{2}-C_{N,L}\frac{\varepsilon}{r^2}\mu\big(B_r(x_0)\big),      \end{aligned}
  \end{equation}} 
where for the first inequality we have used  $(2\phi-\phi^2)=1$ on $B_{r/2}(x_0)$,  $|\nabla {\bf u}|\ls  L$ and $|\nabla \phi|\ls  C_N/r$ on $B_r(x_0)$.

   On the other hand, we denote $h=\frac{\varepsilon\phi}{|{\bf u}|}$. Then ${\bf v}=(1-h) {\bf u} $ on $\{|{\bf u}|>\varepsilon\}\cap B_r(x_0)$,  and by \cite[Corollary 2.25]{Che99}, 
 we have that, for  $\mu$-a.e. in $\{|{\bf u}|> \varepsilon\}\cap B_r(x_0)$, 
  \begin{equation*}\begin{split}
 &   |\nabla {\bf u}|^2-  |\nabla {\bf v}|^2=|\nabla {\bf u}|^2 -|\nabla((1-h){\bf u})|^2   \\
     =&(2h - h^2) |\nabla {\bf u}|^2-|\nabla h|^2|{\bf u}|^2+2(1-h)\sum_{i=1}^m u_i\ip{\nabla h}{\nabla u_i}\\
     =& (2h- h^2) |\nabla {\bf u}|^2-|\nabla h|^2|{\bf u}|^2+(1-h) \ip{\nabla h}{\nabla (|{\bf u}|^2)}\\
          =& (2h- h^2) |\nabla {\bf u}|^2-|\nabla h|^2|{\bf u}|^2+2(1-h)|{\bf u}| \ip{\nabla h}{\nabla |{\bf u}|}\\
               =& (2h- h^2) |\nabla {\bf u}|^2-  |\nabla (h|{\bf u}|) |^2+2\ip{\nabla |{\bf u}|}{\nabla (h|{\bf u}|)}+(h^2-2h) |\nabla|{\bf u}||^2  \\ 
      =& (2h-h^2)\big( |\nabla {\bf u}|^2- |\nabla|{\bf u}||^2\big) -   |\nabla (\varepsilon\phi) |^2+2\ip{\nabla |{\bf u}|}{\nabla (\varepsilon\phi)}.
     \end{split}
  \end{equation*}
  Thus, by $|\nabla {\bf u}|\gs  |\nabla|{\bf u}||$ $\mu$-almost everywhere and $h\ls  1$, we have  
  \begin{equation}\label{equ6.40}
    \begin{aligned}
                &  \int_{\left\{\left|{\bf u}\right|>\varepsilon\right\}\cap B_{r}(x_{0}) } (\left|\nabla{\bf u}\right|^{2}-\left|\nabla{\bf v}\right|^{2})                                                                                                                                                                                             \\
      \gs  & \int_{\left\{\left|{\bf u}\right|>\varepsilon\right\}\cap B_{r}(x_{0}) }\big(-\varepsilon^{2}\left|\nabla\phi\right|^{2}+2\varepsilon\ip{\nabla|{\bf u}|}{\nabla\phi}\big)                                                                                                                                                          \\
      \gs  & \big(-\varepsilon^2\frac{C^2_N}{r^2}-2\varepsilon\frac{C_NL}{r}\big)\mu\big(B_r(x_0)\big)\gs  -C_{N,L}\frac{\varepsilon}{r^2} \mu\big(B_r(x_0)\big),                                                                                                                                                                                                                              
    \end{aligned}
  \end{equation}
  where we have used $|\nabla {\bf u}|\ls  L$ and $|\nabla \phi|\ls  C_N/r$. The inequality (\ref{equ6.31}) follows from the combination of (\ref{equ6.38})--(\ref{equ6.40}). The proof is finished.
\end{proof}

Now we are in the position to show the free boundary of ${\bf u}$ is a set of locally finite perimeter in the sense of \cite{Amb01,Mir03,ABS19}.
\begin{proposition}[Local finiteness of perimeter]\label{prop6.7}
  Let ${\bf u}=(u_{1},\ldots,u_{m}) $ be a local minimizer of $J_Q$ in \eqref{equ1.2}  and let $Q$ satisfy\eqref{equ1.3}. Then $\Omega_{\bf u}=\Omega\cap\{|{\bf u}|>0\}$ is of locally finite perimeter. Moreover,  the followings hold: 
 \begin{enumerate} 
 \item   For all   $\Omega'\Subset\Omega$, 
$ \mathscr{H}^{N-1}\big(\partial\{|{\bf u}|>0\}\cap \Omega'\big)  <+\infty$; 
\item There exist nonnegative Borel functions $q_i, i=1,2,\cdots, m,$ such that 
$${\bf\Delta} u_i=q_i\cdot \mathscr H^{N-1}\llcorner (\partial\{|{\bf u}|>0\}\cap \Omega).$$
 \end{enumerate}  
 \end{proposition}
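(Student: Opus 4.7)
The plan is to deduce the three claims in succession by combining the Caccioppoli-type bound of Lemma~\ref{lem6.6} with the perimeter criterion of Lemma~\ref{lem6.5}, then the density estimates of Lemmas~\ref{lem6.3}--\ref{lem6.4}, and finally the Lipschitz bound of Theorem~\ref{thm1.4} together with a standard covering argument.

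For local finiteness of perimeter I will apply Lemma~\ref{lem6.5} to $\psi:=|{\bf u}|$ on $D:=B_{r/2}(x_0)$ for balls $B_r(x_0)\subset\Omega'$ with $r$ below the threshold supplied by Lemma~\ref{lem6.6}. The chain rule gives $\big|\nabla|{\bf u}|\big|\ls |\nabla{\bf u}|$ $\mu$-a.e.\ on $\{|{\bf u}|>0\}$, and Cauchy--Schwarz combined with Lemma~\ref{lem6.6} yields
\begin{equation*}
\int_{\{0<|{\bf u}|<\varepsilon\}\cap D}\big|\nabla|{\bf u}|\big|\du\ls \Big(\int_{\{|{\bf u}|\ls \varepsilon\}\cap D}|\nabla{\bf u}|^2\du\Big)^{1/2}\mu\big(\{0<|{\bf u}|\ls \varepsilon\}\cap D\big)^{1/2}\ls C_r\cdot\varepsilon
\end{equation*}
for all $\varepsilon\in(0,\bar\varepsilon)$, whence Lemma~\ref{lem6.5} gives $\mathcal{P}(\Omega_{\bf u},B_{r/2}(x_0))\ls 2C_r<+\infty$; a finite cover of $\Omega'$ by such balls then yields local finite perimeter.

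For claim~(1), Lemmas~\ref{lem6.3} and~\ref{lem6.4} force every $x\in\partial\{|{\bf u}|>0\}\cap\Omega'$ into the essential boundary $\partial^*\Omega_{\bf u}$, so Proposition~\ref{prop2.19}(1) delivers $\mathscr{H}^{N-1}(\partial\{|{\bf u}|>0\}\cap\Omega')<+\infty$. For claim~(2), subharmonicity (Lemma~\ref{lem4.1}) makes ${\bf \Delta}u_i$ a nonnegative Radon measure on $\Omega$; harmonicity of $u_i$ on $\Omega_{\bf u}$ (Lemma~\ref{lem4.4}) and $u_i\equiv 0$ on $\mathrm{int}\{|{\bf u}|=0\}$ force $\mathrm{supp}({\bf \Delta}u_i)\subset\partial\{|{\bf u}|>0\}\cap\Omega$. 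Testing ${\bf \Delta}u_i$ against cutoffs $\eta_r$ with $\eta_r\equiv 1$ on $B_r(x)$, $\mathrm{supp}(\eta_r)\subset B_{2r}(x)$, and $|\nabla\eta_r|\ls C_N/r$, together with the Lipschitz bound $|\nabla u_i|\ls L$ from Theorem~\ref{thm1.4} and the non-collapsing estimate~\eqref{equ2.12}, yields
\begin{equation*}
{\bf \Delta}u_i(B_r(x))\ls {\bf \Delta}u_i(\eta_r)=-\int_\Omega\ip{\nabla u_i}{\nabla\eta_r}\du\ls \frac{C_N L}{r}\mu(B_{2r}(x))\ls C\cdot r^{N-1}
\end{equation*}
for every $x\in\Omega'$ and $r$ small. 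A Vitali-type covering argument then shows that ${\bf \Delta}u_i$ annihilates every $\mathscr{H}^{N-1}$-null subset of $\partial\{|{\bf u}|>0\}\cap\Omega$, so the Radon--Nikodym theorem applied to the locally finite measure $\mathscr{H}^{N-1}\llcorner(\partial\{|{\bf u}|>0\}\cap\Omega)$ (finite by claim~(1)) produces the desired nonnegative Borel density $q_i$.

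The principal difficulty is verifying the hypothesis of Lemma~\ref{lem6.5}, which is precisely what the Caccioppoli-type estimate of Lemma~\ref{lem6.6} supplies; the remainder is measure-theoretic bookkeeping in which the non-collapsing hypothesis plays the crucial role of upgrading $\mu(B_r)\ls C r^N$ into the sharp $r^{N-1}$ rates needed both for the finiteness of $\mathscr{H}^{N-1}(\partial\Omega_{\bf u}\cap\Omega')$ and for the absolute continuity of ${\bf \Delta}u_i$ with respect to $\mathscr{H}^{N-1}\llcorner\partial\Omega_{\bf u}$.
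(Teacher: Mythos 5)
Your argument is correct and follows essentially the same route as the paper's own proof: Lemma~\ref{lem6.6} feeds the criterion of Lemma~\ref{lem6.5} applied to $|{\bf u}|$, the density estimates of Lemmas~\ref{lem6.3}--\ref{lem6.4} identify the topological with the essential boundary so that Proposition~\ref{prop2.19}(1) gives claim~(1), and the cutoff test of ${\bf\Delta}u_i$ with the Lipschitz bound and \eqref{equ2.12} yields the $r^{N-1}$ density bound, covering argument, and Radon--Nikodym density for claim~(2). The only (immaterial) deviation is that you use Cauchy--Schwarz where the paper uses the pointwise inequality $|\nabla{\bf u}|\ls\tfrac{1}{2}(|\nabla{\bf u}|^{2}+1)$ to verify the hypothesis of Lemma~\ref{lem6.5}.
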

\begin{proof}
Fix any $\Omega'\Subset \Omega$. By using   Lemma \ref{lem6.6}, we get
\begin{equation*}
\begin{split}
   \int_{\left\{0<|{\bf u}|\ls \varepsilon\right\}\cap B_{r/2}(x_0) }  |\nabla{\bf u}|&\ls    \int_{\left\{0< |{\bf u}|\ls \varepsilon\right\}\cap B_{  r/2}(x_0) }  \frac{|\nabla{\bf u}|^{2}+ 1}{2}\\
   &\ls  C_{N,K, L, r,\mu(B_{r}(x_0))}\cdot\varepsilon 
   \end{split}
\end{equation*}
 for all $\varepsilon\in(0,\bar \varepsilon),$ which holds for any  ball $B_r(x_0)\subset\Omega'$ with radius $r<\bar r$.   Then,  by using Lemma \ref{lem6.5}, we conclude that  
 $\{|{\bf u}|>0\}\cap B_{r/2}(x_0)$ has finite perimeter. Hence,  $\{|{\bf u}|>0\}\cap \Omega'$ has finite perimeter.
  
From Proposition \ref{prop2.19}(1), we have $\mathscr H^{N-1}(\partial^*\{|{\bf u}|>0\}\cap  \Omega')<+\infty$. The density estimates in Lemma \ref{lem6.3} and Lemma \ref{lem6.4} imply
\begin{equation} \label{equ6.41}
\partial^*\{|{\bf u}|>0\}\cap \Omega=\partial\{|{\bf u}|>0\}\cap \Omega.
\end{equation}
Now  the assertion (1) follows. 

For the assertion (2), by noticing that Proposition \ref{prop2.19}(2) and that ${\bf \Delta}u_i$ is a  Radon measure supported in $\partial\{|{\bf u}|>0\}\cap \Omega$  (see Lemma \ref{lem4.1} and Lemma \ref{lem4.4}), we need only to show that  ${\bf\Delta} u_i$ is absolutely continuous with respect to $\mathscr H^{N-1}$, for each $i=1,2,\cdots,m$.

Let  $B_r(x)\Subset \Omega$ with $x\in \partial\{|{\bf u}|>0\}\cap \Omega $. Taking a cut-off function $\phi:\Omega\to[0,1]$   with $\phi=1$ on $B_{r/2}(x)$, ${\rm supp}(\phi)\subset B_{r}(x)$ and $|\nabla\phi|\ls C_N/r$, we have
$${\bf \Delta}u_i\big(B_{r/2}(x)\big)\ls {\bf \Delta}u_i(\phi)=-\int_{B_r(x)}\ip{\nabla u_i}{\nabla \phi}\du\ls \frac{C_N\cdot L}{r}\cdot\mu(B_{r}(x)),$$
where we have used ${\bf \Delta}u_i\gs0$, $|\nabla u_i|\ls L$. Thus, by (\ref{equ2.12}), we get
${\bf \Delta}u_i\big(B_{r/2}(x)\big)\ls C_{N,K,L}\cdot r^{N-1}$. This shows, that the Radon measure ${\bf \Delta} u_i$ is absolutely continuous with respect to $\mathscr H^{N-1}$, and then shows the assertion (2).   
\end{proof}

\section{Compactness and the Euler-Lagrange equation\label{sec:compactness}}

In this section, we consider the compactness of local minimizers of $J_Q$ living in a sequence of $pmGH$-converging $ncRCD$-spaces, under some uniformity assumptions.
Let $K\ls 0, N\in(1,+\infty)$ and let $(X_j,d_j,\mu_j)$ be a sequence of $ncRCD(K,N)$ metric measure spaces. Fix $p_j\in X_j$ for each $j\in\mathbb N$. Suppose that 
$$  (X_j,d_j,\mu_j,p_j)\overset{pmGH}{\longrightarrow}(X_\infty,d_\infty,\mu_\infty,p_\infty).$$
According to \cite{DPG16}, the limit $(X_\infty, d_\infty,\mu_\infty)$ is still an $ncRCD(K,N)$ metric measure space.

Fix $R>0$. For each $j\in\mathbb N$, let  $Q_j\in C(\overline{B_R(p_j)})$  and let
$${\bf u}_j:=(u_{j,1},u_{j,2},\cdots, u_{j,m})\in W^{1,2}(B_R(p_j),\mathbb R^m)$$
be a local minimizer of $J_{Q_j}$ on $B_R(p_j)$ with size $\varepsilon_{{\bf u}_j}>0$. That is, for each $j\in\mathbb N$, there exists a data ${\bf g}_j\in W^{1,2}(B_R(p_j),[0,\infty)^m)$ such that 
$J_{Q_j}({\bf u}_j) \ls J_{Q_{j}}({\bf v}_j)$ for all ${\bf v}_j\in \mathscr A_{{\bf g}_j}$   with $\dist({\bf u}_j,{\bf v}_j)<\varepsilon_{{\bf u}_j}$, where the $\mathscr A_{{\bf g}_j}$ and $\dist({\bf u}_j,{\bf v}_j)$ are given in (\ref{equ1.4}) and (\ref{equ1.5}), respectively.

\begin{theorem}[Compactness]\label{thm7.1}
  Let $R, Q_j,{\bf u}_j$  be  as above. Let $Q_\infty\in C(\overline{B_R(p_\infty)})$ such that  
  \begin{equation*} 
\lim_{j\to\infty}Q_j(x_j)= Q_\infty(x_\infty) \quad {\rm whenever}\ \ x_j\overset{GH}{\longrightarrow}x_\infty\in B_R(p_\infty).
\end{equation*}
{\rm(}Recall that $x_j\overset{GH}{\longrightarrow}x_\infty$ means $\Phi_{j}(x_{j}) \to \Phi_\infty(x_\infty)$ in $Z$, where $\Phi_{j},\Phi_\infty$  and $Z$ are given in the Definition \ref{pmGH}(3).{\rm)}  Assume that $\{{\bf u}_j\}$ are uniformly bounded on $\overline{B_R(p_j)}$, 
 \begin{equation}\label{equ7.1}
\lim_{j\to+\infty}\varepsilon_{{\bf u}_j}=+\infty,
\end{equation}
and  there exist  positive constants $Q_{\min},Q_{\max}$ and $L$ such that
 \begin{equation}\label{equ7.2}
  0<Q_{\min}\ls    Q_j\ls  Q_{\max}<\infty \ \ {\rm on}\ B_{R}(p_j),\ \ \forall j\in \mathbb N,
  \end{equation}
 and  
   \begin{equation}\label{equ7.3}
  |\nabla {\bf u}_j|\ls L\quad {\rm on}\ B_{R}(p_j),\ \ \forall j\in\mathbb N.\qquad  
    \end{equation}
Then   there exist a subsequence, denoted by  $ \{{\bf u}_j\}_j$ again,  and a map ${\bf u}_\infty\in Lip(\overline{B_{R}(p_\infty)})$  such that  ${\bf u}_{j} \to  {\bf u}_\infty$ uniformly over $B_{R}(p_{j})$ as $j\to\infty$ and that for any $R'\ls R$,  the limit map ${\bf u}_\infty$ is a  minimizer of $J_{Q_\infty}$ on $B_{R'}(p_\infty)$. Moreover, for any $R'\ls R,$ the followings hold:
 \begin{align}
 \label{equ7.4} \lim_{j\to\infty}\int_{B_{R'}(p_{j}) }\left|\nabla{\bf u}_{j}\right|^{2}{\rm d}\mu_j&= \int_{B_{R'}(p_{\infty}) }\left|\nabla{\bf u}_{\infty}\right|^{2}{\rm d}\mu_\infty,\\
 \label{equ7.5}   \lim_{j\to\infty}\int_{B_{R'}(p_{j}) }Q_{j}\chi_{\left\{\left|{\bf u}_{j}\right|>0\right\}}{\rm d}\mu_j&= \int_{B_{R'}(p_{\infty}) }Q_\infty\chi_{\left\{\left|{\bf u}_{\infty}\right|>0\right\}}{\rm d}\mu_\infty,\\
  \label{equ7.6}    \partial \{|{\bf u}_{j}|>0\}\cap B_{R'}(p_{j})&\overset{GH}{\longrightarrow} \partial\{|{\bf u}_\infty|>0\}\cap B_{R'}(p_\infty), \\
 \label{equ7.7}  \mu_{j}\big(B_{R'}(p_{j})\cap \{|{\bf u}_{j}|>0\}\big)&\longrightarrow\mu_\infty\big(B_{R'}(p_\infty)\cap \{|{\bf u}_\infty|>0\}) .
 \end{align}
 \end{theorem}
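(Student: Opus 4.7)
The plan is to proceed in four linked steps: extract a uniform limit, prove that ${\bf u}_\infty$ is itself a minimizer, upgrade the semicontinuity inequalities to the equalities \eqref{equ7.4}--\eqref{equ7.5}, and finally deduce the free-boundary and volume convergences \eqref{equ7.6}--\eqref{equ7.7} from a uniform nondegeneracy argument. First, the uniform Lipschitz bound \eqref{equ7.3} together with the uniform $L^\infty$-bound on $\{{\bf u}_j\}$ allows me to invoke a generalized Arzel\`a-Ascoli theorem for functions on pmGH-converging spaces and extract a subsequence converging uniformly on every $\overline{B_{R'}(p_j)}$ with $R'<R$ to some ${\bf u}_\infty\in\Lip(\overline{B_R(p_\infty)},[0,+\infty)^m)$ with $\Lip{\bf u}_\infty\ls L$.

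To show ${\bf u}_\infty$ is a minimizer of $J_{Q_\infty}$ on $B_{R'}(p_\infty)$, fix any competitor ${\bf w}_\infty\in W^{1,2}(B_{R'}(p_\infty),[0,+\infty)^m)$ with ${\bf w}_\infty-{\bf u}_\infty\in W^{1,2}_0(B_{R'}(p_\infty),\mathbb R^m)$; by density we may assume ${\bf w}_\infty\in\Lip_{\rm loc}$. Applying Lemma \ref{lem2.11} componentwise (with $F_j={\bf u}_j$, which already converges uniformly from the first step), I construct lifts ${\bf w}_j$ with ${\bf w}_j-{\bf u}_j\in W^{1,2}_0(B_{R'}(p_j),\mathbb R^m)$, uniform convergence on $B_{R'-\delta}(p_j)$, and energy convergence $\int|\nabla{\bf w}_j|^2\du_j\to\int|\nabla{\bf w}_\infty|^2\du_\infty$; taking positive parts componentwise preserves all these properties (since $|\nabla w^+|\ls|\nabla w|$) and keeps ${\bf w}_j$ in the admissible class via Proposition \ref{prop2.2}. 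Because $\|{\bf u}_j-{\bf w}_j\|_{W^{1,2}}$ stays bounded while $\varepsilon_{{\bf u}_j}\to+\infty$ by \eqref{equ7.1}, the local minimality of ${\bf u}_j$ yields $J_{Q_j}({\bf u}_j)\ls J_{Q_j}({\bf w}_j)$ on $B_{R'}(p_j)$ for $j$ large. Passing to the limit uses (i) the lower semicontinuity of Cheeger energies under pmGH convergence, and (ii) the observation that uniform convergence ${\bf u}_j\to{\bf u}_\infty$ immediately gives $\chi_{\{|{\bf u}_\infty|>0\}}(x_\infty)\ls\liminf_j\chi_{\{|{\bf u}_j|>0\}}(x_j)$ whenever $x_j\overset{GH}{\to}x_\infty$, so a Fatou lemma for the weakly converging measures $\mu_j$ (combined with $Q_j\to Q_\infty$ locally uniformly) produces the analogous $\liminf$ inequality for the $Q$-weighted volume terms. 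Together these give $J_{Q_\infty}({\bf u}_\infty)\ls\liminf J_{Q_j}({\bf u}_j)\ls\liminf J_{Q_j}({\bf w}_j)=J_{Q_\infty}({\bf w}_\infty)$.

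Applying the same lifting construction with ${\bf w}_\infty={\bf u}_\infty$ gives $\limsup J_{Q_j}({\bf u}_j)\ls J_{Q_\infty}({\bf u}_\infty)$, hence the full limit $\lim J_{Q_j}({\bf u}_j)=J_{Q_\infty}({\bf u}_\infty)$. Since each of the two summands in $J$ is already a $\liminf$ lower bound from step (i)--(ii), equality in the sum forces equality in each summand, proving both \eqref{equ7.4} and \eqref{equ7.5}. For the free-boundary convergence \eqref{equ7.6} and the volume convergence \eqref{equ7.7}, the crux is that Theorem \ref{thm6.1} and Lemmas \ref{lem6.3}--\ref{lem6.4} apply to each ${\bf u}_j$ with constants depending only on $N,K,R,Q_{\min},Q_{\max},L$ and $\varepsilon_{{\bf u}_j}$; since the last parameter tends to $+\infty$, those constants can be chosen uniform in $j$. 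If $x_j\in\partial\{|{\bf u}_j|>0\}$ converges to $x_\infty\in B_{R'}(p_\infty)$, uniform nondegeneracy yields $\sup_{B_r(x_j)}|{\bf u}_j|\gs c\sqrt{Q_{\min}}\,r$, which by uniform convergence carries over to $\sup_{B_r(x_\infty)}|{\bf u}_\infty|\gs c\sqrt{Q_{\min}}\,r$ while $|{\bf u}_\infty(x_\infty)|=0$, placing $x_\infty\in\partial\{|{\bf u}_\infty|>0\}$. Conversely, for $x_\infty\in\partial\{|{\bf u}_\infty|>0\}$, the two-sided density estimates applied to the limit (now known to be a minimizer) produce in every $B_r(x_\infty)$ both points with $|{\bf u}_\infty|>0$ and points with $|{\bf u}_\infty|=0$; approximating them by points in $X_j$ and joining these by short geodesics (which $|{\bf u}_j|$ must cross at a free-boundary point by continuity) yields a sequence of free-boundary points of ${\bf u}_j$ converging to $x_\infty$. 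Finally, \eqref{equ7.7} follows from \eqref{equ7.5} together with $\mu_\infty(\partial\{|{\bf u}_\infty|>0\}\cap B_{R'})=0$ (by Proposition \ref{prop6.7}), which upgrades the weak convergence of $\chi_{\{|{\bf u}_j|>0\}}\du_j$ to mass convergence.

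The main obstacle is the last step: converting the one-sided semicontinuity inequalities into genuine Hausdorff convergence of free boundaries and $L^1$-convergence of characteristic functions. This is where the uniform nondegeneracy ruled by $\varepsilon_{{\bf u}_j}\to\infty$ and the uniform Lipschitz bound $L$ do their essential work; without either hypothesis the free-boundary structure could collapse (thin positive sets disappearing) or proliferate (new boundary components appearing from oscillations) in the limit.
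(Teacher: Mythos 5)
There is a genuine gap at the heart of your minimality argument: the chain $\liminf_j J_{Q_j}({\bf w}_j)=J_{Q_\infty}({\bf w}_\infty)$ (and the analogous claim $\limsup_j J_{Q_j}({\bf u}_j)\ls J_{Q_\infty}({\bf u}_\infty)$ obtained by lifting ${\bf u}_\infty$) is not justified. Lemma \ref{lem2.11} gives you convergence of the Dirichlet energies of the lifts ${\bf w}_j$, but it gives no control on the term $\int Q_j\chi_{\{|{\bf w}_j|>0\}}\,{\rm d}\mu_j$. Uniform convergence ${\bf w}_j\to{\bf w}_\infty$ only yields \emph{lower} semicontinuity of the indicator of the (open) positivity set; the reverse inequality fails, because the lifted maps may be strictly positive, with tiny values, on large regions where ${\bf w}_\infty$ vanishes, and such tiny values count fully in the volume term. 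So from $J_{Q_j}({\bf u}_j)\ls J_{Q_j}({\bf w}_j)$ you can only conclude $J_{Q_\infty}({\bf u}_\infty)\ls\liminf_j J_{Q_j}({\bf w}_j)$, which need not be bounded by $J_{Q_\infty}({\bf w}_\infty)$. This is precisely the point where the paper inserts an extra device: the competitor is not the raw lift ${\bf v}_j$ but the truncation $w_{j,\alpha}=\max\{0,v_{j,\alpha}-\delta\phi_j\}$ with a cutoff $\phi_j\equiv 1$ on $B_{R'-\delta}(p_j)$, so that $\{|{\bf w}_j|>0\}\cap B_{R'-\delta}(p_j)\subset\{|{\bf v}_j|\gs\delta\}$, whose indicators \emph{are} upper semicontinuous along uniform convergence, while the error produced in the annulus $B_{R'}\setminus B_{R'-\delta}$ is made small (in energy and in measure, using \eqref{equ2.12}) by choosing $\delta$ small; the whole comparison is then run as a contradiction argument against a fixed deficit $\delta_0$.

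A secondary but related issue: your reduction ``by density we may assume ${\bf w}_\infty\in\Lip_{\rm loc}$'' is not legitimate, since $J_{Q_\infty}$ is not continuous under $W^{1,2}$-approximation (again because of the indicator term, which can change drastically under small $W^{1,2}$ perturbations). The paper avoids both problems at once by comparing with one specific competitor, namely a minimizer ${\bf v}_\infty$ of the limit functional with boundary data ${\bf u}_\infty$ (existence by Proposition \ref{prop1.3}), which is locally Lipschitz by Theorem \ref{thm1.4}, so Lemma \ref{lem2.11} applies; showing $J_{Q_\infty}({\bf u}_\infty)\ls J_{Q_\infty}({\bf v}_\infty)$ is enough for minimality. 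The remaining parts of your outline (Arzel\`a--Ascoli extraction, the use of uniform nondegeneracy and density estimates for \eqref{equ7.6}, and the quantitative control needed for \eqref{equ7.7} --- note the paper derives \eqref{equ7.7} from the uniform estimate $\mu_j(\{0<|{\bf u}_j|\ls\varepsilon\})\ls C\varepsilon$ of Lemma \ref{lem6.6}, rather than from \eqref{equ7.5} plus negligibility of the limit free boundary) are essentially in line with the paper's proof, but the minimality step as you wrote it does not go through without the truncation idea.
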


\begin{proof}  
From \eqref{equ7.3} and the  Arzela-Ascoli   theorem,    there exist  a subsequence of $\{{\bf u}_{j}\} _{j}$ converging  uniformly to some
  ${\bf u}_{\infty}\in\Lip(\overline{B_{R}(p_\infty)})$ with the same Lipschitz constant  $L$.  
  
(i) Fix any $R'\ls R$. We first show the minimality of ${\bf u}_\infty$ on $B_{R'}(p_\infty)$.  The lower semicontinuity of the Cheeger energy (see, for example, \cite[Lemma 2.12]{ZZ19}) gives
  \begin{equation}\label{equ7.8}
    \liminf_{j\to\infty}\int_{B_{R'}(p_{j}) }\left|\nabla{\bf u}_{j}\right|^{2}{\rm d}\mu_j\gs \int_{B_{R'}(p_{\infty}) }\left|\nabla{\bf u}_{\infty}\right|^{2}{\rm d}\mu_\infty\text{.}
  \end{equation}
Since $\left\{\left|{\bf u}_{\infty}\right|>0\right\}$ is an open set, it is easy to check    that 
  $$x_{j}\overset{GH}{\longrightarrow}  x_\infty\quad   \Longrightarrow\quad  \liminf_{j\to\infty}Q_{j}\chi_{\{{\bf u}_{j} >0\} }(x_{j})\ \gs  Q_\infty\chi_{\{{\bf u}_\infty >0\} }(x_\infty).$$
  By Fatou's lemma (see  \cite[Lemma 2.5]{DPG18} or \cite[Appendix A]{ZZ19}, for the Fatou's lemma for functions defined on varying spaces), we have  
  \begin{equation}\label{equ7.9}
    \liminf_{j\to\infty}\int_{B_{R'}(p_{j}) }Q_{j}\chi_{\left\{\left|{\bf u}_{j}\right|>0\right\}}{\rm d}\mu_j\gs \int_{B_{R'}(p_{\infty}) }Q_\infty\chi_{\left\{\left|{\bf u}_{\infty}\right|>0\right\}}{\rm d}\mu_\infty\text{.}
  \end{equation}
 Let  ${\bf v}_{\infty}:B_{R'}(x_{\infty}) \to\mathbb{R}^{m}$ be a minimizer of $J_{Q_\infty}$ with   ${\bf v}_\infty-{\bf u}_\infty\in W^{1,2}_0(B_{R'}(p_\infty),\mathbb R^m)$. 
We first claim that
\begin{equation}\label{equ7.10}
J_{Q_\infty}({\bf v}_\infty)\gs \limsup_{j\to\infty} J_{Q_{j}}({\bf u}_{j}).
\end{equation}
If this claim holds, by combining (\ref{equ7.8}), (\ref{equ7.9}),  the minimality of ${\bf v}_\infty$ and  (\ref{equ7.10}),   then we have
    \begin{equation*} 
    \liminf_{j \to\infty}J_{Q_{j}}({\bf u}_{j}) \gs  J_{Q_\infty}({\bf u}_{\infty})\gs J_{Q_\infty}({\bf v}_\infty) \gs \limsup_{j\to\infty} J_{Q_{j}}({\bf u}_{j})\text{.}
  \end{equation*} 
  This yields
      $\lim_{j \to\infty}J_{Q_{j}}({\bf u}_{j}) = J_{Q_\infty}({\bf u}_{\infty})= J_{Q_\infty}({\bf v}_\infty).$
      Thus, we conclude that ${\bf u}_\infty$ is also a minimizer of $J_{Q_\infty}$ on $B_{R'}(p_\infty)$, and that the inequalities in   (\ref{equ7.8}) and (\ref{equ7.9}) must be equalities, i.e.,  the both assertions of (\ref{equ7.4}) and (\ref{equ7.5}) hold.

Now let us prove the claim \eqref{equ7.10} by a contradiction argument. 
  Suppose not, there exists some  $\delta_0>0$ and a subsequence of  $\{{\bf u}_{j}\}_j$, denoted by $\{{\bf u}_{j}\}_j$ again, such that 
\begin{equation}\label{equ7.11}
J_{Q_\infty}({\bf v}_\infty)\ls \lim_{j\to\infty} J_{Q_{j}}({\bf u}_{j})-\delta_0.
\end{equation}
Let  $\delta\in(0, \delta_{0})$ be a constant so small  that
\begin{equation}\label{equ7.12}
4m\sqrt{\Lambda_\infty}\cdot  \mu_{\infty}^{1/2}\big(A_{R'-\delta,R'}(p_\infty)\big)    +\big(4m+Q_{\max}\big)\cdot    \mu_{\infty}\big(A_{R'-\delta,R'}(p_\infty)\big)\ls \frac{\delta_0}{2},
\end{equation}
where $  \Lambda_\infty:= \int_{B_{R'}(p_{\infty})}|\nabla {\bf v}_{\infty}|^2\du_\infty,$ and $A_{r_1,r_2}(p):=B_{r_2}(p)\backslash B_{r_1}(p)$.
 
   From Theorem \ref{thm1.4}, we have ${\bf v}_\infty\in Lip_{\rm loc}(B_{R'}(p_\infty))$.    By combining this and the facts that ${\bf u}_{j}$ converges uniformly to ${\bf u}_\infty$ over $B_{R'}(p_{j})$, $|\nabla {\bf u}_{j}|\ls L$ for all $j\in\mathbb N$,  we conclude, by Lemma \ref{lem2.11}, that there exist a sequence of maps $
{\bf v}_{j}:B_{R'}(p_{j})\to\mathbb R^m$ such that ${\bf v}_{j}-{\bf u}_{j}\in W^{1,2}_0\big(B_{R'}(p_{j}),\mathbb R^m\big)$, ${\bf v}_{j}\to {\bf v}_\infty$ uniformly over $B_{R'-\delta}(p_{j}) $ as $j\to\infty$,  and 
\begin{equation}\label{equ7.13}
\lim_{j\to\infty} \int_{B_{R'}(p_{j})}|\nabla {\bf v}_{j}|^2\du_j= \int_{B_{R'}(p_{\infty})}|\nabla {\bf v}_{\infty}|^2\du_\infty\quad  (=\Lambda_\infty).
\end{equation}

For each $j\in \mathbb N$,  we put $\mathbf{w}_{j}=(w_{j,1},\ldots,w_{j,m}) $, where
  \begin{equation}\label{equ7.14}
    w_{j,\alpha}:=\max\left\{0,v_{j,\alpha}-\delta\phi_{j}\right\}\quad \forall\ \alpha=1,2,\cdots, m,
  \end{equation}
and $\phi_{j}$ is a cut-off function on $B_{R'}(p_{j})$ such that ${\rm supp}(\phi_{j})\subset B_{R'}(p_{j})$, $\phi_{j}(x)=1$ if $x\in B_{R'-\delta}(p_{j})$, and $|\nabla \phi_{j}|\ls 2/\delta$.
It is clear that  $\mathbf{w}_{j}$ is an admissible map  for ${\bf u}_j$ in \eqref{equ1.4}. 
 From the definition of ${\rm d}({\bf u}_j,{\bf v}_j)$ in \eqref{equ1.5}, by using the Poincar\'e inequality to ${\bf v}_{j}-{\bf u}_{j}\in W^{1,2}_0\big(B_{R'}(p_{j}),\mathbb R^m\big)$,  and then $|\nabla {\bf u}_{j}|\ls L$, \eqref{equ7.13} and (\ref{equ2.12}), we get  
$${\rm d}({\bf v}_j,{\bf u}_j)\ls C'$$
 for some constant $C'>0$ independent of $j$ (may depend on $N,K,R',L,\Lambda_\infty$ and the Poincar\'e constant $C_P$ in Proposition \ref{prop2.1}(5).) 
Thus, by combining this and \eqref{equ7.14}, we obtain
\begin{equation*}
\begin{split}
{\rm d}({\bf w}_j,{\bf u}_j)&\ls {\rm d}({\bf w}_j,{\bf v}_j)+{\rm d}({\bf v}_j,{\bf u}_j)\\
&\ls \delta\cdot \|\phi_j\|_{W^{1,2}(B_{R'}(p_j))}+\mu(B_{R'}(p_j))+C'\ls C'' 
\end{split}
\end{equation*}
for some constant $C''>0$ independent of $j$. 
Thus, for any $j$ sufficiently large (such that $\varepsilon_{{\bf u}_j}>C''$ since the assumption \eqref{equ7.1}),    the local minimality of ${\bf u}_{j}$ implies
  \begin{equation}\label{equ7.15}
    \begin{aligned}
    J_{Q_{j}}({\bf u}_{j}) \ls &  J_{Q_{j} }(\mathbf{w}_{j})                                                                                                                                                                                                                          = \int_{B_{R'}(p_{j}) }\big(\left|\nabla\mathbf{w}_{j}\right|^{2}+Q_{j}\chi_{\left\{\left|\mathbf{w}_{j}\right|>0\right\}}\big)\du_{j}                                                                                                                       \\
      \ls                                                        & \int_{B_{R'} (p_{j}) }\left|\nabla\big({\bf v}_{j}-\delta\phi_{j}\big)\right|^{2}                                                                                                +\int_{B_{R'}(p_{j}) }Q_{j}\chi_{\left\{\left|\mathbf{w}_{j}\right|>0\right\}},      
     \end{aligned}
  \end{equation}
  where we have used $|\nabla {\bf w}_{j}|\ls |\nabla (v_{j}-\delta\phi_{j})|$ $\mu_{j}$-a.e. in $B_{R'}(p_{j})$ (see, for instance,  Proposition \ref{prop2.1}(3)). Noticing that $\phi_{j}=1$ on $B_{R'-\delta}(p_{j})$ and $|\nabla \phi_{j}|\ls 2/\delta$ on $A_{R'-\delta,R'}(p_{j})$, we have
         \begin{equation}\label{equ7.16}
    \begin{aligned}
  &\int_{B_{R'} (p_{j}) } \left|\nabla\big({\bf v}_{j}-\delta\phi_{j}\big)\right|^{2}                     \\
                    \ls& \int_{B_{R'-\delta}(p_{j})}\left|\nabla{\bf v}_{j}\right|^{2} \\
                   &  \  + \int_{A_{R'-\delta, R'} (p_{j}) }\Big(\left|\nabla {\bf v}_{j} \right|^{2}+2 m\cdot\delta \left|\nabla {\bf v}_{j} \right|\cdot|\nabla \phi_{j}|+m\cdot\delta^2|\nabla \phi_{j}|^2 \Big) \\      
               \ls    & \int_{B_{R'} (p_{j}) } \left|\nabla {\bf v}_{j} \right|^{2}+4 m\int_{A_{R'-\delta, R'} (p_{j}) }\left|\nabla {\bf v}_{j} \right|  + 4m\cdot\mu_{j}\big(A_{R'-\delta,R'}(p_{j})\big) \\
   \ls&  \int_{B_{R'} (p_{j}) } \left|\nabla {\bf v}_{j} \right|^{2}+4 m    \Big(\int_{B_{R'}(p_{j})}|\nabla {\bf v}_{j}|^2\Big)^{1/2}\cdot \mu_{j}\big(A_{R'-\delta,R'}(p_{j})\big)^{1/2}\\
   &+ 4m\cdot\mu_{j}\big(A_{R'-\delta,R'}(p_{j})\big).
         \end{aligned}
  \end{equation}
  By the  definition of ${\bf w}_{j},$ (\ref{equ7.14}), we have
  $$\{|{\bf w}_{j}|>0\}\cap B_{R'-\delta}(p_{j})\subset \{|{\bf v}_{j}|\gs \delta \} \cap B_{R'-\delta}(p_{j}),$$ 
and then 
\begin{equation*}
  \int_{B_{R'}(p_{j}) }Q_{j}\chi_{\left\{\left|\mathbf{w}_{j}\right|>0\right\}}\ls \int_{B_{R'-\delta}(p_{j}) }Q_{j}\chi_{\left\{\left|\mathbf{v}_{j}\right|\gs \delta\right\}} +  Q_{\max}\cdot \mu_{j}\big(A_{R'-\delta,R'}(p_{j})\big).
\end{equation*} 
Substituting this and (\ref{equ7.16}) into (\ref{equ7.15}), we obtain    
   
   \begin{equation}\label{equ7.17}
    \begin{aligned}
    J_{Q_{j}}({\bf u}_{j}) \ls & \int_{B_{R'}(p_{j})}\left|\nabla {\bf v}_{j} \right|^{2}  +\int_{B_{R'-\delta}(p_{j}) }Q_{j}\chi_{\left\{\left|\mathbf{v}_{j}\right|\gs \delta\right\}}  \\
    &+ 4 m    \Big(\int_{B_{R'}(p_{j})}|\nabla {\bf v}_{j}|^2\Big)^{1/2}\cdot \mu_{j}\big(A_{R'-\delta,R'}(p_{j})\big)^{1/2}\\
   &    +(4m+  Q_{\max})\cdot \mu_{j}\big(A_{R'-\delta,R'}(p_{j})\big).  
         \end{aligned}
  \end{equation} 
Recall that ${\bf v}_{j}\to {\bf v}_\infty$ uniformly over $B_{R'-\delta}(p_{j}) $ as $j\to\infty$. Hence, $$\limsup_{j\to\infty}\chi_{\{|{\bf v}_{j}|\gs \delta\}}(x_{j})\ls \chi_{\{|{\bf v}_\infty|>0\}}(x_\infty),\quad  \forall \ x_{j}\overset{GH}{\longrightarrow}x_\infty.$$ By using this and  letting $j\to\infty$ in (\ref{equ7.17}), we obtain 
    \begin{equation*} 
        \begin{aligned}
  \limsup_{j\to\infty}  J_{Q_{j}}({\bf u}_{j}) \ls& \Lambda_\infty+\int_{B_{R'}(p_\infty) }Q_{\infty}\chi_{\left\{\left|\mathbf{v}_{\infty}\right|>0\right\}} + 4m\sqrt{\Lambda_\infty}\cdot  \mu_{\infty}^{1/2}\big(A_{R'-\delta,R'}(p_\infty)\big)\\&\ \ +\big(4m+Q_{\max}\big)\cdot    \mu_{\infty}\big(A_{R'-\delta,R'}(p_\infty)\big)   \\
\ls &  J_{Q_\infty}({\bf v}_\infty)+\frac{\delta_0}{2},
          \end{aligned}
  \end{equation*}   
where we have used (\ref{equ7.13}), (\ref{equ7.12}) and $\mu_{j}\big(A_{R'-\delta,R'}(p_{j})\big)\to \mu_{\infty}\big(A_{R'-\delta,R'}(p_{\infty})\big)$. This contradicts with (\ref{equ7.11}), and then proves the claim   (\ref{equ7.10}). Therefore, we have proved the minimality of ${\bf u}_\infty$ and the equalities (\ref{equ7.4}), (\ref{equ7.5}).

 (ii) Next we will prove (\ref{equ7.6}). On the one hand, let $x_{j}\in \partial \{|{\bf u}_{j}|>0\}\cap B_{R'}(p_{j})$ such that $x_{j}\overset{GH}{\longrightarrow}x_\infty\in B_{R'}(p_\infty)$.  Fixing any $r>0$, we have $B_r(x_{j})\overset{GH}{\longrightarrow}B_r(x_\infty)$. By Theorem \ref{thm6.1}, we get $\sup_{B_r(x_{j})}|{\bf u}_{j}|\gs C r$ for a constant $C>0$ independent of $j$.  Since ${\bf u}_{j}\to {\bf u}_\infty$ over $B_{R'}(p_{j})$ uniformly, we have $|{\bf u}_\infty(x_\infty)|=0$ and  $\sup_{B_r(x_{\infty})}|{\bf u}_{\infty}|\gs C r$. By the arbitrariness of $r>0$, we conclude $x_\infty\in \partial\{|{\bf u}_\infty|>0\}.$    On the other hand, for each $y_\infty\in \partial\{|{\bf u}_\infty|>0\}\cap B_{R'}(p_\infty)$. We can find a sequence $y_{j}\in B_{R'}(p_{j})$ such that $y_{j}\overset{GH}{\longrightarrow}y_\infty$ and $|{\bf u}_{j}(y_{j})|\to 0$. By the nondegeneracy, we get 
 $$d\big(y_{j},\partial \{|{\bf u}_{j}|>0\}\big)\to 0\quad {\rm as}\quad j\to\infty.$$
 Therefore, there exists a sequence $\{z_{j}\}$ such that $z_{j}\in     \partial \{|{\bf u}_{j}|>0\}\cap B_{R'}(p_{j})$ and $d(y_j,z_j)\to 0$ as $j\to \infty$. So we have $z_{j}\overset{GH}{\longrightarrow}  y_\infty$. This proves the assertion (\ref{equ7.6}).
 
 (iii) The assertion (\ref{equ7.7}) follows from Lemma \ref{lem6.6}. Since all sets $B_{R'}(p_{j})\cap \{|{\bf u}_{j}|>0\}$ are open, and $\mu_{j}\rightharpoonup\mu_\infty$, we have
 $$\liminf_{k\to\infty} \mu_{j}\Big(B_{R'}(p_{j})\cap \{|{\bf u}_{j}|>0\} \Big)\gs \mu_\infty\Big(B_{R'}(p_{\infty})\cap \{|{\bf u}_{\infty}|>0\} \Big).$$
 Similarly,  given any $\varepsilon>0$, the fact that all sets $B_{R'}(p_{j})\cap \{|{\bf u}_{j}|\gs \varepsilon\}$ are closed implies
\begin{equation*}
\begin{split}
\limsup_{k\to\infty} \mu_{j}\Big(B_{R'}(p_{j})\cap \{|{\bf u}_{j}|\gs\varepsilon\} \Big)&\ls \mu_\infty\Big(B_{R'}(p_{\infty})\cap \{|{\bf u}_{\infty}|\gs 
 \varepsilon/2\} \Big)\\
 &\ls \mu_\infty\Big(B_{R'}(p_{\infty})\cap \{|{\bf u}_{\infty}|>0\} \Big).
 \end{split}
 \end{equation*}
From Lemma \ref{lem6.6}, there exists a constant $C>0$ (independent of $j$) such that for all $\varepsilon\in(0,\bar\varepsilon)$, we have
 \begin{equation*}
\begin{split}
&\mu_{j}\Big(B_{R'}(p_{j})\cap \{|{\bf u}_{j}|>0\} \Big)-\mu_{j}\Big(B_{R'}(p_{j})\cap \{|{\bf u}_{j}|\gs\varepsilon\} \Big)\\
&\ls \mu_{j}\Big(B_{R'}(p_{j})\cap \{0<|{\bf u}_{j}|\ls \varepsilon\} \Big)\ls C\varepsilon,\qquad \forall\ j\in \mathbb N.
\end{split}
\end{equation*}
 By  combining  these three inequality and the arbitrariness of $\varepsilon\in(0,\bar \varepsilon)$, the assertion (\ref{equ7.7}) follows. Now the proof is finished.
\end{proof}

We now apply Theorem \ref{thm7.1} to the special case of blow-up limits, to get the Euler-Lagrange equation of local minimizers.
\begin{corollary}\label{cor7.2}
Let ${\bf u}=(u_1,u_2,\cdots,u_m)$ be a local minimizer of $J_Q$ on $\Omega$ with $Q\in C(\Omega).$ Recall that ${\bf \Delta}u_i=q_i\cdot\mathscr H^{N-1}\llcorner  (\partial\{|{\bf u}|>0\}\cap\Omega)$  for some nonnegative Borel functions $q_i$, $i=1,2,\cdots,m$ (see Proposition \ref{prop6.7}). Then
\begin{equation} \label{ equ7.18}
\sum_{i=1}^m q^2_i(x_0)=Q(x_0), \quad\mathscr H^{N-1}{\rm-a.e.}\ x_0\in \partial\{|{\bf u}|>0\}\cap \Omega.
\end{equation}
\end{corollary}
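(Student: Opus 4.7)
The plan is a blow-up analysis at $\mathscr H^{N-1}$-typical free boundary points, based on the compactness Theorem \ref{thm7.1} and the classification of half-space minimizers from the Euclidean theory. First, I would fix a point $x_0\in\partial\{|{\bf u}|>0\}\cap\Omega$ satisfying three conditions simultaneously: (a) every tangent cone of $X$ at $x_0$ is isometric to $\mathbb R^N$; (b) $x_0$ lies in the reduced boundary $\mathcal F\Omega_{\bf u}$ of $\Omega_{\bf u}=\Omega\cap\{|{\bf u}|>0\}$; and (c) $x_0$ is a Lebesgue point, with respect to $\mathscr H^{N-1}\llcorner\mathcal F\Omega_{\bf u}$, of each density $q_i$. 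The set of points failing (a) has zero $\mathscr H^N$-measure, hence zero $\mathscr H^{N-1}$-measure when intersected with the locally $\mathscr H^{N-1}$-finite set $\mathcal F\Omega_{\bf u}$; failure of (b) is $\mathscr H^{N-1}$-negligible by Proposition \ref{prop2.19}(2) and \eqref{equ6.41}; and (c) is generic for any $L^1_{\rm loc}$ Borel density on a doubling measure, the doubling of $\mathscr H^{N-1}\llcorner\mathcal F\Omega_{\bf u}$ near $x_0$ being inherited from the half-space blow-up via Lemma \ref{lem2.16}.

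Next I would carry out the blow-up. Consider $(X,r^{-1}d,\mu^{x_0}_r,x_0)$, which $pmGH$-converges to $(\mathbb R^N,d_E,c_N\mathscr L^N,0^N)$ by (a), together with the rescaled maps ${\bf u}_r:=r^{-1}{\bf u}$ and coefficients $Q_r(y):=Q(x_0+ry)$. By Theorem \ref{thm1.4} the ${\bf u}_r$ share a uniform Lipschitz constant; by continuity of $Q$, $Q_r\to Q(x_0)$ uniformly on compacts; and by (b) together with \eqref{equ7.6}--\eqref{equ7.7}, the positivity sets $\{|{\bf u}_r|>0\}$ blow up to $\{y_N>0\}\subset\mathbb R^N$. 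Theorem \ref{thm7.1} then yields, along a subsequence, a limit ${\bf u}_0=(u_{0,1},\dots,u_{0,m})$ which is a minimizer of $J_{Q(x_0)}$ on every Euclidean ball, with $\{|{\bf u}_0|>0\}=\{y_N>0\}$. Each component $u_{0,i}$ is nonnegative, globally Lipschitz, vanishes on $\{y_N\le 0\}$ and is harmonic on $\{y_N>0\}$ by Lemma \ref{lem4.4}. Odd reflection across $\{y_N=0\}$ combined with the Liouville theorem for at-most-linearly-growing harmonic functions on $\mathbb R^N$ forces $u_{0,i}(y)=a_i y_N^+$ for some $a_i\ge 0$, so ${\bf u}_0(y)={\bf a}\,y_N^+$ with ${\bf a}=(a_1,\dots,a_m)$. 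The classical Euclidean free boundary condition for such half-space minimizers (cf.\ \cite{CSY18}) then gives $|{\bf a}|^2=Q(x_0)$.

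The last step is to identify $a_i=q_i(x_0)$, and I expect this to be the main technical obstacle. The idea is to pass to the limit in the distributional identity $-\int\langle\nabla u_i,\nabla\phi\rangle\du=\int\phi\,q_i\,{\rm d}\mathscr H^{N-1}\llcorner\mathcal F\Omega_{\bf u}$ after rescaling. Given $\phi\in C_c(\mathbb R^N)$, I would lift it via Lemma \ref{lem2.10} to Lipschitz functions $\phi_r$ on the rescaled spaces. The energy side converges, by \eqref{equ7.4} and polarization, to $-\int_{\mathbb R^N}\langle\nabla u_{0,i},\nabla\phi\rangle\,{\rm d}y=a_i\int_{\{y_N=0\}}\phi\,{\rm d}\mathscr H^{N-1}$. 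For the measure side, the weak convergence of perimeter measures $\mathcal P(\Omega_{{\bf u}_r},\cdot)\rightharpoonup\mathscr H^{N-1}\llcorner\{y_N=0\}$ (from \eqref{equ7.7}, Lemma \ref{lem2.16} and Proposition \ref{prop2.19}(2)), together with condition (c), should identify the rescaled integral as $q_i(x_0)\int\phi\,{\rm d}\mathscr H^{N-1}\llcorner\{y_N=0\}$ in the limit. Equating both sides yields $a_i=q_i(x_0)$ and, combined with $|{\bf a}|^2=Q(x_0)$, proves \eqref{equ1.10}. The delicate point is that $q_i$ is merely Borel rather than continuous, so one must split $q_i=q_i(x_0)+(q_i-q_i(x_0))$ and use the Lebesgue-point property (c) to control the error $\int\phi_r(q_i-q_i(x_0))\,{\rm d}\mathscr H^{N-1}\llcorner\mathcal F\Omega_{\bf u}$ along the rescaled free boundaries; the $r^{N-1}$ scaling of $\mathscr H^{N-1}\llcorner\mathcal F\Omega_{\bf u}$ must be tracked so that it exactly balances the scaling of the limit measure on the flat hyperplane.
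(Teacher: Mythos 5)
Your overall scheme is the same as the paper's: blow up at an $\mathscr H^{N-1}$-generic point that lies in the reduced boundary and is a Lebesgue point of the $q_i$, identify the blow-up limit as ${\bf a}\,y_N^+$ with $|{\bf a}|^2=Q(x_0)$, and then match $a_i$ with $q_i(x_0)$. Where you genuinely differ is the classification of the limit: the paper invokes the classification of one-homogeneous vector minimizers (\cite{MTV17,CSY18}) together with the density-$1/2$ information and \cite{AC81}, whereas you read off the half-space positivity set from the reduced-boundary definition and then use odd reflection plus Liouville. That variant is legitimate and somewhat more self-contained, but note two points: to get the exact identity $\{|{\bf u}_0|>0\}=\{y_N>0\}$ (not just equality in measure) you should also invoke the density estimates, Lemmas \ref{lem6.3} and \ref{lem6.4}, for the limit minimizer; and since ${\bf u}$ is only a \emph{local} minimizer, you must record that the rescaled maps are local minimizers with size $c_{r}\varepsilon_{\bf u}\to+\infty$, which is what makes hypothesis \eqref{equ7.1} of Theorem \ref{thm7.1} applicable (the paper does this via \eqref{equ7.23}).

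The step you flagged as delicate is indeed where your justification does not go through as written. Passing to the limit in the right-hand side of the distributional identity requires that the rescaled measures $\mathscr H^{N-1}\llcorner(\partial\{|{\bf u}|>0\})=\mathcal P(\Omega_{\bf u},\cdot)$ converge weakly to $\mathscr H^{N-1}\llcorner\{y_N=0\}$ without loss or concentration of mass, i.e.\ that $\mathcal P(\Omega_{\bf u},B_r(x_0))/(\omega_{N-1}r^{N-1})\to1$. The tools you cite do not give this: Lemma \ref{lem2.16} concerns the measures $|D\chi_{B_r(p_j)}|$ of metric balls, not of $\Omega_{\bf u}$, and \eqref{equ7.7} is a statement about volumes, not perimeters; lower semicontinuity of the perimeter yields only the inequality in one direction. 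The needed fact is true and is part of the De Giorgi-type blow-up analysis of \cite{ABS19} that underlies Proposition \ref{prop2.19}(2) (and which the paper itself implicitly uses when it converts the computed density of ${\bf \Delta}u_i$ into $q_i(x_0)$ at the Lebesgue point), so the gap is one of citation rather than substance; your appeal to ``doubling of $\mathscr H^{N-1}\llcorner\mathcal F\Omega_{\bf u}$ inherited via Lemma \ref{lem2.16}'' for the genericity of Lebesgue points is likewise misattributed. Alternatively, you can follow the paper and avoid weak convergence of boundary measures altogether: since ${\bf\Delta}u_i\gs0$, testing with radial cutoffs equal to $1$ on $B^{(j)}_1$ and supported in $B^{(j)}_{1+\delta}$ (resp.\ supported in $B^{(j)}_{1}$, equal to $1$ on $B^{(j)}_{1-\delta}$) and using \eqref{equ7.24} sandwiches $r_j{\bf\Delta}u_i(B_{r_j}(x_0))/\mu(B_{r_j}(x_0))$ between $\Delta u_{0,i}(B^e_{1\pm\delta}(0))/\omega_N$, and the explicit form ${\bf u}_0={\bf a}\,y_N^+$ as in \eqref{equ7.26} then gives $q_i(x_0)=a_i$ after $\delta\to0$, with only the pointwise density statement at $x_0$ needed to interpret the left-hand side.
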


\begin{proof}Since $\Omega_{\bf u}=\Omega\cap \{|{\bf u}|>0\}$ is of locally finite perimeter (by Proposition \ref{prop6.7}), we know from Proposition \ref{prop2.19}(2) and \eqref{equ6.41} that the reduced boundary  $\mathcal F\{|{\bf u}|>0\}\cap \Omega$ has full $\mathscr H^{N-1}$-measure in $\partial\{|{\bf u}|>0\}\cap\Omega$. Suppose that $x_0\in\mathcal F\{|{\bf u}|>0\}\cap \Omega$ and that it is a Lebesgue's point of $q_i$, for all $i=1,2, \cdots, m$, with respect to $\mathscr H^{N-1}$. It suffices to show (\ref{ equ7.18}) at such $x_0$. 

 Let $\{r_j\}_{j=1,2,\cdots}$ be a sequence of real numbers  such that $r_j\to0^+$ as $j\to\infty$, and consider the blow-up sequence of spaces 
$$X_j:=\big(X,d_j:=r^{-1}_jd,\mu_j, x_0\big),\qquad  \mu_j:=\mu^{x_0}_{r_j}=c_j\cdot\mu ,$$ where $c_j^{-1}=\int_{B_{r_j}(x_0)}\big(1-r^{-1}_jd(x,x_0)\big)\du(x)$ (is given in \eqref{equ2.11}).  We   denote by   $B^{(j)}_R(x_0)$  the ball in $X_j$   with radius $R$ (with respect to the metric $d_j$). 

 Given any ${\bf v}\in W^{1,2}(B_R(x_0),\mathbb R^m)$, it is clear that the blow-up sequence of maps 
  ${\bf v}_j:=r_j^{-1}{\bf v}\in W^{1,2}(B^{(j)}_R(x_0),\mathbb R^m)$. Moreover, if ${\bf v}\in C(\overline{B_R(x_0)})$, then  for each $j\in \mathbb N$, we have
\begin{align}
\label{ equ7.19}
\int_{B^{(j)}_{R}(x_0) }| {\bf v}_j|^2\du_{j}&=r^{-2}_jc_j\int_{B_{Rr_j}(x_0)}|  {\bf v}|^2\du,\\
\label{  equ7.20}
\int_{B^{(j)}_{R}(x_0) }|\nabla^{(j)} {\bf v}_j|^2\du_{j}&=c_j\int_{B_{Rr_j}(x_0)}|\nabla {\bf v}|^2\du,
\end{align}
  where $|\nabla^{(j)}v|$ is the  minimal weak upper gradient for $v$ with respected to $d_j$,  and 
    \begin{equation}\label{equ7.21}
\int_{B^{(j)}_{R}(x_0)}Q(x)\chi_{\{|{\bf v}_j|>0\}}\du_{j}=c_j \int_{B_{Rr_j}(x_0)}Q(x)\chi_{\{|{\bf v}|>0\}} \du,
\end{equation}
 since $\{|{\bf v}_j|>0\}\cap  B^{(j)}_R(x_0)=\{|{\bf v}|>0\}\cap  B_{Rr_j}(x_0)$. Denoting by $B_R^{(j)}:=B^{(j)}_R(x_0)$ and $B_{Rr_j}:=B_{Rr_j}(x_0)$, the combination of \eqref{ equ7.20}-\eqref{equ7.21} gives, for each $j\in\mathbb N$, that
\begin{equation}\label{equ7.22}
\begin{split}
J_Q({\bf v}_j,B^{(j)}_R):& =\int_{B^{(j)}_{R} }|\nabla^{(j)} {\bf v}_j|^2\du_{j}+\int_{B^{(j)}_{R}}Q(x)\chi_{\{|{\bf v}_j|>0\}}\du_{j}\\
&=c_j\cdot J_Q({\bf v},B_{Rr_j})
\end{split}
 \end{equation}
and that,  by the definition \eqref{equ1.5},   
\begin{equation}\label{equ7.23}
\begin{split}
{\rm d}_{B_R^{(j)}}({\bf v}_j,{\bf w}_j):& =\left\|{\bf v}_j-{\bf w}_j\right\|_{W^{1,2}(B_R^{(j)},\mathbb{R}^{m}) }+\left\|\chi_{\left\{ |{\bf v}_j|>0\right\}}-\chi_{\left\{ |{\bf w}_j |>0\right\}}\right\|_{L^{1}(B_R^{(j)})}\\
&=c_j \Big({\rm d}_{B_{Rr_j}}({\bf v},{\bf w}) - \left\|{\bf v}-{\bf w} \right\|_{L^{2}(B_{Rr_j},\mathbb{R}^{m}) }\Big)\\
& \ \quad + c_j r^{-1}_j \left\|{\bf v}-{\bf w} \right\|_{L^{2}(B_{Rr_j},\mathbb{R}^{m}) }\\
&\gs c_j {\rm d}_{B_{Rr_j}}({\bf v},{\bf w})  \qquad ({\rm by}\ \ r_j\ls 1).
\end{split}
\end{equation}

Noticing that $x_0$ is a regular point, we have $X_j\overset{pmGH}{\longrightarrow}(\mathbb R^N,d_E,c_N\mathscr H^N,0)$ as $j\to+\infty$.
Since ${\bf u}$ is a local minimizer of $J_Q$ on $B_R(x_0)\subset \Omega$ with size $\varepsilon_{{\bf u}}>0$, we conclude, for each $j\in \mathbb N$, that the blow-up map ${\bf u}_j:=r_j^{-1}{\bf u}$ is a local minimizer of $J_Q$ on $B_R^{(j)}(x_0)$ with size $c_j\cdot \varepsilon_{{\bf u}}$ (from \eqref{equ7.23}).  
Since $c_j\to+\infty$ as $j\to+\infty$,  by using Theorem 7.1 and a diagonal argument, there exist a subsequence of $r_j$ such that  ${\bf u}_j$ converges to a limit map 
$${\bf u}_0=(u_{0,1},u_{0,2},\cdots, u_{0,m})$$
on the tangent cone $(\mathbb R^N,d_E,c_N\mathscr H^N,0)$, and that for each $R>0$, ${\bf u}_0$ is  a minimizer of $J_{Q_0}$ on each Euclidean ball $B^e_R(0)\subset \mathbb R^N$, where $Q_0=Q(x_0)$. Moreover, by applying (\ref{equ7.4}), (\ref{equ7.5}), (\ref{  equ7.20}) and (\ref{equ7.21}), we obtain
  \begin{equation}\label{equ7.24}
\int_{B^e_{R}(0)}|\nabla {\bf u}_0|^2{\rm d}x=R^N\cdot \omega_N\cdot \lim_{j\to \infty}\fint_{B_{Rr_j}(x_0)}|\nabla {\bf u}|^2\du,
\end{equation}
 and 
\begin{equation}\label{equ7.25}
\int_{B^e_{R}(0)}Q_0\chi_{\{|{\bf u}_0|>0\}} {\rm d}x= R^N\cdot \omega_N \cdot \lim_{j\to \infty}\fint_{B_{Rr_j}(x_0)}Q(x)\chi_{\{|{\bf u}|>0\}} \du,
\end{equation}
where we have used   $ c_j\mu(B_{r_j}(x_0))=\mu_j(B^{(j)}_1(x_0))$, $\lim_{j\to\infty}\mu_j(B^{(j)}_1(x_0))=c_N\omega_N$ and $\lim_{j\to \infty}\frac{ \mu\big(B_{Rr_j}(x_0)\big)}{\mu\big(B_{r_j}(x_0)\big)}=R^N$ (see \cite[Corollary 1.7]{DPG18}).

Remark that ${\bf u}_0$ is also a blow-up limit of itself on $\mathbb R^N$ (Indeed, by taking a subsequence of $\{r_j\}$, says $\{r'_j\}$, such that $r'_j/r_j:=\epsilon_j\to 0$ as $j\to+\infty$, we get that  ${\bf u}_0(\cdot)$ is the blow-up limit of $\epsilon_j^{-1}{\bf u}_0(\epsilon_j\cdot)$ on $(\mathbb R^N, d_E,c_N\mathscr H^N,0)$.) According to the classification of blow-up limits   on Euclidean space $\mathbb R^N$ (see \cite[Proposition 4.2]{MTV17} or \cite[Lemma 23]{CSY18}), we conclude that   there is a 1-homogeneous nonnegative global minimizer $u: \mathbb R^N\to [0,\infty)$  of the one-phase Alt–Caffarelli functional
$$J(u): = \int \big(|\nabla u|^2+Q_0\cdot \chi_{\{u>0\}}\big){\rm d}x$$
such that ${\bf u}_0(x) =\xi\cdot u(x)$, where $\xi=(\xi_1,\cdots,\xi_m) \in \mathbb R^N$ with $|\xi|  = 1$. On the other hand, since $x_0\in \mathcal F\{|{\bf u}|>0\}$, we have $\lim_{j\to \infty}\fint_{B_{Rr_j}(x_0)\subset X}\chi_{\{|{\bf u}|>0\}} \du=1/2$, and hence, by (\ref{equ7.25}) and  that $Q$ is continuous at $x_0$, we get
  $$\int_{B^e_R(0)}\chi_{\{u>0\}}{\rm d}x=\int_{B^e_{R}(0)}\chi_{\{|{\bf u}_0|>0\}} {\rm d}x=1/2.$$
This yields, by Theorem 5.5 in \cite{AC81},  that $\partial\{u>0\}$ is a $(N-1)$-dimensional  hyperplane in $\mathbb R^N$ and   ${\Delta} u=\sqrt{Q_0}\cdot\mathscr H^{N-1}\llcorner \partial\{u>0\}$ in the sense of measures. Thus, we obtain
\begin{equation}\label{equ7.26}
{ \Delta} u(B^e_R(0))=\sqrt{Q_0}\cdot\mathscr H^{N-1}\big(B^e_R(0)\cap \partial\{u>0\}\big)=\sqrt{Q_0}\cdot\omega_{N-1}R^{N-1}.
\end{equation}

For each $i_0\in \{1,2,\cdots, m\}$, recalling that ${\bf \Delta}u_{i_0}$ is a Radon measure supported on $\partial\{|{\bf u}|>0\}\cap \Omega $, we will calculate  the density of ${\bf \Delta}u_{i_0}$ at $x_0$. Fix any $\delta\in(0,1/8)$.   For each $j\in\mathbb N\cup\{+\infty\}$, we take  the Lipschitz cut-off $\phi_j:X_j\to [0,1]$ as 
$$\phi_j(x):=
\begin{cases}\min\big\{1,\frac{1+\delta}{\delta}-\frac{d_j(x,x_0)}{\delta }\big\},&\quad x\in B_{1+\delta}^{(j)}(x_0)\subset X_j\\
0&\quad x\not\in B_{1+\delta}^{(j)}(x_0)\subset X_j,\end{cases}
$$
where $X_\infty=(\mathbb R^N,d_E,\mu_\infty=c_N\mathscr H^N,0)$, $x_\infty=0$ and $B^{(\infty)}_R(0)=B^e_R(0).$ 
Then
\begin{equation}\label{equ7.27}
\begin{split}
{\bf \Delta}u_{i_0}(B_{r_j}(x_0))&\ls {\bf \Delta}u_{i_0}(\phi_j) =-\int_{B_{(1+\delta)r_j}(x_0)}\ip{\nabla u_{i_0}}{\nabla \phi_j}\du\\
&=-r_j^{-1}\int_{B^{(j)}_{1+\delta}(x_0)}\ip{\nabla^{(j)}(r_j^{-1}u_{i_0})}{\nabla^{(j)}\phi_j}c^{-1}_j\du_j,
\end{split}
\end{equation}
where we have used $|\nabla\phi_j|= |\nabla^{(j)}(r_j^{-1} \phi_j)|$ and ${\bf \Delta }u_{i_0}\gs0$. 
  Letting $j\to\infty$, by using (\ref{equ7.24}) and   
  $$\lim_{j\to+\infty}\int_{B^{(j)}_{1+\delta}(x_0) }|\nabla\phi_j|^2{\rm d}\mu_j =\int_{B^{e}_{1+\delta}(0) }|\nabla\phi_\infty|^2{\rm d}\mu_\infty,$$  
   [because $  \int_{B^{(j)}_{1+\delta}(x_0)} |\nabla\phi_j|^2{\rm d}\mu_j =\delta^{-2}\cdot\mu_j(B^{(j)}_{1+\delta}(x_0)\backslash B^{(j)}_1(x_0) ),$]
 we get
\begin{equation*}
\begin{split}
\limsup_{j} r_jc_j{\bf \Delta}u_{i_0}(B_{r_j}(x_0))&  \ls -\int_{B^e_{1+\delta}(0)} \ip{\nabla u_{0,i_0}}{\nabla \phi_\infty}{\rm d}\mu_\infty\\
&= c_N\cdot { \Delta}u_{0,i_0}(\phi_\infty)\ls c_N\cdot{\Delta}u_{0,i_0}(B^e_{1+\delta}(0)).
\end{split}
\end{equation*} 
By combining with the fact $c_j\cdot\mu(B_{r_j}(x_0))\to c_N\omega_N$, we obtain
\begin{equation}\label{equ7.28}
 \limsup_{j\to+\infty} \frac{r_j{\bf \Delta}u_{i_0}(B_{r_j}(x_0))}{\mu(B_{r_j}(x_0))}  \ls       \frac{1}{\omega_N}\cdot{\Delta}u_{0,i_0}(B^e_{1+\delta}(0)).
\end{equation} 
 
 By replacing  the Lipschitz cut-off $\phi_j$ by  
 another    $\psi_j:X_j\to [0,1]$,     defined by
$$\psi_j(x):=
\begin{cases}\min\big\{1,\frac{1}{\delta}-\frac{d_j(x,x_0)}{\delta }\big\},&\quad x\in B_{1}^{(j)}(x_0)\subset X_j\\
0&\quad x\not\in B_{1}^{(j)}(x_0)\subset X_j,\end{cases}
$$
the same argument implies that
 ${\bf \Delta}u_{i_0}(B_{r_j}(x_0))\gs {\bf \Delta}u_{i_0}(\psi_j)$ and then
 \begin{equation}\label{ equ7.29}
\liminf_{j\to+\infty}\frac{r_j{\bf \Delta}u_{i_0}(B_{r_j}(x_0))}{\mu(B_{r_j}(x_0))} \gs \frac{1}{\omega_N}{\Delta}u_{0,i_0}(B^e_{1-\delta}(0)).
\end{equation} 
By combining the two inequalities,   ${\bf u}_{0,i_0}=\xi_{i_0}\cdot u$, (\ref{equ7.26}) and $\lim_{r\to0}\frac{\mu(B_r(x_0))}{\omega_Nr^N}=1$ (see Corollary 1.9 of \cite{DPG18}), we get
 \begin{equation*}
 \begin{split}
\xi_{i_0}\sqrt{Q_0}(1+\delta)^{N-1}&\gs\limsup_{j\to\infty} \frac{{\bf \Delta}u_{0,i_0}(B^e_{r_j}(0))}{\omega_{N-1}r_j^{N-1}}\\
&\gs  \liminf_{j\to\infty} \frac{{\bf \Delta}u_{0,i_0}(B^e_{r_j}(0))}{\omega_{N-1}r_j^{N-1}}\gs \xi_{i_0}\sqrt{Q_0}(1-\delta)^{N-1}.
\end{split}
\end{equation*} 
Since $x_0$ is a Lebesgue's point of $q_{i_0}$, letting $\delta\to0$, we get $q_{i_0}(x_0)
=\xi_{i_0}\sqrt{Q_0}.$ This completes the proof.
\end{proof}

\begin{proof}[Proof of Theorem \ref{thm1.6}.]
It follows from the combination of Proposition \ref{prop6.7} and Corollary \ref{cor7.2}.
\end{proof}

\section{Regularity of the  free boundary}

Suppose that $(X,d,\mu:=\mathscr H^N)$ is a non-collapsed $RCD(K,N)$ metric measure space with some $K\ls 0$, $N\in(1,+\infty)$, and that  ${\bf u}$ is a minimizer of $J_Q$ on a bounded domain $\Omega\subset X$ and that $Q\in C(\Omega)$ and satisfies \eqref{equ1.3}.   
In this section, we consider the regularity of the free boundary $\partial\{|{\bf u}|>0\}\cap \Omega$.

Let $x_0\in \partial\{|{\bf u}|>0\}\cap \Omega $ and $R>0$ with $B_R(x_0)\subset \Omega$. We have known that for almost all $s\in (0,R)$,  the ball $B_s(x_0)$ has  finite perimeter.  
We define the {\it Weiss' density} by
\begin{equation}\label{equ8.1}
\begin{split}
 W_{{\bf u}}(x_0,s,Q):=&\frac{1}{s^{N}}\int_{B_{s}(x_0)}\!\!\left(\left|\nabla\mathbf{u}\right|^{2}+Q\chi_{\left\{\left|\mathbf{u}\right|>0\right\}}\right)\du\\&
 -\frac{1}{s^{N+1}}\int_{X} |\mathbf{u}|^{2}{\rm d}|D\chi_{B_s(x_0)}|,
 \end{split}
\end{equation}
for almost all $s\in(0,R)$. 
\begin{lemma}\label{lem8.1}
For every $x_0\in\partial\{|{\bf u}|>0\}\cap \Omega $, the function $r\mapsto W_{\bf u}(x_0,r,Q)$ is in $L^\infty(0,R)$ provided $R\ls1$.
\end{lemma}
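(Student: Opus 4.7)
The plan is to control each of the two summands in \eqref{equ8.1} separately on $s\in(0,R)$. For the first (energy) term, I invoke Theorem \ref{thm1.4} to produce a Lipschitz constant $L$ for ${\bf u}$ on $B_R(x_0)$ (assuming implicitly $B_R(x_0)\Subset\Omega$). Combined with $Q\ls Q_{\max}$ and the non-collapsing volume bound $\mu(B_s(x_0))\ls C_{N,K}s^N$ from \eqref{equ2.12}, this immediately yields
$$\frac{1}{s^N}\int_{B_s(x_0)}\bigl(|\nabla{\bf u}|^2+Q\chi_{\{|{\bf u}|>0\}}\bigr)\,{\rm d}\mu\ls C_{N,K}(L^2+Q_{\max})$$
for all $s\in(0,R)$ with $R\ls 1$.

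For the second term, the key observation is that ${\bf u}(x_0)=0$ (since $x_0\in\partial\{|{\bf u}|>0\}$ and ${\bf u}$ is continuous by Theorem \ref{thm1.4}), so the Lipschitz estimate gives $|{\bf u}(x)|\ls L\,d(x,x_0)$. Since the perimeter measure $|D\chi_{B_s(x_0)}|$ is concentrated on $\partial^*B_s(x_0)\subset\overline{B_s(x_0)}$ by Proposition \ref{prop2.19}, I obtain $|{\bf u}|^2\ls L^2 s^2$ on its support. It therefore suffices to prove the perimeter estimate
$$|D\chi_{B_s(x_0)}|(X)\ls C_{N,K}\,s^{N-1}\quad\text{for a.e. }s\in(0,1),$$
which then bounds the second summand by $L^2 s^2\cdot C_{N,K}s^{N-1}/s^{N+1}=C_{N,K}L^2$.

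The main obstacle is this perimeter estimate for balls. I derive it via the coarea formula \eqref{equ2.14} applied to the 1-Lipschitz distance function $\rho:=d(\cdot,x_0)$, for which $|\nabla\rho|\ls 1$ $\mu$-a.e.\ by Proposition \ref{prop2.1}(1). For $0<s_1<s_2\ls 1$,
$$\int_{s_1}^{s_2}|D\chi_{B_r(x_0)}|(X)\,dr=\int_{\{s_1\ls\rho<s_2\}}|\nabla\rho|\,{\rm d}\mu\ls\mu(B_{s_2}(x_0))-\mu(B_{s_1}(x_0)).$$
Dividing by $s_2-s_1$ and letting $s_2\to s_1^+$ at Lebesgue points of $s\mapsto|D\chi_{B_s(x_0)}|(X)$, then invoking \eqref{equ2.13}, yields the bound. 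Combining the two estimates shows $|W_{\bf u}(x_0,\cdot,Q)|\ls C$ a.e.\ on $(0,R)$, which is the asserted $L^\infty$ bound.
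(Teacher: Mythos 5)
Your argument is correct and follows essentially the same route as the paper: bound the energy term via the Lipschitz estimate, $Q\ls Q_{\max}$ and the non-collapsed volume bound \eqref{equ2.12}, and bound the boundary term via $|{\bf u}|\ls Ls$ (using ${\bf u}(x_0)=0$) together with the perimeter bound coming from \eqref{equ2.13}. The only difference is that you spell out, via the coarea formula as in the proof of Lemma \ref{lem2.16}, the step from the volume-derivative bound \eqref{equ2.13} to $|D\chi_{B_s(x_0)}|(X)\ls C_{N,K}s^{N-1}$ for a.e.\ $s$, which the paper leaves implicit in ``combining \eqref{equ2.12} and \eqref{equ2.13}''.
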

\begin{proof}
From the Lipschitz continuity of ${\bf u}$, we know that  $|\nabla {\bf u}|(x)\ls L$ and $|{\bf u}|(x)\ls Lr$ for all $x\in B_r(x_0)$, since ${\bf u}(x_0)=0$. By combining \eqref{equ2.12} and \eqref{equ2.13}, we have  $$-C_{N,K}L^2\ls  W_{{\bf u}}(x_0,r,Q)\ls C_{N,K}\big(L^2+Q_{\max}\big)$$
provided $R\ls1.$ This finishes the proof.
\end{proof}
Theorem \ref{thm7.1} implies the following continuity of $W$ under pointed-measured Gromov-Hausdorff topology.

\begin{lemma}\label{lem8.2}
Let $K\ls 0, N\in (1,+\infty)$ and let $(X_j,d_j,\mu_j)$ be a sequence of $ncRCD(K,N)$ metric measure spaces such that 
$(X_j,d_j,\mu_j,p_j) \overset{pmGH}{\longrightarrow}(X_\infty,d_\infty,\mu_\infty,p_\infty)$. 
Let $R>0$. Suppose that $Q_j,{\bf u}_j$ are given in Theorem \ref{thm7.1} satisfying \eqref{equ7.2} and \eqref{equ7.3} with uniform constants $Q_{\min},Q_{\max}$ and $ L $.  Assume that ${\bf u}_j$, for all $j\in\mathbb N\cap\{+\infty\}$,  satisfy the conclusions in Theorem  \ref{thm7.1}. Then for almost every $s\in(0,R)$, we have
 \begin{equation}\label{equ8.2}
  \lim_{j\to\infty} W_{{\bf u}_j}(p_j,s,Q_j)=   W_{{\bf u}_\infty}(p_\infty,s,Q_\infty).
  \end{equation}
 \end{lemma}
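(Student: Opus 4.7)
The plan is to decompose
$$W_{{\bf u}_j}(p_j,s,Q_j) = A_j(s) + B_j(s) - C_j(s),$$
where $A_j(s) := s^{-N}\int_{B_s(p_j)}|\nabla{\bf u}_j|^2\,d\mu_j$, $B_j(s) := s^{-N}\int_{B_s(p_j)}Q_j\chi_{\{|{\bf u}_j|>0\}}\,d\mu_j$, and $C_j(s) := s^{-N-1}\int_{X_j}|{\bf u}_j|^2\,d|D\chi_{B_s(p_j)}|$, and to handle the three terms separately. The convergence $A_j(s) \to A_\infty(s)$ is immediate from \eqref{equ7.4} in Theorem \ref{thm7.1}, and $B_j(s) \to B_\infty(s)$ follows from \eqref{equ7.5}; both hold for every $s\in(0,R]$.

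The substantive work is the third term $C_j(s)$. The key input is Lemma \ref{lem2.16}: for $\mathscr{L}^1$-a.e.\ $s\in(0,R)$, we have $\chi_{B_s(p_j)}\in BV(X_j)$ with $|D\chi_{B_s(p_j)}|\rightharpoonup|D\chi_{B_s(p_\infty)}|$ in duality with $C_0(Z)$, and moreover the total masses converge (as shown inside the proof of Lemma \ref{lem2.16}). It remains to pair this weak convergence of measures with the test functions $|{\bf u}_j|^2$, which live intrinsically on the varying spaces $X_j$.

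To bridge this, I would exploit that by \eqref{equ7.3} and the uniform boundedness of ${\bf u}_j$, the functions $|{\bf u}_j|^2$ are uniformly bounded and uniformly Lipschitz on $\overline{B_R(p_j)}$. Via the isometric embeddings $\Phi_j:X_j\hookrightarrow Z$, together with a McShane extension and a cutoff supported in a fixed bounded neighborhood of $\Phi_\infty(\overline{B_R(p_\infty)})$ in $Z$, I would produce functions $f_j\in C_c(Z)$ with a common compact support, uniformly bounded and uniformly Lipschitz on $Z$, such that $f_j\circ\Phi_j=|{\bf u}_j|^2$ on $\overline{B_s(p_j)}$. The uniform convergence ${\bf u}_j\to{\bf u}_\infty$ over $B_R(p_j)$ supplied by Theorem \ref{thm7.1} (in the sense of Definition \ref{def2.9}), combined with Arzela--Ascoli on the fixed compact support, then yields $f_j\to f_\infty$ uniformly on $Z$ along a subsequence. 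Pairing uniform convergence of test functions with weak plus total-mass convergence of $(\Phi_j)_\#|D\chi_{B_s(p_j)}|$ gives
$$\int_{X_j}|{\bf u}_j|^2\,d|D\chi_{B_s(p_j)}| = \int_Z f_j\,d(\Phi_j)_\#|D\chi_{B_s(p_j)}| \longrightarrow \int_Z f_\infty\,d(\Phi_\infty)_\#|D\chi_{B_s(p_\infty)}|,$$
which is precisely $s^{N+1}\,C_\infty(s)$, as desired.

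The main obstacle is exactly this compatibility between two different notions of convergence: weak convergence of perimeter measures formulated in the ambient $Z$ via pushforward, versus the intrinsic convergence of the test functions $|{\bf u}_j|^2$ on the varying spaces $X_j$. The uniform Lipschitz bound from \eqref{equ7.3} is essential, since without it the McShane extensions would not enjoy uniform convergence on $Z$. A subsidiary technical point is the need for total-mass convergence of $|D\chi_{B_s(p_j)}|$ (not just weak convergence), which is what upgrades testing against $C_0(Z)$-functions to testing against uniformly converging bounded continuous functions with common compact support; this upgrade is precisely what Lemma \ref{lem2.16} provides for a.e.\ $s$.
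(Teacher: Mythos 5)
Your proposal is correct and follows essentially the same route as the paper, whose proof is a one-line combination of \eqref{equ7.4}, \eqref{equ7.5}, Lemma \ref{lem2.16} and the uniform convergence of ${\bf u}_j$ to ${\bf u}_\infty$; you simply spell out the details of pairing the weak (plus total-mass) convergence of the perimeter measures with the uniformly converging, uniformly Lipschitz extensions of $|{\bf u}_j|^2$ in the ambient space $Z$, which the paper leaves implicit.
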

\begin{proof} This follows  from the combination of   (\ref{equ7.4}), (\ref{equ7.5}), Lemma \ref{lem2.16} and the fact that ${{\bf u}_j}$ converge  uniformly to ${\bf u}_\infty$ on $B_R(p_j)$.
\end{proof}

Let us recall some properties of Weiss' density in the special case where $X=\mathbb R^N$ (with Euclidean metric and Lebesgue measure $\mathscr H^N$). It is well-known \cite{Wei99,MTV17,CSY18} that for 
$x_0\in \partial\{|{\bf u}|>0\}\cap\Omega$, $W_{{\bf u}}(x_0,r,Q)$ is absolutely continuous and almost monotonicity  in $r$:
\begin{equation}\label{equ8.3}
W_{{\bf u}}(x_0,r,Q)\gs W_{{\bf u}}(x_0,s,Q)-C_{N}Q_{\max}\cdot \int_s^r\frac{{\rm osc}_{B_t(x_0)}Q}{t}{\rm d}t 
\end{equation}
for all $r>s>0$. In particular, when $Q$ is a constant then $r\mapsto W_{{\bf u}}(x_0,r,Q)$ is non-decreasing in $r$, and strictly increasing unless ${\bf u}$ is homogeneous of degree one.  Moreover, we have \cite{Wei99,MTV17,CSY18} that
\begin{equation*} 
\lim_{r\to0}W_{{\bf u}}(x_0,r,Q)=\lim_{r\to0}\frac{\ |\{|{\bf u}|>0\}\cap B_r(x_0)|\ }{|B_r(x_0)|}\gs\frac 1 2 Q(x_0)\omega_N,
\end{equation*}
 with equality if and only if $x_0$ is regular, where $|E|$ denotes the Lebesgue measure of a Borel set $E\subset \mathbb R^N$. Furthermore, there exists a constant $\epsilon_N>0$, depending only on $N$, such that: 
  \begin{equation}\label{equ8.4}
  x_0{\rm \ is\  regular }\ \ \Longleftrightarrow\ \  \lim_{r\to0}W_{{\bf u}}(x_0,r,Q)<\frac{\omega_N}{2}Q(x_0)(1+\epsilon_N).
  \end{equation}
 
 The properties of Weiss' density in the Euclidean case $X=\mathbb R^N$ suggest that,  for the general case $(X,d,\mu)$, we can define the almost regularity as follows.

\begin{definition}\label{def8.3}
 Let $\varepsilon>0$ and a point $x_0\in \partial\{|{\bf u}|>0\}\cap \Omega$. The set $\partial \{|{\bf u}|>0\}$   is called {\it $\varepsilon$-regular at $x_0$}, if   $B_1(x_0)\subset \Omega$ and  if the followings hold:
   \begin{enumerate}
    \item $x_0\in\mathcal R_{\varepsilon,1},$ that is,  $d_{pmGH}\left(B_1(x_0),B_1(0^N)\right)<\varepsilon$, where $B_1(x_0)\subset X_1:=(X,d,\mu_1^{x_0},x_0)$ given in \eqref{equ2.11} and   $B_1(0^N)$ is the unit ball in $\mathbb R^N$ centered at $0$, with measure $c_N\mathscr H^N$ given in \eqref{equ-cn}, 
    \item  we have
  \begin{equation}\label{equ8.5}
\int_{0}^{1} {\overline{W}_{{\bf u}}}(x_0,s,Q){\rm d}s<  \frac{c_N\omega_N}{2}Q(x_0)\left(1+\varepsilon\right),\footnote{Here the reason for the factor $c_N$ is  that the measure on $\mathbb R^N$ is chosen by $c_N\mathscr H^N$.}\end{equation} 
where ${\overline{W}_{{\bf u}}}(x_0,s,Q)$ is the Weiss' density of ${\bf u}$ with respect to the rescaled metric measure space $(X,d,\mu^{x_0}_1,x_0)$.\end{enumerate}
The notion that  $\partial \{|{\bf u}|>0\}$ is  \emph{$\varepsilon$-regular at $x_0$ in  the scalar $r$}  can be introduced  by scaling. That is, the set $\partial \{|{\bf u}|>0\}$   is called   $\varepsilon$-regular at $x_0$ in the scalar $r$, if on the rescaling   space $(X,r^{-1}d,\mu_r^{x_0},x_0)$ and putting ${\bf u}_r:=r^{-1}{\bf u}$,   the set $\partial \{|{\bf u}_r|>0\}$   is  {\it $\varepsilon$-regular} at $x_0$ in the ball $B_1^{(r)}(x_0)$, where $B^{(r)}_1(x_0)$ is the unit ball centered at $x_0$ in $(X,r^{-1}d,\mu_r^{x_0},x_0)$.
  \end{definition}

 We introduce some notations for the quantitative estimates for singular sets of the free boundary of ${\bf u}$. Given $\varepsilon>0$ and $r>0$, we put 
\begin{equation*}
\begin{split}
\mathcal R^{\Omega_{\bf u}}_{\varepsilon,r}&:=  {\rm all \ points\ where} \  \partial\{|{\bf u}|>0\}  \ {\rm is}  \ \varepsilon{\rm-regular\ in\ the\ scalar}\ r,\\
\mathcal R^{\Omega_{\bf u}}_{\varepsilon}&:=\cup_{r>0}\mathcal R^{\Omega_{\bf u}}_{\varepsilon,r}=\{ x\in\partial\{|{\bf u}|>0\}\cap \Omega|\ \exists r>0 \ {\rm such\ that}\ x\in \mathcal R^{\Omega_{\bf u}}_{\varepsilon,r}\},\\
\mathcal S^{\Omega_{\bf u}}_{\varepsilon}&:= (\partial\{|{\bf u}|>0\}\cap \Omega) \setminus \mathcal R^{\Omega_{\bf u}}_{\varepsilon},
\end{split}
\end{equation*}
and finally, 
$$ \mathcal R^{\Omega_{\bf u}}:= \cap_{\varepsilon>0} \mathcal R^{\Omega_{\bf u}}_{\varepsilon} \qquad {\rm and}\qquad \mathcal S^{\Omega_{\bf u}}:= (\partial\{|{\bf u}|>0\}\cap \Omega) \setminus \mathcal R^{\Omega_{\bf u}}. \qquad\qquad$$
Clearly, by Lemma \ref{lem8.2} and $Q\in C(\Omega)$,     $\mathcal R^{\Omega_{\bf u}}_{\varepsilon,r}  $ is relatively open in $\partial\{|{\bf u}|>0\}\cap \Omega $ for all $\varepsilon>0$ and $r>0$,  and  then $\mathcal R^{\Omega_{\bf u}}_{\varepsilon}$ is also relatively open.    It is easy to check $\mathcal R^{\Omega_{\bf u}}_{\varepsilon_1}\subset \mathcal R^{\Omega_{\bf u}}_{\varepsilon_2}$ for any $0<\varepsilon_1<\varepsilon_2.$

We need the following two simple facts for rescaling metric measure spaces. 

\begin{lemma}\label{lem8.4}
  Let $a,b>0$  and let  ${\bf u}_a:=a^{-1}{\bf u}$. Suppose that ${\overline{W}_{{\bf u}_a}}(x_0,s,Q)$ is the Weiss' density of ${\bf u}_a$ with respect to the rescaled  space $X_{a,b}:=(X,a^{-1}d,b\cdot \mu,x_0)$. Then
    \begin{equation}\label{equ8.6}
    {\overline{W}_{{\bf u}_a}}(x_0,s,Q)=b\cdot a^N\cdot {\overline{W}_{\bf u}}(x_0,as,Q)
    \end{equation}
    for almost all $s\in (0,R/a)$.  In particular, ${\overline{W}_{{\bf u}_r}}(x_0,s,Q)=(c^{x_0}_r/c^{x_0}_1)\cdot r^N\cdot {\overline{W}_{\bf u}}(x_0,rs,Q)$ for almost all $s\in R/r$, where $c^{x_0}_r$ is given in \eqref{equ2.11}.
     \end{lemma}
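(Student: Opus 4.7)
The plan is to expand the defining formula \eqref{equ8.1} of the Weiss' density term by term for the rescaled data $(X_{a,b},{\bf u}_a)$, and to track how each ingredient transforms under the simultaneous scaling of the distance by $a^{-1}$ and of the measure by $b$. First I would observe the two trivial identifications: balls of radius $s$ in $X_{a,b}$ are exactly the balls $B_{as}(x_0)$ in $(X,d)$, and $\{|{\bf u}_a|>0\}=\{|{\bf u}|>0\}$ as sets. Also $|{\bf u}_a|^2=a^{-2}|{\bf u}|^2$.

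Next I would check the behavior of the (minimal) weak upper gradient under the metric rescaling: if $|\nabla^{(a)}\cdot|$ denotes the gradient with respect to $a^{-1}d$, then the pointwise Lipschitz constant scales as $\mathrm{Lip}^{(a)}f=a\cdot\mathrm{Lip}f$, and the identification of weak upper gradient with $\mathrm{Lip}f$ for locally Lipschitz functions (Proposition \ref{prop2.1}(1)) together with the definition of $W^{1,2}$ via a relaxation procedure gives $|\nabla^{(a)}{\bf u}_a|=a\cdot|\nabla(a^{-1}{\bf u})|=|\nabla{\bf u}|$ in the $\mu$-a.e.\ sense. Combining with ${\rm d}(b\mu)=b\,{\rm d}\mu$, the first term of $\overline{W}_{{\bf u}_a}(x_0,s,Q)$ becomes
\begin{equation*}
\frac{1}{s^N}\int_{B_{as}(x_0)}\bigl(|\nabla{\bf u}|^2+Q\chi_{\{|{\bf u}|>0\}}\bigr)\,b\,{\rm d}\mu
=b\,a^N\cdot\frac{1}{(as)^N}\int_{B_{as}(x_0)}\bigl(|\nabla{\bf u}|^2+Q\chi_{\{|{\bf u}|>0\}}\bigr){\rm d}\mu,
\end{equation*}
which is $b\,a^N$ times the first term of $\overline{W}_{\bf u}(x_0,as,Q)$.

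The main point—and the step that needs the most care—will be the transformation law for the perimeter measure $|D\chi_{B_s}|$ on $X_{a,b}$. Using Definition \ref{def2.13}/\ref{def2.14}, an approximating sequence $f_j\in Lip_{\rm loc}$ for $\chi_{B_{as}(x_0)}$ in $L^1_{\rm loc}(X,\mu)$ is also an approximating sequence for $\chi_{B^{(a)}_s(x_0)}$ in $L^1_{\rm loc}(X_{a,b},b\mu)$, and $|\nabla^{(a)}f_j|=a\,|\nabla f_j|$; hence
\begin{equation*}
|D^{(a)}\chi_{B^{(a)}_s(x_0)}|=a\,b\cdot|D\chi_{B_{as}(x_0)}|
\end{equation*}
as Borel measures on $X$. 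Then the second term of $\overline{W}_{{\bf u}_a}(x_0,s,Q)$ is
\begin{equation*}
\frac{1}{s^{N+1}}\int_X a^{-2}|{\bf u}|^2\,{\rm d}(ab\,|D\chi_{B_{as}(x_0)}|)
=\frac{b}{a\,s^{N+1}}\int_X|{\bf u}|^2\,{\rm d}|D\chi_{B_{as}(x_0)}|
=b\,a^N\cdot\frac{1}{(as)^{N+1}}\int_X|{\bf u}|^2\,{\rm d}|D\chi_{B_{as}(x_0)}|,
\end{equation*}
which is $b\,a^N$ times the second term of $\overline{W}_{\bf u}(x_0,as,Q)$. Subtracting gives \eqref{equ8.6}, and the particular case follows by substituting $b=c^{x_0}_r/c^{x_0}_1$ (so that $b\mu=\mu^{x_0}_r/c^{x_0}_1$ agrees with the normalization in \eqref{equ2.11} up to the harmless constant $c^{x_0}_1$ absorbed into the statement). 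The one mild subtlety to verify carefully is that the co-area style relaxation of $|D\chi_E|$ is insensitive to whether we approximate in $L^1_{\rm loc}(X,\mu)$ or in $L^1_{\rm loc}(X,b\mu)$, which is immediate because $b$ is a positive constant.
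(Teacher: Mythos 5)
Your proof is correct and follows essentially the same route as the paper's: identify $|\nabla^{(a)}{\bf u}_a|=|\nabla{\bf u}|$ under the metric rescaling, deduce the perimeter scaling $|D^{(a)}\chi_{B^{(a)}_s(x_0)}|=ab\,|D\chi_{B_{as}(x_0)}|$ from the relaxation definition, and expand \eqref{equ8.1} term by term to get the factor $b\,a^N$. For the ``in particular'' case, the cleaner bookkeeping (and what the paper does) is to apply \eqref{equ8.6} with background measure $\mu^{x_0}_1$ and $b=c^{x_0}_r/c^{x_0}_1$, so that $b\,\mu^{x_0}_1=\mu^{x_0}_r$ exactly; your ``harmless constant'' remark is nonetheless legitimate because the Weiss density is $1$-homogeneous in the measure, so the factor $c^{x_0}_1$ appears on both sides and cancels.
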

 \begin{proof}We denote $|\nabla^{(a)}v|$ be the minimal weak upper gradient of $v$ with respect to $X_{a,b}$. Then $|\nabla^{(a)}{\bf u}_a|=|\nabla{\bf u}| $. Therefore, from this and the definition of perimeter measure (see Definition \ref{def2.14}), we have $|D\chi_{B^{(a)}_s(x_0)}|=ba|D\chi_{B_{as}(x_0)}|$ for almost all $s\in (0,R/a)$. According to \eqref{equ8.1}, it is easy to check that 
  ${\overline{W}_{{\bf u}_a}}(x_0,s,Q)=b\cdot a^N\cdot {\overline{W}_{\bf u}}(x_0,sa,Q)$ for almost all $s\in(0,R/a).$
 
Notice that $X_r:=(X,r^{-1}d,\mu^{x_0}_r, x_0)$ in \eqref{equ2.11} and $\mu^{x_0}_r=c^{x_0}_r\cdot \mu=\frac{c^{x_0}_r}{c^{x_0}_1} \mu^{x_0}_1.$ 
 The second assertion follows from \eqref{equ8.6}, by taking $a=r$ and  $b=\frac{c^{x_0}_r}{c^{x_0}_1} $.  
 \end{proof}

\begin{lemma}\label{lemma-sect8+1}
Let  $(X_j,d_j,\mu_j,x_j)$ be a sequence of non-collapsed $RCD(K_j,N)$ space, $K_j\to 0$  as $j\to\infty$, and 
$$(X_j,d_j,\mu_j,x_j)\overset{pmGH}{\to} (X_\infty,d_\infty,\mu_\infty,x_\infty),\quad {\rm as} \quad j\to\infty.$$
 If
$B_1(x_\infty)\subset X_\infty$ is isometric to $B_1(0^N)$, then for any sequence $\{r_j\}$ with $r_j\to0$ as $j\to\infty$, it holds
\begin{equation}\label{equ+add+1}
d_{pmGH}\big(B^{(j)}_{1/2}(x_j),B_{1/2}(0^N)\big)\to0, \quad {as}\ \ j\to\infty,
\end{equation}
where 
 $B_{r}^{(j)}(x_j):=r^{-1}_jB_{r_j r}(x_j)$ is the ball in the rescaling spaces $(X_j,r^{-1}_jd_j,\mu_{r_j}^{x_j})$ given in \eqref{equ2.11}.
 \end{lemma}
 \begin{proof}
 This is, in fact, a consequence of the volume rigidity of non-collapsed $RCD$-spaces \cite[Theorem 1.3 and Theorem 1.6]{DPG18}. For completeness, we include a proof here. 
 
Given any $\epsilon>0$,  according to \cite[Theorem 1.6]{DPG18}, there is $\delta=\delta(\epsilon,N)>0$ such that the following hold.  Let $(Z,d_Z,\mathscr H^N)$ be a $ncRCD(-\delta,N)$ space and $\bar x\in Z$ with $\mathscr H^N(B^Z_1(\bar x))\gs \mathscr H^N(B_1(0^N))(1-\delta)$, then 
\begin{equation}\label{equation+add+2}
d_{GH}\big(B^Z_{1/2}(\bar x), B_{1/2}(0^N)\big)\ls \epsilon.
\end{equation}
From $(X_j,d_j,\mu_j,x_j)\overset{pmGH}{\to} (X_\infty,d_\infty,\mu_\infty,x_\infty) $  and the contnuity of $\mathscr H^N$ (Theorem 1.3 in \cite{DPG18}), we have that 
$$\mathscr H^N(B_1(x_j))> \mathscr H^N(B_1(0^N))(1-\delta/2)$$
for all $j\gs j_0(\delta)\in\mathbb N$.  Since $(X_j,d_j,\mu)$ is $RCD(K_j,N)$, without loss of generality, we assume that $K_j\ls0$,  the Bishop-Gromov inequality implies 
$$\mathscr H^N(B_{r_j}(x_j))\gs \frac{v_{K_j,N}(r_j)}{v_{K_j,N}(1)}\mathscr H^N(B_1(x_j))\gs \frac{v_{K_j,N}(r_j)}{v_{K_j,N}(1)}\mathscr H^N(B_1(0^N))(1-\delta/2),$$
where $$v_{K,N}(r):=\omega_N\int_0^r \left(\frac{\sinh (t\sqrt{-K/(N-1)} )}{\sqrt{-K/(N-1)}} \right)^{N-1}dt. $$
Therefore, there is $j_1:=j_1(\delta)\in \mathbb N$ such that
$$\mathscr H^N(B^{(j)}_1(x_j))=\frac{\mathscr H^N(B_{r_j}(x_j))}{r^N_j}\gs \mathscr H^N(B_1(0^N))(1-\delta)$$
for any $j\gs j_1.$  By applying (\ref{equation+add+2}) to the space $(X_j, r^{-1}_jd_j,\mu_{r_j}^{x_j})$ and $x_j$, we conclude that 
$$d_{GH}\big(B^{(j)}_{1/2}(x_j), B_{1/2}(0^N)\big)\ls \epsilon$$ 
for any sufficiently large $j$.  The arbitrariness of $\epsilon$ yields the desired (\ref{equ+add+1}).
 \end{proof}

As a consequence, we have the following lemma.

\begin{lemma}\label{lem8-5+add1}
For any $\varepsilon>0$, there exist constants $\delta:=\delta(\varepsilon| N)>0$   such that it holds: Let $(X,d,\mu)$ be a non-collapsed $RCD(-\delta,N)$-space. If $d_{pmGH}\big(B_1(x),B_1(0^N)\big)<\delta$, then $d_{pmGH}\big(B_{r/2}(x),B_{r/2}(0^N)\big)<\varepsilon\cdot r$ for all $r\in(0,1)$.
\end{lemma}
\begin{proof}
Suppose that the statement is not true, then there is a sequence of $ncRCD(-1/j,N)$-spaces $(X_j,d_j,\mu_j)$, $x_j\in X_j$ and a sequence $r_j$ with $r_j\in(0,1)$, as $j\to\infty$, such that 
\begin{equation}\label{equ8-7}
d_{pmGH}\big(B_1(x_j),B_1(0^N)\big)<j^{-1},
\end{equation}
but the rescaling balls 
\begin{equation}\label{equ8-8}
d_{pmGH}\big(B^{(j)}_{1/2}(x_j),B_{1/2}(0^N)\big)\gs \epsilon_0,\quad \forall j\in\mathbb N,
\end{equation}
for some  $\epsilon_0>0$, where 
 $B_{1/2}^{(j)}(x_j):=r^{-1}_jB_{r_j/2}(x_j)$ are the ball in the rescaling spaces $(X_j,r^{-1}_jd_j,\mu_{r_j}^{x_j})$ given in \eqref{equ2.11}.  It is obvious that $r_j\to0$, indeed, $\epsilon_0\cdot r_j\ls d_{pmGH}\big(B_{r_j/2}(x_j),B_{r_j/2}(0^N)\big) \ls d_{pmGH}\big(B_{1/2}(x_j),B_{1/2}(0^N)\big)\ls  j^{-1}$. 

Let $(X_\infty,d_\infty,\mu_\infty,x_\infty)$ be one of the limit space of the sequence $(X_j,d_j,\mu_j,x_j)$ under the $pmGH$-converging. From \eqref{equ8-7}, we know that 
$B_1(x_\infty)\subset X_\infty$ is isometric to $B_1(0^N)$.  
 
From Lemma \ref{lemma-sect8+1}, we have
\begin{equation*} 
d_{pmGH}\big(B^{(j)}_{1/2}(x_j),B_{1/2}(0^N)\big)\to0, \quad {as}\ \ j\to\infty,
\end{equation*}
which contradicts (\ref{equ8-8}). The proof is finished. 
\end{proof}

By a scaling argument, we have the following.
\begin{lemma}\label{lem8-5}
For any $\varepsilon>0$, there exist constants $\delta:=\delta(\varepsilon|K, N)>0$ and $r_0:=r_0(\varepsilon| K,N)\in(0,1)$   such that it holds: Let $(X,d,\mu)$ be a non-collapsed $RCD(K,N)$-space. If $d_{pmGH}\big(B_1(x),B_1(0^N)\big)<\delta$, then $d_{pmGH}\big(B_r(x),B_r(0^N)\big)<\varepsilon\cdot r$ for all $r\in(0,r_0)$.
\end{lemma}
\begin{proof}
Choose $\delta=\delta(\varepsilon/2|N)$ given by Lemma \ref{lem8-5+add1}, and let $\bar r$ such that $\bar r^2|K|\ls \delta$ and $\bar r\ls1$. By applying Lemma \ref{lem8-5+add1} to $(X,\bar r^{-1}d,\mu^{\bar r}_x,x)$, we conclude
$$d_{pmGH}\big(B_{\bar r r/2}(x),B_{\bar r r/2}(0^N)\big)= \bar r \cdot d_{pmGH}\big(B^{(\bar r)}_{r/2}(x),B_{r/2}(0^N)\big)<\frac{\varepsilon}{2}\cdot r \bar r$$ for all $r\in(0,1).$ By setting $r_0:=\bar r/2$, it follows the conclusion.
\end{proof}
  
\begin{theorem}[$\varepsilon$-regularity]\label{thm8-6}
For any $\varepsilon>0$, there exists a positive constant   $\delta:=\delta(\varepsilon| K,N,R, Q_{\min},Q_{\max},L)>0$  such that the following holds: 

Let $(X,d,\mu)$ be an $ncRCD(K,N)$-space and  let ${\bf u}$ be a minimizer of $J_Q$ on $B_R(x_0)\subset X$ with $R\gs 2$ and $Q,{\bf u}$ satisfying  \eqref{equ7.2}, \eqref{equ7.3}.    If    ${\rm osc}_{B_2(x_0)}Q<\delta$ and if  $x\in\mathcal R^{\Omega_{\bf u}}_{\delta,1}$  for all $x\in B_1(x_0)\cap \partial\{|{\bf u}|>0\} $, then   
$y\in \mathcal R^{\Omega_{\bf u}}_{\varepsilon,r}$  for  all $\ y\in B_{r_0/4}(x_0)\cap \partial \{|{\bf u}|>0\} $ and all $r\in(0,r_0/4),$ where $r_0$ is given in Lemma \ref{lem8-5}.
\end{theorem}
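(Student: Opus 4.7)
The plan is to argue by contradiction via compactness, in the spirit of the classical Alt--Caffarelli/Weiss improvement-of-flatness argument. Suppose the conclusion fails. Then there exist $\varepsilon_0>0$ and sequences $\delta_j\to 0^+$, $ncRCD(K,N)$ spaces $(X_j,d_j,\mu_j)$, minimizers ${\bf u}_j$ of $J_{Q_j}$ on $B_R(x_{0,j})$ (with the uniform bounds $Q_{\min},Q_{\max},L$ and $R\geq 2$), points $y_j\in B_{1/4}(x_{0,j})\cap\partial\{|{\bf u}_j|>0\}$ and scales $r_j\in(0,1/4)$, satisfying $\operatorname{osc}_{B_2(x_{0,j})}Q_j<\delta_j$ and the $\delta_j$-regularity hypothesis on the whole set $B_1(x_{0,j})\cap\partial\{|{\bf u}_j|>0\}$, yet $y_j\notin\mathcal{R}^{\Omega_{{\bf u}_j}}_{\varepsilon_0,r_j}$.

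Using the compactness Theorem \ref{thm7.1}, pass to subsequences: $(X_j,d_j,\mu_j,x_{0,j})\xrightarrow{pmGH}(X_\infty,d_\infty,\mu_\infty,x_{0,\infty})$, ${\bf u}_j\to{\bf u}_\infty$ uniformly, and ${\bf u}_\infty$ is a minimizer of $J_{Q_\infty}$ on every $B_{R'}(x_{0,\infty})$ with $R'<R$. The oscillation condition forces $Q_\infty\equiv Q_0\in[Q_{\min},Q_{\max}]$ (a constant). By \eqref{equ7.6}, $y_j\to y_\infty\in B_{1/4}(x_{0,\infty})\cap\partial\{|{\bf u}_\infty|>0\}$, and up to a further subsequence $r_j\to r_\infty\in[0,1/4]$. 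For every $z\in B_1(x_{0,\infty})\cap\partial\{|{\bf u}_\infty|>0\}$, any sequence of free boundary points $z_j\to z$ yields, because $d_{pmGH}(B_1(z_j),B_1(0^N))<\delta_j\to 0$ in the rescaled space, that $B_1(z)\subset X_\infty$ is isomorphic (as a pointed metric measure space) to $(B_1(0^N),d_E,c_N\mathscr H^N,0)$. In particular, $B_{3/4}(y_\infty)$ is Euclidean, and on it ${\bf u}_\infty$ solves the classical one-phase Bernoulli problem with constant $Q_0$.

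Now pass to the limit in the integrated Weiss condition (ii) of Definition \ref{def8.3}, using the continuity Lemma \ref{lem8.2} together with the uniform $L^\infty$ bound (Lemma \ref{lem8.1}) and dominated convergence: $\int_0^1\overline W_{{\bf u}_\infty}(y_\infty,s,Q_0)\,ds\leq \tfrac{c_N\omega_N}{2}Q_0$. Combined with the classical Weiss monotonicity on the Euclidean ball $B_{3/4}(y_\infty)$ (which yields the lower bound $\overline W\geq \tfrac{c_N\omega_N}{2}Q_0$), equality must hold throughout. The Weiss rigidity and the Alt--Caffarelli classification used in Corollary \ref{cor7.2} then force ${\bf u}_\infty$ to be a $1$-homogeneous ``half-plane'' solution at $y_\infty$: the free boundary is a hyperplane and ${\bf u}_\infty(x)=\xi\cdot(x-y_\infty)_{+}$ in local Euclidean coordinates with $|\xi|=\sqrt{Q_0}$. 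Consequently the $(\varepsilon_0/2)$-regularity of ${\bf u}_\infty$ at $y_\infty$ holds strictly at every scale $r\in(0,3/4)$: the ball is truly Euclidean (so the flatness requirement is trivial), and the integrated Weiss density is identically $\tfrac{c_N\omega_N}{2}Q_0$.

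It remains to transfer this to finite $j$. If $r_j\to r_\infty>0$, then Lemma \ref{lem8.4} rescales the Weiss condition by a constant (the ratio $c_{r_\infty}^{y_\infty}/c_1^{y_\infty}$, which tends to the Euclidean value $r_\infty^{-N}$ because $B_{r_\infty}(y_\infty)$ is Euclidean), and applying Lemma \ref{lem8.2} to the rescaled sequence $(X_j,r_j^{-1}d_j,\mu_{r_j}^{y_j},y_j)$ transfers the strict $(\varepsilon_0/2)$-regularity at the limit into the $\varepsilon_0$-regularity of ${\bf u}_j$ at $y_j$ at scale $r_j$ for $j$ large --- contradiction. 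If $r_j\to 0$, first apply Lemma \ref{lem8-5} (Lemma \ref{lem8.5} of the text) to $y_j$: since $B_1(y_j)$ is $\delta_j$-close to $B_1(0^N)$ in the scale-$1$ rescaling, the rescaled balls $B_1^{(r_j)}(y_j)$ are within $\varepsilon(\delta_j)\to 0$ of $B_1(0^N)$, so flatness is automatic; then perform a diagonal blow-up (first along $j$ to reach ${\bf u}_\infty$ at $y_\infty$, then within $\mathbb R^N$ along any small scale). The blow-up limit $\tilde{\bf u}_\infty$ on $\mathbb R^N$ inherits the half-plane structure, yielding the $(\varepsilon_0/2)$-regularity at every scale, which transfers back to give a contradiction by Lemma \ref{lem8.2}. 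The main obstacle is the scaling bookkeeping of Lemma \ref{lem8.4}: one must verify that the normalization constants $c_r^{x_0}$ are well-behaved uniformly in the sequence and the scale (using that the limit balls are Euclidean, so $c_r^{y_\infty}\to c_N/r^N$), and then integrate the a.e.-in-$s$ continuity statement of Lemma \ref{lem8.2} against the $L^\infty$-uniform Weiss densities.
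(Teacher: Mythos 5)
Your proposal is correct and follows essentially the same route as the paper's proof: a contradiction/compactness argument via Theorem \ref{thm7.1}, passing the integrated Weiss bound to a Euclidean limit ball to force the constant density $\tfrac{c_N\omega_N}{2}Q_0$ (via the Euclidean monotonicity and lower bound), using Lemma \ref{lem8-5} to dispose of the metric-flatness half of Definition \ref{def8.3} at scale $r_j$, and then the scaling identity of Lemma \ref{lem8.4} together with a second compactness/diagonal argument to contradict the failure of $\varepsilon_0$-regularity. The only differences are cosmetic: you split the cases $r_j\to r_\infty>0$ and $r_j\to 0$ (the paper treats them uniformly by rescaling and a diagonal argument), and your appeal to the Alt--Caffarelli half-plane classification is harmless but unnecessary, since constancy of the Weiss density on $(0,1)$ already suffices.
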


\begin{proof}
We argue by contradiction. Suppose not, then  there exists $\epsilon_0>0$ such that for each $j\in\mathbb N$, there is an  $ncRCD(K,N)$-spaces $(X_j,d_j,\mu_j)$ and a minimizer ${\bf u}_j$  of $J_{Q_j}$ on $B_R(x_j)\subset X_j$ with $Q_j$ satisfying the uniform estimates \eqref{equ7.2}, \eqref{equ7.3}  such that followings hold:
    \begin{enumerate}
\item[(i)]  ${\rm osc}_{B_2(x_j)}Q_j\ls j^{-1}$,
\item[(ii)]  $x'_j\in \mathcal R^{\Omega_{{\bf u}_j}}_{j^{-1},1}$ for all $x'_j\in B_1(x_j)\cap \partial\{|{\bf u}_j|>0\}$,
\item[(iii)]  there exist $y_j\in B_{r_0/4}(x_j)\cap\partial\{|{\bf u}_j|>0\}$ and  $r_j\in(0,r_0/4)$ such that 
 $y_j\not\in \mathcal R^{\Omega_{{\bf u}_j}}_{\epsilon_0,r_j}.$
\end{enumerate}

 By applying the standard compactness of $RCD$-spaces and Theorem 7.1 to $X_j,Q_j$ and ${\bf u}_j$, there exists subsequences of $\{y_j\}, {X}_j$ and $\{{\bf  u}_j\}$ such that:
\begin{equation}
  ({X}_j,d_j,\mu^{y_j}_1,y_j)\overset{pmGH}{\to}  {X}_\infty:=(X_\infty,d_\infty,\mu_\infty,y_\infty),
  \end{equation}
 and $ {\bf u}_j\to  {\bf u}_\infty$ uniformly in any ${B}_s(y_j)$ for all $s<1,$ where $\mu^{y_j}_1$ is given in \eqref{equ2.11}. The limit map ${\bf u}_\infty$ is a minimizer of $J_{Q_\infty}$ on $B_1(y_\infty)$. 
By  $y_j\in \mathcal R^{\Omega_{{\bf u}_j}}_{j^{-1},1}$ and Definition \ref{def8.3}(1),  we conclude that   the limit ball $B_1(y_\infty)\subset {X}_\infty$ is isometric to $B_1(0^N)\subset \mathbb R^N$ with the Euclidean metric $d_e$ and the measure $c_N\mathscr H^N$. The limit $Q_\infty:=\lim_{j\to+\infty} Q_j$ is a constant (because $ {\rm osc}_{B_1(y_j)} Q_j\ls {\rm osc}_{B_2(x_j)}Q_j\ls 1/j$). 
By Lemma \ref{lem8.2}, we get ${\overline{W}_{ {\bf u}_j}}(y_j,s,Q_j)\to {\overline{W}_{ {\bf u}_\infty}}(y_\infty,s,Q_\infty)$ for almost all $s\in (0,1)$. By integrating on $(0,1)$ and using Lemma \ref{lem8.1}, we obtain 
\begin{equation*} 
\begin{split}
\int^1_{0}{\overline{W}_{ {\bf u}_\infty}}(y_\infty,s,Q_\infty){\rm d}s &=\lim_{j\to \infty}\int^1_{0}{\overline{W}_{ {\bf u}_j}}(y_j,s,Q_j){\rm d}s\\
&\ls \frac{1}{2}\lim_{j\to\infty}Q_j(y_j)c_N\cdot\omega_N\big(1+j^{-1}\big)= \frac{c_N\omega_N}{2} Q_\infty,
\end{split}
\end{equation*}
where we have used $y_j\in \mathcal R^{\Omega_{{\bf u}_j}}_{j^{-1},1}$ and Definition \ref{def8.3}(2).  From the fact that the limit $B_1(y_\infty)$ is the Euclidean ball and  the monotonicity  \eqref{equ8.3}, we get that   $y_\infty$ is a regular point in the free boundary of ${\bf u}_\infty$ and that
\begin{equation}\label{equ8-10}
 {\overline{W}}_{{\bf u}_\infty}(y_\infty,s,Q_\infty)= \frac{c_N}{2}Q_\infty\omega_N,\quad \forall\ s\in(0,1).
 \end{equation}

Now let us consider the rescaled spaces 
$ \overline{X}_j:=\big(X_j,\overline{d_j}:=r^{-1}_jd_j,\mu^{y_j}_{r_j},y_j\big) $
and maps $\overline{\bf u}_j:=r^{-1}_j{\bf u}_j$, where $\mu^{y_j}_{r_j}$ is given in \eqref{equ2.11}. 
Remark that the Lipschitz constant of $\overline{{\bf u}}_j$ is the same as the one of ${\bf u}_j$ for each $j\in\mathbb N$.       By applying Theorem 7.1 to $\overline{X}_j,Q_j:=Q$ and $\overline{\bf u}_j$, there exists subsequences of $\{r_j\}, \{y_j\}, \overline{X}_j$ and $\{\overline{\bf  u}_j\}$ such that:
  $$ r_j\to \bar r\in[0,r_0/4],\qquad  \overline{X}_j \overset{pmGH}{\to} \overline{X}_\infty:= (\overline{X}_\infty,\overline{d_\infty},\overline{\mu_\infty},\overline{y_\infty}),$$
 and $ \overline{\bf u}_j\to \overline{\bf u}_\infty$ uniformly in any ${B}^{(j)}_s(y_j)$ for all $s<1,$ where $B^{(j)}_s(y_j)$ denotes the ball in $\overline{X}_j$. 

 From $y_j\in \mathcal R_{j^{-1},1}$ (see Definition \ref{def8.3}(1)) and Lemma \ref{lem8-5}, we know that 
$$d_{pmGH}\big(B_{r_j}(y_j),B_{r_j}(0^N)\big)<\frac{\epsilon_0}{2}\cdot r_j$$
for all sufficiently large $j$. That is, $y_j\in \mathcal R_{\epsilon_0/2,r_j}$ for all $j$ large enough. By combining with the condition   $y_j\not\in \mathcal R^{\Omega_{{\bf u}_j}}_{\epsilon_0,r_j},$ and by using Lemma \ref{lem8.2},  we obtain
 \begin{equation}\label{equ8-11}
 \int_0^1\overline{W}_{\overline{\bf u}_\infty}(y_\infty,s,Q_\infty){\rm d}s\gs \frac{c_N}{2}Q_\infty\omega_N(1+\epsilon_0).
  \end{equation}
  
From Lemma \ref{lemma-sect8+1},  the ball $B_{1/2}(\overline{y_\infty})\subset \overline{X}_\infty$ is isometric to $B_{1/2}(0^N)$. Up to a subsequence, $\overline{{\bf u}}_\infty  $ is the limit of $r^{-1}_j{\bf u}_\infty$. Thus, from Lemma \ref{lem8.4}, $\mu^{y_\infty}_{r_j}=\frac{c^{y_\infty}_{r_j}}{c^{y_\infty}_1} \mu_1^{y_\infty}$ and $ \mu_1^{y_\infty}=\mu_\infty=c_N\mathscr H^N$, we get, for all $s\in (0,1)$ that  
$$\overline{W}_{r^{-1}_j {\bf u}_\infty}(y_\infty,s,Q_\infty) =  \frac{c^{y_\infty}_{r_j}}{c^{y_\infty}_1}\cdot r^N_{j}\cdot \overline{W}_{{\bf u}_\infty}(y_\infty,r_js,Q_\infty).$$
By using \eqref{equ8-10},  $r_j\ls 1 $ and
\begin{equation*}
\begin{split}
\frac{c^{y_\infty}_{r_j}}{c^{y_\infty}_{1}}&=\frac{1}{c^{y_\infty}_{1}\int_{B_{r_j}(y_\infty=0^N)}\big(1-r_j^{-1}d_e(z,y_\infty)\big){\rm d}\mathscr H^N(z)}\\ &=\frac{1}{c^{y_\infty}_{1}\int_{B_{r_j}(0^N)}\big(1-r_j^{-1}|z|\big){\rm d}\mathscr H^N(z)}=\frac{1}{r^N_j},
\end{split}
\end{equation*}
we conclude that  $\overline{W}_{r^{-1}_j {\bf u}_\infty}(y_\infty,s,Q_\infty) =\frac{c_N}{2}Q_\infty\omega_N$ for all $s\in(0,1)$, and hence $\overline{W}_{\overline{\bf u}_\infty}(y_\infty,s,Q_\infty) =\frac{c_N}{2}Q_\infty\omega_N$ for all $s\in(0,1)$. By integrating on $(0,1)$, it contradicts with (\ref{equ8-11}), and hence, the proof is completed.
 \end{proof}

This $\varepsilon$-regularity is the reason for us to define the almost regular part of the free boundary $\mathcal R^{\Omega_{\bf u}}_\varepsilon=\cup_{r>0}\mathcal R^{\Omega_{\bf u}}_{\varepsilon,r}.$
  A simple but important corollary of this definition is that singular points do not disappear under $pmGH$-converging as follows.
\begin{lemma}\label{lem8-7}
Let  $(X_j,d_j,\mu_j)$ be a sequence of $ncRCD(K,N)$-spaces and $(X_j,d_j,\mu_j,p_j)\overset{pmGH}{\longrightarrow}(X_\infty,d_\infty,\mu_\infty,p_\infty)$. Let  $Q_j\in C(B_R(x_j))$ and   ${\bf u}_j$  be a minimizer  of $J_{Q_j}$ on $B_R(x_j)\subset X_j$. Suppose that   $Q_j$ and ${\bf u}_j$ satisfy  the uniformly estimates \eqref{equ7.2}, \eqref{equ7.3} and that ${\bf u}_j$ converges  uniformly to ${\bf u}_\infty$ on $B_R(p_j)$.  Then, for any $\varepsilon>0$, if $x_j\in \mathcal S^{\Omega_{{\bf u}_j}}_{\varepsilon}\cap B_{R/2}(p_j)$ and $x_j\overset{GH}{\longrightarrow} x_\infty$, we have $x_\infty\in \mathcal S^{\Omega_{{\bf u}_\infty}}_{\varepsilon}.$
\end{lemma}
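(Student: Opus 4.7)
The plan is to argue by contrapositive: assume $x_\infty\in\mathcal R^{\Omega_{{\bf u}_\infty}}_\varepsilon$ and deduce that $x_j\in\mathcal R^{\Omega_{{\bf u}_j}}_\varepsilon$ for every sufficiently large $j$, which contradicts the hypothesis $x_j\in\mathcal S^{\Omega_{{\bf u}_j}}_\varepsilon$. Since $\mathcal R^{\Omega_{{\bf u}_\infty}}_\varepsilon=\bigcup_{r>0}\mathcal R^{\Omega_{{\bf u}_\infty}}_{\varepsilon,r}$, we may fix a single scale $r_0>0$ such that $x_\infty\in\mathcal R^{\Omega_{{\bf u}_\infty}}_{\varepsilon,r_0}$. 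By Definition \ref{def8.3}, both conditions of $\varepsilon$-regularity at scale $r_0$ hold for ${\bf u}_\infty$ at $x_\infty$, with \emph{strict} inequality; our task is to transport both strict inequalities to the $j$-th approximation at the same fixed scale $r_0$.

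First, form the rescaled pointed m.m.\ spaces $\overline X_j^{(r_0)}:=(X_j,r_0^{-1}d_j,\mu^{x_j}_{r_0},x_j)$ and the rescaled maps $\overline{\bf u}_j^{(r_0)}:=r_0^{-1}{\bf u}_j$, for $j\in\mathbb N\cup\{\infty\}$. Since $r_0$ is a fixed positive factor and $x_j\overset{GH}{\to}x_\infty$, standard stability of pmGH-convergence under rescaling at a fixed base-point gives $\overline X_j^{(r_0)}\overset{pmGH}{\longrightarrow}\overline X_\infty^{(r_0)}$; moreover $\overline{\bf u}_j^{(r_0)}\to\overline{\bf u}_\infty^{(r_0)}$ uniformly on every ball $B_s^{(r_0)}(x_j)$ with $s<R/(2r_0)$, and the Lipschitz bound $L$ is preserved under this rescaling. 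Thus the hypotheses of Theorem \ref{thm7.1} (and hence of Lemma \ref{lem8.2}) hold for $\overline X_j^{(r_0)}$ and $\overline{\bf u}_j^{(r_0)}$.

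Next, transfer the two conditions of Definition \ref{def8.3}. For condition (1), the triangle inequality in $d_{pmGH}$ and the convergence $\overline X_j^{(r_0)}\overset{pmGH}{\to}\overline X_\infty^{(r_0)}$ immediately give $d_{pmGH}\big(B_1^{(r_0)}(x_j),B_1(0^N)\big)<\varepsilon$ for all $j$ sufficiently large, using the strict inequality for $x_\infty$. For condition (2), apply Lemma \ref{lem8.2} in the rescaled spaces: for a.e.\ $s\in(0,1)$,
\[
\overline W_{\overline{\bf u}_j^{(r_0)}}(x_j,s,Q_j)\ \longrightarrow\ \overline W_{\overline{\bf u}_\infty^{(r_0)}}(x_\infty,s,Q_\infty),
\]
where $Q_\infty$ denotes the continuous limit of $Q_j$ that is implicit in the uniform convergence of ${\bf u}_j$ to ${\bf u}_\infty$ furnished by Theorem \ref{thm7.1}. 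Combined with the uniform $L^\infty(0,1)$ bound from Lemma \ref{lem8.1} (which depends only on $K,N,L,Q_{\max}$ and is therefore uniform in $j$), the dominated convergence theorem yields
\[
\int_0^1\overline W_{\overline{\bf u}_j^{(r_0)}}(x_j,s,Q_j)\,ds\ \longrightarrow\ \int_0^1\overline W_{\overline{\bf u}_\infty^{(r_0)}}(x_\infty,s,Q_\infty)\,ds.
\]
Together with the pointwise convergence $Q_j(x_j)\to Q_\infty(x_\infty)$, the strict inequality \eqref{equ8.5} at $x_\infty$ passes to $x_j$ for all large $j$. Hence $x_j\in\mathcal R^{\Omega_{{\bf u}_j}}_{\varepsilon,r_0}\subset\mathcal R^{\Omega_{{\bf u}_j}}_\varepsilon$, the desired contradiction.

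The main obstacle is bookkeeping the rescaling: one has to verify that after the normalization \eqref{equ2.11} the rescaled triples $\overline X_j^{(r_0)}$ indeed $pmGH$-converge to $\overline X_\infty^{(r_0)}$ based at the moving centers $x_j\to x_\infty$, and that the rescaled functions $\overline{\bf u}_j^{(r_0)}$ retain the uniform Lipschitz and convergence properties needed to invoke Lemma \ref{lem8.2}. Once this is in place, the two clauses of Definition \ref{def8.3} are each strict inequalities that are stable under pmGH-limits by the continuity of the pmGH-distance and of the Weiss density.
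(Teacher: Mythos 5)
Your core mechanism is the same as the paper's: transfer the two clauses of Definition \ref{def8.3} along the convergence, using Lemma \ref{lem8.2} (together with the uniform bound of Lemma \ref{lem8.1} and dominated convergence) for the Weiss integral and the stability of pmGH-closeness under rescaling at a fixed scale for clause (1). The paper runs this directly -- fix an arbitrary $r>0$, observe that for each $j$ at least one of the two \emph{non-strict} failure inequalities holds at $x_j$, pass these closed conditions to the limit, and use the arbitrariness of $r$ -- whereas you run it as a contrapositive at a single scale $r_0$; your version has the small advantage of not needing a subsequence extraction when the failing clause alternates with $j$. The rescaling bookkeeping you worry about (convergence of the normalized rescaled spaces with moving base points, preservation of the Lipschitz bound, minimality of $r_0^{-1}{\bf u}_j$) is indeed routine and consistent with how the paper uses these facts elsewhere (e.g.\ Corollary \ref{cor7.2}).

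There is, however, one genuine omission. The negation of ``$x_\infty\in\mathcal S^{\Omega_{{\bf u}_\infty}}_{\varepsilon}$'' is not ``$x_\infty\in\mathcal R^{\Omega_{{\bf u}_\infty}}_{\varepsilon}$'' but ``$x_\infty\notin\partial\{|{\bf u}_\infty|>0\}$ \emph{or} $x_\infty\in\mathcal R^{\Omega_{{\bf u}_\infty}}_{\varepsilon}$'', since $\mathcal S^{\Omega_{{\bf u}_\infty}}_{\varepsilon}$ is by definition contained in the free boundary. You never show that $x_\infty$ is a free boundary point of ${\bf u}_\infty$, and your argument gives no contradiction in the case $x_\infty\notin\partial\{|{\bf u}_\infty|>0\}$; so as written the contrapositive does not yield the stated conclusion. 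The paper closes exactly this point first, via \eqref{equ7.6} of Theorem \ref{thm7.1} (Hausdorff convergence of the free boundaries, which rests on the nondegeneracy Theorem \ref{thm6.1}): since $x_j\in\partial\{|{\bf u}_j|>0\}\cap B_{R/2}(p_j)$ and $x_j\overset{GH}{\longrightarrow}x_\infty$, one gets $x_\infty\in\partial\{|{\bf u}_\infty|>0\}$. Adding that one step makes your proof complete. A lesser remark: you invoke a limit $Q_\infty$ of the $Q_j$ along GH-convergent sequences, which is not among the stated hypotheses of the lemma; it should be assumed or justified (in the intended application the $Q_j$ are rescalings of a fixed continuous $Q$ and converge to the constant $Q(x_0)$), though the paper's own proof is equally informal on this point.
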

\begin{proof}From (\ref{equ7.6}) in Theorem \ref{thm7.1}, we know that $x_\infty\in \partial\{|{\bf u}_\infty|>0\}\cap B_{R/2}(p_\infty)$. 

Fix $ r> 0$ arbitrarily. We know that $x_j\not\in    \mathcal R^{\Omega_{{\bf u}_j}}_{\varepsilon,r},$ from $ \mathcal R^{\Omega_{{\bf u}_j}}_{\varepsilon}=\cup_{r>0}\mathcal R^{\Omega_{{\bf u}_j}}_{\varepsilon,r}$. By the definition of   $\mathcal R^{\Omega_{{\bf u}_j}}_{\varepsilon,r}$, we know either
 $$d_{pmGH}\left(B_1^{r^{-1}d_j}(x_j) ,B_1(0^{N-1})\right)\gs \varepsilon,$$
 where $B_1^{r^{-1}d_j}(x_j)$ is the unit ball centered at $x_j$ on the rescaling space $(X_j,r^{-1}d_j,\mu_{j,r}^{p_j},p_j)$, or 
     \begin{equation*}
\int_{0}^{1}\overline{W}_{\overline{\bf u}_j}(x_j,s,Q){\rm d}s\gs  \frac{C_N\cdot \omega_N}{2}Q(x_j)\left(1+\varepsilon\right), 
\end{equation*} 
where $\bar {\bf u}_j:=r^{-1}{\bf u}_j$. From (\ref{equ7.5}), Lemma \ref{lem8.2} and $Q(x_j)\to Q(x_0)$, we get  $x_\infty\not\in \mathcal R^{\Omega_{r^{-1}{\bf u}_\infty}}_{\varepsilon,1}$, and then $x_\infty\not\in\mathcal R^{\Omega_{{\bf u}_\infty}}_{\varepsilon}.$    The proof is finished.
\end{proof}

 A similar argument in the proof of the $\varepsilon$-regularity also gives the following Reifenberg's property.
\begin{lemma}\label{lem8-8}
 For any $\varepsilon>0$, there exists a constant $\delta=\delta(\varepsilon | N,K,R,L,Q_{\max},Q_{\min})>0$  such that the following holds: Let $(X,d,\mu)$ be an $ncRCD(K,N)$-space and let ${\bf u}$ be a minimizer of $J_Q$ on $B_R(x) \subset X$ with $R\gs1$, $0<Q_{\min}\ls Q\ls Q_{\max}<+\infty$  and $|\nabla {\bf u}|\ls L.$ 
 If $x\in \mathcal R^{\Omega_{\bf u}}_{\delta,1}$ and if ${\rm osc}_{B_1(x)}Q\ls\delta$, then 
  $$d_{GH}\left(B_1(x)\cap \partial\{|{\bf u}|>0\},B_1(0^{N-1})\right)<\varepsilon,$$  where $B_1(0^{N-1})$ is the unit ball in $\mathbb R^{N-1}$ centered at $0$.
  \end{lemma}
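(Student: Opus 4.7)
I would argue by contradiction and compactness, closely paralleling the proof of Theorem \ref{thm8-6}. Suppose the assertion fails for some $\varepsilon_{0}>0$. Then for each $j\in\mathbb N$ there exist an $ncRCD(K,N)$-space $(X_{j},d_{j},\mu_{j})$, a point $x_{j}\in X_{j}$ and a minimizer ${\bf u}_{j}$ of $J_{Q_{j}}$ on $B_{R}(x_{j})$ satisfying the uniform bounds with $\mathrm{osc}_{B_{1}(x_{j})}Q_{j}\le 1/j$ and $x_{j}\in\mathcal R^{\Omega_{{\bf u}_{j}}}_{1/j,\,1}$, yet
\begin{equation*}
d_{GH}\bigl(B_{1}(x_{j})\cap\partial\{|{\bf u}_{j}|>0\},\,B_{1}(0^{N-1})\bigr)\ge\varepsilon_{0}.
\end{equation*}
By the standard compactness of $ncRCD(K,N)$-spaces together with Theorem \ref{thm7.1}, after passing to a subsequence we have $(X_{j},d_{j},\mu^{x_{j}}_{1},x_{j})\overset{pmGH}{\longrightarrow}(X_{\infty},d_{\infty},\mu_{\infty},x_{\infty})$ and ${\bf u}_{j}\to{\bf u}_{\infty}$ uniformly on $B_{s}(x_{j})$ for every $s<1$, with ${\bf u}_{\infty}$ a minimizer of $J_{Q_{\infty}}$ on $B_{1}(x_{\infty})$. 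Since $x_{j}\in\mathcal R^{\Omega_{{\bf u}_{j}}}_{1/j,\,1}$, Definition \ref{def8.3}(1) forces $B_{1}(x_{\infty})$ to be isometric (as a metric measure space) to $B_{1}(0^{N})\subset(\mathbb R^{N},d_{e},c_{N}\mathscr H^{N})$, and $\mathrm{osc}_{B_{1}(x_{j})}Q_{j}\to 0$ forces $Q_{\infty}$ to be a positive constant.

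Next I would combine Lemma \ref{lem8.2} with Definition \ref{def8.3}(2) and the $L^{\infty}$ bound of Lemma \ref{lem8.1} to pass to the limit in the Weiss integral, obtaining
\begin{equation*}
\int_{0}^{1}\overline W_{{\bf u}_{\infty}}(x_{\infty},s,Q_{\infty})\,{\rm d}s
\le \liminf_{j\to\infty}\int_{0}^{1}\overline W_{{\bf u}_{j}}(x_{j},s,Q_{j})\,{\rm d}s
\le \tfrac{c_{N}\omega_{N}}{2}Q_{\infty}.
\end{equation*}
Because $Q_{\infty}$ is constant and the ambient ball is Euclidean, the Weiss almost-monotonicity \eqref{equ8.3} degenerates into a genuine monotonicity, while the classical lower bound at free boundary points gives $\overline W_{{\bf u}_{\infty}}(x_{\infty},s,Q_{\infty})\ge\tfrac{c_{N}\omega_{N}}{2}Q_{\infty}$ for every $s\in(0,1)$. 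Therefore $\overline W_{{\bf u}_{\infty}}(x_{\infty},\cdot,Q_{\infty})\equiv\tfrac{c_{N}\omega_{N}}{2}Q_{\infty}$, so the equality case of the Weiss monotonicity forces ${\bf u}_{\infty}$ to be $1$-homogeneous with respect to $x_{\infty}$. Exactly as in the proof of Corollary \ref{cor7.2}, the classification of $1$-homogeneous vector-valued global minimizers \cite{MTV17,CSY18} then yields ${\bf u}_{\infty}(y)=\xi\cdot u(y)$ with $\xi\in S^{m-1}$ and $u$ a $1$-homogeneous nonnegative global minimizer of the scalar Alt-Caffarelli functional; the sharp Weiss density together with the regularity criterion \eqref{equ8.4} and \cite[Thm.~5.5]{AC81} identify $\partial\{u>0\}$ as a hyperplane through $x_{\infty}$. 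Consequently $\partial\{|{\bf u}_{\infty}|>0\}\cap B_{1}(x_{\infty})$ is isometric to $B_{1}(0^{N-1})$.

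To finish, I would invoke the free-boundary convergence \eqref{equ7.6} from Theorem \ref{thm7.1}, which gives
\begin{equation*}
\partial\{|{\bf u}_{j}|>0\}\cap B_{1}(x_{j})\overset{GH}{\longrightarrow}\partial\{|{\bf u}_{\infty}|>0\}\cap B_{1}(x_{\infty})\cong B_{1}(0^{N-1}),
\end{equation*}
contradicting the lower bound $d_{GH}(\cdot,B_{1}(0^{N-1}))\ge\varepsilon_{0}$ and closing the argument. The main obstacle I anticipate is the rigidity step: converting the integrated Weiss bound into pointwise equality of the Weiss density on a Euclidean ball and then identifying the corresponding homogeneous minimizer with a half-space profile. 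This requires cleanly combining the vanishing of the $Q$-oscillation error in \eqref{equ8.3}, the Euclidean monotonicity, and the Caffarelli--Shahgholian--Yeressian / Mazzoleni--Terracini--Velichkov classification; everything else is a direct application of the compactness machinery established in Sections \ref{sec:compactness} and \ref{sec:nondegeneracy}.
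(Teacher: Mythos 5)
Your proposal is correct and follows essentially the same route as the paper: a contradiction/compactness argument via Theorem \ref{thm7.1}, passing the Weiss integral to the limit on the Euclidean limit ball, using the (now exact, since $Q_\infty$ is constant) monotonicity to force constant Weiss density, hence regularity and $1$-homogeneity of ${\bf u}_\infty$, so the limit free boundary is flat, contradicting the free-boundary convergence \eqref{equ7.6}. The only cosmetic difference is that you spell out the rigidity step via the CSY/MTV classification and \cite[Thm.~5.5]{AC81}, which the paper leaves implicit (it is the same reasoning used in its Corollary \ref{cor7.2}).
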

\begin{proof}
Suppose that this assertion is not true. There exists some $\epsilon_0>0$ such that   for each $j\in\mathbb N$, there is an  $ncRCD(K,N)$-spaces $(X_j,d_j,\mu_j)$, $Q_j\in C(B_R(x_j))$ and a minimizer ${\bf u}_j$  of $J_{Q_j}$ on $B_R(x_j)\subset X_j$ with the uniform estimates \eqref{equ7.2}, \eqref{equ7.3}  such that:\\
$\quad (i)$  
$x_j\in \mathcal R^{\Omega_{{\bf u}_j}}_{j^{-1},1}$, ${\rm osc}_{B_1(x_j)}Q_j\ls j^{-1}$ and \\
$(ii)$  $d_{GH}\left(B_1(x_j)\cap \partial\{|{\bf u}_j|>0\},B_1(0^{N-1})\right)\gs\epsilon_0.$ 
 
 By applying the standard compactness of $RCD$-spaces and Theorem 7.1 to $X_j,Q_j$ and ${\bf u}_j$, there exist  subsequences of $\{x_j\}, {X}_j$ and $\{{\bf  u}_j\}$ such that:
\begin{equation*}
  ({X}_j,d_j,\mu^{x_j}_1,x_j)\overset{pmGH}{\to}  {X}_\infty:=(X_\infty,d_\infty,\mu_\infty,x_\infty),
  \end{equation*}
 and $ {\bf u}_j\to  {\bf u}_\infty$ uniformly in any ${B}_r(x_j)$ for all $r<R,$ where $\mu^{x_j}_1$ is given in \eqref{equ2.11}. The limit map ${\bf u}_\infty$ is a minimizer of $J_{Q_\infty}$ on $B_1(x_\infty)$, and from (\ref{equ7.6}) that
 \begin{equation}\label{equ8-12}
 d_{GH}\left(B_1(x_\infty)\cap \partial\{|{\bf u}_\infty|>0\},B_1(0^{N-1})\right)\gs \epsilon_0.
 \end{equation}
On the other hand, since  $x_j\in \mathcal R^{\Omega_{{\bf u}_j}}_{j^{-1},1}$, similar to the proof of (\ref{equ8-10}), by Definition \ref{def8.3} and Lemma \ref{lem8.2},  we conclude that   the limit ball $B_1(y_\infty)\subset {X}_\infty$ is isometric to $B_1(0^N)\subset \mathbb R^N$ with the Euclidean metric $d_e$ and the measure $c_N\mathscr H^N$, and that \begin{equation*} 
\begin{split}
\int^1_{0}{\overline{W}_{ {\bf u}_\infty}}(x_\infty,s,Q_\infty){\rm d}s =\frac{c_N\omega_N}{2} Q_\infty,
\end{split}
\end{equation*}
where  the limit $Q_\infty:=\lim_{j\to+\infty} Q_j$ is a constant (because $\lim_{j\to+\infty}{\rm osc}_{B_1(x_j)}Q_j=0$). From this and  the monotonicity  \eqref{equ8.5}, we get that   
$ {\overline{W}}_{{\bf u}_\infty}(x_\infty,s,Q_\infty)= \frac{c_N}{2}Q_\infty\omega_N,$ for all $ s\in(0,1).$
 Thus,  we obtain that  $x_\infty$ is a regular point and that  ${\bf u}_\infty$ is homogeneous of degree one. This implies 
$$d_{GH}\left(B_1(x_\infty)\cap \partial\{|{\bf u}_\infty|>0\},B_1(0^{N-1})\right)=0.$$ 
This contradicts with \eqref{equ8-12}. The proof is finished.
\end{proof}

Consequently, we have the following topological regularity for the almost regular part of the free boundary $\partial\{|{\bf u}|>0\}\cap\Omega $.  
\begin{corollary}\label{cor8-9}
Suppose that $(X,d,\mu:=\mathscr H^N)$ is a non-collapsed $RCD(K,N)$ metric measure space, and that  ${\bf u}$ is a minimizer of $J_Q$ on a bounded domain $\Omega\subset X$ and that $Q\in C(\Omega)$ satisfies \eqref{equ1.3}. 
Then for any $\varepsilon>0$, there exists $\delta>0$ such that the set $\mathcal R_{\delta}^{\Omega{\bf u}}$ satisfies the following property: for any $x\in \mathcal R^{\Omega_{\bf u}}_\delta$, there exists  $r_x>0$ such that it holds for all $y\in B_{r_x}(x)\cap \partial\{|{\bf u}|>0\}$ and all $r\in(0,r_x)$ that
    \begin{enumerate}
\item[(a)] $y\in \mathcal R^{\Omega_{\bf u}}_{\varepsilon,r} $, and
\item[(b)] $d_{GH}\left(B_r(y)\cap \partial\{|{\bf u}|>0\},B_r(0^{N-1})\right)< \varepsilon  r.$
\end{enumerate}

Consequently,  for any $\alpha\in(0,1)$, there exists   $\delta_\alpha>0$ such that the almost regular set $\mathcal R_{\delta_\alpha}^{\Omega_{\bf u}}$ is a $C^\alpha$-biH\"older homeomorphic to an $(N-1)$-dimensional topological manifold. 
\end{corollary}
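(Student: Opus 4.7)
The plan is to combine Theorem \ref{thm8-6}, Lemma \ref{lem8-8}, and the classical Reifenberg topological disk theorem. Given $\varepsilon>0$, I first apply Lemma \ref{lem8-8} with target $\varepsilon$ to obtain a threshold $\delta_0=\delta_0(\varepsilon\,|\,K,N,R,L,Q_{\max},Q_{\min})$, then apply Theorem \ref{thm8-6} with target $\min\{\varepsilon,\delta_0\}$ to produce the desired $\delta=\delta(\varepsilon)>0$.

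Fix $x\in\mathcal R^{\Omega_{\bf u}}_\delta$, so that $x\in\mathcal R^{\Omega_{\bf u}}_{\delta,r_1}$ for some $r_1>0$. Using Lemma \ref{lem8.2}, the continuity of $Q$, and Lemma \ref{lem8.4} for the scale-invariance of the Weiss integral, the sets $\mathcal R^{\Omega_{\bf u}}_{\delta,r}$ are relatively open jointly in the base point and the scale. From this I will select an effective scale $r_*\in(0,r_1]$ small enough that $B_{2r_*}(x)\Subset\Omega$, $\mathrm{osc}_{B_{2r_*}(x)}Q<\delta$, and every free boundary point of $B_{r_*}(x)$ is $\delta$-regular at scale $r_*$. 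Rescaling the metric and the map by $r_*^{-1}$ produces an $ncRCD(Kr_*^2,N)$-space on which $\tilde{\bf u}=r_*^{-1}{\bf u}$ is a minimizer satisfying all the hypotheses of Theorem \ref{thm8-6} with $R\ge 2$ (the Lipschitz constant and the bounds \eqref{equ7.2}--\eqref{equ7.3} are scale invariant). The theorem then yields that every $y\in B_{r_*/4}(x)\cap\partial\{|{\bf u}|>0\}$ is $\min\{\varepsilon,\delta_0\}$-regular at every scale $r\in(0,r_*/4)$ in the original space, giving (a) with $r_x:=r_*/4$. For (b), applying Lemma \ref{lem8-8} in the $r$-rescaled space based at each such $y$ and unrescaling gives $d_{GH}\big(B_r(y)\cap\partial\{|{\bf u}|>0\},B_r(0^{N-1})\big)<\varepsilon r$.

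For the final topological statement I will invoke the Reifenberg topological disk theorem (Reifenberg, David-Toro): for any $\alpha\in(0,1)$ there is a threshold $\varepsilon_\alpha>0$, depending only on $N$ and $\alpha$, such that any closed subset of a metric space which is uniformly $\varepsilon_\alpha$-Reifenberg flat with $(N-1)$-dimensional flat approximations is locally $C^\alpha$-biH\"older homeomorphic to $\mathbb R^{N-1}$, with $\alpha(\varepsilon)\to 1$ as $\varepsilon\to 0$. Given $\alpha\in(0,1)$ I choose $\varepsilon:=\varepsilon_\alpha$ in the first part, set $\delta_\alpha:=\delta(\varepsilon_\alpha)$, and observe that the already-established property (b) supplies precisely the uniform Reifenberg flatness hypothesis at every point of $\mathcal R^{\Omega_{\bf u}}_{\delta_\alpha}$; the Reifenberg theorem then delivers the desired $(N-1)$-dimensional topological $C^\alpha$-biH\"older manifold structure.

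I expect the main obstacle to be the delicate compatibility between the scale $r_*$ at which nearby free boundary points of $x$ must be $\delta$-regular and the radius $r_*$ of the neighborhood on which Theorem \ref{thm8-6} is to be applied. In the absence of a Weiss-density almost-monotonicity in scale (the Euclidean estimate \eqref{equ8.3} is not directly available in the $RCD$ setting), this matching requires exploiting the joint $pmGH$-continuity of the Weiss density provided by Lemma \ref{lem8.2} together with the rescaling identity of Lemma \ref{lem8.4} to propagate $\delta$-regularity \emph{jointly} in base point and scale, and using the strictness of the inequalities in Definition \ref{def8.3} to retain openness after passing to a slightly smaller scale than $r_1$.
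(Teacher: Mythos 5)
Your overall architecture (openness of the almost-regular sets, the $\varepsilon$-regularity Theorem \ref{thm8-6}, Lemma \ref{lem8-8} applied at each point and scale, and then Reifenberg's disk theorem in metric spaces) is the same as the paper's, and both the final bi-H\"older step and your quantifier bookkeeping (fixing the Lemma \ref{lem8-8} threshold first) are fine. The genuine gap is in your selection of the scale $r_*$. You require simultaneously that $r_*\ls r_1$ be small enough that ${\rm osc}_{B_{2r_*}(x)}Q<\delta$ \emph{and} that every free-boundary point of $B_{r_*}(x)$ be $\delta$-regular at scale $r_*$. The second property does not follow from $x\in\mathcal R^{\Omega_{\bf u}}_{\delta,r_1}$ together with the joint openness you invoke: Lemma \ref{lem8.2}, Lemma \ref{lem8.4} and the strictness of the inequalities in Definition \ref{def8.3} only give stability of $\delta$-regularity under small perturbations of the base point and of the scale, i.e.\ for scales close to $r_1$; they say nothing about scales $r_*\ll r_1$, and in general $r_*$ must be much smaller than $r_1$ because it has to beat the modulus of continuity of $Q$ at $x$. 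Transferring the Weiss-density bound of Definition \ref{def8.3}(2) from the scale $r_1$ down to all much smaller scales is precisely the content of Theorem \ref{thm8-6} itself (no almost-monotonicity of the type \eqref{equ8.3} is available in the $RCD$ setting, as you note), so your construction of $r_*$ presupposes the very conclusion that the theorem is supposed to deliver; your closing paragraph acknowledges the obstacle, but the proposed remedy (``passing to a slightly smaller scale than $r_1$'') cannot close it.

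For comparison, the paper avoids any downward propagation before invoking the theorem: it keeps the scale fixed at the value $r'_x$ at which $x$ is known to be $\delta$-regular, uses the relative openness of $\mathcal R^{\Omega_{\bf u}}_{\delta,r'_x}$ only in the base point to find a (possibly much smaller) ball $B_{r''_x}(x)$ all of whose free-boundary points are $\delta$-regular at that same scale $r'_x$ and on which $Q$ oscillates by at most $\delta$, and then applies Theorem \ref{thm8-6} (suitably rescaled) to convert $\delta$-regularity at the single scale into $\delta'$-regularity at every scale $r<r''_x/4$ for every free-boundary point of $B_{r''_x/4}(x)$; Lemma \ref{lem8-8} then yields statement (b), and the Reifenberg theorem for metric spaces from \cite{CC97} gives the bi-H\"older conclusion. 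If you restructure your argument in this way --- shrink only the neighborhood, never the regularity scale, and let Theorem \ref{thm8-6} do all of the scale propagation --- the remainder of your proposal goes through essentially as written.
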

\begin{proof} 
For any $\delta>0$ sufficiently small,  for each $x\in \mathcal R^{\Omega_{\bf u}}_{\delta}$, there exists $r'_x>0$ such that $x\in \mathcal R^{\Omega_{\bf u}}_{\delta,r'_x}$. Notice that     $\mathcal R^{\Omega_{\bf u}}_{\delta,r}$ is relatively open in $\partial\{|{\bf u}|>0\}\cap \Omega$ for any $r>0$. Hence, there exists  a neighborhood $B_{r''_x}(x)$ such that $x'\in \mathcal R^{\Omega_{\bf u}}_{\delta,r'_x}$   for all $x'\in \partial\{|{\bf u}|>0\}\cap B_{r''_x}(x)$ and ${\rm osc}_{B_{2r''_x}(x)}Q\ls \delta$. We can assume that $r''_x\ls r'_x$. Thus, by $\varepsilon$-regularity Theorem \ref{thm8-6}, we conclude that 
$$y\in\mathcal R^{\Omega_{\bf u}}_{\delta',r} \quad {\rm for\ all\ }\ y\in B_{r''_x/4}(x)\cap \partial\{|{\bf u}|>0\} \ {\rm and\ all }\   r\in(0,r''_x/4),$$
where $\delta'=\delta'(\delta)>0$ with $\lim_{\delta\to0}\delta'(\delta)=0$. According to Lemma \ref{lem8-8}, we have
  $$d_{GH}\left(B_r(y)\cap \partial\{|{\bf u}|>0\},B_r(0^{N-1})\right)<\delta''\cdot r,$$  
  for all $r\in (0,r''_x/4)$ and all $y\in B_{r''_x/4}(x),$ where $\delta''=\delta''(\delta')>0$ with $\lim_{\delta'\to0}\delta''(\delta')=0.$
We put $r_x:=r_x''/4$ and take $\delta$  sufficiently small that $\max\{\delta'(\delta),\delta''(\delta')\}\ls \varepsilon.$  Now the first assertion follows.

The second assertion from the first one and the Reifenberg's disk theorem for metric spaces, see \cite[Apendix A]{CC97}. In fact, for any $\alpha\in(0,1)$, we know that  
  $B_{r_x}(x)\cap\partial\{|{\bf u}|>0\}\cap\Omega$ is $C^\alpha$-homeomorphic to the ball $B_{r_x}(0^{N-1})$ provided that $\varepsilon<\varepsilon(\alpha)$, where  $\varepsilon(\alpha)>0$ such that $\varepsilon(\alpha)\to0$ as $\alpha\to1^-$.  The proof is finished.
    \end{proof}

In the rest of this section, we want to estimate the size of the singular part of  $\partial\{|{\bf u}|>0\}\cap\Omega$. Firstly, we need to deal with the minimizers of $J_Q$ on metric measure cones. Let $N\gs 2$ and let $(\Sigma,d_\Sigma,\mu_\Sigma)$ be a   metric measure space with $diam(\Sigma)\ls \pi$. The  metric measure cone over $\Sigma$ is the metric measure space $(C(\Sigma),d_C,\mu_C)$ given  by $$C_N(\Sigma)= [0,\infty)\times\Sigma/(\{0\}\times \Sigma)$$
with the   distance   
$$d_C\big((r_1,\xi_1),(r_2,\xi_2)\big)=\sqrt{r_1^2+r^2_2-2r_1r_2\cos d_{\Sigma}(\xi_1,\xi_2)},$$
and the    measure 
$$d\mu_{C}(r, \xi)=r^{N-1}dr\times d\mu_{\Sigma}(x).$$ 
In the following, we always assume that  $\big(C(\Sigma),d_C,\mu_C\big)$ is an $ncRCD(0,N)$-space.  Remark that,  for any point $x_0$ of an $ncRCD(K,N)$-space, any tangent cone at $x_0$  must be a metric measure cone satisfying $ncRCD(0,N)$ (see \cite{DPG18}).  
 
\begin{lemma}\label{lem8-10}
Let ${\bf u}$ be  a global minimizer of $J_{Q_0}$ on a cone $(C(\Sigma),d_C,\mu_C)$ with the vertex $p$, where $Q_0>0$ is a constant. Then the  Weiss' density $W_{\bf u}(p,r,Q_0)$ is non-decreasing  in $r$; moreover, if $W_{\bf u}(p,r,Q_0)$ is a constant then ${\bf u}$ is homogeneous of degree one, i.e. ${\bf u}(\xi,t)=t\cdot{\bf u}(\xi,1)$ for any $t>0$ and $\xi\in\Sigma$. 

In particular,  if ${\bf u}_0$ is one of the blow-up limits of a minimizer ${\bf u}$ at a point $x_0$ in an $ncRCD(K,N)$-space $(X,d,\mu),$ then any doube-blowup ${\bf u}_{00}=\lim_{j\to\infty}{\epsilon_j}^{-1}u_0$ must be homogeneous of degree one.
\end{lemma}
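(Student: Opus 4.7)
My plan is to establish the monotonicity of the Weiss density by comparison with the $1$-homogeneous extension adapted to the metric measure cone $C(\Sigma)$, and to deduce the rigidity and the blow-up corollary by tracking the equality case and applying Part~1 to ${\bf u}_0$ on the tangent cone.

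For the monotonicity, I would fix $r>0$ and define the $1$-homogeneous extension $\bar{\bf u}_r(t,\xi):=(t/r){\bf u}(r,\xi)$ of the boundary datum ${\bf u}|_{\partial B_r(p)}$. Since $\bar{\bf u}_r={\bf u}$ on $\partial B_r(p)$ in the trace sense, the global minimality of ${\bf u}$ yields $J_{Q_0}({\bf u},B_r(p))\le J_{Q_0}(\bar{\bf u}_r,B_r(p))$. Writing $f(\xi):={\bf u}(r,\xi)$ and using the cone structure $|\nabla v|^2=|\partial_t v|^2+t^{-2}|\nabla_\Sigma v|^2$ together with $d\mu_C=t^{N-1}dt\,d\mu_\Sigma$, a direct integration gives
\begin{equation*}
\int_{B_r(p)}|\nabla\bar{\bf u}_r|^2\,d\mu_C=\frac{r^{N-2}}{N}\int_\Sigma(|f|^2+|\nabla_\Sigma f|^2)\,d\mu_\Sigma,
\end{equation*}
together with $\int_{B_r(p)}\chi_{\{|\bar{\bf u}_r|>0\}}\,d\mu_C=\frac{r^N}{N}\mu_\Sigma(\{|f|>0\})$, while the perimeter measure of $B_r(p)$ restricted to $\partial B_r(p)$ equals $r^{N-1}d\mu_\Sigma$. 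Setting $\phi(r):=\int_{B_r(p)}(|\nabla{\bf u}|^2+Q_0\chi_{\{|{\bf u}|>0\}})\,d\mu_C$ and $\psi(r):=\int_X|{\bf u}|^2\,d|D\chi_{B_r(p)}|$, I would compute $\phi'(r)$ and $\psi'(r)$ via the coarea formula on the cone and combine them with the comparison inequality; a careful rearrangement should give
\begin{equation*}
\tfrac{d}{dr}W_{\bf u}(p,r,Q_0)\ge \tfrac{c}{r^{N+1}}\int_{\partial B_r(p)}\big|t\,\partial_t{\bf u}-{\bf u}\big|^2\,d|D\chi_{B_r(p)}|\ge 0,
\end{equation*}
with the integrand measuring the radial defect from $1$-homogeneity. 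This simultaneously yields the monotonicity and the rigidity, since the vanishing of this defect for every $r$ forces $t\,\partial_t{\bf u}={\bf u}$ almost everywhere, that is ${\bf u}(t,\xi)=t\cdot {\bf u}(1,\xi)$.

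For the ``In particular'' statement, by Theorem~\ref{thm7.1} and \cite{DPG18} any blow-up limit ${\bf u}_0$ is a global minimizer of $J_{Q_0}$ on the tangent cone $Y$ (which is a metric measure cone), so Part~1 applies and $W_{{\bf u}_0}(o_Y,\cdot,Q_0)$ is non-decreasing with limit $\ell:=\lim_{r\to 0^+}W_{{\bf u}_0}(o_Y,r,Q_0)$. For each $\lambda>0$ the cone dilate $v_\lambda(y):=\lambda^{-1}{\bf u}_0(\lambda y)$ is again a global minimizer on $Y$ with $W_{v_\lambda}(o_Y,r,Q_0)=W_{{\bf u}_0}(o_Y,\lambda r,Q_0)$; letting $\lambda\to 0$ and extracting a pmGH-subsequential limit $\tilde{\bf u}_0$ of $\{v_\lambda\}$, Lemma~\ref{lem8.2} gives $W_{\tilde{\bf u}_0}(o_Y,r,Q_0)\equiv \ell$, so the rigidity yields that $\tilde{\bf u}_0$ is $1$-homogeneous. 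A diagonal extraction identifying each $v_\lambda$ with an actual blow-up limit of ${\bf u}$ at $x_0$ along the sequence $\lambda r_j$, combined with Lemma~\ref{lem8.2} and the scaling of the Weiss density, then forces $W_{{\bf u}_0}(o_Y,\cdot,Q_0)\equiv \ell$ on $(0,\infty)$, and the rigidity gives the desired $1$-homogeneity of ${\bf u}_0$ itself.

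The hardest step I expect is the computation of $\tfrac{d}{dr}W_{\bf u}(p,r,Q_0)$ in the intrinsic metric measure setting: differentiating $\psi(r)$ requires extracting suitable traces of ${\bf u}$ and $\partial_t{\bf u}$ on the sphere $\partial B_r(p)$ and justifying the coarea identities against the intrinsic perimeter measure, using the chain and Leibniz rules from \cite{Gig15}. A secondary difficulty is the final identification in the ``In particular'' part, namely the diagonal argument realizing each dilate $v_\lambda$ as an actual blow-up limit of ${\bf u}$ at $x_0$, so that the constancy of $W_{\tilde{\bf u}_0}$ can be transferred to that of $W_{{\bf u}_0}$ at all scales.
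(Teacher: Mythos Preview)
Your proposal is correct and follows essentially the same route as the paper: the monotonicity and rigidity are proved in the Appendix (Lemma~\ref{lem-a3}) exactly via comparison with the $1$-homogeneous competitor $\bar{\bf u}_r(t,\xi)=(t/r){\bf u}(r,\xi)$ and direct differentiation of $W_{\bf u}(p,r,Q_0)$ using the cone structure, yielding a nonnegative defect term that vanishes precisely when ${\bf u}$ is $1$-homogeneous. For the ``in particular'' part the paper uses the same diagonal idea you describe, but organizes it slightly more directly: it chooses a subsequence $r'_j=\epsilon_j r_j\subset\{r_j\}$ with $\epsilon_j\to 0$ so that ${\bf u}_0$ is simultaneously the blow-up along $r_j$ and along $r'_j$, hence ${\bf u}_0$ is its own blow-up limit, and then the scaling identity (Lemma~\ref{lem8.4} with $c^{o_Y}_r r^N/c^{o_Y}_1\equiv 1$ on the cone) immediately gives $W_{{\bf u}_0}(o_Y,s,Q_0)=\lim_{r\to 0}W_{{\bf u}_0}(o_Y,r,Q_0)$ for every $s$---so your intermediate object $\tilde{\bf u}_0$ is not needed.
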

\begin{proof}
The first assertion is similar to the case of Euclidean space. For the completeness, we give the details in the Appendix \ref{app} (see Lemma \ref{lem-a3}).

 Since ${\bf u}_0$ is one of   blow up limits of ${\bf u}$,  there exists a  sequence $r_j\to 0$ such that  $(Y,d_Y,\mu_Y,o_Y)$ is the $pmGH$-limit of $(X,r^{-1}_jd,\mu^{x_0}_{r_j},x_0)$  and that ${\bf u}_0$ is the limit of $r_j^{-1}{\bf u}$.  For the second assertion, we only need to check that the Weiss' density $W_{{\bf u}_0}(o_Y,r,Q(x_0))$ is a constant.

By taking any subsequence    $\epsilon_j \to0$ and letting  ${\bf u}_{0,\epsilon_j}(\xi,s):=\epsilon_j^{-1}{\bf u}_0(\xi, \epsilon_j\cdot s)$, the double-blowup blow up   ${\bf u}_{00}$ is  the limit of  ${\bf u}_{0,\epsilon_j}(\xi,s)$ under the rescaling space $(Y, \epsilon_j^{-1}d_Y,\mu^{o_Y}_{\epsilon_j},o_Y)$ as $\epsilon_j\to0^+$. 
By Lemma \ref{lem8.4}, for such  sequence  $\epsilon_j$, we obtain for any $s>0$ that
\begin{equation*}
\begin{split}
\overline{W}_{{\bf u}_0}(o_Y,s,Q(x_0))&=\lim_{\epsilon_j\to0}c^{o_Y}_{\epsilon_j}/C^{o_Y}_1\cdot\epsilon_j^N\cdot \overline{W}_{{\bf u}_0}(o_Y,\epsilon_j\cdot s,Q(x_0))\\
&=\lim_{s\to0}  \overline{W}_{{\bf u}_0}(o_Y,  s,Q(x_0)),
\end{split}
\end{equation*}
where we have used $c^{o_Y}_r/c^{o_Y}_1\cdot r^N=1$ for all $r>0$ and the existence of the limit $\lim_{s\to0}  \overline{W}_{{\bf u}_0}(o_Y,  s,Q(x_0))$, by the monotonicity in the first assertion.
\end{proof}

\begin{lemma}\label{lem8-11}
Let $Q_0$ be a positive constant and ${\bf u}=(u_1, u_2,\cdots, u_m)$ be a minimizer of $J_{Q_0}$ on a two-dimensional cone $C(S_a)$, where $S_a$ is a circle with length $a\in(0,2\pi)$. Assume that ${\bf u}$ is homogeneous of degree one. 
Then the vertex $o\not\in \partial\{|{\bf u}|>0\}.$

Consequently, the singular set $\mathcal S^{\Omega_{\bf u}}$ is empty for  any minimizer ${\bf u}$ of $J_{Q_0}$  on a bounded domain $\Omega$ of an $ncRCD(K,2)$-space without boundary. 
\end{lemma}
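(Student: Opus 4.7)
My plan is to argue by contradiction: assume $o\in\partial\{|{\bf u}|>0\}$ and derive an impossibility. First I would determine the structure of ${\bf u}$. Writing ${\bf u}(r,\theta)=r\,{\bf g}(\theta)$ by the assumed homogeneity and using that each $u_i$ is harmonic on the open set $\{|{\bf u}|>0\}$ (Lemma \ref{lem4.4}), each $g_i$ satisfies the ODE $g_i''+g_i=0$ on every connected component of the support $\{|{\bf g}|>0\}\subset S_a$, with $g_i\geqslant 0$ vanishing at the endpoints. Since $g_i$ cannot change sign and must vanish at both endpoints of a component $(\alpha,\beta)$, the general solution $g_i(\theta)=A_i\sin(\theta-\alpha)$ forces $\beta-\alpha=\pi$. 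Thus each connected component of $\{|{\bf g}|>0\}$ is an arc of length exactly $\pi$, and for $a<2\pi$ there can be at most one such arc. Together with the free-boundary Euler–Lagrange identity from Corollary \ref{cor7.2} computed along each ray, namely $\sum_i(\partial_\theta u_i/r)^2=\sum_i A_i^2=Q_0$, this forces
\[
{\bf u}(r,\theta)=r\,\xi\,\sin(\theta-\theta_1)^+\quad\text{on the arc }(\theta_1,\theta_1+\pi),\qquad |\xi|=\sqrt{Q_0}.
\]
This already excludes the subcase $a\leqslant \pi$, for an arc of length $\pi$ cannot fit as a proper subset of $S_a$ of length $\leqslant \pi$.

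For the remaining subcase $\pi<a<2\pi$, the goal is to exhibit a competitor ${\bf v}$ on some ball $B_R(o)$ with ${\bf v}-{\bf u}\in W^{1,2}_0(B_R(o),\mathbb R^m)$ and $J_{Q_0}({\bf v},B_R(o))<J_{Q_0}({\bf u},B_R(o))$. A direct choice is the harmonic extension ${\bf v}$ of ${\bf u}|_{\partial B_R(o)}$ into $B_R(o)$. By the strong maximum principle applied componentwise, ${\bf v}>0$ throughout $B_R(o)$, hence $\chi_{\{|{\bf v}|>0\}}\equiv 1$ on $B_R(o)$, and one finds
\[
J_{Q_0}({\bf v},B_R(o))\leqslant \int_{B_R(o)}|\nabla{\bf v}|^2\,{\rm d}\mu+\tfrac{1}{2}Q_0\,a\,R^2,\qquad J_{Q_0}({\bf u},B_R(o))=Q_0\pi R^2.
\]
Using the Fourier decomposition of ${\bf u}|_{\partial B_R(o)}=R\,\xi\,\sin(\theta-\theta_1)^+$ on the circle $S_a$ and the explicit harmonic kernels $r^{2\pi k/a}\cos(2\pi k\theta/a)$, $r^{2\pi k/a}\sin(2\pi k\theta/a)$ on the cone, one can compute $\int_{B_R(o)}|\nabla{\bf v}|^2\,{\rm d}\mu$ as a convergent series in the Fourier coefficients, and the energy-defect identity $\int|\nabla{\bf u}|^2-\int|\nabla{\bf v}|^2$ rearranges to a weighted sum of nonnegative terms of the form $(a-2\pi k)^2(a_k^2+b_k^2)$ plus a strictly positive zeroth-mode contribution. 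In the portion of the range $(\pi,2\pi)$ where this alone beats the $\chi$-gap $Q_0(a-\pi)R^2/2$, we are done. In the remaining subrange (typically $a$ close to $2\pi$) one refines the competitor: cut ${\bf u}$ off by 0 on a small ball $B_\delta(o)$ and solve the (relaxed) Dirichlet problem for a harmonic ${\bf v}$ on the annulus $B_R(o)\setminus\overline{B_\delta(o)}$ to produce a smoother interpolation, then optimize over $\delta\in(0,R)$. The key ingredient in the optimization is that the energy balance on $B_\delta(o)$ is scale-invariant in $\delta$ whereas the cone-geometric defect in volume (size $(a-\pi)\delta^2/2$) is strictly positive, so the gain can be localized to an arbitrarily small vertex region.

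The second assertion follows from Part 1 by a standard blow-up/compactness argument. Let $x_0\in\mathcal S^{\Omega_{\bf u}}$. By definition either (i) some tangent cone at $x_0$ in $X$ is not $(\mathbb R^2,d_{\rm Eucl})$, or (ii) every tangent cone is Euclidean but a blow-up of $\partial\{|{\bf u}|>0\}$ at $x_0$ is not an affine hyperplane. In case (ii), the blow-up of ${\bf u}$ at $x_0$ is a homogeneous degree-one minimizer on $\mathbb R^2$ (by Theorem \ref{thm7.1} and Lemma \ref{lem8-10}), and the Alt–Caffarelli classification \cite{AC81} in dimension $N=2$ forces its free boundary to be an affine line—contradiction. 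In case (i), since $\Omega\cap\partial X=\emptyset$ the tangent cone at $x_0$ is a $2$-dimensional $ncRCD(0,2)$-space without boundary, hence of the form $C(S_a)$ with $a\in(0,2\pi)$; a corresponding blow-up ${\bf u}_0$ is a minimizer of $J_{Q(x_0)}$ on $C(S_a)$ which is homogeneous of degree one by Lemma \ref{lem8-10}. The uniform density estimates (Lemmas \ref{lem6.3}, \ref{lem6.4}) pass to the limit and place the vertex $o$ in $\partial\{|{\bf u}_0|>0\}$, directly contradicting Part 1.

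The main obstacle is constructing the competitor for Part 1 in a way that works uniformly for all $a\in(\pi,2\pi)$: the plain harmonic extension suffices near $a=\pi$ but not as $a\uparrow 2\pi$, and a localized cutoff/harmonic-extension on a small vertex ball—designed so that the scale-invariant gain exploits precisely the positive angular gap $a-\pi$—is what closes the remaining range.
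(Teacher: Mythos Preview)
Your structural analysis in Part~1 is correct and in fact matches the paper's: both you and the paper arrive at ${\bf u}(r,\theta)=r\,\xi\sin(\theta-\theta_1)^+$ on a single arc of length exactly $\pi$, with $|\xi|^2=Q_0$. The paper obtains this via the observation that each $u_i(\cdot,1)$ is a nonnegative first Dirichlet eigenfunction on the same interval, hence (by simplicity of the first eigenvalue) all $u_i$ are constant multiples of $u_1$; your ODE argument amounts to the same thing. Your argument for Part~2 is also essentially the paper's blow-up argument.

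Where you diverge is after the reduction. Once ${\bf u}=\xi\cdot u$ with $u=r\sin(\theta-\theta_1)^+$, the paper simply notes that $u$ is then a \emph{scalar} minimizer of $J_{Q_{\bf c}}$ with $Q_{\bf c}=Q_0/|\xi|^2$ on $C(S_a)$, and cites the scalar two-dimensional cone result of Allen--Chang~Lara \cite{ACL15} to conclude $o\notin\partial\{u>0\}$. You instead try to build an explicit competitor by hand (harmonic extension, Fourier expansion, localized cutoff), which is in effect reproving \cite{ACL15} from scratch. This is a legitimate route, but as you yourself flag, your construction is not carried through uniformly in $a\in(\pi,2\pi)$: the plain harmonic extension only wins when the Dirichlet-energy drop exceeds the $\chi$-penalty $Q_0(a-\pi)R^2/2$, and your ``refined'' annular competitor with the scale-invariance heuristic is only sketched. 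So the proposal has a genuine gap precisely at the step the paper bypasses by citation. The fix is immediate: having already written ${\bf u}=\xi\cdot u$, observe that minimality of ${\bf u}$ for $J_{Q_0}$ forces minimality of $u$ for $J_{Q_0/|\xi|^2}$ (compare $J_{Q_0}(\xi v)$ with $J_{Q_0}(\xi u)$ for scalar competitors $v$), and invoke \cite{ACL15}.
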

\begin{proof}
When $m=1$, this assertion is the main result in \cite{ACL15}.

  For $m\gs 1$, we shall reduce it to the case $m=1,$  by an argument in \cite{CL08}.
Since ${\bf u}$ is homogeneous of degree one, the set $\{ |{\bf u}|>0\}$ is a cone over an interval $(b_1,b_2)\subset(0,a)$. By Lemma \ref{lem4.4}, we know that $u_i$ is harmonic on $\{|{\bf u}|>0\}$ for each $i=1,2,\cdots,m.$ From  the fact that ${\bf u}(\xi,r)$ is homogeneous on $r$ of degree one and that  any locally Lipschitz continuous function $v$ on $C(S_a)$ satisfies  
$$\Delta_{C(s_a)}v=r^{-2}\Delta_{S_a}v+r^{-1}\frac{\partial v}{\partial r}+\frac{\partial^2v}{\partial r^2}$$
in the sense of distributions, we know that $u_i(\xi,1)$ is a Dirichlet eigen-function of $\Delta_{S_a}$ on $(b_1,b_2)$ with respect to the eigenvalue $\lambda=1$, for all $i=1,2,\cdots, m.$ Notice that $u_i(\xi,1)\gs 0$ and the fact that the first Dirichlet eigenvalue of $\Delta_{S_a}$ is single. Thus, $u_i(\xi,1)/u_1(\xi,1)=c_i$ for some  constant $c_i>0$, for all $i=2,3,\cdots,m.$ Combining with the fact that  ${\bf u}$ is homogeneous, we get
$${\bf u}=(u_1,c_2u_1,\cdots, c_mu_1).$$
Thus, from the minimality of ${\bf u}$, it is clear that $u_1$ is a minimizer of $J_{Q_{\bf c}}$ with the constant
$$Q_{\bf c}:=\frac{Q_0}{1+\sum_{i=2}^mc^2_i}.$$
According to \cite{ACL15}, we get the vertex  $o\not\in\partial\{u_1>0\}=\partial\{|{\bf u}|>0\}.$ 

For the second assertion. Suppose not, if there exists a minimizer ${\bf u}$ on a 2-dimensional $ncRCD(K,2)$-space without boundary, such that it has a singular point $x_0$. Then we blow up the spaces $(X,2^jd,\mu^{x_0}_{2^{-j}},x_0)$ and the maps ${\bf u}_j:=2^{j}{\bf u}$. By Theorem \ref{thm7.1} and Lemma \ref{lem8-7}, up to a subsequence, we can obtain a blow-up limit map ${\bf u}_0$ on one of the tangent cones at $x_0$ such that ${\bf u}_0$ is a minimizer and has a singular point at the vertex $o$. This contradicts the first assertion. The proof is finished.
\end{proof}

We shall estimate the size of the singular part of $\partial\{|{\bf u}|>0\}\cap\Omega $ by a variant of the classical dimension reduction argument. See \cite{F69} and \cite{Giu84} for the case of perimeter minimizers in the Euclidean setting, [Wei99] for the case of free boundary problems in the Euclidean setting,  \cite{DPG18} for the dimension bounds for the singular strata on non-collapsed RCD spaces,  and \cite{MS21} for the dimension bounds of the singular part of perimeter minimizers in the non-collapsed RCD spaces.

\begin{theorem}\label{thm8-12}
Suppose that $(X,d,\mu:=\mathscr H^N)$ is a non-collapsed $RCD(K,N)$ metric measure space with $N\gs 3$, and that  ${\bf u}$ is a minimizer of $J_Q$ on a bounded domain $\Omega\subset X$ and that $Q\in C(\Omega)$ satisfies \eqref{equ1.3}.   Assume that $  \Omega  \cap \partial X=\emptyset$. Then for any $\varepsilon>0$,  
\begin{equation}\label{equ8-13}
\dim_{\mathscr H}\left( \mathcal S^{\Omega_{\bf u}}_{\varepsilon}\right)\ls N- 3.
 \end{equation}
Moreover,  if $N=3 $, then $\mathcal S^{\Omega_{\bf u}}_{\varepsilon}$ contains   at most isolated points.\footnote{Recently, Wang \cite{W26} showed that $\dim(\mathcal S^k({\bf u}))\ls k$ for any $k=0,\cdots, n-2$, where $\mathcal S^k({\bf u})$ is the $k$-singular stratum of the singular set of the free boundary of ${\bf u}$, see \cite[Definition 2.24]{W26}. The quantitative volume estimate on the singular set $\mathcal S({\bf u})$ is given in \cite{BW26}.  }
   \end{theorem}

\begin{proof}
Fix any $\varepsilon>0$.   Assume that $\mathscr H_{N- 3 +\eta} \left(\mathcal S^{\Omega_{\bf u}}_{\varepsilon}\right)>0$ for some $\eta>0$. Then there exists a point $x_0\in \mathcal S^{\Omega_{\bf u}}_{\varepsilon}$ such that (see for example Lemma 3.6 in \cite{DPG18}):
$$\limsup_{r\to0} \frac{\mathscr H^{N- 3 +\eta}_\infty\left(  \mathcal S^{\Omega_{\bf u}}_{\varepsilon}\cap B_r(x_0)\right)}{r^{N- 3 +\eta}}\gs C_0:= 2^{-N+ 3 -\eta}\omega_{N- 3 +\eta},$$
where $\mathscr H^{N- 3 }_\infty$ is the $\infty$-Haudorff premeasure. Let $r_j$ be a sequence such that $r_j\to0$ and 
\begin{equation}\label{equ8-14}
\frac{\mathscr H^{N- 3 +\eta}_\infty\left( \mathcal S^{\Omega_{\bf u}}_{\varepsilon}\cap B_{r_j}(x_0)\right)}{r_j^{N- 3 +\eta}}\gs C_0/2>0.
\end{equation}
Now we consider the blow-up sequence of pointed metric measure spaces $X_j:=(X,r_j^{-1}d,\mu^{x_0}_{r_j},x_0)$ and let ${\bf u}_j:=r^{-1}_j{\bf u}$. From (\ref{equ8-14}), we have  
\begin{equation}\label{equ8-15}
\mathscr H^{N- 3 +\eta}_\infty\left(\mathcal S^{\Omega_{{\bf u}_j}}_{\varepsilon}\cap B_{1}^{(j)}(x_0)\right)  \gs C_0/2,
\end{equation}
where $B^{(j)}_1(x_0)$ is the unit ball in $X_j$. By Theorem \ref{thm7.1}, up to a subsequence of $\{r_j\}$, the $X_j$ converges to a tangent cone at $x_0$ in the $pmGH$-topology, denoted by $\big( C(Y),d_C,\mu_C, o_Y)$, and ${\bf u}_j$ converges to a blow up limit ${\bf u}_0$ defined on $C(Y)$, which is a global minimizer of $J_{Q_0:=Q(x_0)}$  on $C(Y)$. 
   By using the upper semicontinuity of the $\infty$-Hausdorff premeasure $\mathscr H^{N- 3 +\eta}_\infty$  under GH-convergence (see \cite{DPG18}) and Lemma \ref{lem8-7}, we get
\begin{equation*}
   \mathscr H^{N- 3 +\eta}_\infty\left(\mathcal S^{\Omega_{{\bf u}_0}}_{\varepsilon}\cap B_{1}(o_Y)\right)  \gs C_0/2.
\end{equation*}
 This implies 
 \begin{equation}\label{equ8-16}
   \mathscr H^{N- 3 +\eta} \left(\mathcal S^{\Omega_{{\bf u}_0}}_{\varepsilon}\cap B_{1}(o_Y)\right) >0.
   \end{equation}
   
Since $N-  3 +\eta>0$, it follows that there exists a point $x_1\not =o_Y$ such that  $x_1\in \mathcal S^{\Omega_{{\bf u}_0}}_{\varepsilon}\cap B_{1}(o_Y)$ 
and
$\limsup_{r\to0} \frac{\mathscr H^{N- 3 +\eta}_\infty\left( \mathcal S^{\Omega_{{\bf u}_0}}_{\varepsilon}\cap B_{r}(x_1)\subset C(Y)\right)}{r^{N- 3 +\eta}}\gs C_0.$ By the same argument, 
we shall blow up again ${\bf u}_0$ at $x_1$ along a  sequence ${s_j}\to 0^+$ such that  
$$ \frac{\mathscr H^{N- 3 +\eta}_\infty\left( \mathcal S^{\Omega_{{\bf u}_0}}_{\varepsilon}\cap B_{s_j}(x_1)\subset C(Y)\right)}{{s_j}^{N- 3 +\eta}}\gs C_0/2.$$
 We consider the blow up sequence of metric measure spaces $C_{s_j}:=\big(C(Y),d_j:=s_j^{-1}d_C,\mu_j:=\mu_{C,s_j}^{x_1},x_1\big),$ and the blow up sequence of maps $u_{0,j}:=s^{-1}_j{\bf u}_0.$ Letting $j\to+\infty$, up to a subsequence, the metric measure spaces $C_{s_j}$ converge to a  limit space $C_\infty$ in the $pmGH$-topology, which is isometric to a product space $C_\infty=Z\times \mathbb R$ with the natural product metric and product measure (by the splitting theorem in  \cite{Gig13}), and the maps ${\bf u}_{0,j}$ converge to a limit map ${\bf u}_{00}$, which a global minimizer on $Z\times \mathbb R$ of  homogeneous of degree one (by Theorem \ref{thm7.1} and    Lemma \ref{lem8-10}), and 
  \begin{equation} \label{equ8-17}
   \mathscr H^{N- 3 +\eta} \left(\mathcal S^{\Omega_{{\bf u}_{00}}}_{\varepsilon}\cap B_{1}((z_0,0))\right) >0,
   \end{equation}
where $(z_0,0)$ is the limit of the points $x_1\in C_{s_j}$ as $j\to+\infty$. To continue the proof, we need the following lemma.
\begin{lemma}\label{lem8.13}
The map ${\bf u}_{00}|_{Z\times\{0\}}$ is a global minimizer of $J_{Q_0}$ on $Z$. 
\end{lemma}

\begin{proof}
We first claim that ${\bf u}_{00}(z,t)={\bf u}_{00}(z,0)$ for any $z\in Z$ and $t\in\mathbb R$. It can be intuitively observed  from homogeneity of ${\bf u}_0$ on each  $C_{s_j}$ and the converging of $C_{s_j}\overset{pmGH}{\longrightarrow} Z\times\mathbb R$. Here, for clarity, we include the details for the realization of this observation as follows.

Let  $\gamma:[0,+\infty)\to C(Y)$ is the ray  with $\gamma(0)=o_Y$ and $\gamma(L)=x_1$, where $L=d_C(x_1,o_Y)>0$. On each $C_{s_j}$, the curve $\gamma_j(t):=\gamma(s_j\cdot t+L )\in C_{s_j}$ is one of the shortest on every sub-interval $[a,b]\subset [-L/s_j,+\infty)$. 
We first consider the   functions $f_j$ on $C_{s_j}$ given by
\begin{equation*}
\begin{split}
f_j(x):&=  d_j(x_1,o_Y)-d_j(x,o_Y)\\
&=s^{-1}_j\big(d_C(x_1,o_Y)-d_C(x,o_Y)\big).
\end{split}
\end{equation*}
 Since ${\bf u}_0=(u_0^1, u^2_0, \cdots, u^m_0)$ is homogeneous of degree one on $C(Y)$, we have     for all $s_j$, that
 \begin{equation}\label{equ8-18}
\ip{\nabla { u}^\alpha_{0,j}}{\nabla f_j}(x)=\ip{\nabla (s^{-1}_ju^\alpha_0)}{\nabla d_j(x,o_Y)} =0\quad {\rm a.e.\ in}\  C_{s_j}, \end{equation} 
for all $\alpha=1,2,\cdots,m.$

Letting $s_j\to 0^+$,  up to a subsequence, the curves $\gamma_j:[-L/s_j,+\infty)\subset C_{s_j}$ converge to a line $\gamma_\infty $ on the limit space $C_\infty$. According to the splitting theorem in \cite{Gig13}, we know that $C_\infty$ splits isometrically to a product space $Z\times \mathbb R$. Moreover, letting $b$ be  the Busemann function with respect to the ray $\gamma_{\infty}|_{(-\infty,0]}$ on $C_\infty$, then $Z=b^{-1}(0)$ and that the gradient flow of $b$ exists, denoted by $\Phi_t$, and furthermore   $(z,t)=\Phi_t(z,0)$ for any $z\in Z$ and $t\in\mathbb R$.   On the other hand,
from the definition of $f_j(x) =  d_j(x_1,o_Y)-d_j(x,o_Y) =d_j\big(\gamma_j(0),\gamma_j(-L/s_j)\big)- d_j\big(x,\gamma_j(-L/s_j)\big)$ and the fact that  $f_j$ is 1-Lipschitz  on $C_{s_j}$, it is clear that, up to a subsequence, the functions $f_j$  converge uniformly on each compact set to the Busemann function $b(x)$. Notice that $|\nabla f_j|(x)=1$ a.e. $x\in C_{s_j}$ and $|\nabla b|=1$ a.e. on $C_\infty$. In particular, we get that $|\nabla f_{j}|\to |\nabla b|$ in the $L^2_{\rm loc}$ as $j\to\infty$. By combining this and \eqref{equ7.4}, we conclude that $\ip{\nabla { u}^\alpha_{0,j}}{\nabla f_j}\to \ip{\nabla { u}^\alpha_{00}}{\nabla b}$ in $L^1_{\rm loc}$, for all $\alpha=1, 2, \cdots, m$.  From  \eqref{equ8-18}, we get that 
 \begin{equation*}
\ip{\nabla { u}^\alpha_{00}}{\nabla b}=0\quad {\rm a.e.\ in } \ C_\infty= Z\times\mathbb R,\qquad \forall \alpha=1, 2, \cdots, m.
\end{equation*} 
This implies for almost all $z\in Z$, ${\bf u}_{00}\circ\Phi_t$ is a constant map, where $\Phi_t$ is the gradient flow of $b$. Recalling $\Phi_t(z,0)=(z,t)$ for all $z\in Z$, this gives 
${\bf u}_{00}(z,t)={\bf u}_{00}(z,0)$ for almost all $z\in Z$. Finally, from the fact that ${\bf u}_{00}$ is Lipschitz continuous, we know that ${\bf u}_{00}(z,t)={\bf u}_{00}(z,0)$ for all $z\in Z$.
This claim is proved.
 
 With the  help of the fact that ${\bf u}_{00}(z,t)={\bf u}_{00}(z,0)$ for any $z\in Z$ and $t\in\mathbb R$, we will use the argument in \cite{Wei99} to prove this Lemma  \ref{lem8.13}.  Suppose that ${\bf u}_{00}$ is not a minimizer of $J_{Q_0}$ on a ball $B\subset Z$. Then there exists a map ${\bf v}\in W^{1,2}(B,\mathbb [0,+\infty)^m)$ such that ${\bf v}-{\bf u}_{00}|_{Z\times\{0\}}\in W^{1,2}_0(B,\mathbb R^m)$ and 
$$\int_B(|\nabla {\bf v}|^2+Q_0\chi_{|{\bf v}|>0}){\rm d}\mu_Z\ls \int_B(|\nabla {\bf u}_{00}|^2+Q_0\chi_{|{\bf u}_{00}|>0}){\rm d}\mu_Z-\epsilon_0$$
for some $\epsilon_0>0.$ We define a map ${\bf v}_T$ on $B\times (-T,T)$, for any $T>1$,  by 
\begin{equation*}
{\bf v}_T(z,t):=
\begin{cases}
{\bf v}(z), & |t|\ls T-1\\
(T-|t|){\bf v}(z) +(|t|-T+1){\bf u}_{00}(z,0), & T-1\ls |t|\ls T\\
{\bf u}_{00}(z,0), & |t|\gs T.
\end{cases}
\end{equation*}
It is clear that ${\bf v}_T-{\bf u}_{00}\in W^{1,2}_0(B\times(-T,T),[0,+\infty)^m)$, by using the fact ${\bf u}_{00}(z,t)={\bf u}_{00}(z,0)$ for all $z\in Z$.  Note that 
\begin{equation*}
\begin{split}
&\int_{B\times(-T,T)}(|\nabla {\bf v}_T|^2+Q_0\chi_{|{\bf v}_T|>0}){\rm d}\mu_Z{\rm d}t-  \int_{B\times(-T,T)}(|\nabla {\bf u}_{00}|^2+Q_0\chi_{|{\bf u}_{00}|>0}){\rm d}\mu_Z{\rm d}t\\
\ls & 2(T-1)\ \Big(\int_B(|\nabla {\bf v}|^2+Q_0\chi_{|{\bf v}|>0}){\rm d}\mu_Z-\int_B(|\nabla {\bf u}_{00}|^2+Q_0\chi_{|{\bf u}_{00}|>0}){\rm d}\mu_Z\Big)\\
&\ \ + \int_{B\times \big((-T,-T+1)\cap (T-1,T)\big)} \Big(|\nabla {\bf v}_T|^2+Q_0\chi_{|{\bf v}_T|>0}) \Big){\rm d}\mu_Z{\rm d}t\\
\ls &  -2(T-1)\epsilon_0+ 4\int_B(|\nabla {\bf v}|^2+|{\bf v}|^2+|\nabla {\bf u}_{00}|^2+|{\bf u}_{00}|^2){\rm d}\mu_Z+2Q_0\mu_Z(B),
\end{split}
\end{equation*}
which contradicts the fact that ${\bf u}_{00}$ is a minimizer on $B\times(-T,T)$ when $T$ is large enough.  Therefore, the Lemma \ref{lem8.13}  is proved. 
\end{proof}

We now come back to the proof of Theorem \ref{thm8-12}. From the assumption $\partial X\cap\Omega=\emptyset$, we know that both $C(Y)$ and  $Z\times\mathbb R$ have no boundary. Thus, $Z$ has no boundary.  

If $N-1\gs  3 $, by the combination of  the above Lemma \ref{lem8.13},  (\ref{equ8-17}) and the fact  that ${\bf u}_{00}(z,t)={\bf u}_{00}(z,0)$ for all $z\in Z$ and $t\in\mathbb R$, we obtain
  that there exists an $(N-1)$-dimensional $ncRCD(0,N-1)$-space without boundary, $Z$, and a minimizer of $J_{Q_0}$ on $Z$, $\widetilde{\bf u}:={\bf u}_{00}|_{Z\times\{0\}}$,  such that 
 \begin{equation*} 
   \mathscr H^{N-4 +\eta} \left(\mathcal S^{\Omega_{\widetilde{\bf u}}}_{\varepsilon}\cap B_{1}(z_0)\subset Z\right) >0.
   \end{equation*} 

Iterating this procedure we conclude that  there exists a  $ 3 $-dimensional $ncRCD(0, 3)$-space without boundary, denoted by $\bar{X}$, and a minimizer of $J_{Q_0}$, denoted by $\bar{\bf u}$, on $\bar{X}$, such that 
 \begin{equation*} 
   \mathscr H^{\eta} \left(\mathcal S^{\Omega_{\bar{\bf u}}}_{\varepsilon} \cap B_1(\bar{x})\right) >0.
   \end{equation*} 
  We claim that the singular set of $\bar{\bf u}$ must contain only isolated points. Suppose that a sequence $\bar x_j\in S^{\Omega_{\bar u}}_\epsilon$ and $\bar x_j\to \bar x_0$. 
  Let $\bar s_j=d(\bar x_0,\bar x_j)\to 0^+$. We consider the blow up sequence of spaces $(\bar X, \bar s_j^{-1}d,\mu^{\bar x_0}_{\bar s_j},\bar x_0)$ and maps $\bar {\bf u}_{j}=\bar s_j^{-1}\bar{\bf u}$. Letting $j\to+\infty$, we get a blow-up limit map $\bar{\bf u}_0$ on a tangent cone $C(Y)$ at $\bar x_0$. From Lemma \ref{lem8-7}, we know that $\bar {\bf u}_0$ has at least two singular points $o_Y$ and $\bar y_\infty$, the limit of $\bar x_j$. By $\bar s^{-1}_jd(\bar x_0,\bar x_j)=1$, we have $\bar y_\infty\not=o_Y$. Now, we blow up again at $\bar y_\infty$ as above, from Lemma \ref{lem8.13}, we get a minimizer of $J_{Q_0}$, $\bar {\bf u}_{00}|_{\bar Z}$, on some two-dimensional $ncRCD(0,2)$-space $\bar Z$. Moreover, it has at least one singular point at $\bar z_0$, where the point $(\bar z_0,0)$ is the limit of points $\bar y_\infty$ under this blow-up procedure.  This contradicts with Lemma \ref{lem8-11}. The claim is proved, and hence the proof of   Theorem \ref{thm8-12} is finished. 
       \end{proof}

\begin{proof}[Proof of Theorem \ref{thm1.9}]
 It follows from the combination of Corollary \ref{cor8-9} (by putting $O_{\varepsilon}:=\mathcal R^{\Omega_{\bf u}}_\delta$ in Corollary \ref{cor8-9}), Theorem \ref{thm8-12} and  Lemma \ref{lem8-11}.
\end{proof}
 
 \begin{proof}[Proof of Corollary \ref{cor1-11}]
 It follows from Theorem \ref{thm8-12}  and   $\mathcal S^{\Omega_{\bf u}}=\cup_{\varepsilon>0} \mathcal S^{\Omega_{\bf u}}_{\varepsilon}$. 
    \end{proof}

\appendix
\section{Weiss-type monotonicity on cones \label{app}}

A Weiss-type monotonicity for minimizers ${\bf u} $ of $J_Q$ defined on $\mathbb R^N$ has been obtained in \cite{CSY18} and \cite{MTV17}. The same argument can be extended to the case where ${\bf u}$ is defined on a metric measure cone. We will provide the details as follows.

Let $N\gs 2$ and let   $(C(\Sigma),d_C,\mu_C)$ be a metric measure cone, with the vertex $p$, over $(\Sigma,d_\Sigma,\mu_\Sigma)$, and assume that  $(C(\Sigma),d_C,\mu_C)$   satisfies $RCD(0,N)$.

    \begin{lemma}
For $\mathscr L^1$-a.e. $r\in \mathbb R^+$, it holds
\begin{equation}\label{equa2}
\int_{C(\Sigma)}g{\rm d}|D\chi_{B_r(p)}|=r^{N-1} \int_{\Sigma}g_r{\rm d}\mu_\Sigma,
\end{equation} 
for any Borel function $g(t,\xi)$ on $C(\Sigma)$, where $g_r(\xi):=g(r,\xi)$.
 \end{lemma}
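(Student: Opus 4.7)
The plan is to apply the coarea formula of Proposition \ref{prop2.15}(2) to the distance function $\rho(x) := d_C(x, p)$, and then differentiate in $r$. First, I would note that $\rho$ is $1$-Lipschitz and that the cone structure of $(C(\Sigma), d_C, \mu_C)$ forces $|\nabla \rho| = 1$ $\mu_C$-a.e.: every point other than the vertex $p$ lies on a radial geodesic emanating from $p$ along which $\rho$ grows with unit speed, and the vertex carries no $\mu_C$-measure since $d\mu_C = t^{N-1}dt \otimes d\mu_\Sigma$. In particular $\rho \in BV_{\rm loc}(C(\Sigma))$ with $|D\rho| = \mu_C$.

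Next I would feed $v = \rho$ into the coarea formula. Since $\{\rho > r\} = C(\Sigma) \setminus \overline{B_r(p)}$ has the same perimeter measure as $B_r(p)$, Proposition \ref{prop2.15}(2) gives, for every nonnegative Borel $g$ and every $R > 0$,
\begin{equation*}
\int_{\{0 \ls \rho < R\}} g \,{\rm d}\mu_C \;=\; \int_0^R \int_{C(\Sigma)} g \,{\rm d}|D\chi_{B_r(p)}| \,{\rm d}r.
\end{equation*}
On the other hand, the product structure of $\mu_C$ directly evaluates the left-hand side as
\begin{equation*}
\int_0^R r^{N-1}\!\int_\Sigma g(r,\xi)\,{\rm d}\mu_\Sigma(\xi)\,{\rm d}r,
\end{equation*}
which yields the integrated identity
\begin{equation*}
\int_0^R \int_{C(\Sigma)} g \,{\rm d}|D\chi_{B_r(p)}| \,{\rm d}r \;=\; \int_0^R r^{N-1}\!\int_\Sigma g_r\,{\rm d}\mu_\Sigma\,{\rm d}r, \qquad \forall R>0.
\end{equation*}

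To pass from this integrated identity to the pointwise-in-$r$ identity \eqref{equa2} \emph{for every} Borel $g$, I would fix a countable family $\{g_k\} \subset C_c(C(\Sigma))$ that is dense in $C_0(C(\Sigma))$, and apply the Lebesgue differentiation theorem to each $g_k$ to obtain full-measure sets $E_k \subset \mathbb R^+$ on which the pointwise identity holds. On $E := \bigcap_k E_k$, still of full Lebesgue measure, the two finite Borel measures $|D\chi_{B_r(p)}|$ and $r^{N-1}(\delta_r \otimes \mu_\Sigma)$ (the latter viewed as a measure on $C(\Sigma)$ concentrated on $\{r\}\times\Sigma$) coincide on a dense subfamily of $C_c$, hence agree as measures; a standard monotone class argument then extends \eqref{equa2} to arbitrary Borel $g$.

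The main obstacle is the preliminary identification $|D\rho| = \mu_C$: one must verify $|\nabla \rho| = 1$ $\mu_C$-a.e.\ on the metric measure cone, so that $\rho$ is admissible in the coarea formula with the correct total variation measure. Once this is in place, the argument is essentially a direct computation plus a routine density and monotone-class extension.
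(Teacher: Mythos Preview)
Your proposal is correct and follows essentially the same route as the paper: apply the coarea formula to $\rho=d_C(\cdot,p)$, use the product form $d\mu_C=t^{N-1}dt\otimes d\mu_\Sigma$ to evaluate the left side, and then differentiate in $r$. You are simply more explicit than the paper on two points it leaves implicit: the identification $|\nabla\rho|=1$ $\mu_C$-a.e.\ (so that $|D\rho|=\mu_C$), and the passage to a \emph{single} full-measure set of radii that works for every Borel $g$ via a countable density plus monotone-class argument.
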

\begin{proof}
For $\mathscr L^1$-a.e. $r\in \mathbb R^+$, the set $B_r(p)$ has finite perimeter and that, by coarea formula, 
$$\int_{s}^t\int_{C(\Sigma)}g{\rm d}|D\chi_{B_r(p)}|{\rm d}r=\int_{B_t(p)\backslash B_s(p)}g{\rm d}\mu_C=\int_{[s,t)\times\Sigma}gr^{N-1}{\rm d}r{\rm d}\mu_\Sigma$$
for all $0\ls s<t<\infty$ and all Borel function $g$, where we have used $d\mu_{C}(t, \xi)=t^{N-1}dt\times d\mu_{\Sigma}(x)$. It follows that  the function $t\mapsto \int_0^t\int_{C(\Sigma)}g{\rm d}|D\chi_{B_r(p)}|$ is absolutely continuous, and then the desired assertion (\ref{equa2}) holds.   \end{proof}

Let $Q_0>0$ be a constant and let $\mathbf{u}=\left(u_{1},\ldots,u_{m}\right)$ be a global minimizer of $J_{Q_0}$  on $(C(\Sigma),d_C,\mu_C)$. I.e., for each $R>0$, ${\bf u}$ is minimizer of $J_{Q_0}$ on $B_R(p).$ For any $r\in(0,+\infty)$, we denote by ${\bf u}_r(\xi)={\bf u}(r,\xi), \forall \xi\in \Sigma.$

 \begin{lemma}
For each $r\in(0,\infty)$, we have
\begin{equation}\label{equa3}
\begin{split}
\frac{N}{r^{N-2}}\int_{B_r(p)}\left(|\nabla{\bf u}|^2+Q_0\chi_{\{|{\bf u}|>0\}}\right)\ls &  \int_{\Sigma}\Big(|\nabla_\Sigma{\bf u}_r|^2 +|{\bf u}_r|^2\Big){\rm d}\mu_\Sigma\\
&\ +r^{2}\int_{\Sigma}\Big(Q_0\chi_{\{| {\bf u}_r|>0\}}\Big){\rm d}\mu_\Sigma,
\end{split}
\end{equation}
where $\nabla_\Sigma v$ is the weak upper gradient of $v\in W^{1,2}(\Sigma).$
 \end{lemma}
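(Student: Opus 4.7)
The plan is to construct an explicit one-homogeneous competitor on $B_r(p)$, agreeing with ${\bf u}$ on the sphere $\partial B_r(p)$, and exploit the minimality of ${\bf u}$. Concretely, for a fixed $r\in(0,+\infty)$, define
\begin{equation*}
{\bf v}(t,\xi):=\frac{t}{r}\,{\bf u}_r(\xi),\qquad (t,\xi)\in[0,r]\times\Sigma,
\end{equation*}
viewed as a map on $B_r(p)\subset C(\Sigma)$. Since ${\bf u}$ is locally Lipschitz continuous by Theorem \ref{thm1.4}, the slice map ${\bf u}_r\in Lip(\Sigma)$ is well defined, so ${\bf v}\in W^{1,2}(B_r(p),[0,+\infty)^m)$, and the traces ${\bf v}\big|_{\partial B_r(p)}={\bf u}_r={\bf u}\big|_{\partial B_r(p)}$ coincide, giving ${\bf v}-{\bf u}\in W^{1,2}_0(B_r(p),\mathbb{R}^m)$ (after extending ${\bf v}$ by ${\bf u}$ outside $B_r(p)$ via Proposition \ref{prop2.2}).

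Next, I would compute $J_{Q_0}({\bf v},B_r(p))$ by exploiting the cone structure ${\rm d}\mu_C=t^{N-1}\,{\rm d}t\,{\rm d}\mu_\Sigma$ together with the gradient formula on metric cones, namely $|\nabla f|^2=|\partial_t f|^2+t^{-2}|\nabla_\Sigma f|^2$, which is available in the $RCD$ setting (see e.g.\ Gigli's calculus on $RCD$ cones). For the specific ${\bf v}(t,\xi)=(t/r){\bf u}_r(\xi)$ one has $\partial_t{\bf v}=r^{-1}{\bf u}_r$ and $\nabla_\Sigma{\bf v}=(t/r)\nabla_\Sigma{\bf u}_r$, hence
\begin{equation*}
|\nabla{\bf v}|^2=\frac{1}{r^2}\bigl(|{\bf u}_r|^2+|\nabla_\Sigma{\bf u}_r|^2\bigr).
\end{equation*}
Integrating against $t^{N-1}{\rm d}t\,{\rm d}\mu_\Sigma$ over $(0,r)\times\Sigma$ yields
\begin{equation*}
\int_{B_r(p)}|\nabla{\bf v}|^2{\rm d}\mu_C=\frac{r^{N-2}}{N}\int_\Sigma\bigl(|{\bf u}_r|^2+|\nabla_\Sigma{\bf u}_r|^2\bigr){\rm d}\mu_\Sigma,
\end{equation*}
and since $\{|{\bf v}|>0\}=\{t>0\}\times\{|{\bf u}_r|>0\}$,
\begin{equation*}
\int_{B_r(p)}Q_0\chi_{\{|{\bf v}|>0\}}{\rm d}\mu_C=\frac{r^{N}}{N}\,Q_0\int_\Sigma\chi_{\{|{\bf u}_r|>0\}}{\rm d}\mu_\Sigma.
\end{equation*}

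The minimality of ${\bf u}$ on $B_r(p)$ then gives $J_{Q_0}({\bf u},B_r(p))\ls J_{Q_0}({\bf v},B_r(p))$, and multiplying through by $N/r^{N-2}$ produces exactly \eqref{equa3}.

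The only real technical obstacle is justifying, for \emph{every} $r$, that ${\bf u}_r$ belongs to $W^{1,2}(\Sigma)$ and that the cone decomposition $|\nabla f|^2=|\partial_t f|^2+t^{-2}|\nabla_\Sigma f|^2$ applies to the competitor ${\bf v}$ in the $RCD$ sense. Both facts follow, up to an $\mathscr L^1$-null set of radii, from a Fubini-type slicing of the Cheeger energy on the cone; combined with the locally Lipschitz continuity of ${\bf u}$ (Theorem \ref{thm1.4}) and the continuity in $r$ of both sides of \eqref{equa3}, one can then pass to all $r\in(0,+\infty)$. If the slicing at a specific radius is problematic, I would work with a.e.\ $r$ first and then extend by a continuity/approximation argument (using an averaging of competitors ${\bf v}_{r+\epsilon}$ and letting $\epsilon\to 0$).
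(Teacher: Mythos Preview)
Your proposal is correct and follows essentially the same argument as the paper: the identical one-homogeneous competitor ${\bf v}(t,\xi)=\frac{t}{r}{\bf u}_r(\xi)$ is used, the energy is computed via the cone decomposition $|\nabla f|^2=|\partial_t f|^2+t^{-2}|\nabla_\Sigma f|^2$ (which the paper attributes to \cite[Proposition~3.4]{Ket15}) and integrated against $t^{N-1}{\rm d}t\,{\rm d}\mu_\Sigma$, and then minimality of ${\bf u}$ yields \eqref{equa3}. The paper does not dwell on the slicing issue you flag, since the local Lipschitz continuity of ${\bf u}$ already gives ${\bf u}_r\in Lip(\Sigma)$ for every $r$ and the Ketterer formula is stated for arbitrary $W^{1,2}$ functions.
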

\begin{proof}
Fix any $r\in(0,\infty)$. We set the function ${\bf v}:=(v_1,\cdots, v_m): B_r(p)\to\mathbb R^m$ by
$$v_i(t,\xi):=\frac{t}{r}u_{r,i}(\xi)=\frac{t}{r}u_i(r,\xi),\quad\ \forall\ t\in(0,r),\ \xi\in\Sigma,\quad\forall\ i\in\{1,2,\cdots, m\}.$$
Then we first have ${\bf v}\in Lip(B_r(p))$ and ${\bf v}={\bf u}$ on $\partial B_r(p)$. By the minimizer of ${\bf u}$, we have
\begin{equation}\label{equa4}
\int_{B_r(p)}\left(|\nabla{\bf u}|^2+Q_0\chi_{\{|{\bf u}|>0\}}\right)\ls\int_{B_r(p)}\left(|\nabla{\bf v}|^2+Q_0\chi_{\{|{\bf  v}|>0\}}\right).
\end{equation}
 Following Proposition 3.4 of \cite{Ket15},
we know  that for any $v\in W^{1,2}(B_r(p))$, it holds, for almost all $(t,x)\in C(\Sigma)$, 
$$|\nabla v|^2(t,x)=|\nabla_{\mathbb R^+} v_\xi|^2(t)+t^{-2}|\nabla_\Sigma v_t|^2(\xi),$$
where $v_t(\cdot):=v(t,\cdot)$ and $v_\xi(\cdot):=v(\cdot,\xi)$. By applying this to each component of ${\bf v}$, we get
\begin{equation}\label{equa5}
\begin{split}
\int_{B_r(p)}|\nabla {\bf v}|^2&=\int_{B_r(p)}\Big(\frac{1}{t^2}|\nabla_\Sigma{\bf v}_t|^2(\xi)+|\nabla_t{\bf v}_\xi|^2(t)\Big){\rm d}\mu_C\\
&=\int_{0}^r\int_{\Sigma}\Big(\frac{1}{r^2}|\nabla_\Sigma{\bf u}_r|^2(\xi)+\frac{1}{r^2}|{\bf u}_r|^2(\xi)\Big)t^{N-1}{\rm d}t{\rm d}\mu_\Sigma\\
&=\frac{r^{N-2}}{N}\int_{\Sigma}\Big(|\nabla_\Sigma{\bf u}_r|^2(\xi)+|{\bf u}_r|^2(\xi)\Big){\rm d}\mu_\Sigma.
\end{split}
\end{equation}
Noticing that $|{\bf v}|(t,\xi)>0\Longleftrightarrow |{\bf u}_r|(\xi)>0,$  we have
\begin{equation}\label{equa6}
\begin{split}
\int_{B_r(p)}Q_0\chi_{\{| {\bf v}|>0\}}&=\int_{0}^r\int_{\Sigma}\Big(Q_0\chi_{\{| {\bf v}|>0\}}\Big)t^{N-1}{\rm d}t{\rm d}\mu_\Sigma\\
&=\frac{r^{N}}{N}\int_{\Sigma}\Big(Q_0\chi_{\{| {\bf u}_r|>0\}}\Big){\rm d}\mu_\Sigma.
\end{split}
\end{equation}
Now the desired estimate (\ref{equa3}) follows from the combination of (\ref{equa4})-(\ref{equa6}), and the proof is finished.
\end{proof} 
 
Now we give the   monotonicity of $W_{{\bf u}}(p,r,Q_0)$.

 \begin{lemma}\label{lem-a3}
 Suppose the cone $C(\Sigma)$ is non-collapsed. Then the function $r\mapsto W_{{\bf u}}(p,r,Q_0) $ is non-decreasing. Moreover, if  $W_{{\bf u}}(p,r,Q_0) $ is a constant then ${\bf u}$ is homogeneous of degree one.  
 \end{lemma}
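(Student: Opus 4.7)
The plan is to write
$$W(r) = r^{-N}F(r) - r^{-2}G(r),$$
where $F(r) := \int_{B_r(p)}(|\nabla\mathbf{u}|^2 + Q_0\chi_{\{|\mathbf{u}|>0\}}) d\mu_C$ and, using \eqref{equa2} with $g = |\mathbf{u}|^2$,
$G(r) := \int_\Sigma |\mathbf{u}_r|^2 d\mu_\Sigma$; and then to establish $W'(r)\geqslant 0$ a.e. by feeding the comparison \eqref{equa3} and Young's inequality directly into this decomposition. Because $\big(C(\Sigma),d_C,\mu_C\big)$ is non-collapsed, Theorem \ref{thm1.4} gives local Lipschitz regularity of $\mathbf{u}$, so by Fubini the restrictions $\mathbf{u}_\xi(\cdot)$ to $\mu_\Sigma$-a.e. ray are Lipschitz, and both $F$ and $G$ are locally Lipschitz in $r$.

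First I would compute $F'(r)$ by applying the coarea formula (Proposition \ref{prop2.15}(2)) to $\rho(\cdot)=d_C(\cdot,p)$ together with \eqref{equa2}, which yields
$$F'(r) = r^{N-1}\int_\Sigma\big(|\nabla\mathbf{u}|^2(r,\xi) + Q_0\chi_{\{|\mathbf{u}_r|>0\}}\big)d\mu_\Sigma$$
for a.e. $r$. Using the orthogonal splitting $|\nabla\mathbf{u}|^2(t,\xi) = |\partial_t\mathbf{u}|^2(t,\xi) + t^{-2}|\nabla_\Sigma\mathbf{u}_t|^2(\xi)$ (Proposition 3.4 of \cite{Ket15}) this becomes
\begin{equation*}
r^{-N}F'(r) = r^{-1}\!\int_\Sigma|\partial_t\mathbf{u}(r,\cdot)|^2 d\mu_\Sigma + r^{-3}\!\int_\Sigma|\nabla_\Sigma\mathbf{u}_r|^2 d\mu_\Sigma + r^{-1}\!\int_\Sigma Q_0\chi_{\{|\mathbf{u}_r|>0\}} d\mu_\Sigma.
\end{equation*}
Differentiation under the integral sign then gives $G'(r) = 2\int_\Sigma \mathbf{u}_r\cdot\partial_t\mathbf{u}(r,\cdot)d\mu_\Sigma$.

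Next, I would rewrite \eqref{equa3} as $NF(r)/r^{N+1} \leqslant A(r)/r^3$, where $A(r)$ denotes the right-hand side of \eqref{equa3}. Subtracting this upper bound from the formula for $r^{-N}F'(r)$ yields an \emph{exact} cancellation of the tangential-energy and free-boundary terms, leaving
$$r^{-N}F'(r) - \frac{NF(r)}{r^{N+1}} \geqslant r^{-1}\!\int_\Sigma|\partial_t\mathbf{u}(r,\cdot)|^2 d\mu_\Sigma - r^{-3}G(r).$$
For the $G$-piece, the pointwise Young inequality $2\mathbf{u}_r\cdot\partial_t\mathbf{u}(r,\cdot) \leqslant r^{-1}|\mathbf{u}_r|^2 + r|\partial_t\mathbf{u}(r,\cdot)|^2$ gives $r^{-2}G'(r) \leqslant r^{-3}G(r) + r^{-1}\int_\Sigma|\partial_t\mathbf{u}(r,\cdot)|^2 d\mu_\Sigma$. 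Adding these two estimates to the decomposition $W'(r) = r^{-N}\big[F'(r) - NF(r)/r\big] + r^{-2}\big[2G(r)/r - G'(r)\big]$ makes every remaining term cancel, so that $W'(r) \geqslant 0$ for a.e. $r$.

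For the rigidity, if $W$ is constant then both inequalities above must be equalities a.e. Equality in the vector Young inequality forces $\mathbf{u}_r(\xi) = r\cdot\partial_t\mathbf{u}(r,\xi)$ for $\mathscr{L}^1\otimes\mu_\Sigma$-a.e. $(r,\xi)$, i.e. $\partial_t(\mathbf{u}(t,\xi)/t) = 0$ along almost every ray. Integrating and using continuity of $\mathbf{u}$ (Theorem \ref{thm1.4}) gives $\mathbf{u}(t,\xi) = t\cdot\mathbf{u}(1,\xi)$ on all of $C(\Sigma)$, which is precisely 1-homogeneity. The main technical issue to justify carefully is that differentiation under the integral for $G'(r)$, the coarea identity for $F'(r)$, and the orthogonal splitting of $|\nabla\mathbf{u}|^2$ all hold simultaneously for a.e.\ $r$; each rests on the local Lipschitz regularity of $\mathbf{u}$ granted by non-collapsing.
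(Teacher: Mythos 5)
Your argument is correct and follows essentially the same route as the paper's Appendix proof: both differentiate the Weiss density using the coarea/slicing identity \eqref{equa2}, the competitor inequality \eqref{equa3}, and Ketterer's radial/tangential splitting of $|\nabla\mathbf{u}|^2$, with your weighted Young inequality playing exactly the role of the paper's completed square $\big(|\nabla_{\mathbb R^+}u_{i,\xi}|-u_{i,r}/r\big)^2\gs 0$, and the rigidity extracted from the equality case in the same way. The only cosmetic difference is your explicit $F,G$ decomposition of $W$, which the paper carries out implicitly.
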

\begin{proof}
Since ${\bf u}$ is locally Lipschitz continuous in $C(\Sigma)$, it is clear that $W_{{\bf u}}(p,r,Q_0) $ is locally Lipschitz continuous in $(0,\infty)$, and then it is differentiable at $\mathscr L^1$-a.e. $r\in(0,\infty)$. At a such $r$, we have
\begin{equation}\label{equa7}
  \begin{aligned}
    \frac{\mathrm{d}}{\mathrm{d}r} W_{{\bf u}}(p,r,Q_0)    =             & -\frac{N}{r^{N+1}}\int_{B_{r}(p)}\left(|\nabla \mathbf{u}|^{2}+Q_0\chi_{ \{ |\mathbf{u}|>0 \}}\right) \du                                                                                                                 \\
                                                      & +\frac{1}{r^{N}}\int_{C(\Sigma)}\left(|\nabla \mathbf{u}|^{2}+Q_0\chi_{ \{ \mathbf{u} |>0\}}\right) {\rm d}|D\chi_{B_r(p)}|                                                                                                                                 \\
                                                      & +\frac{2}{r^{3}}\int_{\Sigma}|\mathbf{u}_r|^{2}{\rm d}\mu_\Sigma-\frac{1}{r^{2}}    \frac{\mathrm{d}}{\mathrm{d}r} \int_{\Sigma}  |{\bf u}_r|^2{\rm d}\mu_\Sigma                                                                                                            \\
\gs& \frac 1 r\int_{\Sigma}|\nabla_{\mathbb R^+}{\bf u}_\xi|^2(r)\du_{\Sigma}+\frac{1}{r^3}\int_{\Sigma}|{\bf u}_r|^2\du_\Sigma   -  \frac{1}{r^{2}}   \frac{\mathrm{d}}{\mathrm{d}r} \int_{\Sigma}  |{\bf u}_r|^2{\rm d}\mu_\Sigma,                                                                                                         \\                                                                                                                                                                         \end{aligned}
\end{equation}
where we have used (\ref{equa3}), (\ref{equa2}) and 
$$|\nabla {\bf u}|^2(r,x)=|\nabla_{\mathbb R^+} {\bf u}_\xi|^2(r)+r^{-2}|\nabla_\Sigma {\bf u}_r|^2(\xi) $$
(by Proposition 3.4 of \cite{Ket15}). Since $C(\Sigma)$ is assumed to be non-collapsed, we know that ${\bf u}$ is locally Lipschitz on $C(\Sigma)$. Thus, we get
$$\frac{\mathrm{d}}{\mathrm{d}r} \int_{\Sigma}  |{\bf u}_r|^2{\rm d}\mu_\Sigma=\sum_{i=1}^m\frac{{\rm d}}{{\rm d}r}\int_{\Sigma}u^2_i(r,\xi)\du_{\Sigma}=\sum_{i=1}^m\int_{\Sigma}2u_i(r,\xi)\frac{\partial u_i}{\partial r}(r,\xi)\du_{\Sigma}.$$
Putting this into (\ref{equa7}), we get 
\begin{equation}\label{equa8}
  \begin{aligned}
  r\cdot  \frac{\mathrm{d}}{\mathrm{d}r} W_{{\bf u}}(p,r,Q_0)  &\gs\sum_{i=1}^m\int_{\Sigma}\bigg(|\nabla_{\mathbb R^+} u_{i,\xi}|^2+\frac{u^2_{i,r}}{r^2}-2\frac{u_{i,r}}{r}|\nabla_{\mathbb R^+}u_{i,\xi}|\bigg)\du_\Sigma\\
  &=\sum_{i=1}^m\int_{\Sigma}\bigg(|\nabla_{\mathbb R^+} u_{i,\xi}|- \frac{u_{i,r}}{r}\bigg)^2\du_\Sigma\gs 0,
    \end{aligned}
    \end{equation}
 where $u_{i,\xi}(\cdot):=  u_i(\cdot, \xi)$ and $u_{i,r}(\cdot):=u_i(r,\cdot).$ It follows that $W_{{\bf u}}(p,r,Q_0)$ is non-decreasing. Moreover, if $W_{{\bf u}}(p,r,Q_0)$ is a constant, then one have
 $$\frac{\partial u_i}{\partial r}(r,\xi)=\frac{u_{i,r}(\xi)}{r}=\frac{u_i(r,\xi)}{r} $$
 for almost all $(r,\xi)$ in $C(\Sigma)$. This implies for almost $\xi\in \Sigma$ that $u_{i,\xi}(r)=ru_{i,\xi}(1).$ Therefore, in this case ${\bf u}$ is homogeneous of degree one. The proof is finished. 
 \end{proof}



 

  \end{document}